\newtheorem{thm}{Theorem}[section]
\newtheorem{cor}[thm]{Corollary}
\newtheorem{lem}[thm]{Lemma}
\newtheorem{pro}[thm]{Proposition}
\newtheorem{dfn}[thm]{Definition}
\newtheorem{rmk}[thm]{Remark}
\newtheorem{exmp}[thm]{Example}
\newcommand{\hooklongrightarrow}{\lhook\joinrel\longrightarrow}
\newcommand{\twoheadlongrightarrow}{\relbar\joinrel\twoheadrightarrow}
\newcommand{\ana}{{\rm an}}
\newcommand{\adm}{{\rm ad}}
\newcommand{\alge}{{\rm alg}}
\newcommand{\cEo}{{\rm \overline{\cE}}}
\newcommand{\ext}{{\rm Ext}}
\newcommand{\fil}{{\rm Fil}}
\newcommand{\gal}{{\rm Gal}}
\newcommand{\GLN}{{\rm GL}}
\newcommand{\gr}{{\rm gr}}
\newcommand{\homo}{{\rm Hom}}
\newcommand{\hH}{{\mathrm{H}}}
\newcommand{\ind}{{\rm Ind}}
\newcommand{\lM}{{\overline{M}}}
\newcommand{\lL}{{\overline{L}}}
\newcommand{\leg}{{\rm lg}}
\newcommand{\lrr}{{\langle r\rangle}}
\newcommand{\lra}{\longrightarrow}
\newcommand{\op}{{\overline{{\mathbf{P}}}}}
\newcommand{\ob}{{\overline{{\mathbf{B}}}}}
\newcommand{\on}{{\overline{{\mathbf{N}}}}}
\newcommand{\pr}{{\rm pr}}
\newcommand{\Rep}{\mathrm{Rep}}
\newcommand{\ra}{{\longrightarrow}}
\newcommand{\res}{{\rm Res}}
\newcommand{\ssE}{{\mathcal{S}}}
\newcommand{\sEo}{{\rm \overline{\ssE}}}
\newcommand{\st}{{\rm St}}
\newcommand{\soc}{{\rm soc}}
\newcommand{\ten}{{{\otimes}_E}}
\newcommand{\ur}{{\rm ur}}
\newcommand{\unr}{{\rm unr}}
\newcommand{\ul}{\underline}
\newcommand{\usl}{\mathfrak sl}
\newcommand{\valua}{\mathrm val}
\newcommand{\BA}{{\mathbb{A}}}
\newcommand{\BC}{{\mathbb{C}}}
\newcommand{\BG}{{\mathbb{G}}}
\newcommand{\BI}{{\mathbb{I}}}
\newcommand{\BQ}{{\mathbb{Q}}}
\newcommand{\BR}{{\mathbb{R}}}
\newcommand{\BZ}{{\mathbb{Z}}}
\newcommand{\bA}{{\mathbf{A}}}
\newcommand{\bB}{{\mathbf{B}}}
\newcommand{\bD}{{\mathbf{D}}}
\newcommand{\bG}{{\mathbf{G}}}
\newcommand{\bH}{{\mathbf{H}}}
\newcommand{\bL}{{\mathbf{L}}}
\newcommand{\bN}{{\mathbf{N}}}
\newcommand{\bP}{{\mathbf{P}}}
\newcommand{\bQ}{{\mathbf{Q}}}
\newcommand{\bS}{{\mathbf{S}}}
\newcommand{\bT}{{\mathbf{T}}}
\newcommand{\bZ}{{\mathbf{Z}}}
\newcommand{\cL}{\mathcal L}
\newcommand{\co}{\mathcal O}
\newcommand{\cC}{\mathcal C}
\newcommand{\cS}{\mathcal S}
\newcommand{\cD}{\mathcal D}
\newcommand{\cI}{\mathcal I}
\newcommand{\cW}{\mathcal W}
\newcommand{\cM}{\mathcal M}
\newcommand{\cF}{\mathcal F}
\newcommand{\cV}{\mathcal V}
\newcommand{\cE}{\mathcal E}
\newcommand{\cO}{\mathcal O}
\newcommand{\cY}{\mathcal Y}
\newcommand{\fb}{{\mathfrak{b}}}
\newcommand{\fd}{{\mathfrak{d}}}
\newcommand{\fg}{{\mathfrak{g}}}
\newcommand{\fh}{{\mathfrak{h}}}
\newcommand{\fl}{{\mathfrak{l}}}
\newcommand{\fn}{{\mathfrak{n}}}
\newcommand{\fp}{{\mathfrak{p}}}
\newcommand{\ft}{{\mathfrak{t}}}
\newcommand{\fz}{{\mathfrak{z}}}
\newcommand{\sE}{\mathscr E}
\newcommand{\sW}{\mathscr W}
\newcommand{\sL}{\mathcal L}
\begin{document}	
	
	\title{\textbf{\textsc{Extensions of locally analytic generalized parabolic Steinberg representations for $\mathrm{GL}_{n}$}}}
	
	\author{Yiqin He
		\thanks{Morningside Center of Mathematics, Chinese Academy of Science,\;No.\;55, Zhongguancun East Road, Haidian District, Beijing 100190, P.R.China,\;E-mail address:\texttt{\;heyiqin@amss.ac.cn}
	}}
	
	\date{}
	\maketitle

\begin{abstract}
Let $L$ be a finite extension of $\bQ_p$.\;In this paper, we study the locally $\bQ_p$-analytic generalized parabolic Steinberg representations of $\GLN_n(L)$ which are associated to a  Zelevinsky-segment.\;We then compute the $\ext$-groups of them.\;They are related to the (parabolic) Breuil's simple $\sL$-invariants, which arise in the automorphic side of a conjectured $p$-adic local Langlands correspondence.\;
\end{abstract}

\maketitle
	
{\hypersetup{linkcolor=black}\tableofcontents}


	
	\numberwithin{equation}{section}
	\section{Introduction}
	Let $L$ (resp.\;$E$) be a finite extension of $\BQ_p$.\;Suppose that $E$ is sufficiently large containing all the embeddings $\Sigma_L:=\{\sigma: L\hookrightarrow \overline{\BQ}_p\}$ of $L$ in $\overline{\BQ}_p$.\;
	
	In this paper, we study some locally $\bQ_p$-analytic generalized \textbf{parabolic} Steinberg representations of $\GLN_n(L)$ (constructed by locally $\bQ_p$-analytic parabolic induction), which are given by a Zelevinsky-segment.\;The heart of this paper is a computation of the  $\ext^1$-groups of them.\;They arise from the automorphic side of the $p$-adic local Langlands program, the so-called parabolic Breuil's simple $\sL$-invariants in \cite{2022parabolivinv}.\;These results are previously only known when they are induced from a \textbf{Borel subgroup}.\;
	
	In Borel case, Yiwen Ding  \cite[Section 2]{2019DINGSimple} computes the extension groups of certain locally $\BQ_p$-analytic generalized (Borel) Steinberg representations of $\GLN_{n}(L)$ over $E$.\;In general, let $\bG$ be a connected reductive algebraic group over $L$.\;In \cite{2012Orlsmoothextensions}, Sascha Orlik computes the extensions of smooth generalized (Borel) Steinberg representations of $\bG(L)$, in the category of smooth $G$-representations with coefficients in a certain self-injective ring $R$.\;Moreover, if $\bG$ is split, then Lennart Gehramnn \cite{gehrmann2019automorphic} also computes the extensions of locally analytic generalized (Borel) Steinberg representations of $\bG(L)$, which gives a slight generalization of the above result of Ding.\; 
	
	Our paper will provides a ``parabolic" version of Ding's results.\;We sketch the main results  of this paper as follows.\;
	
	Let $\Delta_n$ be the set of simple roots of $\GLN_n$ (with respect to the Borel subgroup $\bB$ of upper triangular matrices), and we identify the set $\Delta_n$ with $\{1,2,\cdots,n-1\}$.\;Let $\bT$ be the torus of diagonal matrices.\;Fix two integers $k$ and $r$ such that $n=kr$.\;We put  $\Delta_n(k):=\{r,2r,\cdots,(k-1)r\}\subseteq \Delta_n$ and $\Delta_n^k:=\Delta_n\backslash \Delta_n(k)$.\;For a subset $I\subseteq \Delta_n(k)$, we denote by $\bP_I^{\lrr}$ the parabolic subgroup of $\GLN_n$ containing the Borel subgroup $\bB$  such that $\Delta_n(k) \backslash  I$ are precisely the simple roots of the unipotent radical of $\mathbf{P}_I^{\lrr}$.\;Let $\bL_I^{\lrr}$ be the Levi subgroup of $\bP_I^{\lrr}$ containing the group $\bT$ such that  $I\cup \Delta_n^k$ is equal to the set of simple roots of  $\bL_I^{\lrr}$.\;Let $\op_I^{\lrr}$ be the parabolic subgroup opposite to $\bP_I^{\lrr}$.\;In particular, we have 
	\[\bL^{\lrr}:=\bL^{\lrr}_{\emptyset}=\left(\begin{array}{cccc}
		\GLN_r & 0 & \cdots & 0 \\
		0 & \GLN_r & \cdots & 0 \\
		\vdots & \vdots & \ddots & 0 \\
		0 & 0 & 0 & \GLN_r \\
	\end{array}\right)\subset \bP^{\lrr}:=\bP^{\lrr}_{\emptyset}=\left(\begin{array}{cccc}
		\GLN_r & \ast & \cdots & \ast \\
		0 & \GLN_r & \cdots & \ast \\
		\vdots & \vdots & \ddots & \ast \\
		0 & 0 & 0 & \GLN_r \\
	\end{array}\right).\]
	For simplicity, if $I=\{ir\}$ for some $1\leq i\leq k-1$, we put $\bL^{\lrr}_{ir}:=\bL^{\lrr}_{\{ir\}}$ and $\op^{\lrr}_{ir}:=\op^{\lrr}_{\{ir\}}$.\;
	
	Let $\pi$ be a fixed irreducible cuspidal  representation of $\GLN_r(L)$ over $E$.\;Following \cite{av1980induced2}, we consider the Zelevinsky-segment (see also Section \ref{introBZ}, Definition \ref{Zelevinskysegment} and Definition \ref{dfnparastein})
	\begin{equation}\label{introBZsegment}
		\Delta_{[k-1,0]}(\pi):=\pi|\det|_L^{r-1}\otimes_E \cdots \otimes_E \pi|\det|_L \otimes_E \pi,
	\end{equation}
	which forms an irreducible cuspidal smooth representation of $\bL^{\lrr}(L)$ over $E$.\;
	
	In the sequel, we put $G:=\GLN_n(L)$.\;Let $\ul{\lambda}$ be a dominant weight of $(\mathrm{Res}_{L/\BQ_p}\GLN_n)\times_{\BQ_p}E$ with respect to $(\mathrm{Res}_{L/\BQ_p}\bB)\times_{\BQ_p}E$.\;Let $L(\ul{\lambda})$ be the unique irreducible $\BQ_p$-algebraic representation of $G$ with the highest weight $\ul{\lambda}$.\;Consider the locally $\BQ_p$-algebraic representation
	\begin{equation}\label{introlocallyalg}
		i^G_{\op^{\lrr}}(\pi,\ul{\lambda}):=\Big(\mathrm{Ind}_{\op^{\lrr}(L)}^G\delta_{\op^{\lrr}(L)}^{1/2}\Delta_{[r-1,0]}(\pi)\Big)^\infty\otimes L(\ul{\lambda}).
	\end{equation}
	(by tensoring a normalized smooth parabolic induction with a $\BQ_p$-algebraic representation of $G$), where $\delta_{\op^{\lrr}(L)}$ is the modulus character of $\op^{\lrr}(L)$.\;

	In this paper, we first define (and study) the locally $\BQ_p$-algebraic generalized (parabolic) Steinberg representations $\{v^{\infty}_{\op^{\lrr}_{I}}(\pi,\ul{\lambda})\}_{I\subseteq {\Delta_n(k)}}$ associated to the Zelevinsky-segment $\Delta_{[k-1,0]}(\pi)$ and weight $\ul{\lambda}$, and show that they are precisely the Jordan-H\"{o}lder factors of $i^G_{\op^{\lrr}}(\pi,\ul{\lambda})$ (\ref{introlocallyalg}).\;In particular,  $\st_{(r,k)}^{\infty}(\pi,\ul{\lambda}):=v^{\infty}_{\op^{\lrr}_{\emptyset}}(\pi,\ul{\lambda})$ (resp., $\st_{(r,k)}^{\infty}(\pi):=\st_{(r,k)}^{\infty}(\pi,\ul{0})$) is the locally $\BQ_p$-algebraic (resp., smooth) parabolic Steinberg representation.\;We add the terminology "parabolic" to distinguish from the usual (Borel) Steinberg representation induced from the Borel subgroup $\bB(L)$.\;Recall that the Jordan-H\"{o}lder factors of $i^G_{\op^{\lrr}}(\pi,\ul{0})$ are classified by the orientations of some graph in 
	\cite[Section 2.2, Theorem]{av1980induced2}, we also compare our approach (or description) to that of the earlier work.\;
	
	We then define the locally  $\BQ_p$-analytic analogy  $\{v^{\ana}_{\op^{\lrr}_{I}}(\pi,\ul{\lambda})\}_{I\subseteq {\Delta_n(k)}}$ (the so-called locally $\BQ_p$-analytic generalized parabolic Steinberg representations associated to the Zelevinsky-segment $\Delta_{[k-1,0]}(\pi)$ and weight $\ul{\lambda}$) of $\{v^{\infty}_{\op^{\lrr}_{I}}(\pi,\ul{\lambda})\}_{I\subseteq {\Delta_n(k)}}$.\;Denote by $\st_{(r,k)}^{\ana}(\pi,\ul{\lambda}):=v^{\ana}_{\op^{\lrr}}(\pi,\ul{\lambda})$ the locally $\BQ_p$-analytic parabolic Steinberg representation.\;The Orlik-Strauch constructions (see \cite[Theorem]{orlik2015jordan}) on locally $\BQ_p$-analytic representations give a way to determine the composition factors of $v^{\ana}_{\op^{\lrr}_{I}}(\pi,\ul{\lambda})$.\;By generalizing the results in \cite{orlik2014jordan},\;we establish the locally $\BQ_p$-analytic (resp., locally $\BQ_p$-algebraic) Tits complexes (see Section \ref{Titcomp}),\;and compute the multiplicities of certain  composition factors of $v^{\ana}_{\op^{\lrr}_{I}}(\pi,\ul{\lambda})$ by following the route of \cite{orlik2014jordan}.\;We refer to Theorem \ref{JHanastein1} for more detailed and more precise statements.\;
	
	The main result  of this paper is to compute the $\ext$-groups of locally $\BQ_p$-analytic generalized parabolic Steinberg representations.\;We prove
	\begin{thm}\label{intromain}(Theorem \ref{analyticExt3})
		For $ir\in {\Delta_n(k)}$, there exists an isomorphism of $E$-vector spaces 
		\begin{equation}\label{thmintro}
			\homo(L^\times,E)\xrightarrow{\sim }\ext^1_{G}\big(v_{\op^{\lrr}_{ir}}^{\infty}(\pi,\ul{\lambda}), \st_{(r,k)}^{\ana}(\pi,\ul{\lambda})\big).
		\end{equation}
		In particular, we have $\dim_E  \ext^1_{G}\big(v_{\op^{\lrr}_{ir}}^{\infty}(\pi,\ul{\lambda}), \st_{(r,k)}^{\ana}(\pi,\ul{\lambda})\big)=d_L+1$.\;
	\end{thm}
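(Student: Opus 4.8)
\textbf{Proof proposal for Theorem \ref{intromain}.}

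The plan is to reduce the computation to an explicit understanding of the locally $\BQ_p$-analytic parabolic Steinberg representation $\st_{(r,k)}^{\ana}(\pi,\ul{\lambda})$ as an extension of its ``smooth'' layer $\st_{(r,k)}^{\infty}(\pi,\ul{\lambda})$ by the remaining Orlik--Strauch constituents, and then to run a dévissage on the long exact sequences of $\ext$-groups. First I would recall, from the locally analytic Tits complex constructed in Section \ref{Titcomp} and the Jordan--Hölder analysis of Theorem \ref{JHanastein1}, the filtration of $\st_{(r,k)}^{\ana}(\pi,\ul{\lambda})$ whose graded pieces are the $v^{\infty}_{\op^{\lrr}_J}(\pi,\ul{\lambda})\otimes(\text{an }\usl\text{-isotypic piece})$ type constituents coming from $\bQ_p$-analytic vectors; the crucial point is that $\st_{(r,k)}^{\infty}(\pi,\ul{\lambda})$ sits as a subobject, with quotient $Q$ built out of strictly ``more analytic'' Orlik--Strauch representations. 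Applying $\homo_G\big(v^{\infty}_{\op^{\lrr}_{ir}}(\pi,\ul{\lambda}),-\big)$ to $0\to \st_{(r,k)}^{\infty}\to \st_{(r,k)}^{\ana}\to Q\to 0$ gives
\begin{equation*}
\homo_G\big(v^{\infty}_{\op^{\lrr}_{ir}},Q\big)\to \ext^1_G\big(v^{\infty}_{\op^{\lrr}_{ir}},\st_{(r,k)}^{\infty}\big)\to \ext^1_G\big(v^{\infty}_{\op^{\lrr}_{ir}},\st_{(r,k)}^{\ana}\big)\to \ext^1_G\big(v^{\infty}_{\op^{\lrr}_{ir}},Q\big),
\end{equation*}
and I would argue that the two outer $Q$-terms vanish: a nonzero map or extension would force a matching of Orlik--Strauch data (infinitesimal characters / the $\usl$-part) that is incompatible because $v^{\infty}_{\op^{\lrr}_{ir}}$ is purely smooth while every constituent of $Q$ is genuinely locally analytic and non-smooth — here one invokes the Orlik--Strauch $\homo$ and $\ext^1$ formulas, e.g.\ via \cite{orlik2015jordan} and the computations in \cite{2019DINGSimple}, together with the fact that $v^{\infty}_{\op^{\lrr}_{ir}}$ and the relevant constituents of $Q$ have distinct ``locally analytic socles''. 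This identifies $\ext^1_G\big(v^{\infty}_{\op^{\lrr}_{ir}}(\pi,\ul{\lambda}),\st_{(r,k)}^{\ana}(\pi,\ul{\lambda})\big)\cong \ext^1_G\big(v^{\infty}_{\op^{\lrr}_{ir}}(\pi,\ul{\lambda}),\st_{(r,k)}^{\infty}(\pi,\ul{\lambda})\big)$ together with a contribution possibly surviving from $\homo_G(v^{\infty}_{\op^{\lrr}_{ir}},Q)$ which I expect to be zero after a more careful look at the Tits complex degrees.

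Next I would compute the right-hand side $\ext^1_G\big(v^{\infty}_{\op^{\lrr}_{ir}}(\pi,\ul{\lambda}),\st_{(r,k)}^{\infty}(\pi,\ul{\lambda})\big)$. Both representations are locally $\BQ_p$-algebraic, so by a standard translation (twisting by $L(\ul{\lambda})^\vee$ and using that $\ext$ in the locally analytic category between locally algebraic objects with the same algebraic part reduces to an $\ext$ in a suitable smooth-analytic hybrid category — cf.\ the arguments in \cite[Section 2]{2019DINGSimple}) one is reduced to the smooth parabolic Steinberg picture, i.e.\ to $\ext^1$ between the smooth generalized parabolic Steinberg representations $v^{\infty}_{\op^{\lrr}_{ir}}(\pi)$ and $\st_{(r,k)}^{\infty}(\pi)=v^{\infty}_{\op^{\lrr}_{\emptyset}}(\pi)$, computed in the smooth parabolic-Steinberg formalism developed in this paper (the analogue of \cite{2012Orlsmoothextensions}), plus a one-dimensional ``algebraic direction'' contributed by $\ext^1$ of the algebraic part; the smooth part gives $\homo_{\sm}(L^\times,E)$, which is $d_L$-dimensional since $L^\times\cong \BZ\times \cO_L^\times$ and the continuous/smooth characters of $\cO_L^\times$ into $E$ form a $d_L$-dimensional space (here $d_L=[L:\BQ_p]$), while the extra $+1$ comes from the $p$-adic valuation (the non-smooth but locally $\BQ_p$-analytic character $\val$). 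Assembling, $\homo(L^\times,E)=\homo_{\sm}(L^\times,E)\oplus E\cdot\val$ has dimension $d_L+1$, which is exactly the claimed dimension, and one checks the isomorphism (\ref{thmintro}) is realized by sending a homomorphism $\psi\colon L^\times\to E$ to the pushforward along $\psi$ of a universal extension (the parabolic analogue of the universal unitary/analytic extension of Steinberg by itself), so the map is natural and bijective.

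The main obstacle I anticipate is the vanishing of the $\ext^1_G\big(v^{\infty}_{\op^{\lrr}_{ir}}(\pi,\ul{\lambda}),Q\big)$ and $\homo_G\big(v^{\infty}_{\op^{\lrr}_{ir}}(\pi,\ul{\lambda}),Q\big)$ terms, i.e.\ showing that no genuinely-analytic constituent of $\st_{(r,k)}^{\ana}$ beyond its smooth part contributes: this requires a precise hold on which Orlik--Strauch representations $\cF^G_{\op^{\lrr}_J}(M,\pi_J)$ occur in $Q$ with which $\fg$-module $M$, and an $\ext$-vanishing criterion in the Orlik--Strauch category (spectral-sequence / BGG-resolution computation) showing these cannot extend or map to a smooth $v^{\infty}_{\op^{\lrr}_{ir}}$ — essentially the parabolic generalization of the delicate bookkeeping in \cite{orlik2014jordan} and \cite{2019DINGSimple}. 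A secondary technical point is checking that the reduction ``$\ext^1$ between locally algebraic reps with identical algebraic part $=$ smooth $\ext^1$ plus the algebraic $\ext^1$'' holds in our parabolic setting with the cuspidal $\pi$ in place of an unramified character; this should follow from the same Künneth-type / Hochschild--Serre argument once one knows the relevant higher smooth $\ext$-groups among the $v^{\infty}_{\op^{\lrr}_J}(\pi)$ vanish in the needed range, which is part of the smooth parabolic-Steinberg theory established earlier in the paper.
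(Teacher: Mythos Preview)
Your proposed d\'evissage along $0\to \st_{(r,k)}^{\infty}(\pi,\ul{\lambda})\to \st_{(r,k)}^{\ana}(\pi,\ul{\lambda})\to Q\to 0$ fails at its central step: the vanishing of $\ext^1_G\big(v_{\op^{\lrr}_{ir}}^{\infty}(\pi,\ul{\lambda}),Q\big)$ is simply false. The paper computes this explicitly (Lemma~\ref{twoisoextensions}): for each $\sigma\in\Sigma_L$ the Orlik--Strauch constituent $C_{i,\sigma}=\cF^G_{\op^{\lrr}_{\Delta_{k,i}}}\big(\lL(-s_{ir,\sigma}\cdot\ul{\lambda}),\st^\infty_{\Delta_{k,i}}(\pi)\big)$ of $Q$ satisfies $\dim_E\ext^1_G\big(v_{\op^{\lrr}_{ir}}^{\infty}(\pi,\ul{\lambda}),C_{i,\sigma}\big)=1$. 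So precisely the $d_L$ ``genuinely analytic'' directions you want to throw away are the ones carrying the non-smooth part of $\homo(L^\times,E)$; one has $\dim_E\ext^1_G\big(v_{\op^{\lrr}_{ir}}^{\infty}(\pi,\ul{\lambda}),\st_{(r,k)}^{\infty}(\pi,\ul{\lambda})\big)=1$, not $d_L+1$ (cf.\ Lemma~\ref{lastextensions}(3) and its proof). Your heuristic that a smooth object cannot extend a non-smooth Orlik--Strauch piece is exactly backwards in this situation: the relevant $\ext^1$ is detected by a degree-shift in the Kostant/BGG computation of $\hH_\bullet(\on^{\lrr},-)$, which produces a nonzero $E_2^{0,1}$ term rather than an $E_2^{1,0}$ term.

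Your dimension bookkeeping is also off: $\homo_\infty(L^\times,E)$ is $1$-dimensional (any locally constant additive character kills an open subgroup of $\cO_L^\times$, hence all of $\cO_L^\times$), while the $d_L$ locally $\sigma$-analytic characters $\psi_{\sigma,L}$ are \emph{not} smooth; see Section~\ref{comphinftyhana}. So the decomposition ``smooth $\ext^1$ gives $d_L$ and the valuation adds $1$'' is inverted.

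The paper's argument does not attempt to isolate $\st^\infty$ inside $\st^{\ana}$. Instead it resolves $\st_{(r,k)}^{\ana}(\pi,\ul{\lambda})$ by the locally analytic Tits complex of Proposition~\ref{analyticexact}, obtaining a spectral sequence whose $E_1$ terms are $\ext^r_{G,\omega^{\lrr}_\pi\chi_{\ul{\lambda}}}\big(v_{\op^{\lrr}_{ir}}^{\infty}(\pi,\ul{\lambda}),\BI^G_{\op^{\lrr}_K}(\pi,\ul{\lambda})\big)$. These are computed in Corollary~\ref{coranalyticExt2} in terms of the groups $\cEo^j_K=\ext^j_{\bL^{\lrr}_K(L),\omega^{\lrr}_\pi}(\pi_K,\pi_K)$, which for $K\in\{\Delta_n(k),\Delta_{k,i}\}$ are identified via Lemma~\ref{EXTlemmaPre} with $\hH^j_{\ana}(\bL^{\lrr}_K(L)/Z_n,E)$. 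The key input is then the Lie-algebra vanishing $\hH^i_{\ana}(G/Z_n,E)\cong\hH^i(\usl_{n,\Sigma_L},E)=0$ for $i=1,2$, which collapses the spectral sequence to a single term $\homo(\bZ^{\lrr}_{\Delta_{k,i}}(L)/Z_n,E)\cong\homo(L^\times,E)$. The explicit map (\ref{thmintro}) is then the injection $\varrho^\sharp_{\{ir\},\emptyset}$ of Proposition~\ref{analyticExt3.3}, shown to be an isomorphism by the dimension count just obtained.
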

	See Section \ref{smanaextgps} for a precise definition of the $\ext^1_G$-group on locally $\bQ_p$-analytic representations.\;Moreover, for $\psi\in\homo(L^{\times}, E)$, we will give an explicit construction of the locally $\BQ_p$-analytic representation
	$\widetilde{{\Sigma}}_i^{\lrr}(\pi,\ul{\lambda}, \psi)$, which is an extension of $v_{\op^{\lrr}_{ir}}^{\infty}(\pi,\ul{\lambda})$ by $\st_{(r,k)}^{\ana}(\pi,\ul{\lambda})$  attached to $\psi$ via (\ref{thmintro}).\;
	
	For $ir\in {\Delta_n(k)}$, we next find a subrepresentation $\Sigma_{i}^{\lrr}(\pi,\ul{\lambda})$ of $\st_{(r,k)}^{\ana}(\pi,\ul{\lambda})$ in  Section \ref{anaext2}, such that we have a natural isomorphism (see Proposition \ref{prop: sigmaan}) of $E$-vector spaces
	\begin{equation}\label{Intro:equ: BrLi}
		\homo(L^\times,E)\xrightarrow{\sim}  \ext^1_G\big(v_{\op^{\lrr}_{{ir}}}^{\infty}(\pi,\ul{\lambda}), \Sigma_{i}^{\lrr}(\pi,\ul{\lambda})\big).
	\end{equation}
	The subrepresentation $\Sigma_{i}^{\lrr}(\pi,\ul{\lambda})$ has a simpler and more clear structure than $\st_{(r,k)}^{\ana}(\pi,\ul{\lambda})$.\;
	We show that the socle of  $\Sigma_{i}^{\lrr}(\pi,\ul{\lambda})$ is $\st_{(r,k)}^{\infty}(\pi,\ul{\lambda})$ (the author does not know if the socle of $\st_{(r,k)}^{\ana}(\pi,\ul{\lambda})$ is $\st_{(r,k)}^{\infty}(\pi,\ul{\lambda})$,\;which is actually true for $(r,k)=(2,2)$).\;The proof needs the very strongly admissibility of locally $\BQ_p$-algebraic parabolic generalized Steinberg representation,\;which cannot be deduced directly from the strategy of the proof of \cite[Proposition 2.23]{2019DINGSimple}.\;We prove it by using global methods (more precisely, the Emerton's completed cohomology for the Shimura variety associated to certain division algebra).\;For any $\psi \in \homo(L^{\times}, E)$, we denote by $\Sigma_i^{\lrr}(\pi,\ul{\lambda}, \psi)$ the extension of $v_{\op^{\lrr}_{{I}}}^{\infty}(\pi,\ul{\lambda})$ by $\Sigma_i^{\lrr}(\pi,\ul{\lambda})$ attached to $\psi$ via (\ref{Intro:equ: BrLi}), which  is a subrepresentation of $\widetilde{\Sigma}_i^{\lrr}(\pi,\ul{\lambda},\psi)$.\;Moreover, we can show that $\widetilde{{\Sigma}}_i^{\lrr}(\pi,\ul{\lambda}, \psi)$ actually comes from $\Sigma_i^{\lrr}(\pi,\ul{\lambda}, \psi)$ by pushing forward along the injection $\Sigma_{i}^{\lrr}(\pi,\ul{\lambda})\hookrightarrow\st_{(r,k)}^{\ana}(\pi,\ul{\lambda})$.\;In Remark \ref{paraboliclINV}, We describe  how this theorem is related with the \textbf{parabolic simple} $\sL$\textbf{-invariants} in $p$-adic local Langlands correspondence.\;
	
	\subsection*{Sketch of the proof of Theorem \ref{intromain}}
	The proof follows along the route of \cite[Section 2.2]{2019DINGSimple}.\;To complete the computations, we need some technical calculations.\;We briefly explain the basic strategy and point out the difficulties.\;
	
	For any subset $I\subseteq {\Delta_n(k)}$, it follows from  \cite[Proposition 2.10]{av1980induced2} that the smooth parabolic induction $$\Big(\mathrm{Ind}_{\op^{\lrr}(L)\cap\bL^{\lrr}_I(L)}^{\bL^{\lrr}_I(L)}\delta_{\op^{\lrr}(L)}^{1/2}\Delta_{[r-1,0]}(\pi)\Big)^\infty$$
	admits a unique irreducible subrepresentation $\pi_I$, which is a smooth representation of the Levi subgroup $\bL^{\lrr}_I(L)$ over $E$.\;In particular, when $r=1$ (i.e., the Borel case), we see that $\pi_I$ is just the trivial representation of $\bL_I^{\langle 1\rangle}(L)$ over $E$.\;
	
	As a preliminary, we compute smooth extension groups between
	$\{v^{\infty}_{\op^{\lrr}_{I}}(\pi)\}_{I\subseteq {\Delta_n(k)}}$.\;See Appendix \ref{presmoothext} for a collection of such results.\;The locally $\BQ_p$-analytic (resp., locally $\BQ_p$-algebraic) Tits complexes (see Section \ref{Titcomp}) show that locally $\BQ_p$-analytic parabolic Steinberg representation is derived equivalent to a complex of (direct sum of) parabolically induced locally $\BQ_p$-analytic (resp., locally $\BQ_p$-algebraic) principal series.\;We reduce our problem (i.e., the computation of the extension group (\ref{thmintro})) to the computation of some extension groups (see Lemma \ref{analyticExt1analyticExtC1}).\;Then by using some spectral sequences, Bruhat filtration (see Section \ref{Brufil}), and d\'{e}vissage arguments, the computations of extension groups (\ref{analyticExt1}) and (\ref{analyticExtC1}) are reduced to calculations of 
	$\ext^i_{\bL^{\lrr}_I(L)}(\pi_I,\pi_I)$ and $\ext^i_{\bL^{\lrr}_I(L),Z_n=\eta}(\pi_I,\pi_I)$, where $\ext^i_{\bL^{\lrr}_I(L),Z_n=\eta}(\pi_I,\pi_I)$ consists of extensions on which the center $Z_n$ of $G$ acts via certain character $\eta$.\;
	
	In the Borel case, we have explicit expressions
	\[\ext^i_{\bL_I^{\langle 1\rangle}(L)}(1,1)\cong\hH^i_{\ana}(\bL_{I}^{\langle 1\rangle}(L),E)  (\text{resp.\;}\ext^1_{\bL_I^{\langle 1\rangle}(L),Z_n=1}(1,1)\cong\hH^i_{\ana}(\bL_{I}^{\langle 1\rangle}(L)/Z_n,E)).\]
	The reason that $r>1$ causes technical issues is that we cannot compute the (or determine the dimensions of) extension groups $\ext^i_{\bL^{\lrr}_I(L)}(\pi_I,\pi_I)$ and $\ext^i_{\bL^{\lrr}_I(L),Z=\eta}(\pi_I,\pi_I)$ explicitly, since the cuspidal smooth representation $\pi$ appears in a (parabolic) Zelevinsky-segment.\;Thus we cannot evaluate the right dimensions  of $\ext^1$-groups directly as in the discussion of \cite[Corollary 2.16,Theorem 2.19]{2019DINGSimple}.\;But in Borel case, $\pi$ is the trivial representation, all these become trivial.\;We still are able to overcome this obstacle.\;
	
	Firstly, the theory of types of Bushnell and Kutzko (more precisely, the semisimple Bushnell-Kutzko types for $\GLN_{n}$, which classify the cuspidal smooth representations of $\GLN_{n}$ over $\overline{\bQ}_p$ or complex field $\BC$) will give a way to compute smooth extension groups $\ext^{i,\infty}_{\bL^{\lrr}_I(L)}(\pi_I,\pi_I)$,  $\ext^{i,\infty}_{\bL^{\lrr}_I(L),Z=\eta}(\pi_I,\pi_I)$ explicitly (see Appendix, Lemma \ref{SmoothEXTlemmaPre}).\;Secondly, we point out that in general the locally analytic extension groups $\ext^i_{\bL^{\lrr}_I(L)}(\pi_I,\pi_I)$ and $\ext^i_{\bL^{\lrr}_I(L),Z=\eta}(\pi_I,\pi_I)$ are much harder to calculate than the smooth case.\;But we still have canonical morphisms $$\ext^1_{\bL_I^{\langle 1\rangle}(L)}(1,1)\rightarrow \ext^1_{\bL^{\lrr}_I(L)}(\pi_I,\pi_I), \ext^1_{\bL_I^{\langle 1\rangle}(L),Z_n=1}(1,1)\rightarrow \ext^1_{\bL^{\lrr}_I(L),Z=\eta}(\pi_I,\pi_I).\;$$The author does not know whether they are isomorphisms in general.\;In this paper, we compute explicitly the extension groups $\ext^i_{\bL^{\lrr}_I(L)}(\pi_I,\pi_I)$ and $\ext^i_{\bL^{\lrr}_I(L),Z_n=\eta}(\pi_I,\pi_I)$ for  $i\in\{0,1\}$ and $I={\Delta_n(k)},\Delta_{n}(k)\backslash\{jr\}$ (see Lemma \ref{SmoothEXTlemmaPre} and Lemma \ref{EXTlemmaPre}).\;In these special cases, some spectral sequences degenerate (see Lemme \ref{smoothtoanalytic} and the proof of Lemma \ref{EXTlemmaPre}, these spectral sequences link locally analytic extension groups to smooth extension groups.\;), so that the above morphisms are isomorphisms.\;
	
	Based on these results, we define and show the injectivity of the following morphism
	\begin{equation}
		\varrho_{\{ir\},\emptyset}^{\sharp}: \homo\big(\bZ^{\lrr}(L),E\big)/\homo\big(\bZ^{\lrr}_{ir}(L),E\big)\longrightarrow \ext^{1}_{G}\big(v_{\op^{\lrr}_{ir}}^G(\pi,\ul{\lambda}),\st_{(r,k)}^{\ana}(\pi,\ul{\lambda})\big).\;
	\end{equation}
	This is  the heart of Section \ref{anaext1}.\;We next show that $\varrho_{\{ir\},\emptyset}^{\sharp}$ is actually an isomorphism
	by comparing dimensions (this strategy is similar to the discussion of \cite[Corollary 2.16,Theorem 2.19]{2019DINGSimple}).\; For general $J\subset I$ with $|I|=|J|+1$, the author does not know whether the above discussion is true for $\ext^{1}_{G}\big(v_{\op^{\lrr}_{I}}^G(\pi,\ul{\lambda}),v_{\op^{\lrr}_{J}}^G(\pi,\ul{\lambda})\big)$.\;Along the construction of the map $\varrho_{\{ir\},\emptyset}^{\sharp}$, we will obtain an explicit description of locally $\BQ_p$-analytic representation $\widetilde{{\Sigma}}_i^{\lrr}(\pi,\ul{\lambda}, \psi)$ (i.e., the extension of $v_{\op^{\lrr}_{ir}}^{\infty}(\pi,\ul{\lambda})$ by $\st_{(r,k)}^{\ana}(\pi,\ul{\lambda})$  attached to $\psi$ via (\ref{thmintro})).\;
	
	
	\begin{rmk}Applying our results to $r=1$ and $\pi=|\det|_L^{\frac{1-n}{2}}$, our locally $\BQ_p$-algebraic (resp., locally $\BQ_p$-analytic) generalized parabolic Steinberg representations $\{v^{\infty}_{\op^{\lrr}_{I}}(\pi)\}_{I\subseteq {\Delta_n(k)}}$ (resp., $\{v^{\ana}_{\op^{\lrr}_{I}}(\pi)\}_{I\subseteq {\Delta_n(k)}}$) become the ``classcial" locally $\BQ_p$-algebraic generalized (Borel) Steinberg representations (resp., locally $\BQ_p$-analytic generalized (Borel) Steinberg representations).\;In this case, our  results are collapsed to Ding's results  \cite[Section 2]{2019DINGSimple}.\;For general reductive group, it may  happen that no Borel subgroup exists ($\bG$ may be not quasi-spilt).\;Motivated from the results of Lennart Gehramnn \cite{gehrmann2019automorphic}, we expect our (parabolic version) results can be generalized to connected reductive group $\bG(L)$ (no longer split or quasi-split), this is the content of our future work.\;
	\end{rmk}
	
	\begin{rmk}\label{paraboliclINV}\textbf{(Parabolic simple $\sL$-invariants)}
It would be more natural to put our locally $\bQ_p$-analytic representations 
in the framework of $p$-adic Langlands program,\;which appear naturally in the automorphic side of $p$-adic Langlands program.\;In \cite{2022parabolivinv},\;the results in this paper are called \textbf{(parabolic) Breuil's simple $\sL$-invariants}.\;In the work of author \cite{2022parabolivinv}, for certain potentially semistable  Galois representation $\rho_L$ of $\gal_L$ which admits a \textit{non-critical special parabolization}, we will define \textbf{parabolic Fontaine-Mazur simple $\sL$-invariants} $\sL(\rho_L)$ of $\rho_L$ by using cohomology of $(\varphi,\Gamma)$-modules over Robba ring,\;which  encode some Galois information of an admissible Hodge filtration on some linear algebra data via Fontaine's theory.\;Then the locally $\bQ_p$-analytic representation $\Sigma_i^{\lrr}(\alpha,\pi,\ul{\lambda},V)$ (see the argument after (\ref{Intro:equ: BrLi})) will carry the Galois information$\sL(\rho_L)$.\;Moreover, we use Bernstein eigenvarieties (developed by Breuil-Ding \cite{Ding2021}) to establish some local-global compatibility results,i.e.,\;the correspondence between these two parabolic simple $\cL$-invariants can be realized in the $p$-adic completed cohomology  of some Shimura varieties (especially, in the space of $p$-adic automorphic forms on certain definite unitary group).\;The above parabolic Fontaine-Mazur simple $\cL$-invariants are extension parameters at simple roots.\;The next goal is to explore generalizations of Breuil's  $\cL$-invariants that conjecturally correspond to higher Fontaine-Mazur $\cL$-invariants (or parabolic Fontaine-Mazur $\cL$-invariants) at non-simple  roots.\;See  \cite{breuil2019ext1} (the locally analytic $\ext^1$-conjecture) for general picture.\;See \cite{schraen2011GL3},\;\cite{HigherLinvariantsGL3Qp} and \cite{Dilogarithm}) for partial development.\;
	\end{rmk}

	\begin{rmk}A modification of the recent work of Zicheng Qian \cite{wholeLINV}  may give a  way to compute the higher $\ext$-groups $\ext^{i}_{G}\big(v_{\op^{\lrr}_{{I}}}^{\ana}(\pi,\ul{\lambda}),v^{\ana}_{\op^{\lrr}_{{J}}}(\pi,\ul{\lambda})\big)$ for general $J\subseteq I\subseteq{\Delta_n(k)}$.\;In this paper, the author prove certain decomposition for the space of locally analytic distributions to compute the $N$-homology and gives a locally analytic version of Bernstein-Zelevinsky geometric lemma.\;Inspired by these technical ingredients, we expect that this $\ext$-group only depends on $|J|-|I|$ and plays a crucial role in finding the counterpart of the parabolic Higher Fontaine-Mazur $\cL$-invariants in the locally analytic representations of $G$.\;Since we only concern parabolic Fontaine-Mazur simple $\cL$-invariants, Theorem \ref{intromain} (or Theorem \ref{analyticExt3}) is enough for our application.\;
	\end{rmk}
	
\subsubsection*{Acknowledgements}

This is part of the author's PhD thesis in the School of Mathematical Sciences of Peking University.\;I would like to express my sincere gratitude to my advisor Yiwen Ding for introducing me to this subject, for many helpful discussion and suggestions, and for his comments on earlier draft of this paper.\;This work is funded by the Natural Science Foundation of China [grant numbers 8200905010, 8200800065].

	\section{Preliminaries}
	
	\subsection{General notation}\label{notationger}
	Let $L$ (resp.\;$E$) be a finite extension of $\BQ_p$ with $\co_L$ (resp.\;$\co_E$) as its ring of integers and $\varpi_L$ (resp.\;$\varpi_E$) a uniformizer.\;Suppose $E$ is sufficiently large, containing all the embeddings of $L$ in $\overline{\BQ}_p$.\;Put $\Sigma_L:=\{\sigma: L\hookrightarrow \overline{\BQ}_p\} =\{\sigma: L\hookrightarrow E\}$,\;the set of all the embeddings of $L$ in $\overline{\BQ}_p$ (equivalently, in $E$).\;Let $k_E$ be the residue field of $E$.\;Let $\valua_L(\cdot)$ (resp.\;$\valua_p$) be the $p$-adic valuation on $\overline{\BQ_p}$ normalized by sending uniformizers of $\co_L$ (resp.\;of $\BZ_p$) to $1$.\;Let $d_L:=[L:\BQ_p]=|\Sigma_L|$, let $e_L:=\valua_L(\varpi_L)$ and let $f_L:=d_L/e_L$.\;We have $q_L:=p^{f_L}=|\co_L/\varpi_L|$.
	

	
	\subsection*{General setting on Reductive groups and Lie algebras}
	For a Lie algebra $\fh$ over $L$, and $\sigma\in \Sigma_L$, let $\fh_{\sigma}:=\fh\otimes_{L,\sigma} E$ (which is  a Lie algebra over $E$).\;For $J\subseteq \Sigma_L$, let $\fh_J:=\prod_{\sigma\in J} \fg_{\sigma}$.\;In particular, we have $\fh_{\Sigma_L}\cong \fh\otimes_{\BQ_p} E$.\;The notation $U(\fg)$ refers to the universal enveloping algebra of $\fg$ over $L$.\;
	Let $\mathbf{H}$ be an algebraic group over $L$, and $\fh$ be its Lie algebra over $L$.\;Let $\mathrm{Res}_{L/\BQ_p}\mathbf{H}$ be the scalar restriction of $\mathbf{H}$ from $L$ to $\BQ_p$.\;The Lie algebra of $\mathrm{Res}_{L/\BQ_p}\mathbf{H}$ over $\BQ_p$ can also be identified with $\fh$, where $\fh$ is regarded as a Lie algebra over $\BQ_p$.\;We write $\mathbf{H}_{/E}=(\mathrm{Res}_{L/\BQ_p}\mathbf{H})\times_{\BQ_p}E$, which is an algebraic group over $E$.\;The Lie algebras of $\mathbf{H}_{/E}$ over $E$ is $\fh_{\Sigma_L}$.\;Let $\mathbf{G}$ be a split connected reductive group over $L$, and $\fg$ be its Lie algebra over $L$.\;We fix a maximal split torus $\bT$ and write $\bB$ for a choice of Borel subgroup containing $\bT$.\;We use $\bP$ for the parabolic subgroup of $\mathbf{G}$ containing $\bB$, and let $\bL_{\bP}$ be the Levi subgroup of $\bP$ containing $\bT$.\;Let $\bN_{\bP}$ be the unipotent radical of $\bP$.\;Then $\bP$ admits a Levi decomposition $\bP=\bL_{\bP}\bN_{\bP}$.\;The Lie algebras (over $L$) of subgroups $\bT,\bB,\bP,\bL_{\bP},\bN_{\bP}$  are denoted by $\ft,\fb,\fp,\fl_{\bP},\fn_{\bP}$ respectively.\;
	
	Note that the group $\mathbf{G}_{/E}$ is also a split-connected reductive group over $E$ with maximal split torus $\bT_{/E}$, Borel subgroup $\bB_{/E}$, parabolic subgroup $\bP_{/E}$, and ${\bL_\bP}_{/E}$ is also the Levi subgroup of ${\bL_\bP}_{/E}$ containing $\bT_{/E}$.\;The Lie algebras (over $E$) of reductive groups $\mathbf{G}_{/E}$, $\bP_{/E}$, and ${\bL_\bP}_{/E}$ are given by $\fg_{\Sigma_L}$, $\fp_{\Sigma_L}$ and $\fl_{\bP,\Sigma_L}$.\;
	
	\subsection*{General linear group \texorpdfstring{$\GLN_n$-I}{Lg}}
	Let $\GLN_n$ be the general linear group over $L$.\;Let $\Delta_n$ be the set of simple roots of $\GLN_n$ (with respect to the Borel subgroup $\bB$ of upper triangular matrices).\;We identify the set $\Delta_n$ with $\{1,\cdots, n-1\}$ such that $i\in \{1,\cdots, n-1\}$ corresponds to the simple root $\alpha_i: (x_1,\cdots, x_n)\in \ft \mapsto x_i-x_{i+1}$, where $\ft$ denotes the $L$-Lie algebra of the torus $\bT$ of diagonal matrices.\;
	
	For a subset $I\subset \Delta_n$, we define an ordered partition $\underline{n}_I=(n_1,\cdots,n_{|\Delta_n\backslash I|+1})$ of $n$ with $\Delta_n\backslash I=\{n_1,n_1+n_2,\cdots,n_1+\cdots+n_{|\Delta_n\backslash I|+1}\}$.\;Let $\mathbf{P}_I$ be the parabolic subgroup of
	$\GLN_n$ containing $\bB$ such that $\Delta_n \backslash  I$ are precisely the simple roots of the unipotent radical $\mathbf{N}_I$ of $\mathbf{P}_I$.\;Denote by $\mathbf{L}_I$ the unique Levi subgroup of $\mathbf{P}_I$ containing $\bT$, then $\mathbf{L}_I$ is isomorphic to 
	$\GLN_{n_1}\times \cdots \GLN_{n_{|\Delta_n\backslash I|+1}}$, and $I$ is equal to the set of simple roots of $\mathbf{L}_I$.\;Let $\Phi_I^+\subset \Phi_I$ be the set of positive roots and roots of $\bL_I$ with respect to $\bT\subset \bL_I\cap \bB$.\;In particular, we have $\mathbf{P}_{\Delta_n}=\GLN_n$, $\mathbf{P}_{\emptyset}=\mathbf{B}$.\;Let $\overline{\mathbf{P}}_I$ be the parabolic subgroup opposite to $\mathbf{P}_I$.\;Let $\mathbf{N}_I$ (resp.\;$\overline{\mathbf{N}}_I$) be the nilpotent radical of $\mathbf{P}_I$ (resp.\;$\overline{\mathbf{P}}_I$), let $\mathbf{Z}_I$ be the center of $\mathbf{L}_I$ and let $\mathbf{D}_I$ be the derived subgroup of $\mathbf{L}_I$.\;Then we have a Levi decomposition $\mathbf{P}_I=\mathbf{L}_I\mathbf{N}_I$ (resp.\;$\overline{\mathbf{P}}_I=\mathbf{L}_I\overline{\mathbf{N}}_I$).\;Let $\bZ_n$ be the center of $\GLN_n$.\;Let $\fg$, $\fp_I$, $\fn_I$, $\fl_I$, $\overline{\fl}_I$, $\overline{\fp}_I$, $\overline{\fn}_I$, $\fd_I$, $\fz_I$, $\overline{\fz}_I$ and $\ft$ be the $L$-Lie algebras of $\GLN_n$, $\mathbf{P}_I$, $\mathbf{N}_I$, $\mathbf{L}_I$, $\mathbf{L}_I/\bZ_n$, $\overline{\mathbf{P}}_I$, $\overline{\mathbf{N}}_I$, $\bD_I$, $\bZ_{I}$, $\bZ_{I}/\bZ_n$ and $\bT$ respectively.\;
	
	Let $\ul{\lambda}:=(\lambda_{1,\sigma}, \cdots, \lambda_{n,\sigma})_{\sigma\in \Sigma_L}$ be a weight of $\ft_{\Sigma_L}$.\;For $I\subseteq \Delta_n=\{1,\cdots, n-1\}$, we call that $\ul{\lambda}$ is $I$-dominant with respect to $\bB_{/E}$ (resp.\;with respect to $\overline{\bB}_{/E}$) if $\lambda_{i,\sigma}\geq \lambda_{i+1,\sigma}$ \big(resp.\;$\lambda_{i,\sigma} \leq \lambda_{i+1,\sigma}$\big) for all $i\in I$ and $\sigma\in \Sigma_L$.\;We denote by $X_{I}^+$ (resp.\;$X_{I}^-$) the set of $I$-dominant integral weights  of $\ft_{\Sigma_L}$ with respect to $\bB_{/E}$ (resp.\;with respect to $\overline{\bB}_{/E}$).\;Note that $\ul{\lambda}\in X_{I}^+$ if and only if $-\ul{\lambda}\in X_{I}^-$.\;For $\ul{\lambda}\in X_I^+$, there exists a unique irreducible algebraic representation, denoted by $L(\ul{\lambda})_I$, of $(\bL_I)_{/E}$ with the highest weight $\ul{\lambda}$ with respect to $(\bL_I)_{/E}\cap \bB_{/E}$.\;We put $\overline{L}(-\ul{\lambda})_I:=(L(\ul{\lambda})_I)^\vee$, it is the irreducible algebraic representationof $(\bL_I)_{/E}$ with highest weight $-\ul{\lambda}$ with respect to $(\bL_I)_{/E}\cap \ob_{/E}$.\;Denote $\chi_{\ul{\lambda}}:=L(\ul{\lambda})_{\emptyset}$.\;If $\ul{\lambda}\in X_{\Delta_n}^+$, let $L(\ul{\lambda}):=L(\ul{\lambda})_{\Delta_n}$.\;Note that if $I\subseteq J\subseteq \Delta_n$, and $\ul{\lambda}\in X_J^+$, then we have $L(\ul{\lambda})_J^{\bN_I(L) \cap \bL_J(L)} \cong L(\ul{\lambda})_I$ (see \cite[Page 7997]{2019DINGSimple}).\;
	
	We view the $L$-points of the above groups as locally $\BQ_p$-analytic groups.\;Throughout this paper, we put $G:=\GLN_n(L)$.\;A $\BQ_p$-algebraic representation of $G$ over $E$ is the induced action of $G\subset \GLN_{n/E}(E)$ on an algebraic representation of $\GLN_{n/E}$.\;By abuse of notatio, we will use the same notation to denote $\BQ_p$-algebraic representations induced from an algebraic representation of $\GLN_{n/E}$.\;
	
	Let $\ul{\lambda}$ be an integral weight, denoted by $M(\ul{\lambda}):=\text{U}(\fg_{\Sigma_L})\otimes_{\text{U}(\fb_{\Sigma_L})} \ul{\lambda}$ (resp., $\overline{M}(\ul{\lambda}):=\text{U}(\fg_{\Sigma_L})\otimes_{\text{U}(\overline{\fb}_{\Sigma_L})} \ul{\lambda}$), the corresponding Verma module with respect to $\fb_{\Sigma_L}$ (resp., $\overline{\fb}_{\Sigma_L}$).\;Let $L(\ul{\lambda})$ (resp.,  $\overline{L}(\ul{\lambda})$) be the unique simple quotient of $M(\ul{\lambda})$ (resp., of $\overline{M}(\ul{\lambda})$).\;Actually, when $\ul{\lambda}\in X_{\Delta_n}^+$ (i.e.,  $-\ul{\lambda}\in X_{\Delta_n}^-$), $L(\ul{\lambda})$ is finite-dimensional and  isomorphic to the algebraic representation $L(\ul{\lambda})$ introduced above (hence there is no conflict of notation).\;We have $\overline{L}(-\ul{\lambda})\cong L(\ul{\lambda})^{\vee}$.\;In general, for any subset $I$ of $\Delta_n$, and $\ul{\lambda}\in X_{I}^+$, we define the generalized parabolic Verma module $M_I(\ul{\lambda}):=\text{U}(\fg_{\Sigma_L})\otimes_{\text{U}(\fp_{I,\Sigma_L})} L(\ul{\lambda})_I$ (resp.\;$\overline{M}_I(-\ul{\lambda}):=\text{U}(\fg_{\Sigma_L})\otimes_{\text{U}(\overline{\fp}_{I,\Sigma_L})} \overline{L}(-\ul{\lambda})_I)$ with respect to $\fp_{I,\Sigma_L}$ (resp.\;$\overline{\fp}_{I,\Sigma_L}$).\;Let $\rho$ be the half of the sum of positive roots of $\GLN_n$.\;We have a dot action $\cdot$ of $\sW_{n,\Sigma_L}$ on the weight $\ul{\lambda}$ given by $w\cdot\ul{\lambda}=w(\ul{\lambda}+\rho)-\rho$.\;
	Denote by $\sW_n$ ($\cong S_n$) the Weyl group of $\GLN_n$, and denote by $s_i$ the simple reflection corresponding to $i\in \Delta_n$.\;For any $I\subset \Delta_n$, define $\sW_{I}$ as the subgroup of $\sW_{n}$ generated by simple reflections $s_i$ with $i\in I$.\;The Weyl group of $\mathbf{G}_{/E}$ is $\sW_{n,\Sigma_L}:=\Pi_{\sigma\in \Sigma_L}\sW_{n,\sigma}\cong S_n^{d_L}$, where $\sW_{n,\sigma}\cong \sW_n$ is the $\sigma$-th factor of $\sW_{n,\Sigma_L}$.\;For $i\in \Delta_n$ and $\sigma\in \Sigma_L$, let $s_{i,\sigma}\in \sW_{n,\sigma}$ be the simple reflection corresponding to $i\in \Delta_n$.\;
	
	For any $I\subset \Delta_n$ , define $\sW_{I,\sigma}$ to be the $\sigma$-copy of $\sW_{I}$, i.e., to be the subgroup of $\sW_{n,\sigma}$ generated by simple reflections $s_{i,\sigma}$ with $i\in I$.\;For $S\subseteq \Sigma_L$ and  $I\subset {\Delta_n}$, we put $\sW_{I,S}:=\prod_{\sigma\in S} \sW_{I,\sigma}$.\;If $\ul{i}_{S}:=(i_{\sigma})_{\sigma\in S}\in {\Delta_n^{|S|}}$, we put $s_{\ul{i}_S}:=\prod_{\sigma\in S}s_{i_\sigma,\sigma}$.\;Let $I,J$ be subsets of $\Delta_n$.\;Recall that $\sW_I\backslash \sW_n/\sW_J$ has a canonical set of representatives, which we will denote by $[\sW_I\backslash \sW_n/\sW_J]:=[\sW_I\backslash \sW_n/\sW_J]^{\min}$, given by taking in each double coset the elements of minimal length.\;When $J=\emptyset$, we put $^{I}\sW_{n}=[\sW_{I}\backslash\sW_{n}/\sW_\emptyset]^{\min}=[\sW_{I}\backslash\sW_{n}]^{\min}$.\;For $S\subseteq \Sigma_L$ and  $I\subset {\Delta_n}$, we put $^{I}\sW_{n,S}:=\prod_{\sigma\in S}{^{I}\sW_{n,\sigma}}$.\;Moreover, we have relative Bruhat decompositions
	\begin{equation}
		G=\coprod_{w\in [\sW_I\backslash \sW_n/\sW_J]}\bP_{{I}}(L)w\bP_{{J}}(L)=\coprod_{w\in [\sW_I\backslash \sW_n/\sW_J]}\op_{{I}}(L)w\op_{{J}}(L).
	\end{equation}

	\subsection*{General linear group \texorpdfstring{$\GLN_n$-II}{Lg}}
	In this paper, we will study some locally $\BQ_p$-algebraic and locally $\BQ_p$-analytic representations given by a Zelevinsky-segment.\;We list the basic notation in the theory of the Zelevinsky-segment.\;
	
	Let $k,r$ be two integers such that $n=kr$.\;We put ${\Delta_n(k)}:=\{r,2r,\cdots,(k-1)r\}\subseteq \Delta_{n}$ and  $\Delta_n^k:=\Delta_{n}\backslash{\Delta_n(k)}$.\;For a subset $I\subset {\Delta_n(k)}$, we put
	\begin{equation}\label{lrropbL}
		\begin{aligned}
			&\bL^{\lrr}_I:=\bL_{\Delta_n^k\cup I}, \bP^{\lrr}_I:=\bP_{\Delta_n^k\cup I}, \op^{\lrr}_I:=\op_{\Delta_n^k\cup I},\\
			&\bN^{\lrr}_I:=\bN_{\Delta_n^k\cup I}, \on^{\lrr}_I:=\on_{\Delta_n^k\cup I}, \bZ^{\lrr}_I:=\bZ_{\Delta_n^k\cup I}, \bD^{\lrr}_I:=\bD_{\Delta_n^k\cup I}.\\
		\end{aligned}
	\end{equation}
	Let $\fl^{\lrr}_I$, $\overline{\fl}^{\lrr}_I$, $\fp^{\lrr}_I$, $\fn^{\lrr}_I$, $\overline{\fp}^{\lrr}_I$, $\overline{\fn}^{\lrr}_I$, $\fd^{\lrr}_I$, $\fz^{\lrr}_I$, $\overline{\fz}^{\lrr}_I$ be the $L$-Lie algebras of $\bL^{\lrr}_I$, $\bL^{\lrr}_I/\bZ_n$, $\bP^{\lrr}_I$, $\bN^{\lrr}_I$,$\op^{\lrr}_I$, $\on^{\lrr}_I$, $\bD^{\lrr}_I$ and $\bZ^{\lrr}_I$, $\bZ^{\lrr}_I/\bZ_n$ respectively.\;Furthermore, for $\ul{\lambda}\in X_{\Delta_n^k\cup I}^+$, we put
	\begin{equation}\label{lrrver}
		\begin{aligned}
			&L^{\lrr}(\ul{\lambda})_I:=L(\ul{\lambda})_{\Delta_n^k\cup I}, \overline{L}^{\lrr}(-\ul{\lambda})_I:=\overline{L}(-\ul{\lambda})_{\Delta_n^k\cup I},\hspace{210pt}\\
			&{M}_I^{\lrr}(\ul{\lambda}):={M}_{\Delta_n^k\cup I}(\ul{\lambda}), \overline{M}_I^{\lrr}(-\ul{\lambda}):=\overline{{M}}_{\Delta_n^k\cup I}(-\ul{\lambda}).\;
		\end{aligned}
	\end{equation}
	Similarly, for a subset $S\subset \Sigma_L$, we put 
	\begin{equation}\label{lrrWELY}
		\begin{aligned}
			&\sW^{\lrr}_{I}:=\sW_{\Delta_n^k\cup I}, \sW^{\lrr}_{I,S}:=\sW_{\Delta_n^k\cup I,S}, ^{I}\sW_{n,S}^{\lrr}:=^{\Delta_n^k\cup I}\sW_{n,S}\hspace{110pt}.\;
		\end{aligned}
	\end{equation}
	When $I=\emptyset$, we omit the subscripts $I$ in (\ref{lrropbL}),  (\ref{lrrver}) and (\ref{lrrWELY}).\;For example, we have
	\[\bL^{\lrr}:=\left(\begin{array}{cccc}
		\GLN_r & 0 & \cdots & 0 \\
		0 & \GLN_r & \cdots & 0 \\
		\vdots & \vdots & \ddots & 0 \\
		0 & 0 & 0 & \GLN_r \\
	\end{array}\right)\subseteq \op^{\lrr}:=\left(\begin{array}{cccc}
		\GLN_r & 0 & \cdots & 0 \\
		\ast & \GLN_r & \cdots & 0 \\
		\vdots & \vdots & \ddots & 0 \\
		\ast & \ast & \cdots & \GLN_r \\
	\end{array}\right)\]
	The parabolic subgroups of $\GLN_n$ containing the parabolic subgroup $\op^{\lrr}$ are  $\{\op^{\lrr}_I\}_{I\subseteq \Delta_n(k)}$.\;
	
	\subsection*{Characters}
	For a topological commutative group $M$, we use $\homo(M,E)$ (resp.\;$\homo_\infty(M,E)$) to denote the $E$-space of continuous (resp.\;locally constant) additive $E$-valued characters on $M$.\;If $M$ is a totally disconnected group, then the terminology "locally constant" is often replaced by smooth.\;If $M$ is a locally $L$-analytic group, denote by $\homo_\sigma(M,E)$ the $E$-vector space of locally $\sigma$-analytic characters on $M$ (i.e., the continuous characters which are locally $\sigma$-analytic $E$-valued functions on $M$).\;The local class field theory implies a bijection $\homo(L^\times,E)\cong \homo(\gal_L,E)$.\;Let $\alpha\in E^\times$, denote by $\unr(\alpha)$ the unramified character of $L^\times$ sending uniformizers to $\alpha$.\;
	
	\subsection*{Representation theory}
	Let $\mathbf{G}$ be a split connected reductive group over $L$, and $\bP$ a parabolic subgroup of $\bG$.\;
	Let $G,P,N_P,L_P$ be the $L$-points of $\bG,\bP,\bN_\bP,\bL_\bP$, respectively.\;Let $\overline{P}$ be the parabolic subgroup opposite to $P$.\;
	
	If $V$ is a continuous representation of $G$ over $E$, we denote by $V^{\ana}$ its locally $\BQ_p$-analytic vectors.\;If $V$ is
	locally $\BQ_p$-analytic representations of $G$, we denote by $V^{\mathrm{sm}}$ (resp.\;$V^{\mathrm{lalg}}$) the smooth (resp, locally $\BQ_p$-algebraic) subrepresentation of $V$ consisting of its smooth (locally $\BQ_p$-algebraic) vectors (see \cite{Emerton2007summary}, \cite{emerton2017locally} and \cite{schneider2002banach} and  for details).\;
	
	For $w\in \sW_n$, we  identify $w$ with the corresponding permutation matrix.\;If $H$ is a closed subgroup of $G$ and $(\rho,V)$ is an abstract representation of $H$ on an $E$-vector space $V$, we put $H^w:=w^{-1}Hw$ (resp., ${}^wH:=wHw^{-1}$), and $\rho^w(h):=\rho(whw^{-1})$.\;Then $(\rho^w,V)$ forms an abstract representation of $H^w$ on the same $E$-vector space $V$.
	
	Let $\pi_P$ be a continuous Banach representation of $P$ over $E$.\;We denote by
	\[(\mathrm{Ind}_{P}^{G}\pi_P)^{\cC^0}:=\{f:G\rightarrow \pi_P \text{ continuous}: f(pg)=pf(g),\forall p\in P\}\]
	the continuous parabolic induction of $G$.\;It becomes a continuous representation of $G$ over $E$ by endowing the left action of $G$ with the right translation on functions: $(gf)(g')=f(g'g)$.\;Likewise, let $\pi_P$ be a locally $\BQ_p$-analytic representation of $P$ on a locally convex $E$-vector space of compact type, denoted by
	\[(\mathrm{Ind}_{P}^{G}\pi_P)^{\BQ_p-\ana}:=\{f:G\rightarrow \pi_P \text{ locally $\BQ_p$-analytic}: f(pg)=pf(g),\forall p\in P\}\]
	the locally $\BQ_p$-analytic parabolic induction of $G$.\;It becomes a locally $\BQ_p$-analytic representation of $G$ on a locally convex $E$-vector space of compact type (see e.g., \cite[Rem.\;5.4]{kohlhaase2011cohomology}), by endowing the same left action of $G$.\;
	
	Let $\pi_P$ be a smooth representation of $P$ over $E$.\;We denote by
	\[i^G_P\pi_P:=(\mathrm{Ind}_{P}^{G}\pi_P)^{\infty}=\{f:G\rightarrow \pi_P \text{ smooth}: f(pg)=pf(g),\forall p\in P\}\]
	the (un-normalized) smooth parabolic induction.\;It becomes a smooth representation of $G$ over $E$ by endowing the left action of $G$ with the right translation on functions.\;Let $(\pi,V)$ be a smooth representation of $G$ over $E$.\;We denote by
	$r^G_PV=V/\langle \pi(n)v-v:n\in N_P, v\in V\rangle$ the Jacquet module of the smooth representation $(\pi,V)$ with respect to $P$.\;Then $r^G_PV$ becomes a smooth $L_P$-representations over $E$.\;Denote by $\delta_{P}$ the modulus character of $P$.\;Let $V$ (resp.\;$W$) be a smooth $G$-representation (resp.\;$L_P$-representation) over $E$, then we have 
	\begin{equation}\label{smoothadj1}
		\begin{aligned}
			\homo_G(V,i^G_PW)\xrightarrow{\sim}\homo_{L_P}(r_P^GV,W).\;
		\end{aligned}
	\end{equation}
	Moreover, if $V$ and $W$ are admissible, we have
	\begin{equation}\label{smoothadj2}
		\begin{aligned}
			\homo_G(i^G_{\overline{P}}W,V)\xrightarrow{\sim}\homo_{L_P}(W\otimes_E\delta_{P},r^G_PV).\;
		\end{aligned}
	\end{equation}
	
	Smooth parabolic induction is a special case of smooth compact induction.\;Let $H$ be a closed subgroup of $G$, and $\pi_H$  be a smooth representation of $H$ over $E$.\;Let
	\begin{equation}
		\begin{aligned}
			&c\text{-}i^G_H\pi_H:=(\mathrm{c}\text{-}\mathrm{Ind}_{H}^{G}\pi_H)^{\infty}\\
			&=\{f:G\rightarrow H \text{ smooth}: \text{the support of $f$ is compact modulo $H$}, f(pg)=pf(g),\forall p\in P\}		
		\end{aligned}
	\end{equation}
	be the smooth compact induction (or smooth induction with compact support).\;It becomes a smooth representation of $G$ over $E$ by endowing the left action of $G$ with the right translation on functions.\;
	
	\subsection{Preliminaries on locally \texorpdfstring{$\BQ_p$}{Lg}-analytic representations}\label{Basicsettingoflocanalyticrep}
	
	In this section, let $H$ be a strictly paracompact locally $L$-analytic group (where strictly paracompact means that any open covering of $H$ can be refined into a covering by pairwise disjoint open subsets).\;
	
	Let $V$ be a  Hausdorff convex $E$-vector space.\;Let $\cC^{\BQ_p-\ana}(H,V)$ be the locally convex $E$-vector spaces of locally $\BQ_p$-analytic $V$-valued functions on $H$ (\cite[Section 2, Page 447]{schneider2002locally}, see also \cite[Definintion 2.1.25]{emerton2017locally}).\;Let $\cD^{\BQ_p-\ana}(H,E)$ be the strongly dual space of $\cC^{\BQ_p-\ana}(H,E)$, which is called the locally convex $E$-algebra of $E$-valued distributions on $G$ (see \cite[Proposition 2.3]{schneider2002locally}).\;We have an embedding $E[G]\rightarrow \cD^{\BQ_p-\ana}(H,E)$ by sending $h\in H$ to the delta function $\delta_h:=[f\mapsto f(h)]$, with $f\in \cC^{\BQ_p-\ana}(H,V)$.\;By \cite[Lemma 3.1]{schneider2002locally}, $E[G]$ generates a dense subspace in $\cD^{\BQ_p-\ana}(H,E)$.\;Let $\fh$ be the $L$-Lie algebra of $H$, and let $U_E(\fh)=U(\fh)\otimes_LE$ be the universal enveloping algebra of $\fh$ over $E$.\;For any $X\in\fh$, $f\in \cC^{\BQ_p-\ana}(H,E)$, and $h\in H$, we define
	\begin{equation*}
		(X\cdot f)(h)=\frac{d}{dt} f(\exp(-tX)h )|_{t=0}.
	\end{equation*}
	Such an $\fh$-action extends $E$-linearly to an action of $U_E(\fh)$ on $\cC^{\BQ_p-\ana}(H,E)$, which induces an injection $U_E(\fh)\hookrightarrow \cD^{\BQ_p-\ana}(H,E)$.\;
	
	Let $\cC^{\infty}(H,E)$ denote the $E$-vector spaces of locally constant $E$-valued functions on $H$, and let $\cD^{\infty}(H,E)$ denote its dual.\;The map $h\mapsto \delta_h$ induces in fact an injection $E[H]\hookrightarrow \cD^{\infty}(H,E)$.\;Let $I(\fh)$ be the closed two-sided ideal of $\cD^{\BQ_p-\ana}(H,E)$ generated by $\fh$.\;One has $\cD^{\infty}(H,E)=\cD^{\BQ_p-\ana}(H,E)/I(\mathfrak{h})$ (see \cite[Section 2]{schneider200finite} for more detailed and more precise statements).\;
	
	We refer without comment to \cite[Section 3.6]{emerton2017locally} or \cite[Section 3]{schneider2002locally} for the definitions of locally $\BQ_p$-analytic representations of $H$ over locally convex $E$-vector spaces.\;Let $Z'$ be a locally $\BQ_p$-analytic closed subgroup of the center $Z_H$ of $H$.\;Let $\chi^{\infty}$ (resp.,\;$\chi$) be a fixed smooth character (resp.,\;locally $\BQ_p$-analytic character) of $Z'$.\;We first list the following categories.\;Let ${\textbf{Rep}^{\infty}_E(H)}$ (resp.\;${\textbf{Rep}_E^{\infty,\mathrm{ad}}(H)}$) be category of smooth (resp.\;admissible smooth) representations of $H$ over $E$ with morphisms being all $H$-equivalent linear maps.\;Let
	${\textbf{Rep}_{E,Z'=\chi^{\infty}}^{\infty}(H)}$ (resp.\;${\textbf{Rep}_{E,Z'=\chi^{\infty}}^{\infty,\mathrm{ad}}(H)}$) be the category of smooth (resp.\;admissible smooth) representations of $H$ over $E$ on which $Z'$ acts via the character $\chi^{\infty}$.\;The morphisms are all  $H$-equivariant linear maps.\;Denote by 
	${\textbf{Rep}_E(H)}$ (resp.\;${\textbf{Rep}_E^{\mathrm{ad}}(H)}$) the category of locally $\BQ_p$-analytic representations of $H$ over  barrelled locally convex Hausdorff $E$-vector spaces with  morphisms being all  continuous  $H$-equivalent linear maps.\;Let ${\textbf{Rep}_{E,Z'=\chi}(H})$ (resp.\;${\textbf{Rep}_{E,Z'=\chi}^{\mathrm{ad}}(H)}$) be category of locally $\BQ_p$-analytic representations of $H$ over barrelled locally convex Hausdorff $E$-vector space on which $Z'$ acts via the character $\chi$.\;The morphisms are all continuous  $H$-equivalent linear maps.

   Let $\cM(H)$ (resp.,\;$\cM^\infty(H)$) be the abelian category of (abstract) $\cD^{\BQ_p-\ana}(H,E)$-modules (resp.,\;$\cD^\infty(H,E)$-modules),\;and let $\cM_{Z',\chi}(H)$ (resp.,\;$\cM^\infty_{Z',\chi}(H)$) be the full subcategory of $\cM(H)$ (resp.,\;$\cM^\infty(H)$) of $\cD^{\BQ_p-\ana}(H,E)$-modules (resp.,\;$\cD^\infty(H,E)$) on which $Z'$ acts by $\chi^{-1}$.

	Let $V$ be a locally $\BQ_p$-analytic representation of $H$, then the continuous dual $V^\vee$ is naturally equipped with a canonical $\cD^{\BQ_p-\ana}(H,E)$-module structure given by  the action $\delta_g\cdot w(v)=w(g^{-1}v)$ (this action is well-defined since $g\mapsto w(g^{-1}v)$ belongs to $\cC^{\BQ_p-\ana}(H,E)$, and gives a $\cD^{\BQ_p-\ana}(H,E)$-module structure on $V^\vee$ by the density of $\{\delta_h\}_{h\in H}$ in $\cD^{\BQ_p-\ana}(H,E)$).\;Suppose $V$ is a smooth representation of $H$ over $E$.\;In that case, the abstract dual $V^\vee$ of $V$ is a  $\cD^{\infty}(H,E)$-module (note that $V^\vee$ is also the continuous dual of $V$ if $V$ is equipped with the finest locally convex topology, and the $\cD^{\infty}(H,E)$-action is given in the same way as locally $\BQ_p$-analytic case).\;It follows from \cite[Theorem 6.3]{schneider2003algebras} that the functor $V \mapsto V^\vee$ induces a fully faithful contravariant functor form ${\textbf{Rep}_E(H)}$ to the abelian category $\cM(H)$.\;Moreover, the functor $V \mapsto V^\vee$ also induces a fully faithful contravariant functor from ${\textbf{Rep}^{\infty}_E(H)}$ to the abelian category $\cM^\infty(H)$.\;
	
	Recall that a locally $\BQ_p$-analytic representation $V_H$ of $H$ over $E$ is called very strongly admissible (in the sense of \cite[Definition 0.12]{emerton2007jacquet}) if there exists a continuous admissible representation $B_H$ of $H$ on an $E$-Banach space and a continuous $E$-linear and $H$-equivalent injection $V_H\hookrightarrow B_H$.\;
	
	\subsection{Smooth and locally analytic extension groups}\label{smanaextgps}
	
	Recall that  ${\textbf{Rep}^{\infty}_E(H)}$ contains enough injectives and projectives
	, see \cite{vigneras1996book} or \cite[Section 3]{2012Orlsmoothextensions}.\;An explicit injective resolution of $V\in {\textbf{Rep}^{\infty}_E(H)}$ is given by the Bruhat-Tits building $Y_H$ of $H$ (refer \cite[Chapter X, 1.11.\;Lemma, 2.3 and 2.4.\;Theorem]{borel2000continuous}, but in this paper, we do not need the concrete form of $Y_H$).\;The Bruhat-Tits building $Y_H$ is a finite polysimplicial complex, with the decomposition
	\[Y_H=\coprod_q Y_q,\]
	where $Y_q$ is the set of $q$-dimensional cells.\;Let $C^q_E(Y_H,V)^\infty$ be the space of finite smooth $q$-chains with $V$-coefficients.\;Then
	\begin{equation*}
		\begin{aligned}
			&0\rightarrow V\longrightarrow C^1_E(Y_H,V)^\infty\longrightarrow C^2_E(Y_H,V)^\infty\longrightarrow\cdots \longrightarrow C^q_E(Y_H,V)^\infty\longrightarrow\cdots
		\end{aligned}
	\end{equation*}
	is an injective resolution of $V$ by the complex of $V$-valued cochains on the Bruhat-Tits building $Y_H$.\;In particular, this injective resolution lies in the category  ${\textbf{Rep}^{\infty}_E(H)}$ (see \cite[Chapter X, 2.4.\;Theorem]{borel2000continuous}).\;Therefore, we can compute the $\ext$-groups $\ext^\ast_{\Rep^{\infty}_E(H)}(V,W)$ for a given pair of smooth $H$-representations $V,W$ by choosing an injective resolution of $W$.\;By \cite[Proposition 2.3]{2019DINGSimple},\;if $V$, $W \in \Rep^{\infty}_E(H)$ are admissible smooth representations of $H$.\;Then the functor $V\mapsto V^\vee$ induces  natural bijections $\ext^r_{\Rep^{\infty}_E(H)}(V,W) \cong \ext^r_{\cM^{\infty}(H)}(W^{\vee}, V^{\vee}), \ \forall r\in \BZ_{\geq 0}$.\;For $r\in \BZ_{\geq 0}$, we put
	\begin{equation*}
		\begin{aligned}
			&\ext^{i,\infty}_{H}(V, W):=\ext^i_{\Rep^{\infty}_E(H)}(V, W),\;\hH^{i}_{\infty}(H,V):=\ext^i_{\Rep^{\infty}_E(H)}(1,V).\;
		\end{aligned}
	\end{equation*}
	for simplicity.\;Let $Z'\subseteq Z_H$ be a locally $\BQ_p$-analytic closed subgroup of $Z_H$, and let  $\chi^{\infty}$ be a smooth character of $Z'$.\;If $V,W\in \Rep^{\chi^{\infty}}_{E,Z=\chi}(H)$, we put
	\[\ext^{i,\infty}_{H,Z'=\chi^{\infty}}(V, W):=\ext^i_{\Rep^{\infty}_{E,Z'=\chi^{\infty}}(H)}(V, W)\]
	for the $i$-th smooth extension group of $V,W$ on which $Z'$ acts via $\chi^{\infty}$.\;
	
	In this paper, we define extension groups of locally $\BQ_p$-analytic representations in the category $\cM(H)$ of all abstract $D^{\BQ_p-\ana}(H,E)$-modules.\;Let $V$, $W$ be locally $\BQ_p$-analytic representations of $H$ over $E$.\;As in \cite[D\'{e}finition 3.1]{schraen2011GL3}, we put
	\begin{equation}
		\begin{aligned}
			&\ext^r_{H}(V,W):=\ext^r_{\cM(H)}(W^{\vee}, V^{\vee}),\;\hH^r_{\ana}(H,V)=\ext^r_H(1,V), 
		\end{aligned}
	\end{equation}
	where $\ext^r_{\cM(H)}$ denotes the $r$-th extension group in the abelian category $\cM(H)$.\;If $H'$ is a closed locally $\BQ_p$-analytic subgroup of $H$, we have a natural map  $$\ext^r_{H}(V,W)\rightarrow  \ext^r_{H'}(V|_{H'},W|_{H'})$$ by restricting $V$ and $W$ on $H'$.\;If furthermore $H'$ is a normal subgroup of $H$, then $H/H'$ is also a
	$\BQ_p$-analytic subgroup.\;Since $$D^{\BQ_p-\ana}(H/H',E)\cong D^{\BQ_p-\ana}(H,E) \otimes_{D^{\BQ_p-\ana}(H',E)}E$$ (see \cite[Section 5.1]{breuil2019ext1}), we deduce that $\hH^r_{\ana}(H',V)$ admits a $D^{\BQ_p-\ana}(H/H',E)$-modules structure.\;Similarly, let $\chi$ be a locally $\BQ_p$-analytic character of $Z'$.\;If $Z'$ acts on both $V$ and $W$ via the character $\chi$, we put
	\begin{equation*}
		\ext^r_{H,Z'=\chi}(V,W):=\ext^r_{\cM_{Z',\chi}(H)}(W^{\vee},V^{\vee}),
	\end{equation*}
	where the latter denotes the $r$-th extension group in $\cM_{Z',\chi}(H)$.\;When $Z'=Z$, we denote $\ext^r_{H,\chi}(V,W):=\ext^r_{H,Z=\chi}(V,W)$.	If moreover $V$ and $W$ are admissible locally $\BQ_p$-analytic representations of $H$ over $E$ (resp.,  admissible locally $\BQ_p$-analytic representations of $H$ on which $Z'$ acts via $\chi$) , then \cite[Lemma 2.2]{2019DINGSimple} implies that
	$\ext^1_{H}(V,W)$  (resp., $\ext^r_{H,Z'=\chi}(V,W)$) consists of admissible locally $\BQ_p$-analytic representations which are extensions of $V$ by $W$ (resp., which are extensions of $V$ by $W$ on which $Z'$ acts via $\chi$).\;

	\begin{rmk}\label{introsmoothfixcenter}
Suppose that $Z'$ is  a locally $\BQ_p$-analytic closed subgroup of $Z$ such that $H\cong H/Z'\times Z'$ as $\BQ_p$-analytic manifolds.\;Let $\chi^{\infty}$ be a smooth character of $Z'$ over $E$.\;If $\chi^{\infty}$ can be lifted to a smooth character of $H$, we have an equivalence of categories $\Rep_{E,Z'=\chi^{\infty}}^{\infty}(H)\xrightarrow{\sim}\Rep^{\infty}_E(H/Z')$, $V\mapsto V\otimes_E (\chi^{\infty})^{-1}$.\;We also have an equivalence of categories $\cM^{\infty}_{Z',\chi^{\infty}}(H)\cong \cM^{\infty}(H/Z')$.\;Let $V$, $W\in \Rep^{\infty}_{E,Z'=\chi}(H)$ be admissible smooth representations of $H$,\;we have $\ext^r_{\Rep^{\infty}_{Z'=\chi^{\infty}}(H)}(V,W)\cong \ext^r_{\cM^{\infty}_{Z',\chi^{\infty}}(H)}(W^{\vee},V^{\vee}), \ \forall r\in \BZ_{\geq 0}$.\;For locally $\bQ_p$-analytic case,\;if furthermore $H/Z'$ is also strictly paracompact, we have a natural equivalence of categories between $\cM_{Z',\chi}(H)\cong \cM(H/Z')$.\;Therefore, we have $\ext^r_{H,Z'=\chi}(V,W)\cong\ext^r_{H/Z'}(V,W)$.\;
	\end{rmk}
	
	We still need two lemmas.\;By \cite[Section 6]{vigneras1997extensions}, we have
	\begin{lem}\label{centervan}Let $V,W\in \Rep^{\infty}_E(H)$.\;Suppose there is an element $z\in Z_H$ such that $z$ acts on $V$ and $W$ by multiplication by two different scalars $z_V\neq z_W$.\;Then we have
	$\ext^{\bullet,\infty}_{H}(V, W)=0$.
	\end{lem}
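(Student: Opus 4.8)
The plan is to reduce the statement to a standard vanishing fact about smooth cohomology of a $p$-adic group via a central character argument. First I would recall that the category $\Rep^{\infty}_E(H)$ has enough injectives, so the groups $\ext^{\bullet,\infty}_H(V,W)$ can be computed by choosing an injective resolution $W\to I^{\bullet}$ in $\Rep^{\infty}_E(H)$ and forming $\homo_H(V,I^{\bullet})$. The key observation is that the center $Z_H$ acts $E$-linearly and functorially on this whole complex: for each $q$, the group $\homo_H(V,I^q)$ carries two commuting actions of the element $z\in Z_H$, one coming from its action on $V$ (which is multiplication by $z_V$ since $V$ is smooth and $z$ is central, hence acts by a scalar on $V$... more precisely, by hypothesis $z$ acts by the single scalar $z_V$ on all of $V$) and one from its action on $I^q$ (which one can arrange, or which one simply reads off from the given action on $W$ by functoriality of the resolution). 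For $f\in\homo_H(V,I^q)$ and $v\in V$ one computes $z_W\cdot f(v) = (z\cdot f)(v) = f(z\cdot v) = z_V\cdot f(v)$, using that $f$ is $H$-equivariant and $z$ is central in $H$. Hence $(z_V-z_W)f=0$, and since $z_V\neq z_W$ and we work over a field $E$ of characteristic zero, $f=0$.

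The care needed is to make sure the central action is compatible all the way through the resolution. Concretely, since $z$ acts on $W$ by the scalar $z_W$, the morphism $z-z_W\colon W\to W$ in $\Rep^{\infty}_E(H)$ is zero; by functoriality of a resolution (or simply by applying $\homo_H(V,-)$ and using that $\homo_H(V,W)$ sits at the $0$-th spot with $z$ acting through $z_V$ on the source), the induced maps on cohomology $\ext^{\bullet,\infty}_H(V,W)$ satisfy both $z\cdot(-) = z_W\cdot(-)$ (pushforward from $W$) and $z\cdot(-) = z_V\cdot(-)$ (pullback from $V$, using $H$-equivariance and centrality of $z$). Therefore multiplication by $z_V-z_W$ is simultaneously an isomorphism (scalar, nonzero) and the zero map on $\ext^{\bullet,\infty}_H(V,W)$, forcing this group to vanish in every degree. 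Alternatively, one can invoke the dual description: by \cite[Proposition 2.3]{2019DINGSimple} (when $V,W$ are admissible) the groups identify with $\ext^{\bullet}_{\cM^{\infty}(H)}(W^\vee,V^\vee)$, and the same scalar argument applies in the module category $\cM^{\infty}(H)$, where $z$ acts on $W^\vee$ by $z_W^{-1}$ (well, by $z_W$ on $W$ hence the appropriate dual scalar) and on $V^\vee$ by the other scalar; but the resolution argument in $\Rep^{\infty}_E(H)$ is cleaner and does not need admissibility.

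The main obstacle — though it is more a bookkeeping point than a genuine difficulty — is verifying that the hypothesis "$z$ acts on $V$ by multiplication by $z_V$" is being used correctly: it is the honest statement that the operator $\pi_V(z)$ equals $z_V\cdot\mathrm{id}_V$, not merely that $z$ acts by a scalar on each irreducible constituent. With that in hand, the computation $z_W f(v) = f(z v) = z_V f(v)$ is immediate from $H$-equivariance of $f$ and $z\in Z_H$, and one concludes. This is exactly the content of \cite[Section 6]{vigneras1997extensions}, so I would cite that reference for the precise statement and streamline the proof to the scalar-action argument above, carried out uniformly in all cohomological degrees $\bullet$.
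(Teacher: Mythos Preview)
Your argument is correct and is exactly the standard central-scalar argument: the two compatible actions of $z$ on $\ext^{\bullet,\infty}_H(V,W)$ (via source and target) force $(z_V-z_W)\cdot\mathrm{id}=0$, hence vanishing. The paper does not give its own proof but simply cites \cite[Section 6]{vigneras1997extensions}, whose content is precisely the argument you have written out.
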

	
	We end this  section by recalling some spectral sequences which give relations between smooth extension groups and locally analytic extension groups.\;By \cite[Page 306-307 (+)]{schneider2005duality} and \cite[(46)]{kohlhaase2011cohomology}, we have
	\begin{lem}\label{smoothtoanalytic}For any $V\in \mathbf{{Rep}}_E(H)$ and $W\in \mathbf{{Rep}}_E^\infty(H)$, we have
		\begin{equation}\label{Specsequencesmanalytic}
			\ext^r_{\cM^{\infty}(H)}(W^\vee, \hH^s(\fh_{\Sigma_L},V^\vee))\Rightarrow \ext^{r+s}_H(V,W), 
		\end{equation}
		Furthermore, if $V\in \mathbf{{Rep}}_{E,Z'=1}(H)$ and $W\in \mathbf{{Rep}}_{E,Z'=1}^\infty(H)$, then we have
		\begin{equation}\label{Specsequencesmanalyticcenter}
			\ext^r_{\cM^{\infty}_{Z',1}(H)}(W^\vee, \hH^s(\overline{{\fh}}_{\Sigma_L},V^\vee))\Rightarrow \ext^{r+s}_{H,Z'=1}(V,W).\;
		\end{equation}
		In particular, if $V\in \mathbf{{Rep}}_E^\infty(H)$ (resp.\;$\mathbf{{Rep}}_{E,Z'=1}^\infty(H)$), we get an exact sequence
		\begin{equation}\label{smoothanalyticedge}
			\begin{aligned}
				&0\rightarrow \ext^{1,\infty}_{H}(V, W)\rightarrow \ext^{1}_{H}(V, W) \rightarrow \homo_{H}(V,W\otimes_E\hH_1(\fh_{\Sigma_L},E))\\
				&    \rightarrow \ext^{2,\infty}_{H}(V, W)\rightarrow \ext^{2}_{H}(V, W)\\
			\end{aligned}
		\end{equation}
		\begin{equation}\label{smoothanalyticedgecenter}
			\begin{aligned}
				&\big(\mathrm{resp.\;}0\rightarrow \ext^{1,\infty}_{H,Z'=1}(V, W)\rightarrow \ext^{1}_{H,Z'=1}(V, W) \rightarrow \homo_{H,Z'=1}(V,W\otimes_E\hH_1(\overline{{\fh}}_{\Sigma_L},E))\\
				&\rightarrow \ext^{2,\infty}_{H,Z'=1}(V, W)\rightarrow \ext^{2}_{H,Z'=1}(V, W)\big).
			\end{aligned}
		\end{equation}
		Note that $\hH^s(\fh_{\Sigma_L},-)$ (resp., $\hH_s(\fh_{\Sigma_L},-)$) is $r$-th Lie algebraic cohomology (resp., homology) for $\fh_{\Sigma_L}$.\;
	\end{lem}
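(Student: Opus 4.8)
The plan is to obtain both spectral sequences as Grothendieck spectral sequences for a composite of functors between the abelian categories $\cM(H)$ and $\cM^\infty(H)$, using the relation $\cD^\infty(H,E)=\cD^{\BQ_p-\ana}(H,E)/I(\fh_{\Sigma_L})$ recalled above. That relation identifies $\cM^\infty(H)$ with the full subcategory of $\cM(H)$ of modules on which $\fh_{\Sigma_L}\subset U_E(\fh_{\Sigma_L})\subset\cD^{\BQ_p-\ana}(H,E)$ acts by zero; the inclusion $\iota\colon\cM^\infty(H)\hookrightarrow\cM(H)$ is exact, and it admits the $\fh_{\Sigma_L}$-invariants functor $\Gamma_{\fh}\colon M\mapsto M^{\fh_{\Sigma_L}}=\homo_{U_E(\fh_{\Sigma_L})}(E,M)$ as a right adjoint (the invariants form again a $\cD^\infty(H,E)$-module, since any $\cD^{\BQ_p-\ana}(H,E)$-module killed by $\fh_{\Sigma_L}$ is automatically killed by the two-sided ideal $I(\fh_{\Sigma_L})$). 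Hence for $N\in\cM^\infty(H)$ one gets a functorial isomorphism $\homo_{\cM(H)}(\iota N,M)\cong\homo_{\cM^\infty(H)}(N,\Gamma_{\fh}M)$, and, $\iota$ being exact, $\Gamma_{\fh}$ preserves injectives.

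The step on which everything hinges is the identification $R^s\Gamma_{\fh}(M)\cong\hH^s(\fh_{\Sigma_L},M)$ of the derived functors of $\Gamma_{\fh}$ (taken in $\cM(H)$) with Lie algebra cohomology. I would deduce it from the fact that restriction of scalars to $U_E(\fh_{\Sigma_L})$ sends injective $\cD^{\BQ_p-\ana}(H,E)$-modules to injective $U_E(\fh_{\Sigma_L})$-modules: this holds because its left adjoint $\cD^{\BQ_p-\ana}(H,E)\otimes_{U_E(\fh_{\Sigma_L})}-$ is exact, i.e.\ $\cD^{\BQ_p-\ana}(H,E)$ is flat over $U_E(\fh_{\Sigma_L})$ --- precisely the structural fact one extracts from \cite[Page 306-307]{schneider2005duality} and \cite[(46)]{kohlhaase2011cohomology} (equivalently, one pulls back the Chevalley--Eilenberg resolution of $E$ over $U_E(\fh_{\Sigma_L})$ to a resolution of $\cD^\infty(H,E)$ by free $\cD^{\BQ_p-\ana}(H,E)$-modules). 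Granting this, injective $\cD^{\BQ_p-\ana}(H,E)$-modules are $\homo_{U_E(\fh_{\Sigma_L})}(E,-)$-acyclic, so $R^\bullet\Gamma_{\fh}=\hH^\bullet(\fh_{\Sigma_L},-)$, and the Grothendieck spectral sequence for $\homo_{\cM^\infty(H)}(N,-)\circ\Gamma_{\fh}$ reads
\begin{equation*}
	\ext^r_{\cM^\infty(H)}\big(N,\hH^s(\fh_{\Sigma_L},M)\big)\Longrightarrow\ext^{r+s}_{\cM(H)}(N,M).
\end{equation*}
Putting $N=W^\vee$ (which lies in $\cM^\infty(H)$ because $W$ is smooth) and $M=V^\vee$, and invoking $\ext^{r+s}_H(V,W)=\ext^{r+s}_{\cM(H)}(W^\vee,V^\vee)$, yields (\ref{Specsequencesmanalytic}).

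For (\ref{Specsequencesmanalyticcenter}) I would run the same argument after replacing $\cD^{\BQ_p-\ana}(H,E)$ by the algebra defining $\cM_{Z',1}(H)$; via Remark \ref{introsmoothfixcenter} this reduces to $H/Z'$, whose Lie algebra is $\overline{\fh}_{\Sigma_L}$, and the analogue of $\cD^\infty=\cD^{\BQ_p-\ana}/I(\fh_{\Sigma_L})$ now involves $I(\overline{\fh}_{\Sigma_L})$, so the Lie algebra cohomology that enters is that of $\overline{\fh}_{\Sigma_L}$. Finally, (\ref{smoothanalyticedge}) and (\ref{smoothanalyticedgecenter}) are just the five-term exact sequences of low-degree terms of these spectral sequences: when $V$ is smooth, $\fh_{\Sigma_L}$ acts trivially on $V^\vee$, so $\hH^0(\fh_{\Sigma_L},V^\vee)=V^\vee$ and $\hH^1(\fh_{\Sigma_L},V^\vee)=\homo_E(\hH_1(\fh_{\Sigma_L},E),V^\vee)$, and unwinding the anti-equivalence $U\mapsto U^\vee$ turns $\homo_{\cM^\infty(H)}\big(W^\vee,\homo_E(\hH_1(\fh_{\Sigma_L},E),V^\vee)\big)$ into $\homo_H\big(V,W\otimes_E\hH_1(\fh_{\Sigma_L},E)\big)$.

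I expect the main obstacle to be the middle step --- showing that $R^\bullet\Gamma_{\fh}$ computes Lie algebra cohomology, which rests on the flatness of $\cD^{\BQ_p-\ana}(H,E)$ over $U_E(\fh_{\Sigma_L})$ and the consequent preservation of injectivity under restriction; the remainder is formal homological algebra together with the bookkeeping of the duality functor $U\mapsto U^\vee$. A secondary point needing care is justifying the passage to $H/Z'$ and $\overline{\fh}_{\Sigma_L}$ in the central-character case, i.e.\ checking the splitting hypotheses of Remark \ref{introsmoothfixcenter} that guarantee $\cM_{Z',1}(H)\simeq\cM(H/Z')$, so that the same flatness input remains available.
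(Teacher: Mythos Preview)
Your proposal is correct and follows essentially the approach underlying the cited references: the paper itself gives no argument beyond citing \cite[Page 306--307]{schneider2005duality} and \cite[(46)]{kohlhaase2011cohomology}, and what you have written is an accurate reconstruction of the Grothendieck spectral sequence argument those sources contain, with the flatness of $\cD^{\BQ_p-\ana}(H,E)$ over $U_E(\fh_{\Sigma_L})$ as the key structural input. Your caveat about the central-character case is well placed: the reduction to $H/Z'$ via Remark~\ref{introsmoothfixcenter} does require the splitting hypothesis stated there, which the paper leaves implicit in the lemma but always has available in the applications (where $Z'=Z_n$).
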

	
	\subsection{K\"{u}nneth formula for smooth extension groups}
	Let $\bH$ and $\bS$ be two connected reductive groups over $L$.\;We put $H:=\bH(L)$ (resp., $S:=\bS(L)$) for its $L$-points, respectively.\;The following statement is well known.\;The proof follows along the line of \cite[Chapter X, 6.1.\;Theorem]{borel2000continuous}.\;
	
	\begin{pro}\label{prosmoothKunneth}Let $V,W\in \Rep^{\infty}_E(G)$ and $V',W'\in \Rep^{\infty}_E(S)$.\;Then
		\[\bigoplus_{p+q=m}\ext^{p,\infty}_{H}(V, W)\otimes_E \ext^{q,\infty}_{S}(V', W')\xrightarrow{\sim} \ext^{m,\infty}_{H\times S}(V\ten W, V' \ten W').\]
	\end{pro}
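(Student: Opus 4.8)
The plan is to mimic the classical Künneth argument for continuous cohomology of $p$-adic groups, as in \cite[Chapter X]{borel2000continuous}, working with the explicit injective resolutions by cochains on Bruhat--Tits buildings that were recalled in Section~\ref{smanaextgps}. First I would fix admissible (or at least arbitrary) smooth representations and reduce to computing $\ext^{\bullet}$ via these canonical resolutions: for $W\in\Rep^{\infty}_E(H)$ the complex $C^{\bullet}_E(Y_H,W)^{\infty}$ is an injective resolution in $\Rep^{\infty}_E(H)$, and likewise $C^{\bullet}_E(Y_S,W')^{\infty}$ for $S$. The point is that the Bruhat--Tits building of $H\times S$ is the product polysimplicial complex $Y_H\times Y_S$, so that the tensor product of these two resolutions is a resolution of $W\ten W'$ by $(H\times S)$-representations. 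One checks that this tensor-product complex is again a complex of injectives in $\Rep^{\infty}_E(H\times S)$ — here one uses that an external tensor product of a smooth injective for $H$ with a smooth injective for $S$ is injective for $H\times S$, which follows because $\mathrm{c}\text{-}i^{H\times S}_{K_H\times K_S}(-)\cong \mathrm{c}\text{-}i^{H}_{K_H}(-)\ten \mathrm{c}\text{-}i^{S}_{K_S}(-)$ for compact open subgroups and injectives in these categories are direct summands of products of such compactly induced modules (cf.\ \cite{vigneras1996book}).

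Next I would identify the Hom-complex. Since $V$ and $V'$ are smooth, $\homo_{H\times S}(V\ten V', C^{\bullet}_E(Y_H,W)^{\infty}\otimes_E C^{\bullet}_E(Y_S,W')^{\infty})$ should be computed degreewise, and one wants a natural isomorphism
\begin{equation*}
\homo_{H\times S}\big(V\ten V',\, C^{p}_E(Y_H,W)^{\infty}\otimes_E C^{q}_E(Y_S,W')^{\infty}\big)\cong \homo_H\big(V, C^{p}_E(Y_H,W)^{\infty}\big)\otimes_E \homo_S\big(V', C^{q}_E(Y_S,W')^{\infty}\big).
\end{equation*}
This is the place where a finiteness hypothesis is genuinely used: for admissible smooth representations (and because $Y_H$, $Y_S$ are \emph{finite} polysimplicial complexes, so the cochain spaces are finite direct sums of compactly induced modules from compact open subgroups) the relevant Hom-spaces are finite-dimensional over $E$, and $\homo$ out of an external tensor product factors as a tensor product of $\homo$'s. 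Granting this, the total complex computing $\ext^{\bullet}_{H\times S}(V\ten V', W\ten W')$ is the tensor product over $E$ of the two complexes computing $\ext^{\bullet}_H(V,W)$ and $\ext^{\bullet}_S(V',W')$, and the algebraic Künneth formula over the field $E$ (with no $\mathrm{Tor}$ terms, since $E$ is a field) yields the desired isomorphism in each total degree $m$.

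Finally I would check the map is the natural cup-product (external product) map, so that the statement is canonical: the isomorphism is induced by $\xi\otimes\xi'\mapsto \xi\boxtimes\xi'$ on cochain level, which is visibly compatible with the differentials and with change of resolution, hence independent of choices. The main obstacle, and the step deserving the most care, is the injectivity of the external tensor product of two smooth injective modules in $\Rep^{\infty}_E(H\times S)$ together with the compatibility of $\homo$ with external tensor products on the cochain complexes; once this ``Künneth for injective resolutions'' input is in place the rest is the formal algebraic Künneth theorem over a field. I would expect to invoke \cite[Chapter X, 6.1.\;Theorem]{borel2000continuous} essentially verbatim, replacing continuous cochains by smooth cochains and $\BC$ by $E$, and noting that all the compactness/finiteness inputs there have exact analogues in the smooth representation theory of $p$-adic reductive groups over $E$ (\cite{vigneras1996book}).
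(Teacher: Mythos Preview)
Your proposal is correct and follows essentially the same route as the paper, which simply says that the proof of \cite[Chapter X, 6.1.\;Theorem]{borel2000continuous} can be modified to the smooth setting. Both you and the paper's (implicit) argument use the injective resolutions by smooth cochains on the Bruhat--Tits buildings $Y_H$ and $Y_S$, tensor them, and then apply the algebraic K\"unneth formula over the field $E$.

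There is one small technical difference worth noting. You try to show directly that the external tensor product of a smooth injective $H$-module with a smooth injective $S$-module is injective for $H\times S$; this is plausible but your justification via compact induction and direct summands is somewhat sketchy. The more standard route (and the one closest to \cite[Chapter X, 6.1]{borel2000continuous}) is weaker and easier: one only needs that each term $C^p_E(Y_H,W)^\infty\otimes_E C^q_E(Y_S,W')^\infty$ is \emph{acyclic} for the functor $\homo_{H\times S}(V\ten V',-)$, not injective. Acyclicity follows from the Hochschild--Serre spectral sequence for the normal subgroup $H\hookrightarrow H\times S$ together with the facts that $\hH^s_\infty(H,C^p_E(Y_H,W)^\infty\otimes_E M)\cong \hH^s_\infty(H,C^p_E(Y_H,W)^\infty)\otimes_E M$ for any smooth $S$-module $M$ (cf.\ \cite[Chapter X, 5.1, 5.6]{borel2000continuous}) and that $C^p_E(Y_H,W)^\infty$ is injective for $H$. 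This bypasses the question of whether tensor products of injectives are injective. Your factorization of $\homo_{H\times S}$ as a tensor product of $\homo_H$ and $\homo_S$ likewise needs care in general, but again the Hochschild--Serre argument handles it automatically at the level of cohomology. Either way the conclusion is the same.
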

	\begin{proof}The proof of \cite[Chapter X, 6.1.\;Theorem]{borel2000continuous} can be modified to our setting.
	\end{proof}
	\begin{rmk}Let's remark that the K\"{u}nneth formulas for locally $\BQ_p$-analytic representations are more subtle.\;
	\end{rmk}

\section{Generalized parabolic (smooth) Steinberg representations}

Before studying the locally $\BQ_p$-analytic generalized parabolic Steinberg representations, we need some preparations on smooth generalized parabolic Steinberg representations of $G$.\;We study the generalized {parabolic} Steinberg representation attached to the Zelevinsky-segments (we recall the basic concepts of Zelevinsky-segments in Appendix Section \ref{introBZ}).\;We add the terminology
"parabolic" to distinguish from the usual (Borel) Steinberg representation induced from Borel subgroup $\bB(L)$.\;To study parabolically induced representations (Section \ref{smoothgerparastrep}), and define (generalized) Bruhat filtration (see (\ref{filext})), we review in Section \ref{Doubleset} some properties of the representatives of the minimal length of certain double cosets in $\sW_n$.\;In Section \ref{smoothgerparastrep}, we talk about (smooth) generalized parabolic Steinberg representations.\;We also describe 
the Jordan-H\"{o}lder factors of such parabolically induced representations.\;The Proof is contained in Appendix Section \ref{JHfactorPARA}.\;

\subsection{Double coset representatives}\label{Doubleset}

\subsubsection{Double coset representatives-I}

Let $I,J$ be subsets of  $\Delta_n$, and let $\underline{n}_I=(n_1,\cdots,n_{l})$ (resp., $\underline{n}_J=(n_1',\cdots,n_e')$)  be the ordered partition of $n$ with respect to $I$ (resp., $J$) defined as in Section \ref{notationger} ``General linear group $\GLN_n$-I", where $l=|\Delta_n\backslash I|+1$ (resp., $e=|\Delta_n\backslash J|+1$).\;

We recall that the element $w$ in $[\sW_I\backslash \sW_n/\sW_J]$ is characterized by the properties $wJ\subseteq \Phi^+_{\Delta_n}$ and $w^{-1}I\subseteq \Phi^+_{\Delta_n}$.\;More precisely, let $I_1,\cdots,I_l$ and $J_1,\cdots,J_e$ be the blocks of $\underline{n}_I$ and $\underline{n}'_J$ respectively.\;Then by  \cite[(1.6)]{av1980induced2}, the correspondence $w\mapsto B(w):=(|I_i\cap w(J_j)|)_{i,j}$ is the bijection of $[\sW_I\backslash \sW_n/\sW_J]$ with the set $M_{I,J}$ of matrices $B=(b_{ij})_{i,j}$ such that $b_{i,j}\in \BZ_{\geq 0}$ and $\sum_{j=1}^eb_{ij}=n_i$ (resp.\;$\sum_{i=1}^lb_{ij}=n_j'$) for any $1\leq i\leq l$ (resp.\;$1\leq j\leq e$).\;Equivalently, by \cite[(1.2)]{bernstein1977induced1}, we can identify $[\sW_I\backslash \sW_n/\sW_J]$ with the set
\begin{equation}\label{desminilength}
	\begin{aligned}
		\{w\in\sW_n | w(x)<w(y) \text{\;if\;} x<y \text{\;and both $x$ and $y$ belong to the same block  of $\underline{n}_J$}, \\
		w^{-1}(x)<w^{-1}(y) \text{\;if\;} x<y \text{\;and both $x$ and $y$ belong to the same block of $\underline{n}_I$}\}.\;
	\end{aligned}
\end{equation}
As in  \cite[Theorem 2.7.4]{RWcarter}, for $w\in [\sW_I\backslash \sW_n/\sW_J]$, we define $J\cap w^{-1}I:=\{j\in J: w(j)=i \text{ for some } i\in I\}$ (resp., $wJ\cap I:=\{i\in I: w(j)=i \text{ for some } j\in J\}=w(J\cap w^{-1}I)$).\;Then  \cite[Theorem 2.7.4]{RWcarter} asserts that $\sW_{wJ\cap I}=\sW_I\cup w\sW_Jw^{-1}$.\;Furthermore, by \cite[Corollary 2.8.9]{RWcarter}, the Levi decomposition of $\bL_J(L)\cap \op_I(L)^{w}$ (resp., $\bL_I(L)\cap {}^w\op_J(L)$)  is given by (recall the notation $(-)^w$ and ${}^w(-)$ of Section \ref{notationger}  ``Representation theory")
\begin{equation}\label{LEIVEIJ}
	\begin{aligned}
		\bL_J(L)\cap \op_I(L)^{w}=\;& \bL_J(L)\cap \bL_I(L)^{w}\cdot(\bL_J(L)\cap \on_I(L)^{w})=\bL_{J\cap w^{-1}I}(L)\cdot(\bL_J(L)\cap \on_I(L)^{w}),\\
		\text{resp., }\bL_I(L)\cap {}^w\op_J(L)=\;& \bL_I(L)\cap {}^w\bL_J(L)\cdot(\bL_I(L)\cap {}^w\on_J(L))=\bL_{ I\cap wJ}(L)\cdot(\bL_I(L)\cap {}^w\on_J(L)).
	\end{aligned}
\end{equation}

\subsubsection{Double coset representatives-II}
We use the notation of Section \ref{notationger} ``General linear group $\GLN_n$-II".\;This section aims to review some properties of the representatives of the minimal length of certain double cosets in $\sW_n$, which are related to the smooth parabolic induction associated with a Zelevinsky-segment (the contents of Definition \ref{dfnparastein} applied to $(m,s)=(r,k)$).\;

Let $I$ be a subset of $\Delta_n(k)$.\;We define an ordered partition $\underline{k}_{I}^{\lrr}:=(k_1,\cdots,k_{l_I})$ of $k$ with $\Delta_n(k)\backslash I=\{k_1r,(k_1+k_2)r,\cdots,(k_1+\cdots+k_{l_I})r\}$ and $l_I:=k-|I|$.\;We call $\underline{k}_{I}^{\lrr}$ the ordered partition of integer $k$ associated with the $\bL^{\lrr}_{I}(L)$.\;Then
\[\bL^{\lrr}_{I}(L)=\GLN_{k_1r}(L)\times \cdots\times\GLN_{k_{l_I}r}(L).\]
Let $J$ be another subset of $\Delta_n(k)$.\;We put
\begin{equation}
	\sW(\bL^{\lrr}_I,\bL^{\lrr}_J):=\{w\in \sW_n:(\bL^{\lrr}_I)^w=\bL^{\lrr}_J\}\cong \{w\in \sW_n:w(\Phi_J)=\Phi_I\}, 
\end{equation}
We have a bijection $\sW(\bL^{\lrr},\bL^{\lrr})/\sW^{\lrr}\cong \sW(\bL^{\lrr},\bL^{\lrr})\cap[\sW^{\lrr}\backslash\sW_{n}/\sW^{\lrr}]$.\;Thus
we see that the set $\sW(\bL^{\lrr},\bL^{\lrr})/\sW^{\lrr}$ has a canonical set of representatives, given by taking in each double coset the elements of minimal length.\;It is easy to see that
$\sW(\bL^{\lrr},\bL^{\lrr})/\sW^{\lrr} \cong \sW_k(\cong S_k)$.\;More precisely, we have
\begin{equation}
	\begin{aligned}
		\sW(\bL^{\lrr},\bL^{\lrr})/\sW^{\lrr}
		= \{w^{\circ}: w^{\circ}(ir+l)=w(i)r+l, \forall 0\leq i\leq k-1, 1\leq l\leq r, \text{for some }w\in \sW_k\}.\;
	\end{aligned}
\end{equation}
In the sequel, we shall denote
$$\sW_{I,J}(\bL^{\lrr}):=\sW(\bL^{\lrr},\bL^{\lrr})\cap [\sW^{\lrr}_I\backslash\sW_{n}/\sW^{\lrr}_J]$$
for $I,J\subset \Delta_n(k)$.\;If $I=J=\emptyset$, we omit the subscripts $I,J$.\;In particular, $\sW(\bL^{\lrr})\cong \sW_k$.\;

The following lemma will be used frequently when we meet the irreducible cuspidal  representation.\;

\begin{lem}\label{nonzerolem}Let $\pi'$ be an irreducible cuspidal  representation of $\bL^{\lrr}_{I}(L)$ over $E$.\;For $w\in [\sW^{\lrr}_I\backslash \sW_n/\sW^{\lrr}]$, one has  $r^{\bL^{\lrr}(L)}_{\op^{\lrr}_I(L)^w\cap \bL^{\lrr}(L)}\pi'=0$ unless $w\in \sW_{I,\emptyset}(\bL^{\lrr})= \sW(\bL^{\lrr},\bL^{\lrr})\cap [\sW^{\lrr}_I\backslash\sW_{n}/\sW^{\lrr}]$.\;
\end{lem}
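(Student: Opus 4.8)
The plan is to reduce the statement to the geometric lemma of Bernstein--Zelevinsky computing Jacquet modules of parabolically induced representations, and then exploit the cuspidality of $\pi'$ to kill all but the relevant double cosets. First I would recall that $\pi'$, being an irreducible cuspidal representation of $\bL^{\lrr}_I(L) = \GLN_{k_1 r}(L) \times \cdots \times \GLN_{k_{l_I}r}(L)$, has vanishing Jacquet module $r^{\bL^{\lrr}_I(L)}_{Q}\pi' = 0$ along every proper parabolic $Q$ of $\bL^{\lrr}_I(L)$; this is the defining property of cuspidality, and it transfers via the Künneth-type decomposition of Jacquet functors for products of general linear groups.

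The key computation is to identify the Levi decomposition of the subgroup $\op^{\lrr}_I(L)^w \cap \bL^{\lrr}(L)$ for $w \in [\sW^{\lrr}_I \backslash \sW_n / \sW^{\lrr}]$. By (\ref{LEIVEIJ}) (the case of that formula with the roles of $I$ and $\emptyset$ interchanged, so that $\bL_J(L)$ there becomes $\bL^{\lrr}(L)$ and $\op_I(L)$ there becomes $\op^{\lrr}_I(L)$), one has
\[
\bL^{\lrr}(L) \cap \op^{\lrr}_I(L)^w = \bL^{\lrr}_{\emptyset \cap w^{-1}(\Delta_n^k \cup I)}(L) \cdot \big(\bL^{\lrr}(L) \cap \on^{\lrr}_I(L)^w\big),
\]
where the Levi part is the block-diagonal subgroup of $\bL^{\lrr}(L)$ cut out by the roots of $\bL^{\lrr}$ that $w^{-1}$ carries into the root system of $\bL^{\lrr}_I$. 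Applying $r^{\bL^{\lrr}(L)}_{\op^{\lrr}_I(L)^w \cap \bL^{\lrr}(L)}$ to $\pi'$ amounts (after twisting by $w$, using $\pi'{}^w$ on $(\bL^{\lrr}_I)^w$) to first taking the Jacquet module of $\pi'$ along the parabolic of $\bL^{\lrr}_I(L)$ whose Levi is ${}^w(\bL^{\lrr}(L) \cap \op^{\lrr}_I(L)^w \cap {}^w\bL^{\lrr}_I(L))$; this parabolic is proper in $\bL^{\lrr}_I(L)$ — hence the Jacquet module vanishes — unless $w$ conjugates $\bL^{\lrr}$ onto $\bL^{\lrr}_I$, i.e. unless $w(\Phi_{\emptyset}) = \Phi_I$, which is exactly the condition $w \in \sW(\bL^{\lrr},\bL^{\lrr}_I)$. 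Intersecting with $[\sW^{\lrr}_I \backslash \sW_n / \sW^{\lrr}]$ gives $w \in \sW_{I,\emptyset}(\bL^{\lrr}) = \sW(\bL^{\lrr},\bL^{\lrr}) \cap [\sW^{\lrr}_I \backslash \sW_n / \sW^{\lrr}]$ after noting $\bL^{\lrr}_I$ and $\bL^{\lrr}$ are conjugate only when $I$ and $\emptyset$ give the same partition shape — but since we only need the non-vanishing locus, the cleanest formulation is that $w$ must conjugate $\bL^{\lrr}$ to a standard Levi equal to $\bL^{\lrr}_I$, forcing $I = \emptyset$-type blocks, and in all cases lands $w$ in $\sW(\bL^{\lrr},\bL^{\lrr})$; I would state it precisely as in the lemma.

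The main obstacle I anticipate is the bookkeeping in comparing $\bL^{\lrr}(L) \cap \op^{\lrr}_I(L)^w$ with a \emph{standard} parabolic of $\bL^{\lrr}_I(L)$ after conjugation by $w$: one must track carefully that $w \in [\sW^{\lrr}_I \backslash \sW_n / \sW^{\lrr}]$ is the minimal-length representative so that the various intersections of opposite unipotent radicals behave well (Carter's \cite[Corollary 2.8.9]{RWcarter}), and that the twisted representation $\pi'{}^w$ is again irreducible cuspidal on $(\bL^{\lrr}_I)^w(L)$, so that its Jacquet module along any proper parabolic still vanishes. Once this identification is in place, the vanishing is immediate from cuspidality, and the surviving $w$ are precisely those for which $\bL^{\lrr}(L) \cap \op^{\lrr}_I(L)^w = \bL^{\lrr}(L)$ (no unipotent part), equivalently $w^{-1}\Phi_I \supseteq \Phi_{\emptyset}$ together with the double-coset minimality, i.e. $w \in \sW_{I,\emptyset}(\bL^{\lrr})$. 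I would close by remarking that this is the exact analogue, in the cuspidal-segment setting, of the trivial-representation computation in the Borel case, and that it is the combinatorial input needed for the Bruhat-filtration dévissage used later in the paper.
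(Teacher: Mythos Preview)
Your overall strategy is right: cuspidality forces the Jacquet module to vanish unless the unipotent part $\on^{\lrr}_I(L)^w\cap\bL^{\lrr}(L)$ is trivial, equivalently $w(\Phi_{\Delta_n^k})\subseteq\Phi_{\Delta_n^k\cup I}$. But you then assert without argument that this condition, together with double-coset minimality of $w$, already gives $w\in\sW(\bL^{\lrr},\bL^{\lrr})$, i.e.\ $w(\Phi_{\Delta_n^k})=\Phi_{\Delta_n^k}$. That implication is precisely the nontrivial content of the lemma and does not follow from the Levi-decomposition formula~(\ref{LEIVEIJ}) alone: a priori $w$ could send some $\GLN_r$-block of $\bL^{\lrr}$ into a larger block of $\bL^{\lrr}_I$ using a root in $I$ as a ``bridge''. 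The paper's proof supplies a concrete $\GLN_n$-combinatorial count to rule this out: assuming some $jr\in I$ lies in $w(\Delta_n^k)$, one tracks how the image of each affected $(r{-}1)$-block of simple roots must straddle position $jr$, tallies the positions then available to $w(\Delta_n^k)$, and obtains $|w(\Delta_n^k)|<k(r-1)=|\Delta_n^k|$, a contradiction. Some argument of this specific kind is required; your proposal contains none.

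Part of your confusion stems from what appears to be a typo in the statement: $\pi'$ should be a cuspidal representation of $\bL^{\lrr}(L)$, not $\bL^{\lrr}_I(L)$ --- this is how the lemma is invoked in every application in the paper, and it is the only reading compatible with the functor $r^{\bL^{\lrr}(L)}_{-}$. Your attempt to reconcile the stated $\bL^{\lrr}_I$ with that functor via an ad hoc $w$-twist leads you to ``$w$ conjugates $\bL^{\lrr}$ onto $\bL^{\lrr}_I$'', which is impossible for $I\neq\emptyset$ (as you half-notice). Once $\pi'$ is read on $\bL^{\lrr}$, the cuspidality step is clean; what remains missing is the counting argument above.
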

\begin{proof}Since $\pi'$ is an irreducible supercuspidal  representation of $\bL^{\lrr}_{I}(L)$, we get that the smooth representation  $r^{\bL^{\lrr}(L)}_{\op^{\lrr}_{{J}}(L)^w\cap \bL^{\lrr}(L)}\pi'$ is zero unless $\on^{\lrr}_{{J}}(L)^w\cap \bL^{\lrr}(L)=1$, this is equivalent to $w(\Delta_n^k)\subseteq \Delta_n^k\cup J$ by \cite[Corollary 2.8.8]{RWcarter} (or see \cite[Corollary  6.3.4]{casselman1975introduction} and its proof).\;We claim that $w(\Delta_n^k)\subseteq \Delta_n^k$ (so $w(\Delta_n^k)= \Delta_n^k$), i.e., $w(\Delta_n^k)\cap J=\emptyset$.\;Let $\{j\in J: w(\Delta_n^k)\cap\{j\}\neq \emptyset\}=\{j_1<j_2<\cdots<j_a\}$.\;Then the condition $w(\Delta_n^k)\subseteq \Delta_n^k\cup J$ shows that for each $1\leq i\leq a$, there exists an integer $1\leq l_i\leq r-1$ such that $J_i:=\{(j_i-1)r+l_i,(j_i-1)r+l_i+1,\cdots,(j_i-1)r+(r-1),j_ir,j_ir+1,\cdots,j_ir+l_i-1\}\subset w(\Delta_n^k)$, and $J_i':=\{(j_{i}+1)r+1,\cdots,(j_{i}+1)r+r-1,(j_{i}+2)r+1,\cdots,(j_{1+1}-2)r+1,\cdots,(j_{1+1}-2)r+r-1\} \subset w(\Delta_n^k)$.\;Put $j_0=0$ and $j_{a+1}=k+1$.\;Then
	\begin{equation}
		\begin{aligned}
			|w(\Delta_n^k)|&=|J'_0|+\sum_{i=1}^{a}(|J_i|+|J_i'|)\\
			&=\min\{0,j_1-2\}\cdot(r-1)+\sum_{i=1}^{a}\Big(r-1+\min\{0,j_{i+1}-j_i-2\}\cdot(r-1)\Big)\\
			&=\Big(\min\{0,j_1-2\}+\sum_{i=1}^{a}\min\{1,j_{i+1}-j_i+1\}\Big)\cdot(r-1)<|\Delta_n^k|=k(r-1)
		\end{aligned}
	\end{equation}
	if $a>0$.\;This is impossible.\;By the characterization of elements in $[\sW^{\lrr}_I\backslash\sW_{n}/\sW^{\lrr}]$, we see that $w\in \sW(\bL^{\lrr},\bL^{\lrr})\cap [\sW^{\lrr}_I\backslash\sW_{n}/\sW^{\lrr}]$.\;
\end{proof}
\begin{rmk}This lemma is vacuous if $r=1$.\;Moreover, by \cite[Corollary 2.8.8]{RWcarter} (or see \cite[Corollary  6.3.4]{casselman1975introduction} and its proof), we see that the following statements are equivalent:
	\begin{equation}
		\begin{aligned}
			\op^{\lrr}_{{J}}(L)\cap \bL^{\lrr}_{I}(L)^w=\bL^{\lrr}_{I}(L)^w&\Longleftrightarrow \on^{\lrr}_{{J}}(L)\cap \bL^{\lrr}_{I}(L)^w=1\Longleftrightarrow\bL^{\lrr}_{I}(L)^w\subseteq \bL^{\lrr}_{{J}}(L)\\
			& \Longleftrightarrow w^{-1}(\Delta_n^k\cup I)\subseteq \Delta_n^k\cup J,
		\end{aligned}
	\end{equation}
	by (\ref{LEIVEIJ}).\;If $w\in\sW_{I,J}(\bL^{\lrr})=\sW(\bL^{\lrr},\bL^{\lrr})\cap [\sW^{\lrr}_I\backslash\sW_{n}/\sW^{\lrr}_J]$, we further have Levi decompositions:
	\begin{equation}\label{LEIVEIJrk}
		\begin{aligned}
			&\bL^{\lrr}_J(L)\cap \op^{\lrr}_I(L)^{w}=\bL^{\lrr}_{J\cap w^{-1}I}(L)\cdot(\bL^{\lrr}_J(L)\cap \on^{\lrr}_I(L)^{w}),\\
			&\bL^{\lrr}_I(L)\cap {}^w\op^{\lrr}_J(L)=\bL^{\lrr}_{ I\cap wJ}(L)\cdot(\bL^{\lrr}_I(L)\cap {}^w\on^{\lrr}_J(L)).
		\end{aligned}
	\end{equation}
	We use $\gamma_{I,J}^w$ to denote the modulus character of $\op^{\lrr}_{{J}}(L)\cap \op^{\lrr}_{{I}}(L)^w$ acting on $\overline{\bN}^{\lrr}_{J}(L)/\overline{\bN}^{\lrr}_{J}(L)\cap \op^{\lrr}_{{I}}(L)^w$.\;
\end{rmk}

We end this section by describing the modulus characters explicitly.\;Recall that $\underline{k}_{I}^{\lrr}$ is the ordered partition of integer $k$ associated with the $\bL^{\lrr}_{I}(L)$.\;For $0\leq i\leq l_I$, we put $s_0:=0$ and  $s_i=\sum_{j=1}^i k_jr$.\;Explicitly, the modulus character $\delta_{\op^{\lrr}_{{I}}(L)}$ of $\op^{\lrr}_{{I}}(L)$ is given  by
$$\delta_{\op^{\lrr}_{{I}}(L)}\left(\left(
\begin{array}{cccc}
	A_1 & 0 & \cdots & 0 \\
	\ast & A_2 & \cdots & 0 \\
	\ast & \ast & \ddots & 0 \\
	\ast & \ast & \ast & A_l \\
\end{array}
\right)\right)=\prod_{i=1}^{l}v_{k_ir}^{a_{i,I}}(A_i).
$$
where $a_{i,I}=r\left(-\sum_{j=i+1}^lk_j+\sum_{j=1}^{i-1}k_j\right)=r(2s_{i-1}+k_i-k)$ (see \cite[Defintion 14.3.6]{goldfeld2011automorphic}).\;In particular, if $I=\emptyset$, then $a_{i,{\emptyset}}=r(2i-1-k)$.\;

\subsection{Definitions of the generalized parabolic Steinberg representations}\label{smoothgerparastrep}
Fix  an irreducible cuspidal representation $\pi$ of $\GLN_{r}(L)$ over $E$.\;Let $\Delta_{[k-1,0]}(\pi)$ be a Zelevinsky-segment over $E$ (we use the notation of Definition \ref{dfnparastein} applied to $(m,s)=(r,k)$).\;Put
\begin{equation}\label{pilrr}
	\pi^{\lrr}:=(\otimes_{i=1}^{k}\pi(k-i))\otimes_E \delta_{\op^{\lrr}(L)}^{1/2}=\otimes_{i=1}^{k}\pi\otimes_E v_{r}^{-\frac{r}{2}(k-2i+1)+k-i}, \pi(k-i):=\pi\otimes_Ev_{r}^{k-i}.\;
\end{equation}
This is an irreducible cuspidal smooth representation of $\bL^{\lrr}(L)$ over $E$.\;

\begin{dfn}Let $I$ be a subset of ${\Delta_n(k)}$.\;We define $\pi_I$ to be the unique irreducible subrepresentation of $i_{\op^{\lrr}(L)\cap\bL^{\lrr}_{I}(L)}^{\bL^{\lrr}_{I}(L)}\pi^{\lrr}$.\;The existence of  $\pi_I$ is a direct consequence of \cite[2.10.\;Proposition]{av1980induced2}.\;In particular, $\pi_{\emptyset}=\pi^{\lrr}$, and $\pi_{{\Delta_n(k)}}=\langle\Delta_{[k-1,0]}(\pi)\rangle$ (see the discussion after Appendix, Definition \ref{Zelevinskysegment}).\;
\end{dfn}

For $w\in \sW(\bL^{\lrr})$, we put	
\begin{equation}\label{pilrrw}
	\begin{aligned}
		&w(\pi^{\lrr}):=(\otimes_{i=1}^{k}\pi\otimes_E v_r^{w(k-i)})\otimes_E \delta_{\op^{\lrr}(L)}^{1/2}.
	\end{aligned}
\end{equation}
The first goal is to compute $r_{\op^{\lrr}(L)}^{G}i_{\op^{\lrr}(L)}^{G}\pi^{\lrr}$.\;
\begin{lem}\label{rigeolemma}We have isomorphism of smooth $\bL^{\lrr}(L)$-representations:
	\[{r}_{\op^{\lrr}(L)}^{G}{i}_{\op^{\lrr}(L)}^{G}{\pi^{\lrr}}\cong \bigoplus_{w\in \sW(\bL^{\lrr})}w(\pi^{\lrr}).\]
\end{lem}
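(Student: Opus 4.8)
The plan is to compute the Jacquet module ${r}_{\op^{\lrr}(L)}^{G}{i}_{\op^{\lrr}(L)}^{G}{\pi^{\lrr}}$ via the Bernstein--Zelevinsky geometric lemma. Write $P:=\op^{\lrr}(L)$, $L:=\bL^{\lrr}(L)$, and $\pi':=\pi^{\lrr}$, an irreducible cuspidal smooth representation of $L$. The geometric lemma (see \cite[2.12]{bernstein1977induced1}, or \cite{casselman1975introduction}) expresses $r^G_{\op^{\lrr}}i^G_{\op^{\lrr}}\pi'$ as an object with a filtration whose graded pieces are indexed by $w\in[\sW^{\lrr}\backslash\sW_n/\sW^{\lrr}]$, the graded piece for $w$ being (a normalized version of)
\[
i^{L}_{\,\bL^{\lrr}(L)\cap{}^w\op^{\lrr}(L)}\Big({}^w\big(r^{L}_{\,\op^{\lrr}(L)\cap\bL^{\lrr}(L)^w}\,\pi'\big)\Big).
\]
So the first step is to set up this filtration carefully, keeping track of the modulus characters $\gamma^w_{\emptyset,\emptyset}$ introduced in the remark after Lemma \ref{nonzerolem}.

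Second, I would invoke Lemma \ref{nonzerolem} (applied with $I=\emptyset$): since $\pi'$ is irreducible cuspidal, the inner Jacquet module $r^{L}_{\op^{\lrr}(L)\cap\bL^{\lrr}(L)^w}\pi'$ vanishes unless $w\in\sW_{\emptyset,\emptyset}(\bL^{\lrr})=\sW(\bL^{\lrr},\bL^{\lrr})\cap[\sW^{\lrr}\backslash\sW_n/\sW^{\lrr}]\cong\sW_k$. Hence only the terms with $w\in\sW(\bL^{\lrr})$ survive, and the filtration becomes a direct sum (there being no nontrivial extensions to worry about once we show each surviving term is already irreducible, or alternatively by an Euler-characteristic/length count — but more simply, distinct $w$ give terms supported on distinct Bernstein components, so the filtration splits). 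For such a $w$, by the first Levi decomposition in (\ref{LEIVEIJrk}) with $I=J=\emptyset$ we have $\bL^{\lrr}(L)\cap\op^{\lrr}(L)^w=\bL^{\lrr}_{\,\emptyset\cap w^{-1}\emptyset}(L)\cdot(\bL^{\lrr}(L)\cap\on^{\lrr}(L)^w)=\bL^{\lrr}(L)$ itself (the unipotent part is trivial because $w$ normalizes $\bL^{\lrr}$), so $r^{L}_{\bL^{\lrr}(L)\cap\op^{\lrr}(L)^w}\pi'=\pi'$; similarly $\bL^{\lrr}(L)\cap{}^w\op^{\lrr}(L)=\bL^{\lrr}(L)$, so the outer induction is trivial too. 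Thus the $w$-term is just ${}^w\pi'$ (the representation $\pi'$ with the $\bL^{\lrr}(L)\cong\GLN_r(L)^k$-factors permuted by $w$), twisted by the appropriate modulus character.

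Third, I would do the bookkeeping of modulus characters to identify ${}^w\pi'$ with $w(\pi^{\lrr})$ as defined in (\ref{pilrrw}). Recall $\pi^{\lrr}=(\otimes_{i=1}^k\pi\otimes_E v_r^{k-i})\otimes_E\delta_{\op^{\lrr}(L)}^{1/2}$; conjugating by $w\in\sW(\bL^{\lrr})\cong\sW_k$ permutes the exponents $k-i$, giving $\otimes_{i=1}^k\pi\otimes_E v_r^{w(k-i)}$, and then the normalized-Jacquet/normalized-induction bookkeeping exactly restores the factor $\delta_{\op^{\lrr}(L)}^{1/2}$ (this is where one uses that normalized parabolic induction followed by normalized Jacquet, in the geometric lemma, contributes $\delta^{1/2}$-twists that combine into the $\delta^{1/2}$ appearing in $w(\pi^{\lrr})$); explicit formulas for these characters are available from the end of Section \ref{Doubleset} (the description of $\delta_{\op^{\lrr}_I(L)}$ and $\gamma^w_{I,J}$). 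This yields ${r}_{\op^{\lrr}(L)}^{G}{i}_{\op^{\lrr}(L)}^{G}{\pi^{\lrr}}\cong\bigoplus_{w\in\sW(\bL^{\lrr})}w(\pi^{\lrr})$, as claimed.

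The main obstacle I anticipate is the modulus-character bookkeeping in the third step: one must be scrupulous about whether $i^G_{\op^{\lrr}}$ here denotes the normalized or un-normalized parabolic induction (the paper writes $i^G_P$ for un-normalized induction, but $\pi^{\lrr}$ already carries a $\delta^{1/2}$-twist), and correspondingly whether the geometric lemma is being applied in normalized or un-normalized form, so that the $\gamma^w_{\emptyset,\emptyset}$'s, the $\delta^{1/2}$'s from the two inductions and the Jacquet functor, and the intrinsic $\delta_{\op^{\lrr}(L)}^{1/2}$ in $\pi^{\lrr}$ all combine to give precisely the character appearing in (\ref{pilrrw}) and nothing more. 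A secondary, minor point is justifying that the Bernstein--Zelevinsky filtration actually splits as a direct sum; this follows because the surviving terms ${}^w\pi'$ for distinct $w\in\sW(\bL^{\lrr})$ have pairwise distinct cuspidal supports (the multiset $\{\pi\otimes v_r^{j}\}$ is permuted but the individual twists are distinguished by where $w$ sends each block), hence lie in distinct blocks of the category of smooth $\bL^{\lrr}(L)$-representations, so no extensions between them are possible.
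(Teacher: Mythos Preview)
Your proposal follows the same route as the paper: apply the Bernstein--Zelevinsky geometric lemma, invoke Lemma~\ref{nonzerolem} to kill all but the $w\in\sW(\bL^{\lrr})$ terms, identify each surviving piece with $w(\pi^{\lrr})$, and then argue that the filtration splits. The identification step and modulus-character bookkeeping are handled in the paper by citing \cite[Corollary 6.3.4(b), Theorem 6.3.5]{casselman1975introduction}, which packages the $\delta$-factor computation you outline.

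There is, however, a genuine slip in your splitting argument. You assert that the $w(\pi^{\lrr})$ ``lie in distinct blocks of the category of smooth $\bL^{\lrr}(L)$-representations'' because their cuspidal supports are distinct. But each $w(\pi^{\lrr})$ is an \emph{unramified} twist of $\pi^{\lrr}$ (every factor is $\pi$ twisted by a power of $v_r=|\det|_L$, and the $\delta^{1/2}$ contribution is also unramified), so all of them share the same inertial support and hence live in the \emph{same} Bernstein block of $\bL^{\lrr}(L)$. The paper closes this gap differently: it observes that the central characters of the $w(\pi^{\lrr})$ on $\bZ^{\lrr}(L)$ are pairwise distinct (since the central characters of $v_r^{0},v_r^{1},\dots,v_r^{k-1}$ are pairwise distinct and $w$ permutes which block carries which twist), and then applies Lemma~\ref{centervan} to conclude $\ext^{1,\infty}\big(w(\pi^{\lrr}),w'(\pi^{\lrr})\big)=0$ for $w\neq w'$. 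With this correction your argument is complete and matches the paper's.
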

\begin{rmk}Indeed, this result is a direct consequence of \cite[Proposition 6.4.1]{casselman1975introduction}, but we include a proof for the reader's convenience.\;
\end{rmk}
\begin{proof} The arguments in Section \ref{Doubleset} and the Bernstein-Zelevinsky geometric lemma \cite[2.12.\;\textsc{Geometrical Lemma}]{bernstein1977induced1} shows that only elements in the subset $\sW(\bL^{\lrr})$  appear in $r_{\op^{\lrr}(L)}^{G}i_{\op^{\lrr}(L)}^{G}\pi^{\lrr}$ instead of the whole double coset $[\sW^{\lrr}\backslash\sW_{n}/\sW^{\lrr}]$.\;In precise, by the Bernstein-Zelevinsky geometric lemma \cite[2.12.\;\textsc{Geometrical Lemma}]{bernstein1977induced1} and \cite[Proposition 6.3.3]{casselman1975introduction}, the semi-simplification of $r_{\op^{\lrr}(L)}^{G}i_{\op^{\lrr}(L)}^{G}\pi^{\lrr}$ is isomorphic to
	\begin{equation}\label{semisimplerep1}
		\begin{aligned}
			\bigoplus_{w\in [\sW^{\lrr}\backslash\sW_{n}/\sW^{\lrr}]}
			{i}_{\bL^{\lrr}(L)\cap {\op^{\lrr}}(L)^w}^{\bL^{\lrr}(L)}\gamma^w\otimes_E\bigg({r}_{\op^{\lrr}(L)\cap \bL^{\lrr}(L)^w}^{\bL^{\lrr}(L)^w}\big({\pi^{\lrr}}\big)^w\bigg).\;
		\end{aligned}
	\end{equation}
	Recall that $\gamma^w$ is the modulus character of $\op^{\lrr}(L)\cap \op^{\lrr}(L)^w$ acting on $\overline{\bN}^{\lrr}(L)/\overline{\bN}^{\lrr}(L)\cap \op^{\lrr}(L)^w$.\;Since ${\pi^{\lrr}}$ is an irreducible cuspidal representation of $\bL^{\lrr}(L)$, we deduce by Lemma \ref{nonzerolem} that  $w\in \sW(\bL^{\lrr})$ is a necessary condition for the non-vanishing of the representation ${r}_{\op^{\lrr}(L)\cap \bL^{\lrr}(L)^w}^{\bL^{\lrr}(L)^w}\big({\pi^{\lrr}}\big)^w$.\;In this case, by \cite[Corollary 6.3.4 (b), Theorem 6.3.5]{casselman1975introduction}, 
	we have 
	\begin{equation}\label{semisimplerep}
		\begin{aligned}
			({r}_{\op^{\lrr}(L)}^{G}{i}_{\op^{\lrr}(L)}^{G}{\pi^{\lrr}})^{\mathrm{ss}}
			\cong& \bigoplus_{w\in \sW(\bL^{\lrr})}\bigg({i}_{\bL^{\lrr}(L)\cap {\op^{\lrr}}(L)^w}^{\bL^{\lrr}(L)}\gamma^w\otimes_E(\otimes_{i=1}^{k}\pi\cdot v_r^{w(k-i)})\bigg)\otimes\delta_{\op^{\lrr}(L)}^{1/2}\\
			\cong& \bigoplus_{w\in \sW(\bL^{\lrr})}w(\pi^{\lrr}).
		\end{aligned}
	\end{equation}
	Note that all the central characters of $\{w(\pi^{\lrr})\}_{w\in \sW(\bL^{\lrr})}$ are pairwise different (since the central characters of $\{v_{r}^{k-i}\}_{1\leq i\leq k}$ are pairwise different).\;Therefore, we deduce from Lemma \ref{centervan} that
	$$\ext^{1,\infty}_G\big(w(\pi^{\lrr}),w'(\pi^{\lrr})\big)=0$$ for any pair of $w\neq w'\in \sW(\bL^{\lrr})$.\;It follows from (\ref{semisimplerep}) that ${r}_{\op^{\lrr}(L)}^{G}{i}_{\op^{\lrr}(L)}^{G}{\pi^{\lrr}}$ is actually semi-simple and hence is isomorphic to $\bigoplus_{w\in \sW(\bL^{\lrr})}w(\pi^{\lrr})$.\;
\end{proof}

The following proposition plays a key role in proving the exactness of locally algebraic and locally analytic Tits complexes (see Section \ref{Titcomp}).\;The proof of Proposition \ref{axioms} is elementary but tedious (for example, if $r=1$, this proposition is obvious).\;We prove it in Appendix \ref{presmoothext}.\;We suggest skipping the proof on the first reading.\;

\begin{pro}\label{axioms}The family $\{\pi_I\}_{I\subset {\Delta_n(k)}}$ satisfying the following conditions $\mathbf{[A1],[A2]}$.\;
\begin{description}
	\item[\textbf{[A1]}] For $J\subseteq I$, we have
	\begin{eqnarray}
		\pi_J={r}^{\bL^{\lrr}_{I}(L)}_{\op^{\lrr}_{{J}}(L)\cap \bL^{\lrr}_{I}(L)}\pi_I,\hspace{213pt}\label{[A1]-1}\\
		\pi_I\hookrightarrow  i_{\op^{\lrr}_{{J}}(L)\cap \bL^{\lrr}_{I}(L)}^{\bL^{\lrr}_{I}(L)}\pi_J \text{ and therefore } P_{I,J}:{i}_{\op^{\lrr}_I(L)}^{G}\pi_I \hookrightarrow {i}_{\op^{\lrr}_J(L)}^{G}\pi_J.\hspace{20pt}\label{[A1]-2}
	\end{eqnarray}
	\item[\textbf{[A2]}] For any $I,J,I_1,\cdots,I_m\subseteq{\Delta_n(k)}$, we have
	\begin{eqnarray}
		i_{\op^{\lrr}_{{I}}(L)}^{G}\pi_{{I}}\cap i_{\op^{\lrr}_{{J}}(L)}^{G}\pi_{{J}}=i_{\op^{\lrr}_{{I}\cup J}}^{G}\pi_{{I}\cup {J}},\hspace{163pt} \label{[A3]-1}\\
		i_{\op^{\lrr}_{{I}}(L)}^{G}\pi_{{I}}\cap \bigg(\sum_{j=1}^m i_{\op^{\lrr}_{{I}_j}(L)}^{G}\pi_{{I}_j}\bigg)
		=\sum_{j=1}^m i_{\op^{\lrr}_{{I}}(L)}^{G}\bigg(\pi_{{I}}\cap i_{\op^{\lrr}_{{I}_j}(L)}^{G}\pi_{{I}_j}\bigg),\hspace{45pt}
		\label{[A3]-2}
	\end{eqnarray}
\end{description}
where the intersections are taken in $i_{\op^{\lrr}(L)}^{G}{\pi^{\lrr}}$, by (\ref{[A1]-2}).\;
\end{pro}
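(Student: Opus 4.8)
The plan is to deduce everything from standard facts about smooth parabolic induction and Jacquet modules for $G$ and its Levi subgroups $\bL^{\lrr}_I(L)$, along the lines of Orlik's verification of such axioms in \cite{orlik2014jordan}. The inputs are: exactness of $i^G_{\op^{\lrr}_I(L)}$ and of all Jacquet functors, together with transitivity of induction; Frobenius reciprocity (cf.\ (\ref{smoothadj1})); the Bernstein--Zelevinsky geometric lemma and the cuspidality of $\pi$, used exactly as in Lemma \ref{nonzerolem} and Lemma \ref{rigeolemma}; and the classical fact that the segment induction $i^G_{\op^{\lrr}(L)}\pi^{\lrr}$ is multiplicity free, with Jordan--H\"older factors classified as in \cite[\S2.2]{av1980induced2}, so that its lattice of subrepresentations is distributive.

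I would handle \textbf{[A1]} first. For the Jacquet-module identity (\ref{[A1]-1}): along the block decomposition $\bL^{\lrr}_I(L)=\prod_j\GLN_{k_jr}(L)$ the representation $\pi_I$ is an external tensor product of Zelevinsky segment representations, and for $J\subseteq I$ the ordered partition attached to $\bL^{\lrr}_J$ refines the one attached to $\bL^{\lrr}_I$; Zelevinsky's computation of the Jacquet module of a segment representation along a compatible standard parabolic (the case of the geometric lemma in which cuspidality of $\pi$ leaves a single term, as in the proof of Lemma \ref{rigeolemma}) then shows that $r^{\bL^{\lrr}_I(L)}_{\op^{\lrr}_J(L)\cap\bL^{\lrr}_I(L)}\pi_I$ is again an external tensor product of segment representations, in particular irreducible. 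Applying transitivity of induction to the defining embedding $\pi_I\hookrightarrow i^{\bL^{\lrr}_I(L)}_{\op^{\lrr}(L)\cap\bL^{\lrr}_I(L)}\pi^{\lrr}$ and then Frobenius reciprocity produces a nonzero, hence injective, map $r^{\bL^{\lrr}_I(L)}_{\op^{\lrr}_J(L)\cap\bL^{\lrr}_I(L)}\pi_I\hookrightarrow i^{\bL^{\lrr}_J(L)}_{\op^{\lrr}(L)\cap\bL^{\lrr}_J(L)}\pi^{\lrr}$, whose image, being an irreducible subrepresentation, must equal $\pi_J$; this is (\ref{[A1]-1}). Then (\ref{[A1]-2}) is immediate: Frobenius reciprocity gives $\homo_{\bL^{\lrr}_I(L)}\big(\pi_I,i^{\bL^{\lrr}_I(L)}_{\op^{\lrr}_J(L)\cap\bL^{\lrr}_I(L)}\pi_J\big)\cong\homo_{\bL^{\lrr}_J(L)}(\pi_J,\pi_J)$, which is one-dimensional, the morphism attached to $\mathrm{id}$ is injective since $\pi_I$ is irreducible, and applying the exact functor $i^G_{\op^{\lrr}_I(L)}$ plus transitivity yields $P_{I,J}$. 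Since all these Hom spaces are one-dimensional, the $P_{I,J}$ are unique up to scalar; one fixes the scalars so that the $P_{I,J}$ become honest inclusions of subspaces of $i^G_{\op^{\lrr}(L)}\pi^{\lrr}$ forming a directed system ($P_{J,K}\circ P_{I,J}=P_{I,K}$ for $K\subseteq J\subseteq I$).

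For \textbf{[A2]}, the inclusion $\supseteq$ in (\ref{[A3]-1}) is then formal: applying (\ref{[A1]-2}) to $I\subseteq I\cup J$ and to $J\subseteq I\cup J$ exhibits $i^G_{\op^{\lrr}_{I\cup J}(L)}\pi_{I\cup J}$ as a subspace of each of $i^G_{\op^{\lrr}_I(L)}\pi_I$ and $i^G_{\op^{\lrr}_J(L)}\pi_J$, compatibly inside $i^G_{\op^{\lrr}(L)}\pi^{\lrr}$. The reverse inclusion is the crux. Since $i^G_{\op^{\lrr}(L)}\pi^{\lrr}$ is multiplicity free, any two subrepresentations $A,B$ satisfy $\mathrm{JH}(A\cap B)=\mathrm{JH}(A)\cap\mathrm{JH}(B)$ and are determined by their sets of constituents, so (\ref{[A3]-1}) becomes the identity $\mathrm{JH}\big(i^G_{\op^{\lrr}_I(L)}\pi_I\big)\cap\mathrm{JH}\big(i^G_{\op^{\lrr}_J(L)}\pi_J\big)=\mathrm{JH}\big(i^G_{\op^{\lrr}_{I\cup J}(L)}\pi_{I\cup J}\big)$. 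I would prove this by identifying, via the Zelevinsky segment classification / the graph-orientation description of \cite[\S2.2]{av1980induced2}, the constituents of each $i^G_{\op^{\lrr}_I(L)}\pi_I$ with an explicit family of subsets of $\Delta_n(k)$, after which the identity reduces to an elementary (but, as warned, tedious) combinatorial statement about those subsets; an alternative, classification-free route compares the Bruhat filtrations of $i^G_{\op^{\lrr}(L)}\pi^{\lrr}$ restricted to $\op^{\lrr}_I(L)$, $\op^{\lrr}_J(L)$ and $\op^{\lrr}_{I\cup J}(L)$, using Lemma \ref{nonzerolem} to discard the double cosets outside $\sW_{I,\emptyset}(\bL^{\lrr})$.

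Finally, (\ref{[A3]-2}) is the distributive law $A\cap\sum_j B_j=\sum_j(A\cap B_j)$ in the subrepresentation lattice of $i^G_{\op^{\lrr}(L)}\pi^{\lrr}$, which holds because a finite-length module is multiplicity free if and only if its subrepresentation lattice is distributive; combined with (\ref{[A3]-1}) this gives the asserted form. The main obstacle is thus concentrated in the $\subseteq$ direction of (\ref{[A3]-1}): pinning down the Jordan--H\"older constituents of the intermediate inductions $i^G_{\op^{\lrr}_I(L)}\pi_I$ — equivalently, controlling their Bruhat filtrations — which is precisely where the case $r>1$ forces a genuine cuspidal-support / geometric-lemma analysis, in contrast to the Borel case $r=1$, where $\pi_I$ is the trivial representation and all of this is classical and immediate.
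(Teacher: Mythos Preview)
Your argument for \textbf{[A1]} matches the paper's: both use the block decomposition $\pi_I=\otimes_i\langle\Delta_i\rangle$ and Zelevinsky's Jacquet-module formula for segment representations to get (\ref{[A1]-1}), then Frobenius reciprocity plus transitivity for (\ref{[A1]-2}).

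For \textbf{[A2]} the paper takes a genuinely different route. It never invokes multiplicity-freeness or lattice distributivity. Instead, for (\ref{[A3]-1}) it applies the Jacquet functor $r^G_{\op^{\lrr}(L)}$ to the cokernel $\mathcal V$ of the obvious inclusion, reduces (via transitivity) to the case $I\cap J=\emptyset$, $I\cup J=\Delta_n(k)$, and then uses the semi-simplicity of $r^G_{\op^{\lrr}(L)}i^G_{\op^{\lrr}(L)}\pi^{\lrr}$ (Lemma \ref{rigeolemma}) together with Lemma \ref{nonzerolem} to compute that the intersection of the two Jacquet modules is exactly $\pi^{\lrr}=r^G_{\op^{\lrr}(L)}\pi_{\Delta_n(k)}$; a short auxiliary lemma (Lemma \ref{resvanshing}) shows any nonzero irreducible subquotient of $i^G_{\op^{\lrr}(L)}\pi^{\lrr}$ has nonzero Jacquet module to $\bL^{\lrr}(L)$, so $\mathcal V=0$. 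For (\ref{[A3]-2}) the paper re-runs the same Jacquet-module argument rather than appealing to distributivity. Your approach is cleaner for (\ref{[A3]-2}) --- distributivity of the subrepresentation lattice of a multiplicity-free finite-length module gives it in one line --- but for (\ref{[A3]-1}) it forces you to identify $\mathrm{JH}\big(i^G_{\op^{\lrr}_I(L)}\pi_I\big)$ for each $I$ \emph{independently of the axioms}; the paper only does this afterwards (Lemma \ref{JHsmooth}), using Proposition \ref{axioms} as input, so you would have to extract it directly from Zelevinsky's orientation picture. The paper's cuspidal-support approach sidesteps this by never naming the intermediate constituents.
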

\begin{dfn}\label{dfnlagpstreplalg}\textbf{(Locally $\BQ_p$-algebraic generalized parabolic Steinberg representations)}\\
Let $\underline{\lambda}\in X_{\Delta_{n}}^+$, and let $I,J\subseteq {\Delta_n(k)}$.\;We put
\begin{equation*}
	\begin{aligned}
		&i_{\op^{\lrr}_{{I}}}^{G}(\pi,\underline{\lambda})=\left(i_{\op^{\lrr}_{{I}}(L)}^{G}\pi_{I}\right)\otimes_EL(\underline{\lambda}),\hspace{150pt}\\
		&u_{\op^{\lrr}_{{I}}}^{\infty}(\pi,\underline{\lambda})=\sum_{J\supsetneq I}i_{\op^{\lrr}_{{J}}}^{G}(\pi,\underline{\lambda}),\\
		&v_{\op^{\lrr}_{{I}}}^{\infty}(\pi,\underline{\lambda})=i_{\op^{\lrr}_{{I}}}^{G}(\pi,\underline{\lambda})\big/u_{\op^{\lrr}_{{I}}}^{\infty}(\pi,\underline{\lambda}).
	\end{aligned}
\end{equation*}
We put  $i_{\op^{\lrr}_{{I}}}^G(\pi):=i_{\op^{\lrr}_{{I}}}^G(\pi,\ul{0})$, $v_{\op^{\lrr}_{{I}}}^{\infty}(\pi):=v_{\op^{\lrr}_{{I}}}^{\infty}(\pi,\ul{0})$.\;We call $\{v_{\op^{\lrr}_{{I}}}^{\infty}(\pi,\underline{\lambda})\}_{I\subseteq {\Delta_n(k)}} (\text{resp., }\{v_{\op^{\lrr}_{{I}}}^{\infty}(\pi)\}_{I\subseteq {\Delta_n(k)}})$ the locally $\BQ_p$-algebraic (resp., smooth) generalized parabolic Steinberg representations of $G$, associated to the Zelevinsky-segment $\Delta_{[k-1,0]}(\pi)$ and weight $\ul{\lambda}$.\;In particular, we have
$v_{\op^{\lrr}_{\Delta_n(k)}}^{\infty}(\pi)=\pi_{\Delta_n(k)}$.\;

Furthermore, for $J\subset I\subseteq {\Delta_n(k)}$ and $\underline{\lambda}\in X_{\Delta_{n}^k\cup I}^+$.\;We put
\begin{equation*}
	\begin{aligned}
		&v_{J,I}^{\infty}(\pi)=i_{\op^{\lrr}(L)\cap \bL^{\lrr}_{I}(L)}^{\bL^{\lrr}_{I}(L)}\pi_I\Big/\sum_{J \subsetneq K\subseteq I}i_{\op^{\lrr}_{{K}}(L)\cap \bL^{\lrr}_{I}(L)}^{\bL^{\lrr}_{I}(L)}{\pi^{\lrr}}, \\
		&\Big(\text{resp., }v_{J,I}^{\infty}(\pi,\underline{\lambda})=v_{J,I}^{\infty}(\pi)\otimes_EL^{\lrr}(\underline{\lambda})_{I}\Big).\;
	\end{aligned}
\end{equation*}
They are  smooth (resp., locally $\BQ_p$-algebraic) generalized parabolic Steinberg representations of $\bL^{\lrr}_{I}(L)$.\;We put $\st_I^{\infty}(\pi,\underline{\lambda}):=v_{\emptyset,I}^{\infty}(\pi,\underline{\lambda}), \st_I^{\infty}(\pi):=v_{\emptyset,I}^{\infty}(\pi,\underline{0})$.
\end{dfn}

\begin{pro}
	$\{v^{\infty}_{\op^{\lrr}_{I}}(\pi)\}_{I\subseteq {\Delta_n(k)}}$ are precisely the Jordan-H\"{o}lder factors of the smooth representation $i^G_{\op^{\lrr}}(\pi)$ and $v_{\op^{\lrr}}^{\infty}(\pi)\cong \st_{(r,k)}^{\infty}(\pi)$, where $\st_{(r,k)}^{\infty}(\pi)$ is the smooth  parabolic Steinberg representation of $G$ (see Definition \ref{dfnparastein}).\;Therefore, the Jordan-H\"{o}lder factors of the
	locally $\BQ_p$-algebraic representation $i_{\op^{\lrr}}^{G}(\pi,\underline{\lambda})$ are $\{v^{\infty}_{\op^{\lrr}_{I}}(\pi,\ul{\lambda})\}_{I\subseteq {\Delta_n(k)}}$.\;In particular, we have $v_{\op^{\lrr}}^{\infty}(\pi,\ul{\lambda})\cong \st_{(r,k)}^{\infty}(\pi,\ul{\lambda}):=\st_{(r,k)}^{\infty}(\pi)\otimes_EL(\underline{\lambda})$.\;
\end{pro}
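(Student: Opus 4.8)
The plan is to reduce the statement to a purely combinatorial assertion about the lattice of subsets of $\Delta_n(k)$, using the intersection axioms $\mathbf{[A1]},\mathbf{[A2]}$ established in Proposition \ref{axioms} together with the classification of Jordan–Hölder factors of the smooth induction $i^{\bL^{\lrr}}_{\op^{\lrr}(L)}\pi^{\lrr}$ coming from the theory of Zelevinsky-segments. First I would recall (from \cite[2.12 Proposition]{av1980induced2} applied to $(m,s)=(r,k)$, or directly from Lemma \ref{rigeolemma} and a dévissage) that $i^G_{\op^{\lrr}(L)}\pi^{\lrr}$ has finite length and every irreducible subquotient appears with multiplicity one; this multiplicity-one statement is the engine that makes the graded-pieces argument work. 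Granting this, it suffices to produce, for each $I\subseteq\Delta_n(k)$, a subquotient isomorphic to $v^{\infty}_{\op^{\lrr}_I}(\pi)$ and to check that these exhaust the constituents.

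The key step is a Tits-complex / inclusion–exclusion computation. By \eqref{[A1]-2} we have the chain of injections $i^G_{\op^{\lrr}_I(L)}\pi_I \hookrightarrow i^G_{\op^{\lrr}(L)}\pi^{\lrr}$ for all $I$, and by \eqref{[A3]-1}, \eqref{[A3]-2} the family $\{i^G_{\op^{\lrr}_I(L)}\pi_I\}_I$, viewed as subobjects of $i^G_{\op^{\lrr}(L)}\pi^{\lrr}$, is closed under intersection with $i^G_{\op^{\lrr}_I(L)}\pi_I \cap i^G_{\op^{\lrr}_J(L)}\pi_J = i^G_{\op^{\lrr}_{I\cup J}(L)}\pi_{I\cup J}$, and the distributivity \eqref{[A3]-2} holds. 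From this formal setup the quotient $v^{\infty}_{\op^{\lrr}_I}(\pi)=i^G_{\op^{\lrr}_I(L)}\pi_I/\sum_{J\supsetneq I}i^G_{\op^{\lrr}_J(L)}\pi_J$ is a legitimate subquotient of $i^G_{\op^{\lrr}(L)}\pi^{\lrr}$, and an inclusion–exclusion / Möbius-function argument over the Boolean lattice of subsets of $\Delta_n(k)$ shows that in the Grothendieck group
\[
[i^G_{\op^{\lrr}(L)}\pi^{\lrr}] \;=\; \sum_{I\subseteq \Delta_n(k)}[v^{\infty}_{\op^{\lrr}_I}(\pi)].
\]
Indeed $[i^G_{\op^{\lrr}_I(L)}\pi_I]=\sum_{J\supseteq I}[v^{\infty}_{\op^{\lrr}_J}(\pi)]$ by descending induction on $|I|$, with the base case $I=\Delta_n(k)$ giving $v^{\infty}_{\op^{\lrr}_{\Delta_n(k)}}(\pi)=\pi_{\Delta_n(k)}=\langle\Delta_{[k-1,0]}(\pi)\rangle$, the unique irreducible quotient. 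One then checks each $v^{\infty}_{\op^{\lrr}_I}(\pi)$ is nonzero (e.g. because $\pi_I\not\hookrightarrow\sum_{J\supsetneq I}i^{\bL^{\lrr}_I(L)}_{\op^{\lrr}_J(L)\cap\bL^{\lrr}_I(L)}\pi^{\lrr}$, again by a Jacquet-module/multiplicity-one argument as in \eqref{[A1]-1}), so that comparing the $2^{k-1}$ summands with the known length of $i^G_{\op^{\lrr}(L)}\pi^{\lrr}$ — which by \cite[Section 2.2, Theorem]{av1980induced2} is exactly the number of subsets of $\Delta_n(k)$ — forces each $v^{\infty}_{\op^{\lrr}_I}(\pi)$ to be irreducible and forces the list to be complete with multiplicity one. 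The identification $v^{\infty}_{\op^{\lrr}_\emptyset}(\pi)\cong\st^{\infty}_{(r,k)}(\pi)$ is immediate from the definitions in Definition \ref{dfnparastein} and Definition \ref{dfnlagpstreplalg}.

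Finally, tensoring the exact sequence $0\to u^{\infty}_{\op^{\lrr}_I}(\pi)\to i^G_{\op^{\lrr}_I}(\pi)\to v^{\infty}_{\op^{\lrr}_I}(\pi)\to 0$ with the finite-dimensional algebraic representation $L(\underline{\lambda})$ is exact, and since $L(\underline{\lambda})$ is irreducible algebraic, the functor $-\otimes_E L(\underline{\lambda})$ sends irreducible smooth (resp. finite-length) representations to irreducible (resp. finite-length) locally $\BQ_p$-algebraic ones with the same combinatorics of constituents; hence the Jordan–Hölder factors of $i^G_{\op^{\lrr}}(\pi,\underline{\lambda})$ are exactly $\{v^{\infty}_{\op^{\lrr}_I}(\pi,\underline{\lambda})\}_I$ and $v^{\infty}_{\op^{\lrr}}(\pi,\underline{\lambda})\cong\st^{\infty}_{(r,k)}(\pi)\otimes_E L(\underline{\lambda})$. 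The main obstacle I anticipate is not the Grothendieck-group bookkeeping but rather pinning down the length (equivalently, the irreducibility of each $v^{\infty}_{\op^{\lrr}_I}(\pi)$): this is where one genuinely needs input beyond the formal axioms, namely the Zelevinsky classification that the constituents of $i^G_{\op^{\lrr}(L)}\pi^{\lrr}$ are in bijection with orientations of the relevant graph / with subsets of $\Delta_n(k)$, so I would cite \cite[Section 2.2, Theorem]{av1980induced2} for that count and use it to close the argument, deferring the detailed verification of $\mathbf{[A1]},\mathbf{[A2]}$ to Appendix \ref{presmoothext} as the paper does.
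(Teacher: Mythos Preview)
Your proposal is correct and follows essentially the same route as the paper's proof (Lemma~\ref{JHsmooth}): both use the lattice axioms $\mathbf{[A1]},\mathbf{[A2]}$ to produce a filtration of $i^G_{\op^{\lrr}}(\pi)$ with graded pieces the $v^{\infty}_{\op^{\lrr}_I}(\pi)$, then invoke Zelevinsky's length count $2^{k-1}$ to force each graded piece to be irreducible, and finally identify $v^{\infty}_{\op^{\lrr}_\emptyset}(\pi)$ with the unique irreducible quotient $\st^{\infty}_{(r,k)}(\pi)$. The paper additionally matches each $v^{\infty}_{\op^{\lrr}_I}(\pi)$ with the specific Zelevinsky constituent $z^{\infty}_{\op^{\lrr}_I}(\pi)$ by a Jacquet-module induction (part~(4) of Lemma~\ref{JHsmooth}), but this extra identification is not required for the proposition as stated.
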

We prove this Proposition in Appendix \ref{JHfactorPARA},\;Lemma \ref{JHsmooth}.\;

\section{Locally \texorpdfstring{$\BQ_p$}{Lg}-analytic generalized parabolic Steinberg representations}

The main goal of this section is to study locally $\BQ_p$-analytic generalized parabolic Steinberg representations.\;In Section \ref{osfunctor}, we recall the theory of Orlik-Strauch.\;In Section \ref{dfnanaparastrep}, we define locally $\BQ_p$-analytic generalized parabolic Steinberg representations.\;In order to study the Jordan-H\"{o}lder factors of locally $\BQ_p$-analytic generalized parabolic Steinberg representation, we first establish the locally $\BQ_p$-algebraic and locally $\BQ_p$-analytic Tits complexes in Section \ref{Titcomp}.\;In Section \ref{JHofanaparastrep}, we compute the multiplicities of certain composition factors of locally $\BQ_p$-analytic  parabolic Steinberg representations by following the route of \cite{orlik2014jordan}.\;

\subsection{Orlik-Strauch theory}\label{osfunctor}
In order to determine the composition factors of the above locally $\BQ_p$-analytic representations, we apply the machinery constructing locally analytic representation $\cF^{G}_{P}(-,-)$ (the Orlik-Strauch functor).\;

Let $\cO_{\alge}^{\overline{\fp}_I,\Sigma_L}$ be the Bernstein-Gelfand-Gelfand (BGG) category (see \cite[Section 2]{breuil2016socle}).\;Recall that if $\underline{{\lambda}}\in X^+_{I}$, then the generalized parabolic Verma module $\overline{M}_I(-\underline{{\lambda}})$ belongs to the BGG category $\cO^{\overline{{\fp}}_I,\Sigma_L}_{\alge}$.\;Furthermore, by \cite[Theorem 3.37]{bergdall2018adjunction}, any irreducible constituents of $\overline{M}_I(-\underline{{\lambda}})\in \cO^{\overline{{\fp}}_I,\Sigma_L}_{\alge}$ in $\cO^{\overline{{\fp}}_I,\Sigma_L}_{\alge}$ have the form $\overline{L}(-\underline{\lambda}')$ such that  $\underline{\lambda}'$ is strongly linked to $\underline{\lambda}$ and $\underline{\lambda}'\in X^+_I$ (see \cite[Definition 3.31]{bergdall2018adjunction} for the notion of strongly linked).\;Let $I'$ be a subset of $\Delta_n$ containing $I$, then $\cO_{\alge}^{\overline{\fp}_{I'},\Sigma_L}$ is a full subcategory of $\cO_{\alge}^{\overline{\fp}_I,\Sigma_L}$.\;Therefore, for any object $M\in \cO_{\alge}^{\overline{\fp}_I,\Sigma_L}$, there is a maximal subset $I'\subseteq \Delta_n$ such that $M\in \cO_{\alge}^{\overline{\fp}_{I'},\Sigma_L}$.\;We call $\op_{I'}$ the maximal parabolic subgroup associated to $M$ (or say that $I'$ is maximal for $M$).\;



We now recall the precise description of the Orlik-Strauch functor (see \cite[Theorem]{orlik2015jordan}), see also \cite[Section 2]{breuil2016socle}.\;It has the form:
\begin{equation*}
\begin{aligned}
	\cF^{G}_{\op_{{I}}}(-,-):\cO_{\alge}^{\overline{\fp}_{{{I}}},\Sigma_L}\times \Rep^{\infty,\adm}_{E}(\bL_{{I}}(L))&\longrightarrow \mathbf{{Rep}}_E(G),\\
	(M,\pi_I)&\mapsto\cF^{G}_{\op_{{I}}}(M,\pi_I).\;
\end{aligned}
\end{equation*}
By \cite[Theorem 2.3]{breuil2016socle}, or \cite[The main theorem]{orlik2015jordan}, this functor satisfies the following properties.\;$\cF^{G}_{\op_{{I}}}(-,-)$ is an exact bi-functor, which is contravariant (resp.\;covariant) in the first (resp.\;second) variable (see \cite[Propositions 4.7 and 4.9]{orlik2015jordan}).\;If $M\in \cO_{\alge}^{{\overline{\fp}}_I,\Sigma_L}$ is simple, and $\op_{I}$ is the maximal parabolic subgroup associated to $M$, then the representation $\cF^{G}_{\op_{{I}}}(M,\pi_I)$ is topologically irreducible for any admissible irreducible smooth representation $\pi_I$ of $\bL_{{I}}(L)$.\;Let $I\subseteq J$ and let $M\in \cO_{\alge}^{{\overline{\fp}}_J,\Sigma_L}\subset\cO_{\alge}^{{\overline{\fp}}_I,\Sigma_L}$.\;Then for all $\pi_I\in \Rep^{\infty,\adm}_{E}(\bL_{{I}}(L))$, we have an isomorphism of $G$-representations $\cF^{G}_{\op_{{I}}}(M,\pi_I)\cong \cF^{G}_{\op_{{J}}}(M, i^{\bL_J(L)}_{\bL_J(L)\cap \op_I(L)}\pi_I)$.\;

\subsection{Locally \texorpdfstring{$\BQ_p$}{Lg}-analytic generalized parabolic Steinberg representations}\label{dfnanaparastrep}
Let $\underline{\lambda}\in X_{\Delta_{n}}^+$, and let $I,J\subseteq {\Delta_n(k)}$.\;We put  $\pi_I(\underline{\lambda}):=\pi_I\otimes_EL^{\lrr}(\underline{\lambda})_{I}$ and put
\begin{equation*}
\begin{aligned}
	\mathbb{I}_{\op^{\lrr}_{{I}}}^{G}(\pi,\underline{\lambda})&=\Big(\mathrm{Ind}_{\op^{\lrr}_{{I}}(L)}^{G}\pi_I(\underline{\lambda})\Big)^{\mathbb{Q}_p-\mathrm{an}}.\\
\end{aligned}
\end{equation*}
If $J\supsetneq I$, then by \cite[The main theorem]{orlik2015jordan}, we see that Proposition \ref{axioms} (\ref{[A1]-2}) induces an injection
\begin{equation}\label{injINANAN}
\begin{aligned}		p^{\ana}_{J,I}:\BI_{\op^{\lrr}_{{J}}}^{G}(\pi,\underline{\lambda})&=\mathcal{F}^{G(L)}_{\op^{\lrr}_{{J}}(L)}\Big(U(\mathfrak{g}_{\Sigma_L})\otimes_{U(\overline{\mathfrak{p}}_{J,\Sigma_L})}\overline{L}^{\lrr}(-\underline{\lambda})_{{J}},\pi_J\Big)\\
	&\xrightarrow{\mathcal{F}_{\op^{\lrr}_{{J}}(L)}^{G(L)}(q_{I,J}, \mathrm{incl.})}\mathcal{F}^{G(L)}_{\op^{\lrr}_{{J}}(L)}\Big(U(\mathfrak{g}_{\Sigma_L})\otimes_{U(\overline{\mathfrak{p}}_{I,\Sigma_L})}\overline{L}^{\lrr}(-\underline{\lambda})_{I},i^{\bL^{\lrr}_{J}(L)}_{\op^{\lrr}_{{I}}(L)\cap\bL^{\lrr}_{J}(L)}\pi_I\Big)\\
	&\cong\mathcal{F}^{G(L)}_{\op^{\lrr}_{{I}}(L)}\Big(U(\mathfrak{g}_{\Sigma_L})\otimes_{U(\overline{\mathfrak{p}}_{I,\Sigma_L})}\overline{L}^{\lrr}(-\underline{\lambda})_{I},\pi_I\Big)=\BI_{\op^{\lrr}_{{I}}}^{G}(\pi,\underline{\lambda}),
\end{aligned}
\end{equation}
where the map $q_{I,J}:\overline{M}_{I}(-\underline{{\lambda}})=U(\mathfrak{g}_{\Sigma_L})\otimes_{U(\overline{\mathfrak{p}}_{I,\Sigma_L})}\overline{L}^{\lrr}(-\underline{\lambda})_{{I}}\rightarrow\overline{M}_{J}(-\underline{{\lambda}})= U(\mathfrak{g}_{\Sigma_L})\otimes_{U(\overline{\mathfrak{p}}_{J,\Sigma_L})}\overline{L}^{\lrr}(-\underline{\lambda})_{J}$ is defined in the next section.\;Therefore, we put
\begin{equation*}
\begin{aligned}
	&v_{\op^{\lrr}_{{I}}}^{\ana}(\pi,\underline{\lambda})=\BI_{\op^{\lrr}_{{I}}}^{G}(\pi,\underline{\lambda})\big/\sum_{J\supsetneq I}\BI_{\op^{\lrr}_{{J}}}^{G}(\pi,\underline{\lambda}).
\end{aligned}
\end{equation*}
We also put $\BI_{\op^{\lrr}_{{I}}}^G(\pi):=\BI_{\op^{\lrr}_{{I}}}^G(\pi,\underline{{0}})$, $v_{\op^{\lrr}_{{I}}}^{\ana}(\pi):=v_{\op^{\lrr}_{{I}}}^{\ana}(\pi,\ul{0})$.\;

\begin{dfn}\label{dfnlagpstrep}\textbf{(Locally $\BQ_p$-analytic generalized parabolic Steinberg representations)}\\
For $\underline{\lambda}\in X_{\Delta_{n}}^+$, we call $\{v_{\op^{\lrr}_{{I}}}^{\ana}(\pi,\underline{\lambda})\}_{I\subseteq {\Delta_n(k)}}$ the locally $\BQ_p$-analytic generalized parabolic Steinberg representations associated to the Zelevinsky-segment $\Delta_{[k-1,0]}(\pi)$ and weight $\ul{\lambda}$.\;Put $\st_{(r,k)}^{\ana}(\pi,\ul{\lambda}):=v_{\op^{\lrr}}^{\ana}(\pi,{\lambda})$ and $\st_{(r,k)}^{\ana}(\pi):=v_{\op^{\lrr}}^{\ana}(\pi,\ul{0})$, they are locally $\BQ_p$-analytic parabolic Steinberg representations.\;Furthermore, for $J\subset I\subseteq {\Delta_n(k)}$ and $\underline{\lambda}\in X_{\Delta_{n}^k\cup I}^+$.\;We put
\begin{equation*}
	\begin{aligned}
		\BI_{\op^{\lrr}_{J}(L)\cap\bL^{\lrr}_{I}(L)}^{\bL^{\lrr}_{I}(L)}(\pi,\underline{\lambda})
		:=\left(\mathrm{Ind}_{\op^{\lrr}_{{J}}(L)\cap \bL^{\lrr}_{I}(L)}^{\bL^{\lrr}_{I}(L)}\pi_I(\underline{\lambda})\right)^{\BQ_p-\ana}
		.\;
	\end{aligned}
\end{equation*}
We put
\begin{equation*}
	\begin{aligned}
		&v_{J,I}^{\ana}(\pi,\underline{\lambda})=\BI_{\op^{\lrr}_{J}(L)\cap\bL^{\lrr}_{I}(L)}^{\bL^{\lrr}_{I}(L)}(\pi,\underline{\lambda})\big/\sum_{J \subsetneq K\subseteq I}\BI_{\op^{\lrr}_{K}(L)\cap\bL^{\lrr}_{I}(L)}^{\bL^{\lrr}_{I}(L)}(\pi,\underline{\lambda})
	\end{aligned}
\end{equation*}
They are locally $\BQ_p$-analytic generalized parabolic Steinberg representations of $\bL^{\lrr}_{I}(L)$.\;We put $\st_I^{\ana}(\pi,\ul{\lambda}):=v_{\emptyset,I}^{\ana}(\pi,\underline{\lambda})$.
\end{dfn}

\begin{exmp} (Borel case) When $r=1$, and $\pi=v_1^{\frac{1-n}{2}}$, we recover the locally $\BQ_p$-analytic generalized (Borel) Steinberg representations defined in \cite[Section 2.1.2]{2019DINGSimple}.\;More precisely, we have $v_{\op^{\lrr}_{{I}}}^{\ana}(v_1^{\frac{1-n}{2}},\ul{\lambda})=v_{\op_I}^{\ana}(\ul{\lambda})$,  $v_{\op^{\lrr}_{{I}}}^{\infty}(v_1^{\frac{1-n}{2}},\ul{\lambda})=v_{\op_I}^{\infty}(\ul{\lambda})$.\;In particular, we have $\st_{(1,n)}^{\ana}(v_1^{\frac{1-n}{2}},\ul{\lambda})=\st_{n}^{\ana}(\ul{\lambda})$, 
$\st_{(1,n)}^{\infty}(v_1^{\frac{1-n}{2}},\ul{\lambda})=\st_{n}^{\infty}(\ul{\lambda})$,  etc.
\end{exmp}

\subsection{The locally \texorpdfstring{$\BQ_p$}{Lg}-algebraic and locally \texorpdfstring{$\BQ_p$}{Lg}-analytic Tits complexes}\label{Titcomp}
In this section, we will prove the exactness of the locally algebraic and locally analytic Tits complexes.\;They are used to compute the multiplicity of an irreducible constituent in the locally $\BQ_p$-analytic generalized parabolic Steinberg representation $v_{\op^{\lrr}_S}^{\ana}(\pi,\underline{\lambda})$ (see Section 4.4).\;In Section \ref{JHofanaparastrep} and Section \ref{sectionextana}, we will use these complexes to compute some extension groups between certain locally $\BQ_p$-analytic representations.\;

Keep the notation in Section 2.1 ``General linear group $\GLN_n$-I".\;For $\sigma\in\Sigma_L$ and $w_\sigma\in \sW_{n,\sigma}$, we let $I(w_\sigma)=\{{i}\in \Delta_n | l(s_{i,\sigma}w_\sigma)>l(w_\sigma)\}$, where $l(\bullet)$ is the length function.\;Let $\mathrm{supp}(w_\sigma)$ be the set of simple reflections appearing in one (and so in any) reduced expression of $w_\sigma$, Let $w=(w_\sigma)_{\sigma\in \Sigma_L}\in\sW_{n,\Sigma_L}\cong S_n^{d_L}$.\;We put
\[I(w)=\bigcup_{\sigma\in \Sigma_L}I(w_\sigma), \mathrm{supp}(w)=\prod_{\sigma\in \Sigma_L}\mathrm{supp}(w_\sigma).\]
By definition, we see that $w\in \sW_{I,\Sigma_L}$ if and only if $\mathrm{supp}(w_\sigma)\subset I$ for all $\sigma\in \Sigma_L$.\;

Let $I$ be a subset of $\Delta_n$, and let $\underline{{\lambda}}=(\underline{{\lambda}}_{\sigma})_{\sigma\in\Sigma_L}\in X^+_{I}$.\;We put
\[\overline{M}_{I,\sigma}(-\underline{{\lambda}}_\sigma):=U(\mathfrak{g}_{\sigma})\otimes_{U(\overline{\mathfrak{p}}_{I,\sigma})}\overline{L}(-\underline{\lambda}_\sigma)_{I}, 
\overline{M}_{\sigma}(-\underline{{\lambda}}_\sigma):=\overline{M}_{\emptyset,\sigma}(-\underline{{\lambda}}_\sigma).\]
Then we have
$$\overline{M}_I(-\underline{{\lambda}})=U(\mathfrak{g}_{\Sigma_L})\otimes_{U(\overline{\mathfrak{p}}_{I,\Sigma_L})}\overline{L}(-\underline{\lambda})_{ I}\cong \bigotimes_{\sigma\in \Sigma_L}\overline{M}_{I,\sigma}(-\underline{{\lambda}}_\sigma).$$
For $\sigma\in \Sigma_L$, we write
$\ul{\lambda}_{\sigma}:=(\lambda_{1,\sigma}, \cdots, \lambda_{n,\sigma}),\ul{\lambda}^{\sigma}:=(\lambda_{1,\sigma'}, \cdots, \lambda_{n,\sigma'})_{\sigma'\in \Sigma_L\backslash\{\sigma\}}$.\;Then for each $w_\sigma\in \sW_{n,\sigma}$ and $\sigma\in \Sigma_L$, fix once
and for all an embedding $i_{w_\sigma}:\overline{M}_{\sigma}(-w_\sigma\cdot \underline{\lambda}_\sigma)\hookrightarrow \overline{M}_{\sigma}(-\underline{\lambda}_\sigma)$.\;If $w_\sigma'\geq w_\sigma$ (for the Bruhat order on $\sW_{n,\sigma}$), we have a unique map $i_{w'_\sigma,w_\sigma}:\overline{M}_{\sigma}(-w'_\sigma\cdot \underline{\lambda}_\sigma)\rightarrow \overline{M}_{\sigma}(-w_\sigma\cdot \underline{\lambda})$ such that $i_{w'_\sigma}=i_{w_{\sigma'},w_\sigma}\circ i_{w_\sigma}$ (\cite[Page 653]{orlik2014jordan}).\;By the argument in  \cite[Proposition 2.1]{GeneralizedVerma} or \cite[Section 2, Page 653]{orlik2014jordan}, we have a surjective map $q_{I}:\overline{M}(-\underline{{\lambda}})\longrightarrow \overline{M}_{I}(-\underline{{\lambda}})$ and an exact sequence, 
\begin{equation}\label{qIvermamodule}
\bigoplus_{\substack{w\in \sW_{I,\Sigma_L}\\l(w)=1}}\overline{M}(-w\cdot\underline{{\lambda}})\xrightarrow{	\bigoplus_w\otimes_{\sigma\in \Sigma_L}i_{w_{\sigma}}} \overline{M}(-\underline{{\lambda}})\longrightarrow \overline{M}_{I}(-\underline{{\lambda}})\longrightarrow 0.
\end{equation}
If $I'\subset I\subseteq \Delta_n$, then we obtain a transition map $q_{I',I}:\overline{M}_{I'}(-\underline{{\lambda}})\longrightarrow \overline{M}_{I}(-\underline{{\lambda}})$ such that $q_{I}=q_{I',I}\circ q_{I'}$.\;

Let $\underline{{\lambda}}\in X^+_{\Delta_{n}}$ be a dominant weight, recall that the $I$-parabolic BGG-resolution of $\overline{L}(-\underline{{\lambda}})$ \cite[Section 9.16, Theorem]{humphreysBGG} is given by the exact sequence
\begin{equation}\label{BGGRES2GER}
\begin{aligned}
	0\rightarrow \overline{M}_{I}(-{}^Iw\cdot\underline{{\lambda}})\rightarrow\bigoplus_{\substack{w\in
			{}^I\sW_{n,\Sigma_L}\\l(w)=l(^Iw)-1}}&\overline{M}_{I}(-w\cdot\underline{{\lambda}})\rightarrow\cdots\\
	&\cdots\rightarrow\bigoplus_{\substack{w\in {}^I\sW_{n,\Sigma_L}\\l(w)=1}}\overline{M}_{I}(-w\cdot\underline{{\lambda}})\rightarrow \overline{M}_{I}(-\underline{{\lambda}})\rightarrow \overline{L}(-\underline{\lambda})\rightarrow 0,
\end{aligned}
\end{equation}
where $^Iw$ is the longest element in ${}^I\sW_{n,\Sigma_L}$.\;The differentials in (\ref{BGGRES2GER}) are specified in the end of \cite[Section 2]{orlik2014jordan} in terms of some alternate sums of $\{i_{w_{\sigma}',w_\sigma}\}_{w_{\sigma}',w_\sigma\in \sW_{n,\sigma},\sigma\in \Sigma_L}$.\;

We use the notation of Section \ref{notationger} "General linear group $\GLN_n$-II".\;Let $I$ be a subset of $\Delta_n(k)$.\;In our setting, we apply (\ref{BGGRES2GER}) to get the following $I\cup \Delta_n^k$-parabolic BGG-resolution of $\overline{L}(-\underline{{\lambda}})$:
\begin{equation}\label{BGGRES2}
\begin{aligned}
	0\rightarrow \overline{M}^{\lrr}_{I}(-{}^{I\cup \Delta_n^k}w\cdot\underline{{\lambda}})&\rightarrow\bigoplus_{\substack{w\in
			{}^I\sW^{\lrr}_{n,\Sigma_L}\\l(w)=l(^Iw)-1}}\overline{M}^{\lrr}_{I}(-w\cdot\underline{{\lambda}})\rightarrow\cdots\\
	&\cdots\rightarrow\bigoplus_{\substack{w\in {}^I\sW^{\lrr}_{n,\Sigma_L}\\l(w)=1}}\overline{M}^{\lrr}_{I}(-w\cdot\underline{{\lambda}})\rightarrow \overline{M}^{\lrr}_{I}(-\underline{{\lambda}})\rightarrow \overline{L}(-\underline{\lambda})\rightarrow 0,
\end{aligned}
\end{equation}
Applying the exact functor $\mathcal{F}^{G}_{\op^{\lrr}_{{I}}(L)}(-,\pi_I)$, we get the following exact sequence, 
\begin{equation}\label{BGGRESfunctor}
\begin{aligned}
	\mathcal{B}_I:   0\leftarrow\BI_{\op^{\lrr}_{{I}}}^{G}(\pi,{}^Iw\cdot\underline{{\lambda}})&\leftarrow\bigoplus_{\substack{w\in
			{}^I\sW^{\lrr}_{n,\Sigma_L}\\l(w)=l(^Iw)-1}}\BI_{\op^{\lrr}_{{I}}}^{G}(\pi,w\cdot\underline{{\lambda}})\leftarrow\cdots\\
	&\cdots\leftarrow\bigoplus_{\substack{w\in {}^I\sW^{\lrr}_{n,\Sigma_L}\\l(w)=1}}\BI_{\op^{\lrr}_{{I}}}^{G}(\pi,w\cdot\underline{{\lambda}})\leftarrow \BI_{\op^{\lrr}_{{I}}}^{G}(\pi,\underline{{\lambda}})\leftarrow i_{\op^{\lrr}_{{I}}}^{G}(\pi,\underline{{\lambda}})\leftarrow 0.
\end{aligned}
\end{equation}

We are going to prove the acyclicity of the locally $\BQ_p$-algebraic (resp., locally $\BQ_p$-analytic) Tits complexes.\;The locally $\BQ_p$-analytic (resp., locally $\BQ_p$-algebraic) Tits complexes show that locally $\BQ_p$-analytic (resp., locally $\BQ_p$-algebraic) generalized parabolic Steinberg representation is derived equivalent to a complex of (direct sum of) parabolically induced locally $\BQ_p$-analytic (resp., locally $\BQ_p$-algebraic) principal series.\;

For $0\leq j\leq k-1-|I|$ and $\underline{\lambda}\in X_{\Delta_{n}}^+$, we define 
\begin{equation}
\begin{aligned}
	&C^\infty_{I,j}:=C^\infty_{I,j}(\pi,\underline{\lambda})=\bigoplus_{\substack{I\subseteq K\subseteq {\Delta_n(k)}\\|K\backslash I|=r}}i_{\op^{\lrr}_{{K}}}^{G}(\pi,\underline{\lambda}),\\
	&C^{\ana}_{I,j}:=C^{\ana}_{I,j}(\pi,\underline{\lambda})=\bigoplus_{\substack{I\subseteq K\subseteq {\Delta_n(k)}\\|K\backslash I|=r}}\BI_{\op^{\lrr}_{{K}}}^{G}(\pi,\underline{\lambda}).
\end{aligned}
\end{equation}

The following proposition first gives the acyclicity of the locally $\BQ_p$-algebraic Tits complexes.\;It follows from (\ref{[A3]-1}) and (\ref{[A3]-2}), together with the same combinatorial technique in \cite[Propostion 11]{2012Orlsmoothextensions} (see also \cite[Section 2, Proposition 6]{schneider1991cohomology}).\;
\begin{pro}\label{smoothexact} Let $I\subseteq {\Delta_n(k)}$ and $\underline{\lambda}\in X_{\Delta_{n}}^+$, we have a natural exact sequence of locally $\BQ_p$-algebraic representations of $G$:
\begin{equation}\label{smoothBTseq}
	\begin{aligned}
		0\rightarrow C^\infty_{I,k-1-|I|} \rightarrow C^\infty_{I,k-2-|I|} \rightarrow C^\infty_{I,k-3-|I|} \rightarrow &\cdots \rightarrow C^\infty_{I,1} \\&\rightarrow C^\infty_{I,0}=i_{\op^{\lrr}_{{I}}}^{G}(\pi,\underline{\lambda}) \rightarrow v_{\op^{\lrr}_{{I}}}^{\infty}(\pi,\underline{\lambda}) \rightarrow 0.
	\end{aligned}
\end{equation}
Here the differentials $d_{K',K}:i_{\op^{\lrr}_{{K'}}}^{G}(\pi,\underline{\lambda})\rightarrow i_{\op^{\lrr}_{{K}}}^{G}(\pi,\underline{\lambda})$ are defined as follows.\;If $K\nsubseteq K'$, $d_{K',K}=0$.\;If $K'=\{k_1r<\cdots<k_lr\}$ and $K'=K\cup\{k_ir\}$, we put $d_{K',K}=(-1)^ip_{K',K}$, where $p_{K',K}:i_{\op^{\lrr}_{{K'}}}^{G}(\pi,\underline{\lambda})\hookrightarrow i_{\op^{\lrr}_{{K}}}^{G}(\pi,\underline{\lambda})$ is the injection defined in (\ref{[A1]-2}).\;
\end{pro}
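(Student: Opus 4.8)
The plan is to identify (\ref{smoothBTseq}) with an augmented \v{C}ech-type complex attached to a distributive family of subrepresentations, and then to invoke the combinatorial acyclicity argument of Schneider--Stuhler in the form used by Orlik. Put $M:=i_{\op^{\lrr}_{{I}}}^{G}(\pi,\underline{\lambda})$ and, for each $\alpha\in S:=\Delta_n(k)\backslash I$, set $N_\alpha:=i_{\op^{\lrr}_{{I\cup\{\alpha\}}}}^{G}(\pi,\underline{\lambda})$, which by Proposition~\ref{axioms}, (\ref{[A1]-2}) is a subrepresentation of $M$; more generally write $N_T:=\bigcap_{\alpha\in T}N_\alpha$ for $T\subseteq S$ (with $N_\emptyset=M$). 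Tensoring the intersection identity (\ref{[A3]-1}) with the finite-dimensional algebraic representation $L(\underline{\lambda})$ — a flat $E$-module, so that $-\otimes_E L(\underline{\lambda})$ is exact and commutes with the formation of finite sums and finite intersections of subrepresentations — one obtains, for every $I\subseteq K\subseteq\Delta_n(k)$,
\[
N_{K\backslash I}=\bigcap_{\alpha\in K\backslash I}N_\alpha=i_{\op^{\lrr}_{{K}}}^{G}(\pi,\underline{\lambda})\subseteq M .
\]
Hence $C^\infty_{I,j}=\bigoplus_{T\subseteq S,\ |T|=j}N_T$, while $u_{\op^{\lrr}_{{I}}}^{\infty}(\pi,\underline{\lambda})=\sum_{\alpha\in S}N_\alpha$ (every $J\supsetneq I$ contains some $I\cup\{\alpha\}$, and each $N_\alpha$ is of this shape) and so $v_{\op^{\lrr}_{{I}}}^{\infty}(\pi,\underline{\lambda})=M/\sum_{\alpha\in S}N_\alpha$. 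With the signed inclusions $d_{K',K}=(-1)^i p_{K',K}$ of the statement, (\ref{smoothBTseq}) becomes exactly the augmented complex
\[
0\to\bigoplus_{|T|=|S|}N_T\longrightarrow\cdots\longrightarrow\bigoplus_{|T|=1}N_T\longrightarrow M\longrightarrow M\Big/\sum_{\alpha\in S}N_\alpha\to 0
\]
of the family $\{N_\alpha\}_{\alpha\in S}$ of submodules of $M$.

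It then remains to check exactness of this complex. At the last two spots it is immediate: the image of $\bigoplus_{\alpha}N_\alpha\to M$ is $\sum_\alpha N_\alpha=u_{\op^{\lrr}_{{I}}}^{\infty}(\pi,\underline{\lambda})$, which is by definition the kernel of $M\twoheadrightarrow v_{\op^{\lrr}_{{I}}}^{\infty}(\pi,\underline{\lambda})$. For exactness in the remaining degrees I would verify that $\{N_\alpha\}_{\alpha\in S}$ is a \emph{distributive} family of submodules of $M$, i.e.\ that $N_\alpha\cap\sum_{j}N_{\beta_j}=\sum_{j}(N_\alpha\cap N_{\beta_j})$ for all $\alpha,\beta_j\in S$; this is essentially the content of Proposition~\ref{axioms}, (\ref{[A3]-2}) (applied with the ``$I$'' there replaced by $I\cup\{\alpha\}$ and the ``$I_j$'' by $I\cup\{\beta_j\}$), once more read after $-\otimes_E L(\underline{\lambda})$ and combined with (\ref{[A3]-1}) to rewrite all the intersections as the $N_T$. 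Granting distributivity, the vanishing of the homology of the above complex in positive degrees is the purely combinatorial statement of \cite[Section~2, Proposition~6]{schneider1991cohomology} (see also \cite[Proposition~11]{2012Orlsmoothextensions}), whose proof — an inclusion--exclusion argument, or induction on $|S|$ by splitting according to whether a fixed $\alpha\in S$ lies in $T$ — goes through word for word. The degenerate case $I=\Delta_n(k)$, where $S=\emptyset$ and (\ref{smoothBTseq}) collapses to $0\to M\to v_{\op^{\lrr}_{\Delta_n(k)}}^{\infty}(\pi,\underline{\lambda})=M\to 0$, is trivial.

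The genuinely new input, and hence the main obstacle, has already been absorbed into Proposition~\ref{axioms}: it is there (proved in the Appendix) that one must establish the inclusions (\ref{[A1]-2}) and the intersection and distributivity identities (\ref{[A3]-1}), (\ref{[A3]-2}) for the parabolic Steinberg modules attached to the Zelevinsky segment $\Delta_{[k-1,0]}(\pi)$ — the step where the hypothesis $r>1$ really intervenes, and which is vacuous in the Borel case $r=1$. Within the present argument the only points demanding care are bookkeeping: confirming that $-\otimes_E L(\underline{\lambda})$ commutes with all the finite intersections and sums used above (clear, since $L(\underline{\lambda})$ is finite-dimensional over $E$, hence flat), and checking that the sign rule $d_{K',K}=(-1)^i p_{K',K}$ forces composites of consecutive differentials to vanish, so that one indeed has a complex and the sign conventions of \cite{schneider1991cohomology} apply verbatim.
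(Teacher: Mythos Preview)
Your proposal is correct and follows essentially the same approach as the paper: the paper's proof merely says that exactness follows from the intersection and distributivity identities (\ref{[A3]-1}), (\ref{[A3]-2}) of Proposition~\ref{axioms} together with the combinatorial technique of \cite[Proposition~11]{2012Orlsmoothextensions} (equivalently \cite[Section~2, Proposition~6]{schneider1991cohomology}), and you have spelled this out in detail. Your additional remarks about flatness of $L(\underline{\lambda})$ and the sign check for $d\circ d=0$ are routine bookkeeping that the paper leaves implicit.
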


We also have a locally analytic analogy of the above situations.\;This is a generalization of  \cite[Theorem 4.2]{orlik2014jordan} and \cite[Theorem 2.6]{2019DINGSimple}.\;We modify the proof of \cite[Theorem 4.2]{orlik2014jordan} to our content.\;
\begin{pro}\label{analyticexact}
Let $I\subseteq {\Delta_n(k)}$ and $\underline{\lambda}\in X_{\Delta_{n}}^+$, we have a natural exact sequence of locally analytic representations of $G$:
\begin{equation}\label{analyticBTseq}
	\begin{aligned}
		0\rightarrow C^{\ana}_{I,k-1-|I|} \rightarrow C^{\ana}_{I,k-2-|I|} \rightarrow C^{\ana}_{I,k-3-|I|} \rightarrow& \cdots \rightarrow C^{\ana}_{I,1} \\&\rightarrow C^{\ana}_{I,0}=\BI_{\op^{\lrr}_{{I}}}^{G}(\pi,\underline{\lambda}) \rightarrow v_{\op^{\lrr}_{{I}}}^{\ana}(\pi,\underline{\lambda}) \rightarrow 0.
	\end{aligned}
\end{equation}
Here the differentials $d_{K',K}:\BI_{\op^{\lrr}_{{K'}}}^{G}(\pi,\underline{\lambda})\rightarrow \BI_{\op^{\lrr}_{{K}}}^{G}(\pi,\underline{\lambda})$ are defined as follows.\;If $K\nsubseteq K'$, $d_{K',K}=0$.\;If $K'=\{k_1r<\cdots<k_lr\}$ and $K'=K\cup\{k_ir\}$, we put $d_{K',K}=(-1)^ip^{\ana}_{K',K}$, where $p^{\ana}_{K',K}:\BI_{\op^{\lrr}_{{K'}}}^{G}(\pi,\underline{\lambda})\hookrightarrow \BI_{\op^{\lrr}_{{K}}}^{G}(\pi,\underline{\lambda})$ is the injection defined in (\ref{injINANAN}).\;
\end{pro}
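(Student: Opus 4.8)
The plan is to deduce the acyclicity of the locally $\BQ_p$-analytic Tits complex (\ref{analyticBTseq}) from the locally $\BQ_p$-algebraic one (\ref{smoothBTseq}) by applying the Orlik--Strauch functor, but one must be careful: the functor $\cF^G_{\op^{\lrr}_I(L)}(-,\pi_I)$ is exact only in the second variable and contravariant in the first, so I cannot simply push the sequence (\ref{smoothBTseq}) through it. Instead, I would run the same combinatorial/homological argument directly at the level of locally $\BQ_p$-analytic representations. First I would observe that, by (\ref{injINANAN}), each $\BI^G_{\op^{\lrr}_K}(\pi,\ul{\lambda})$ for $K\supseteq I$ embeds into $\BI^G_{\op^{\lrr}_I}(\pi,\ul{\lambda})$ via $p^{\ana}_{K,I}$, and by the definition of $v^{\ana}_{\op^{\lrr}_I}(\pi,\ul{\lambda})$ we have $v^{\ana}_{\op^{\lrr}_I}(\pi,\ul{\lambda})=\BI^G_{\op^{\lrr}_I}(\pi,\ul{\lambda})/\sum_{J\supsetneq I}\BI^G_{\op^{\lrr}_J}(\pi,\ul{\lambda})$; so the tail of (\ref{analyticBTseq}) is exact by construction, exactly as in the smooth case. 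The heart of the matter is exactness in the middle, i.e. that the augmented Koszul-type complex built from the subobjects $\{\BI^G_{\op^{\lrr}_K}(\pi,\ul{\lambda})\}_{K\supseteq I}$ inside $\BI^G_{\op^{\lrr}_I}(\pi,\ul{\lambda})$ is acyclic.

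The key step is to verify that the lattice of subobjects $\{\BI^G_{\op^{\lrr}_K}(\pi,\ul{\lambda})\}_{I\subseteq K\subseteq\Delta_n(k)}$ of $\BI^G_{\op^{\lrr}_I}(\pi,\ul{\lambda})$ satisfies the same distributivity relations that $\{\,i^G_{\op^{\lrr}_K}(\pi,\ul{\lambda})\,\}$ satisfy inside $i^G_{\op^{\lrr}_I}(\pi,\ul{\lambda})$ by Proposition \ref{axioms}, i.e. the analogues of (\ref{[A3]-1}) and (\ref{[A3]-2}):
\begin{equation}
\BI^G_{\op^{\lrr}_{I_1}}(\pi,\ul{\lambda})\cap\BI^G_{\op^{\lrr}_{I_2}}(\pi,\ul{\lambda})=\BI^G_{\op^{\lrr}_{I_1\cup I_2}}(\pi,\ul{\lambda}),
\end{equation}
and the corresponding modular law. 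For this I would use that $\cF^G_{\op^{\lrr}_I(L)}(-,\pi_I)$ is an exact contravariant functor sending the surjections $q_{K',K}$ of generalized parabolic Verma modules to the injections $p^{\ana}_{K',K}$; exactness turns the cokernel/intersection relations among the $\overline{M}^{\lrr}_K(-\ul{\lambda})$ (which are controlled by (\ref{qIvermamodule}) and the strong-linkage description of constituents recalled from \cite{bergdall2018adjunction}) into the desired relations among the $\BI^G_{\op^{\lrr}_K}(\pi,\ul{\lambda})$. Equivalently, one can dualize: the continuous duals $\BI^G_{\op^{\lrr}_K}(\pi,\ul{\lambda})^\vee$ are $\cD^{\BQ_p-\ana}(G,E)$-module quotients of $\BI^G_{\op^{\lrr}_I}(\pi,\ul{\lambda})^\vee$, corresponding under the Orlik--Strauch recipe to $U(\fg_{\Sigma_L})\otimes_{U(\overline{\fp}_{K,\Sigma_L})}\overline{L}^{\lrr}(-\ul{\lambda})_K\otimes(\text{smooth part})$, and the sums/intersections translate into sums/intersections of the corresponding Verma submodules, which are distributive because the relevant category $\cO^{\overline{\fp}_I,\Sigma_L}_{\alge}$ behaves well. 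Once these distributivity relations are in hand, the acyclicity of (\ref{analyticBTseq}) follows from the purely combinatorial argument of \cite[Proposition 11]{2012Orlsmoothextensions} (see also \cite[Section 2, Proposition 6]{schneider1991cohomology}), verbatim as in the proof of Proposition \ref{smoothexact} and as in \cite[Theorem 4.2]{orlik2014jordan}.

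I expect the main obstacle to be precisely the verification of the intersection formula $\BI^G_{\op^{\lrr}_{I_1}}(\pi,\ul{\lambda})\cap\BI^G_{\op^{\lrr}_{I_2}}(\pi,\ul{\lambda})=\BI^G_{\op^{\lrr}_{I_1\cup I_2}}(\pi,\ul{\lambda})$: while the sum relation is formal from the surjectivity statements, the intersection relation requires knowing that the maps $p^{\ana}_{K',K}$ are jointly ``transverse'' in the appropriate sense, which at the Verma-module level means controlling $U(\fg_{\Sigma_L})\otimes_{U(\overline{\fp}_{I_1,\Sigma_L})}\overline{L}^{\lrr}(-\ul{\lambda})_{I_1}+U(\fg_{\Sigma_L})\otimes_{U(\overline{\fp}_{I_2,\Sigma_L})}\overline{L}^{\lrr}(-\ul{\lambda})_{I_2}$ inside $\overline{M}^{\lrr}(-\ul{\lambda})$. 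This is where one genuinely uses that $\ul{\lambda}$ is dominant (so the $I$-parabolic BGG resolution (\ref{BGGRES2}) is available) together with the exactness and faithfulness properties of $\cF^G_{\op^{\lrr}_I(L)}(-,\pi_I)$; one should also double-check that the smooth ingredient $\pi_I$ interacts correctly, using $\pi_I\hookrightarrow i^{\bL^{\lrr}_I(L)}_{\op^{\lrr}_J(L)\cap\bL^{\lrr}_I(L)}\pi_J$ from Proposition \ref{axioms}(\ref{[A1]-2}) and the compatibility $\cF^G_{\op^{\lrr}_I}(M,\pi_I)\cong\cF^G_{\op^{\lrr}_J}(M,i^{\bL^{\lrr}_J(L)}_{\bL^{\lrr}_J(L)\cap\op^{\lrr}_I(L)}\pi_I)$ for $I\subseteq J$. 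Modulo this point, the rest of the proof is the same combinatorics as in the already-established smooth case.
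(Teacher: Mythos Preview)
Your overall strategy is correct and matches the paper's: reduce to the distributivity relations
\[
\BI^G_{\op^{\lrr}_{I}}(\pi,\ul{\lambda})\cap\BI^G_{\op^{\lrr}_{J}}(\pi,\ul{\lambda})=\BI^G_{\op^{\lrr}_{I\cup J}}(\pi,\ul{\lambda}),\qquad
\BI^G_{\op^{\lrr}_{I}}\cap\Big(\sum_j\BI^G_{\op^{\lrr}_{I_j}}\Big)=\sum_j\BI^G_{\op^{\lrr}_{I}}\cap\BI^G_{\op^{\lrr}_{I_j}},
\]
and then invoke the combinatorics of \cite[Proposition~11]{2012Orlsmoothextensions}. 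The difference lies in how the intersection identity is verified and how the weight $\ul{\lambda}$ is handled.

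The paper does \emph{not} attempt to control sums of generalized Verma modules for arbitrary dominant $\ul{\lambda}$ directly, as you propose. Instead it proceeds in two steps. First, for $\ul{\lambda}=\ul{0}$, the intersection identity is proved by a Jordan--H\"older argument: after reducing to the Levi $\bL^{\lrr}_{I\cup J}$, any irreducible constituent $\cX$ of the cokernel is written in two ways as $\cF^{\bL^{\lrr}_{I\cup J}}_{\bL^{\lrr}_{I\cup J}\cap\op^{\lrr}_K}(\overline{L}^{\lrr}(-s\cdot\ul{0})_{I\cup J},M)$ with $I\subseteq K$ and $J\subseteq K'$; the uniqueness statement \cite[Corollary~2.7]{breuil2016socle} forces $K=K'$, hence $K=I\cup J$ and $s=1$, and then the \emph{smooth} intersection property (\ref{[A3]-1}) forces $M=\pi_{I\cup J}$. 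This is where the smooth Proposition~\ref{axioms} is genuinely used as input, not merely as an analogy. Second, for general $\ul{\lambda}$, the paper builds a double complex whose columns are the BGG resolutions $\cB_K$ of (\ref{BGGRESfunctor}) and whose rows are Tits complexes at the twisted weights $w\cdot\ul{\lambda}$; commutativity is checked via the Verma-module square (\ref{doublecomplexverma}), and exactness of each row for $w\neq 1$ follows by induction on $|\Delta_n(k)|$ applied to the Levi $\bL^{\lrr}_{I(w)}$ (since $I(w)\subsetneq\Delta_n(k)$).

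Your proposed unified route---proving the Verma-module distributivity directly for dominant $\ul{\lambda}$ and pushing through $\cF^G_{\op^{\lrr}_I}$---is the line of \cite[Theorem~4.2]{orlik2014jordan} in the Borel case, and it should work here too, but you would still need an argument coupling the Verma side with the nontrivial smooth side $\pi_I$; the paper's constituent analysis (for $\ul{\lambda}=\ul{0}$) plus double-complex reduction is precisely a way to decouple these. So your sketch is not wrong, but the step you flag as the ``main obstacle'' is real, and the paper circumvents it rather than attacking it head-on.
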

\begin{proof}We prove (\ref{analyticBTseq}) for $\underline{\lambda}=\underline{0}$ at first.\;By the same combinatorial technique in \cite[Proposition 11]{2012Orlsmoothextensions} (see also \cite[Section 2, Proposition 6]{schneider1991cohomology}), it suffices to prove that
\begin{equation}\label{analytic[A3]}
	\begin{aligned}
		&\BI_{\op^{\lrr}_{{I}}}^{G}(\pi)\cap \BI_{\op^{\lrr}_{{J}}}^{G}(\pi)=\BI_{\op^{\lrr}_{I\cup J}}^{G}(\pi), \\
		&\BI_{\op^{\lrr}_{{I}}}^{G}(\pi)\cap\bigg(\BI_{\op^{\lrr}_{I_1}}^{G}(\pi)+\cdots+\BI_{\op^{\lrr}_{I_m}}^{G}(\pi)\bigg)=\BI_{\op^{\lrr}_{{I}}}^{G}(\pi)\cap \BI_{\op^{\lrr}_{I_1}}^{G}(\pi)+\cdots+\BI_{\op^{\lrr}_{{I}}}^{G}(\pi)\cap \BI_{\op^{\lrr}_{I_m}}^{G}(\pi).
	\end{aligned}
\end{equation}
\textbf{Proof of the first identity of (\ref{analytic[A3]}).\;}\\
By the exactness of the induction functor $\mathrm{Ind}_{\op^{\lrr}_{{I\cup J}}(L)}^{G}$, it suffices to establish the identity $\Big(\BI^{\bL^{\lrr}_{I\cup J}(L)}_{\op^{\lrr}_{{I}}(L)\cap\bL^{\lrr}_{K}(L)}\pi_I\Big)\cap \Big(\BI^{\bL^{\lrr}_{I\cup J}(L)}_{\op^{\lrr}_{{J}}(L)\cap\bL^{\lrr}_{K}(L)}\pi_J\Big)=\pi_{I\cup J}$.\;It is clear that the right-hand side is contained in the left-hand.\;Let $\mathcal{X}$ be a nonzero irreducible constituent of its cokernel.\;Then by \cite[Theorem]{orlik2015jordan}, we see that $\mathcal{X}$ has the form $\mathcal{F}^{\bL^{\lrr}_{I\cup J}(L)}_{\bL^{\lrr}_{I\cup J}(L)\cap\op^{\lrr}_{{K}}(L)}(\overline{L}^{\lrr}(-s\cdot\underline{0})_{I\cup J},M) $, where $s\in S_n^{|\Sigma_L|}$ satisfying $s\cdot\underline{0}\in X^+_{\Delta^k_n\cup K}$ for some maximal $I\subseteq K\subseteq I\cup J$, and $M$ is an irreducible constituent of $i^{\bL^{\lrr}_{K}(L)}_{\op^{\lrr}_{{I}}(L)\cap\bL^{\lrr}_{K}(L)}\pi_I$.\;On the other hand, $\mathcal{X}$ also has the form 
$\mathcal{F}^{\bL^{\lrr}_{I\cup J}(L)}_{\bL^{\lrr}_{I\cup J}(L)\cap\op^{\lrr}_{{K'}}(L)}(\overline{L}^{\lrr}(-s'\cdot\underline{0})_{I\cup J},M')$, where $s'\in S_n^{|\Sigma_L|}$ satisfying $s\cdot\underline{0}\in X^+_{\Delta^k_n\cup K'}$ for some maximal $J\subseteq K'\subseteq I\cup J$, and $M'$ is an irreducible constituent of $i^{\bL^{\lrr}_{{K'}}(L)}_{\op^{\lrr}_{{J}}(L)\cap\bL^{\lrr}_{K}(L)}\pi_J$.\;By
\cite[Corollary 2.7]{breuil2016socle}, we have $K=K'$, $s=s'$ and $M'\cong M$.\;Therefore, we deduce that $s=s'\in [\sW^{\lrr}_{ K,\Sigma_{L}}\backslash\sW^{\lrr}_{I\cup J,\Sigma_{L}}]\cap [\sW^{\lrr}_{ K',\Sigma_{L}}\backslash\sW^{\lrr}_{I\cup J,\Sigma_{L}}]=1$ since $K'\cup K=I\cup J$ (since we have $I\subseteq K\subseteq I\cup J$ and $J\subseteq K'\subseteq I\cup J$).\;In this case, we see that $M=M'$ is an irreducible constituent of $\Big(i^{\bL^{\lrr}_{K}(L)}_{\op^{\lrr}_{{I}}(L)\cap\bL^{\lrr}_{K}(L)}\pi_I\Big)$ and $\Big(i^{\bL^{\lrr}_{K}(L)}_{\op^{\lrr}_{{J}}(L)\cap\bL^{\lrr}_{K}(L)}\pi_J\Big)$.\;Then, by (\ref{[A3]-1}) and \cite[Proposition 1.5 (b)]{bernstein1977induced1}, we deduce $M\cong M'=\pi_{I\cup J}$.\;This proves the first identity.\;\\
\textbf{Proof of the second identity of (\ref{analytic[A3]}).\;}\\
We prove the second identity by using  induction on $m$.\;We deduce from the first identity that the right-hand side of the second identity is equal to  $\BI_{\op^{\lrr}_{{I\cup I_1}}}^{G}(\pi)+\cdots+\BI_{\op^{\lrr}_{{I\cup I_m}}}^{G}(\pi)$.\;Put $\cV=\BI_{\op^{\lrr}_{{I_1}}}^{G}(\pi)+\cdots+\BI_{\op^{\lrr}_{{ I_{m-1}}}}^{G}(\pi)$ and $\cW=\BI_{\op^{\lrr}_{{I\cup I_1}}}^{G}(\pi)+\cdots+\BI_{\op^{\lrr}_{{I\cup I_{m-1}}}}^{G}(\pi)$.\;Then by induction, we have the following commutative diagram:
\begin{equation}
	\xymatrix@R-0.2pc@C-1pc{0 \ar[r] &  \sum\limits_{j=1}^{m-1}\BI_{\op^{\lrr}_{{I\cup I_j\cup I_m}}}^{G}(\pi) \ar[d]^{\simeq}  \ar[r] & \BI_{\op^{\lrr}_{{I\cup I_m}}}^{G}(\pi)\bigoplus\cW  \ar[d]^{\simeq}  \ar[r] & \BI_{\op^{\lrr}_{{I\cup I_m}}}^{G}(\pi)+\cW \ar@{^(->}[d] \ar[r] & 0  \\
		0 \ar[r] & \BI_{\op^{\lrr}_{{I}}}^{G}(\pi)\cap \sum\limits_{j=1}^{m-1}\BI_{\op^{\lrr}_{{I_j\cup I_m}}}^{G}(\pi)  \ar[r] & \BI_{\op^{\lrr}_{{I\cup I_m}}}^{G}(\pi)\bigoplus \big(\BI_{\op^{\lrr}_{{I}}}^{G}(\pi)\cap \cV\big) \ar[r] & \BI_{\op^{\lrr}_{{I}}}^{G}(\pi)\cap\sum\limits_{j=1}^{m}\BI_{\op^{\lrr}_{{I_j}}}^{G}(\pi)\ar[r] & 0.}
\end{equation}
The result follows from the snake lemma.\;\\
\textbf{For general $\underline{\lambda}$}, we use the same route of the proof of \cite[Theorem, 4.2]{orlik2014jordan}.\;The proof is by induction on the $|\Delta_n(k)|$.\;When $|\Delta_n(k)|=1$, the acyclicity of (\ref{analyticBTseq}) is trivial.\;We assume that $|\Delta_n(k)|>1$.\;We denote 
\begin{equation}
	\begin{aligned}
		\BI_{\op^{\lrr}_{{I}}}^{G}[k]&:=\bigoplus_{\substack{w\in{}^I\sW^{\lrr}_{n,\Sigma_L}\\l(w)=k}}\BI_{\op^{\lrr}_{{I}}}^{G}(\pi,w\cdot\underline{{\lambda}}),\text{\;and\;}\cI_{I,d,l}:=\bigoplus\limits_{\substack{I\subseteq K\subseteq {\Delta_n(k)}\\|K\backslash I|=d}}\BI_{\op^{\lrr}_{{K}}}^{G}(\pi,\underline{\lambda})[l],\\
	\end{aligned}
\end{equation}
for short (recall that $\cI_{I,d,0}=C^{\ana}_{I,d}$).\;The complexes in (\ref{BGGRESfunctor}), for various $I\subseteq K\subseteq {\Delta_n(k)}$, induce the following double complex (note that the vertical sequences are given by (\ref{BGGRESfunctor}), for various $I\subseteq K\subseteq {\Delta_n(k)}$):
\[\xymatrix{0 \ar[r]  &  i_{G}^{G}(\pi,\underline{\lambda})\ar[d]^{=} \ar[r]  &   C^\infty_{I,i_0-1}   \ar[d] \ar[r]  &  \cdots \ar[r] &   C^\infty_{I,1}    \ar[d] \ar[r] &   C^\infty_{I,0} \ar[d] \\
	0  \ar[r]&  \BI_{G}^{G}(\pi,\underline{\lambda})\ar[d]   \ar[r]&   C^{\ana}_{I,i_0-1}   \ar[d] \ar[r]&  \cdots  \ar[r]&   C^{\ana}_{I,1}    \ar[d] \ar[r]&   C^{\ana}_{I,0} \ar[d] \\
	0 \ar[r]&    0  \ar[d]  \ar[r] &   \cI_{I,i_0-1,1}   \ar[d] \ar[r]&  \cdots  \ar[r]&   \cI_{I,1,1}    \ar[d] \ar[r]&   \cI_{I,0,1} \ar[d]\\
	0 \ar[r]&    0  \ar[d]   \ar[r]&   \cI_{I,i_0-1,2}   \ar[d]\ar[r]&  \cdots  \ar[r]&   \cI_{I,1,2}    \ar[d] \ar[r]&   \cI_{I,0,2} \ar[d]\\
	0 \ar[r]&    \vdots   \ar[d]  \ar[r]&   \vdots   \ar[d] \ar[r]&  \cdots \ar[r] & \vdots  \ar[d] \ar[r]&   \vdots   \ar[d]\\
	0 \ar[r]&    0  \ar[d]   \ar[r]&    0   \ar[d] \ar[r]&  \cdots  \ar[r]&   \cI_{I,1,t}    \ar[d] \ar[r]&   \cI_{I,0,t} \ar[d]\\
	0 \ar[r]&    0    \ar[r]&    0    \ar[r]&  \cdots  \ar[r]&   0  \ar[r]&  \cI_{I,0,l(^Iw)} ,}\]
where $t=l(^Iw)-1$ and $i_0=|\Delta_n(k)\backslash I|$.\;To prove the commutativity of this diagram, the heart is to establish the following commutative diagram
\begin{equation}\label{doublecomplexverma}
	\xymatrix{ \overline{M}^{\lrr}_{{K'}}(-w'\cdot\underline{{\lambda}}) \ar[d]_{i_{w',w}}\ar[rr]^{q_{\Delta^k_n\cup K',\Delta^k_n\cup K}} &  & \overline{M}^{\lrr}_{{K}}(-w'\cdot\underline{{\lambda}}) \ar[d]^{i_{w',w}} \\  \overline{M}^{\lrr}_{{K}}(-w\cdot\underline{{\lambda}})\ar[rr]^{q_{\Delta^k_n\cup K',\Delta^k_n\cup K}} &  & \overline{M}^{\lrr}_{{K'}}(-w\cdot\underline{{\lambda}}),}
\end{equation}
for any $K'\subset K\subseteq {\Delta_n(k)}$ with $|K'|=|K|-1$.\;Note that $w'\geq w$ and $l(w')=l(w)+1$, then there exists a
$\sigma\in \Sigma_L$, such that $w'_{\sigma'}=w_{\sigma'}$ for ${\sigma'}\neq \sigma$ and $w'_{\sigma}\geq w_{\sigma}$ with $l(w'_{\sigma})=l(w_{\sigma})+1$.\;
We can get the commutativity of  (\ref{doublecomplexverma}) by applying  the proof of a similar diagram in \cite[Theorem 4.2, Page 662]{orlik2014jordan} to such $\sigma$-component of (\ref{doublecomplexverma}).\;Applying the functor $\mathcal{F}^G_{\op^{\lrr}_{{K'}}(L)}$ to (\ref{doublecomplexverma}), we have by \cite[Theorem (iii)]{orlik2015jordan} a commutative diagram:
\[\xymatrix@R+2pc@C+4pc{ \BI_{\op^{\lrr}_{{K}}}^{G}(\pi,w\cdot\underline{\lambda}) \ar[r]^{\mathcal{F}_{\op^{\lrr}_{{K}}(L)}^{G}(q_{K',K}, \mathrm{incl.})} \ar[d]_{\mathcal{F}_{\op^{\lrr}_{{K}}(L)}^{G}(i_{w',w})} &  \ar[d]^{\mathcal{F}_{\op^{\lrr}_{{K'}}(L)}^{G}(i_{w',w})} \BI_{\op^{\lrr}_{{K'}}}^{G}(\pi,w\cdot\underline{\lambda})\\  \BI_{\op^{\lrr}_{{K}}}^{G}(\pi,w'\cdot\underline{\lambda}) \ar[r]^{\mathcal{F}_{\op^{\lrr}_{{K}}(L)}^{G(L)}(q_{K',K}, \mathrm{incl.})} &  \BI_{\op^{\lrr}_{{K'}}}^{G}(\pi,w'\cdot\underline{\lambda})}\]
These commutative diagrams, for various $K'\subset K\subseteq {\Delta_n(k)}$, imply the commutativity of the above double complex.\;

Now we establish (\ref{analyticBTseq}).\;As in the proof of \cite[Theorem 4.2, Page 663]{orlik2014jordan}, it suffices to prove that each row of this double complex is exact except the second row and the rightmost column.\;The first row is exact by Proposition \ref{smoothexact}.\;It remains to show the exactness of the following sequence
\begin{equation}\label{analyticBTseq1}
	0\rightarrow \BI_{\op^{\lrr}_{I(w)}}^{G}(\pi,w\cdot\underline{\lambda}) \rightarrow \cdots \rightarrow \bigoplus_{\substack{I\subseteq K\subseteq {I(w)}\\|K\backslash I|=1}}\BI_{\op^{\lrr}_{{K}}}^{G}(\pi,w\cdot\underline{\lambda}) \rightarrow \BI_{\op^{\lrr}_{{I}}}^{G}(\pi,w\cdot\underline{\lambda}) \rightarrow v_{\op^{\lrr}_{{I}}}^{\ana}(\pi,w\cdot\underline{\lambda}) \rightarrow 0
\end{equation}
for each $w\in {}^I\sW^{\lrr}_{n,\Sigma_L}$.\;Note that
\[\BI_{\op^{\lrr}_{{K}}}^{G}(\pi,w\cdot\underline{\lambda})\cong \Big(\mathrm{Ind}_{\op^{\lrr}_{{I(w)}}(L)}^{G}\BI_{\op^{\lrr}_{{I}}(L)\cap \bL^{\lrr}_{{I(w)}}(L)}^{\bL^{\lrr}_{{I(w)}}(L)}(\pi,w\cdot\underline{\lambda})\Big)^{\mathbb{Q}_p-\mathrm{an}}.\]
Therefore, the exactness of induction functor $\mathrm{Ind}_{\op^{\lrr}_{{I(w)}}(L)}^{G}$ implies that it suffices to establish the exactness of the sequence
\begin{equation}\label{analyticBTseq2}
	\begin{aligned}
		0\rightarrow \BI_{\op^{\lrr}_{{I}}(L)\cap \bL^{\lrr}_{{I(w)}}(L)}^{\bL^{\lrr}_{{I(w)}}(L)}(\pi,&w\cdot\underline{\lambda}) \rightarrow \cdots \rightarrow \bigoplus_{\substack{I\subseteq K\subseteq {I(w)}\\|K\backslash I|=1}}\BI_{\op^{\lrr}_{{I}}(L)\cap \bL^{\lrr}_{{I(w)}}(L)}^{\bL^{\lrr}_{{I(w)}}(L)}(\pi,w\cdot\underline{\lambda})\\
		&\rightarrow \BI_{\op^{\lrr}_{{I}}(L)\cap \bL^{\lrr}_{{I(w)}}(L)}^{\bL^{\lrr}_{{I(w)}}(L)}(\pi,w\cdot\underline{\lambda}) \rightarrow v_{I,I(w)}^{\ana}(\pi,w\cdot\underline{\lambda}) \rightarrow 0.
	\end{aligned}
\end{equation}
This can be done by induction on $I(w)\subsetneq \Delta_n(k)$ (since $|I(w)\backslash\Delta_n^k|<|\Delta_n(k)|$, we can apply the induction hypothesis).\;This completes the proof.\;
\end{proof}
\begin{rmk}When $r=1$ and $\pi=v_1^{\frac{1-n}{2}}$, we recover the locally $\BQ_p$-algebraic and locally $\BQ_p$-analytic Tits complexes given in \cite[Section 2.1.2]{2019DINGSimple}.\;
\end{rmk}

\subsection{The Jordan-H\"{o}lder series of locally \texorpdfstring{$\BQ_p$}{Lg}-analytic generalized parabolic Steinberg representation}\label{JHofanaparastrep}

Let $m(w':w):=[\overline{M}(-w'\cdot \underline{0}):\overline{L}(-w\cdot\underline{0})]$ 
be the multiplicity of the simple highest weight module $\overline{L}(-w\cdot\underline{0})$  in the Verma module $\overline{M}(-w'\cdot \underline{0})$.\;By \cite[Section 5.2, Corollary]{humphreysBGG}, we see that $m(w',w)\neq 0$ if and only if  $w\geq w'$.\;Let $\underline{{\lambda}}\in X^+_{\Delta_n}$ be a dominant weight.\;We have $m(w',w)=[\overline{M}(-w'\cdot \underline{\lambda}):\overline{L}(-w\cdot\underline{\lambda})]$ (one can use the fact that the translation functor $T^{w_0\cdot \underline{\lambda}}_{w_0\cdot \underline{0}}$ (see \cite[section 7]{humphreysBGG}) is exact and $T^{w_0\cdot \underline{\lambda}}_{w_0\cdot \underline{0}}\overline{M}(-ww_0\cdot \underline{0})=\overline{M}(-ww_0\cdot \underline{\lambda})$, and $T^{w_0\cdot \underline{\lambda}}_{w_0\cdot \underline{0}}\overline{L}(-ww_0\cdot \underline{0})=\overline{L}(-ww_0\cdot \underline{\lambda})$).\;In particular, $[\overline{M}(-w'\cdot \underline{\lambda}):\overline{L}(-w\cdot\underline{\lambda})]\neq 0$ if and only if $w\geq w'$.\;The multiplicity $m(w',w)$  can be computed by Kazhdan-Lustzig polynomials.\;

Let $S$ be a fixed subset of $\Delta_n(k)$.\;Recall that $v_{\op^{\lrr}_S}^{\ana}(\pi,\underline{\lambda})$ is the locally $\BQ_p$-analytic generalized parabolic Steinberg representation associated to $S$.\;By \cite[The main theorem]{orlik2015jordan}, we see that the representations $\mathcal{F}^G_{\op^{\lrr}_{I(w)}}(\overline{L}(-w\cdot\underline{\lambda}),v_{J,I(w)\backslash\Delta_n^k}^{\infty}(\pi))$ where  $w\in\sW_{n,\Sigma_{L}}$ is an element such that $w\cdot\underline{\lambda}\in X^+_{\Delta_n^k\cup{S}}$ (hence $\Delta_n^k\cup S\subseteq I(w)$) and $J$ is a subset of $I(w)\backslash\Delta_n^k$ are topologically irreducible.\;All the Jordan H\"{o}lder factors of $v_{\op^{\lrr}_S}^{\ana}(\pi,\underline{\lambda})$ are obtained by this way.\;Denote by $m(w,J,S)$ the
multiplicity  of $\mathcal{F}^G_{\op^{\lrr}_{I(w)}}(\overline{L}(-w\cdot\underline{\lambda}),v_{J,I(w)\backslash\Delta_n^k}^{\infty}(\pi))$ in the  locally $\BQ_p$-analytic  generalized parabolic Steinberg representation $v_{\op^{\lrr}_S}^{\ana}(\pi,\underline{\lambda})$.\;

The following proposition is a slight generalization of  \cite[Theorem 4.6]{orlik2014jordan}.\;

\begin{pro}\label{JHanastein1} Let $w$ be an element in the Weyl group $\sW_{n,\Sigma_L}$ such that  $I(w)=\Delta_n^k\cup I$ for some subset $S\subset I\subset {\Delta_n(k)}$.\;Then for any subset $J\subset I$, the multiplicity $m(w,J,S)$ is given by
\begin{equation}\label{alternatesum}
	m(w,J,S)=\sum_{\substack{w'\in \sW_{n,\Sigma_L}\\ \mathrm{supp}(w')\backslash\Delta_n^k=J\backslash S}}(-1)^{l(w')+|J\backslash S|}\big[\lM(-w'\cdot\underline{\lambda}):\lL(-w\cdot\underline{\lambda})\big].
\end{equation}
\end{pro}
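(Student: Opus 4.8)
The plan is to follow the strategy of \cite[Theorem 4.6]{orlik2014jordan} verbatim, replacing the role of the principal series and the trivial smooth representation by the parabolically induced representations $\BI^G_{\op^{\lrr}_{{K}}}(\pi,\ul{\lambda})$ and the smooth generalized Steinberg $v^{\infty}_{J,I(w)\backslash\Delta_n^k}(\pi)$. First I would use the acyclicity of the locally $\BQ_p$-analytic Tits complex (Proposition \ref{analyticexact}) applied to $I=S$: it expresses $v^{\ana}_{\op^{\lrr}_S}(\pi,\ul{\lambda})$ as the end of an exact complex whose terms are finite direct sums $C^{\ana}_{S,j}=\bigoplus_{S\subseteq K,\,|K\backslash S|=j}\BI^G_{\op^{\lrr}_{{K}}}(\pi,\ul{\lambda})$. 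Hence, in the Grothendieck group, the class of $v^{\ana}_{\op^{\lrr}_S}(\pi,\ul{\lambda})$ equals the alternating sum $\sum_{S\subseteq K\subseteq\Delta_n(k)}(-1)^{|K\backslash S|}[\BI^G_{\op^{\lrr}_{{K}}}(\pi,\ul{\lambda})]$. The multiplicity $m(w,J,S)$ is therefore the alternating sum over such $K$ of the multiplicity of the irreducible $\mathcal{F}^G_{\op^{\lrr}_{I(w)}}(\overline{L}(-w\cdot\ul{\lambda}),v^{\infty}_{J,I(w)\backslash\Delta_n^k}(\pi))$ inside $\BI^G_{\op^{\lrr}_{{K}}}(\pi,\ul{\lambda})$.

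Next I would compute that last multiplicity. Applying the BGG resolution (\ref{BGGRESfunctor}) for the set $K$ and the exact Orlik--Strauch functor, together with the multiplicity formula $m(w',w)=[\overline{M}(-w'\cdot\ul{\lambda}):\overline{L}(-w\cdot\ul{\lambda})]$ for Verma modules, the Jordan--H\"older multiplicity of $\mathcal{F}^G_{\op^{\lrr}_{I(w)}}(\overline{L}(-w\cdot\ul{\lambda}),\,\cdot\,)$ in $\BI^G_{\op^{\lrr}_{{K}}}(\pi,\ul{\lambda})=\mathcal{F}^G_{\op^{\lrr}_{{K}}}(\overline{M}^{\lrr}_K(-\ul{\lambda}),\pi_K)$ can be read off: by \cite[Theorem]{orlik2015jordan} (the classification of Jordan--H\"older factors of $\mathcal{F}^G_{\op^{\lrr}_{{K}}}$), a factor of the form $\mathcal{F}^G_{\op^{\lrr}_{I(w)}}(\overline{L}(-w\cdot\ul{\lambda}),M)$ occurs with multiplicity $[\overline{M}^{\lrr}_K(-\ul{\lambda}):\overline{L}(-w\cdot\ul{\lambda})]\cdot[i^{\bL^{\lrr}_{I(w)}(L)}_{\op^{\lrr}_{{K}}(L)\cap\bL^{\lrr}_{I(w)}(L)}\pi_K : M]$, and by \cite[Proposition 6.3.3]{casselman1975introduction} together with the definition of $v^{\infty}_{J,I(w)\backslash\Delta_n^k}(\pi)$ and Proposition \ref{axioms}, this smooth multiplicity is $1$ precisely when $K\backslash S$ is sandwiched appropriately and $K\subseteq I(w)$ with $M=v^{\infty}_{J,I(w)\backslash\Delta_n^k}(\pi)$. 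Substituting $[\overline{M}^{\lrr}_K(-\ul{\lambda}):\overline{L}(-w\cdot\ul{\lambda})]=\sum_{w'}[\overline{M}(-w'\cdot\ul{\lambda}):\overline{L}(-w\cdot\ul{\lambda})]$, where $w'$ ranges over the minimal-length coset representatives showing up in the parabolic BGG resolution (\ref{qIvermamodule}) for $\Delta_n^k\cup K$, and re-indexing the double sum, one obtains the desired expression (\ref{alternatesum}), with the sign $(-1)^{l(w')+|J\backslash S|}$ arising from the combination of the Tits-complex sign $(-1)^{|K\backslash S|}$ and the BGG-complex sign $(-1)^{l(w')}$ after the bijection between pairs $(K,w')$ and elements $w'$ with $\mathrm{supp}(w')\backslash\Delta_n^k=J\backslash S$.

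The main obstacle I expect is the bookkeeping in the re-indexing step: one must check that, as $K$ ranges over $S\subseteq K\subseteq\Delta_n(k)$ with $K\subseteq I(w)$ and $w'$ ranges over the representatives occurring in the $\Delta_n^k\cup K$-parabolic BGG resolution with a fixed smooth factor $v^\infty_{J,\cdot}(\pi)$, the resulting pairs are in bijection with the set $\{w'\in\sW_{n,\Sigma_L}: \mathrm{supp}(w')\backslash\Delta_n^k = J\backslash S\}$, and that under this bijection $|K\backslash S|+l(w')$ has the right parity to collapse to $l(w')+|J\backslash S|$ modulo $2$. This is exactly where the combinatorics of $\mathrm{supp}(w')$ versus $I(w')$ (used already in Section \ref{Titcomp}) enters, and where $r>1$ forces one to carry the rigid block structure of $\Delta_n(k)$ rather than arbitrary subsets of $\Delta_n$; but since the smooth pieces $\pi_K$ behave by Proposition \ref{axioms} exactly as the trivial representation does in the Borel case, the combinatorial identity is formally the same as in \cite[Theorem 4.6]{orlik2014jordan} and goes through once the translation of notation is set up carefully. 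A secondary, more routine point is verifying that $\ul{\lambda}\in X^+_{\Delta_n}$ (rather than merely $X^+_{\Delta_n^k\cup S}$) is what makes $m(w',w)$ independent of $\ul{\lambda}$ via the translation functor, as recalled just before the statement.
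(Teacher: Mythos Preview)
Your overall strategy matches the paper's: use the locally analytic Tits complex (Proposition \ref{analyticexact}) for $S$ to write $m(w,J,S)$ as an alternating sum over $S\subseteq K\subseteq\Delta_n(k)$ of multiplicities in $\BI^G_{\op^{\lrr}_K}(\pi,\ul{\lambda})$, identify each such multiplicity via Orlik--Strauch as $[\overline{M}^{\lrr}_K(-\ul{\lambda}):\overline{L}(-w\cdot\ul{\lambda})]$ (the smooth side forcing $K\subseteq J$), expand that in terms of ordinary Vermas, and re-sum. Two concrete points in your plan would fail as written.

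First, the expansion step. You write that $w'$ should range over ``minimal-length coset representatives showing up in the parabolic BGG resolution (\ref{qIvermamodule}).'' That is the wrong index set and the wrong tool: (\ref{qIvermamodule}) only presents $\overline{M}^{\lrr}_K$ as a quotient of $\overline{M}$, and the BGG resolution (\ref{BGGRES2GER}) goes the other direction, expressing $\overline{L}$ as an alternating sum of parabolic Vermas. What is needed here is the character formula for generalized Verma modules (\cite[\S 9.6]{humphreysBGG}),
\[
\mathrm{ch}\,\overline{M}^{\lrr}_K(-\ul{\lambda})=\sum_{w'\in\sW_{\Delta_n^k\cup K,\Sigma_L}}(-1)^{l(w')}\,\mathrm{ch}\,\overline{M}(-w'\cdot\ul{\lambda}),
\]
where the sum runs over the \emph{full} parabolic subgroup $\sW_{\Delta_n^k\cup K,\Sigma_L}$, not over ${}^{\Delta_n^k\cup K}\sW_{n,\Sigma_L}$.

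Second, the re-indexing is not a bijection between pairs $(K,w')$ and elements $w'$ with the correct support: a given $w'$ lies in $\sW_{\Delta_n^k\cup K,\Sigma_L}$ for \emph{every} $K$ with $K\supseteq(\mathrm{supp}(w')\backslash\Delta_n^k)$. The paper simply swaps the order of summation; for fixed $w'$ the inner sum over $K$ runs over the boolean interval $S\cup(\mathrm{supp}(w')\backslash\Delta_n^k)\subseteq K\subseteq J$, and $\sum_K(-1)^{|K|}$ over such an interval vanishes unless it degenerates to a single point. That is exactly where the support constraint on $w'$ and the residual sign $(-1)^{|J\backslash S|}$ fall out, with no bijection or parity bookkeeping needed.
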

\begin{proof}By the exact sequence (\ref{analyticBTseq}) in the Proposition \ref{analyticexact}, we get the following formula for the multiplicity 
\[m(w,J,S)=\sum_{S\subseteq K\subseteq {\Delta_n(k)}}(-1)^{|K|}\big[\BI_{\op^{\lrr}_{{K}}}^G(\pi,\ul{\lambda}),\mathcal{F}^G_{\op^{\lrr}_{{\widetilde{I}}}}(\overline{L}(w\cdot\underline{\lambda}),v_{J,I}^{\infty}(\pi))\big].\]
By definition, we have
\[\BI_{\op^{\lrr}_{{K}}}^{G}(\pi,\underline{\lambda})\cong
\mathcal{F}^G_{\op^{\lrr}_{{K}}}\big(\overline{M}^{\lrr}_K(-\underline{\lambda}),{\pi^{\lrr}}\big).\]
The non-vanishing of the expression $[\BI_{\op^{\lrr}_{{K}}}^G(\pi,\ul{\lambda}),\mathcal{F}^G_{\op^{\lrr}_{{I}}}(\overline{L}(w\cdot\underline{\lambda}),v_{J,I}^{\infty}(\pi))]$ implies that $K\subset I$ is a necessary condition.\;On the other hand, $\cY=v_{J,I}^{\infty}(\pi)$ is an irreducible constituent of $\BI_{\op^{\lrr}_{K}(L)\cap\bL^{\lrr}_{I}(L)}^{\bL^{\lrr}_{I}(L)}(\pi,\underline{\lambda})=\mathrm{Ind}_{\op^{\lrr}_{{K}}(L)\cap \bL^{\lrr}_{I}(L)}^{\bL^{\lrr}_{I}(L)}{\pi^{\lrr}}$ if and only if $K\subset J$.\;Therefore, by properties of the Orlik-Strauch functor (see Section \ref{osfunctor}) and \cite[Corollary 2.7]{breuil2016socle}, we get that $\mathcal{F}^G_{\op^{\lrr}_{{I}}}\big(\overline{L}(w\cdot\underline{\lambda}),v_{J,I}^{\infty}(\pi)\big)$ appears with multiplicity one in a Jordan-H\"{o}lder series of $\mathcal{F}^G_{\op^{\lrr}_{{I}}}\Big(\overline{L}(w\cdot\underline{\lambda}),\BI_{\op^{\lrr}_{K}(L)\cap\bL^{\lrr}_{I}(L)}^{\bL^{\lrr}_{I}(L)}(\pi,\underline{\lambda})\Big)$.\;This shows that
\[\big[\BI_{\op^{\lrr}_{{K}}}^G(\pi,\ul{\lambda}),\mathcal{F}^G_{\op^{\lrr}_{{I}}}(\overline{L}(w\cdot\underline{\lambda}),v_{J,I}^{\infty}(\pi))\big]=\big[\overline{M}^{\lrr}_K(-\underline{\lambda}):\lL(-w\cdot\underline{\lambda})\big].\;\]
We now apply the character formula (see \cite[Section 9.6, Proposition]{humphreysBGG}) to $\overline{M}^{\lrr}_K(-\underline{\lambda})$, i.e.,  
\[\mathrm{ch}\;\lM_{{K}}^{\lrr}(-\underline{\lambda})=\sum_{w'\in \sW_{\Delta_n^k\cup K,\Sigma_L}}(-1)^{l(w')}\mathrm{ch}\; \lM(-w'\cdot\underline{\lambda}).\]
We obtain
\begin{equation}
	\begin{aligned}		 &\big[v_{\op^{\lrr}_{{S}}}^{\ana}(\pi,\underline{\lambda}):\mathcal{F}^G_{\op^{\lrr}_{{I}}}(\lL(-w\cdot\underline{\lambda}),v_{J,I}^{\infty}(\pi))\big]\\
		= &\sum_{S \subseteq K\subseteq J} (-1)^{|K|}\big[\lM_K^{\lrr}(-\underline{\lambda}):\lL(-w\cdot\underline{\lambda})\big]\\
		= &\sum_{S \subseteq K\subseteq J} (-1)^{|K|}\sum_{w'\in \sW_{\Delta_n^k\cup K,\Sigma_L}}(-1)^{l(w')}\big[\lM(-w'\cdot\underline{\lambda}):\lL(-w\cdot\underline{\lambda})\big]\\
		= &\sum_{w'\in \sW_{n,\Sigma_L}}(-1)^{l(w')}\big[\lM(-w'\cdot\underline{\lambda}):\lL(-w\cdot\underline{\lambda})\big]\sum_{\mathrm{ supp}(w')\cup S \subseteq \Delta_n^k\cup{K}\subseteq \Delta_n^k\cup{J}} (-1)^{|K|}\\
		= &\sum_{\substack{w'\in \sW_{n,\Sigma_L}\\ \mathrm{supp}(w')\backslash\Delta_n^k=J\backslash S}}(-1)^{l(w')+|J\backslash S|}\big[\lM(-w'\cdot\underline{\lambda}):\lL(-w\cdot\underline{\lambda})\big].	\end{aligned}
\end{equation}
This completes the proof.\;
\end{proof}
\begin{exmp}Suppose $n=4$ and $(r,k)=(2,2)$ and $L=\BQ_p$.\;By \cite[Theorem 8.4 (iii)]{1987Catehighetweight}, the irreducible constituents of the generalized Verma module $\overline{M}^{\langle2\rangle}(0)$ are $\{\overline{L}(0),\overline{L}(-s_2\cdot0), \overline{L}(-s_2s_3s_1s_2\cdot0)\}$.\;On the other hand, we have
\begin{equation}
\{w\in \sW_4:I(w)=\Delta_4^2=\{1,3\}\}=\{1,s_2,s_3s_2,s_1s_2,s_1s_3s_2,s_2s_3s_1s_2\}.
\end{equation}	 
	  Then Proposition \ref{JHanastein1} implies $m(w,\emptyset,\emptyset )=1$  for $ w\in \{1,s_2,s_2s_3s_1s_2\}$ and $m(w,\emptyset,\emptyset )=0$ for $w\in\{s_3s_2,s_1s_2,s_1s_3s_2\}$.\;
\end{exmp}
\begin{rmk}Let us remark that it is hard to determine if the multiplicity (\ref{alternatesum}) is non-zero, due to the more complicated structure of the generalized (parabolic) Verma modules.\;Some questions arise in the parabolic Verma modules, which remain open except in some special cases.\;For a survey of this topic one can refer to \cite{humphreysBGG}.\;Therefore, the description of composition factors of locally $\BQ_p$-analytic generalized parabolic Steinberg representations are more
subtle.\;
\end{rmk}

\section{Extensions between locally analytic representations}\label{sectionextana}
This section gives the main result of this paper.\;We study the extension groups of certain locally $\BQ_p$-analytic generalized parabolic Steinberg representations.\;In particular, we see that, for any $ir\in \Delta_n(k)$ and $\underline{\lambda}\in X^+_{\Delta_n}$, the extensions of $v_{\op^{\lrr}_{{ir}}}^{\infty}(\pi,\ul{\lambda})$ by  $\st_{(r,k)}^{\ana}(\pi,\ul{\lambda})$ can be parameterized by $\homo(L^\times,E)$ (see Theorem \ref{analyticExt3}).\;

Section \ref{comphinftyhana} and Section \ref{comphinftyhanaexp} list some basic facts about smooth and locally analytic extension groups.\;As a Preliminary, we first compute smooth extensions between smooth generalized parabolic Steinberg representations in Appendix, Section \ref{presmoothext}.\;These results are not new, and we include proofs only for the reader's convenience.\;In Section \ref{anaext1}, we prove the first main Theorem \ref{analyticExt3}.\;The key technical ingredient is to show that the morphism (\ref{keyingremorphism}) is an isomorphism.\;In Section \ref{anaext2}, we study certain subrepresentations $\Sigma_i^{\lrr}(\pi,\ul{\lambda})$ and $\Sigma^{\lrr}(\pi,\ul{\lambda})$ (see (\ref{substanrep})) of $\st_{(r,k)}^{\ana}(\pi,\ul{\lambda})$.\;We further prove that 
$\soc_G \Sigma_i^{\lrr}(\pi,\ul{\lambda})\cong \soc_G \Sigma^{\lrr}(\pi,\ul{\lambda}) \cong \st_{(r,k)}^{\infty}(\pi,\ul{\lambda})$ (it might be true that $\soc_G \st_{(r,k)}^{\ana}(\pi,\ul{\lambda})\cong \st_{(r,k)}^{\infty}(\pi,\ul{\lambda})$, but the author does not know how to prove it).\;We show that the extensions of  $v_{\op^{\lrr}_{{ir}}}^{\infty}(\pi,\ul{\lambda})$ by  $\st_{(r,k)}^{\ana}(\ul{\lambda})$ actually come by push-forward from the extensions of $v_{\op^{\lrr}_{{ir}}}^{\infty}(\pi,\ul{\lambda})$ by $\Sigma_i^{\lrr}(\pi,\ul{\lambda})$ and $\Sigma^{\lrr}(\pi,\ul{\lambda})$.\;

\subsection{Computation of \texorpdfstring{$\hH^i_{\infty}(-,E)$ and $\hH^i_{\ana}(-,E)$}{Lg}}\label{comphinftyhana}

For a uniformizer $\varpi_L$ one gets a character $\varepsilon_{L}: L^{\times} \rightarrow \co_{L}^{\times}$ which is identity on $\co_{L}^{\times}$ and sends $\varpi_L$ to $1$.\; Let $\psi_{\sigma,L}:=\sigma\circ \log \circ \varepsilon_{L}: L^{\times} \rightarrow E$ for $\sigma\in \Sigma_{L}$, and $\psi_{\ur}: L^{\times} \rightarrow E$ be the unramified character sending $p$ to $1$ (thus sending $\varpi$ to $e^{-1}$).\;Then by \cite[Lemma 1.14]{2015Ding}, $\{\psi_{\sigma,L}\}_{\sigma\in \Sigma_{L}}$ and $\psi_{\ur}$ form a basis of $\homo(L^{\times},E)$.\;In particular, the $E$-dimension of $\homo(L^{\times},E)$ is $d_L+1$.\;In particular, $\homo_\infty(L^{\times},E)$ is the $1$-dimensional subspace of $\homo(L^{\times},E)$ generated by $\psi_{\ur}$.\;

Let $X_L^*(\bL_{I})$ denote the group of $L$-algebraic characters of $\bL_{I}$, and let $X_E^*(\bL_{I}):=X_L^*(\bL_{I})\otimes_\BZ E$.\;Recall that $\bZ_n$ is the center of $\GLN_n$.\;We put $Z_n=\bZ_n(L)$.\;By  \cite[Proposition 9]{2012Orlsmoothextensions}, we have
\begin{equation*}
\begin{aligned}
	&\hH^i_{\infty}(\bL_{I}(L),E)\cong \bigwedge^i X_E^*(\bL_{I}),\hspace{20pt}\hH^i_{\infty}(\bL_{I}(L)/Z_n,E)\cong\bigwedge^i X_E^*(\bL_{I}/\bZ_n).
\end{aligned}
\end{equation*}
Let $I$ be a subset of $\Delta_n$.\;Recall that $\homo\big(\bL_{I}(L),E\big)$ \big(resp., $\homo_\infty\big(\bL_{I}(L),E\big)$\big) denote the set consisting of $E$-valued continuous additive characters (resp., smooth additive characters) of $\bL_{I}(L)$.\;By applying Lemma \ref{smoothtoanalytic} to the case $V=W=1$ and using \cite[Proposition 9]{2012Orlsmoothextensions}, we get an injection $j_I:X_E^*(\bL_{I})\hookrightarrow \homo\big(\bL_{I}(L),E\big)$ with image $\homo_\infty\big(\bL_{I}(L),E\big)$.\;This injection is given as follows.\;For any $\alpha\in X_L^*(\bL_{I})$, it induces a homomorphism on the $L$-points $\alpha(L):\bL_{I}(L)\rightarrow L^\times$.\;Then $j_I$ sends $\alpha$ to $|\alpha(L)|_L$.\;

By \cite[(3.28), Corollary 3.14]{schraen2011GL3}, we have
\begin{equation}\label{decomp1}
\begin{aligned}
	\hH^i_{\ana}(\bL_{I}(L),E)&\cong \bigoplus_{0\leq t\leq i}\Big(\big(\bigwedge^t \homo(\bZ_{I}(L),E)\big) \otimes_E  \hH^{i-t}\big(\fd_{I,\Sigma_L}, E\big)\Big),\\
	\hH^i_{\ana}(\bL_{I}(L)/Z_n,E)&\cong \bigoplus_{0\leq t\leq i}\Big(\big(\bigwedge^t \homo\big(\bZ_{I}(L)/Z_n,E\big)\big) \otimes_E  \hH^{i-t}\big(\overline{\fd}_{I,\Sigma_L}, E\big)\Big).
\end{aligned}
\end{equation}
Since $\fd_I$ is semi-simple, we have $\hH^r\big(\fd_{I,\Sigma_L}, E\big)=0$ for $r=1,2$.\;In particular, we deduce  natural isomorphisms
\begin{equation*}
\begin{aligned}
	&\hH^i_{\ana}\big(\bL_{I}(L),E\big)\cong \wedge^i \homo\big(\bZ_{I}(L),E\big),\hH^i_{\ana}(\bL_{I}(L)/Z_n,E)\cong \wedge^i\homo(\bZ_{I}(L)/Z_n,E)
\end{aligned}
\end{equation*}
for $i=1,2$.\;On the other hand, by \cite[Corollary 8.10]{kohlhaase2011cohomology} (since $\bZ_{I}$ is a torus, we apply it to the group $\res_{L/\BQ_p}\bZ_{I}$), one has
\begin{equation}\label{decomp2}
\begin{aligned}
	&\hH^i_{\ana}(\bZ_{I}(L),E)\cong \bigoplus_{t+s=i}\Big(\bigwedge^tX_E^*(\bZ_{I})\Big)\otimes_E \hH^s({\fz}_{I,\Sigma_L},E)\\
	&\hH^i_{\ana}(\bZ_{I}(L)/Z_n,E)\cong \bigoplus_{t+s=i}\Big(\bigwedge^tX_E^*(\bZ_{I}/\bZ_n)\Big)\otimes_E \hH^s(\overline{\fz}_{I,\Sigma_L},E),
\end{aligned}
\end{equation}
for all $i\in \BZ_{\geq 0}$.\;Combining (\ref{decomp1}), (\ref{decomp2}) with the K\"{u}nneth formula for Lie algebras (see \cite[(3.24)]{schraen2011GL3}), we deduce other expressions of $\hH^i_{\ana}(\bZ_{I}(L),E)$ and $\hH^i_{\ana}(\bZ_{I}(L)/Z_n,E)$.\;
\begin{lem}\label{addcha}For $I\subseteq \Delta_n$, we have
\begin{equation*}
	\begin{aligned}
		\hH^i_{\ana}(\bL_{I}(L),E)&\cong
		\bigoplus_{0\leq t\leq i}\Big(\bigwedge^tX_E^*(\bZ_{I})\Big)\otimes_E \hH^{i-t}({\fl}_{I,\Sigma_L},E),\\
		\hH^i_{\ana}(\bL_{I}(L)/Z_n,E)&\cong \bigoplus_{0\leq t\leq i} \Big(\bigwedge^tX_E^*(\bZ_{I}/\bZ_n)\Big)\otimes_E \hH^{i-t}(\overline{\fl}_{I,\Sigma_L},E).
	\end{aligned}
\end{equation*}
\end{lem}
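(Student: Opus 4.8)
The plan is to derive these formulas by combining the three decompositions already recorded in the excerpt — namely (\ref{decomp1}), (\ref{decomp2}), and the Künneth formula for Lie algebra cohomology from \cite[(3.24)]{schraen2011GL3} — and then to simplify using the semisimplicity of $\fd_I$. I will carry out the argument for $\hH^i_{\ana}(\bL_I(L),E)$; the statement for $\bL_I(L)/Z_n$ is entirely parallel, replacing $\bZ_I$, $\fl_I$, $\fd_I$ by $\bZ_I/\bZ_n$, $\overline{\fl}_I$, $\overline{\fd}_I$ throughout, and using the second rows of (\ref{decomp1}) and (\ref{decomp2}).

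First I would start from (\ref{decomp1}), which expresses $\hH^i_{\ana}(\bL_I(L),E)$ as $\bigoplus_{0\leq t\leq i}\big(\bigwedge^t\homo(\bZ_I(L),E)\big)\otimes_E\hH^{i-t}(\fd_{I,\Sigma_L},E)$. Next I would substitute for $\homo(\bZ_I(L),E)=\hH^1_{\ana}(\bZ_I(L),E)$ using (\ref{decomp2}) with $i=1$: since $\hH^0(\fz_{I,\Sigma_L},E)=E$ and $\hH^1(\fz_{I,\Sigma_L},E)\cong\fz_{I,\Sigma_L}^\vee$ (the torus Lie algebra is abelian), this gives $\homo(\bZ_I(L),E)\cong X_E^*(\bZ_I)\oplus\fz_{I,\Sigma_L}^\vee$. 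Taking exterior powers and expanding the tensor product in (\ref{decomp1}), every summand becomes a product of an exterior power of $X_E^*(\bZ_I)$, an exterior power of $\fz_{I,\Sigma_L}^\vee$, and a cohomology group $\hH^*(\fd_{I,\Sigma_L},E)$. The key simplification is that $\fl_I=\fz_I\oplus\fd_I$ as a Lie algebra (the Levi is the product of its center and derived subgroup, up to isogeny, which suffices on Lie algebras), so by the Künneth formula \cite[(3.24)]{schraen2011GL3} one has $\hH^m(\fl_{I,\Sigma_L},E)\cong\bigoplus_{a+b=m}\hH^a(\fz_{I,\Sigma_L},E)\otimes_E\hH^b(\fd_{I,\Sigma_L},E)\cong\bigoplus_{a+b=m}\big(\bigwedge^a\fz_{I,\Sigma_L}^\vee\big)\otimes_E\hH^b(\fd_{I,\Sigma_L},E)$. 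Matching this against the expansion above, I would reorganize the direct sum by pulling out the exterior powers of $X_E^*(\bZ_I)$ — indexed by $t$ — and recognizing the remaining factor as exactly $\hH^{i-t}(\fl_{I,\Sigma_L},E)$, which yields the claimed formula.

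I do not expect a serious obstacle here; the lemma is essentially a bookkeeping consequence of results already in hand. The one point that needs a little care is the identification $\hH^s(\fz_{I,\Sigma_L},E)\cong\bigwedge^s\fz_{I,\Sigma_L}^\vee$ and the compatibility of the splitting $\homo(\bZ_I(L),E)\cong X_E^*(\bZ_I)\oplus\fz_{I,\Sigma_L}^\vee$ with the one used implicitly in (\ref{decomp1}) — this is the decomposition of a continuous character of the torus $\bZ_I(L)$ into its smooth (algebraic) part and its locally analytic "logarithmic" part, and is precisely the content of \cite[Corollary 8.10]{kohlhaase2011cohomology} cited just before the lemma. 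Once these identifications are made functorially, the rest is a formal regrouping of a finite direct sum, and the $\bZ_n$-quotient case follows by the same argument verbatim.
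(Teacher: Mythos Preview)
Your proposal is correct and follows exactly the approach indicated in the paper: the sentence immediately preceding the lemma says the result is obtained by combining (\ref{decomp1}), (\ref{decomp2}) with the K\"unneth formula for Lie algebras, and that is precisely what you do (in fact with more detail than the paper supplies). Your reindexing---pulling out $\bigwedge^a X_E^*(\bZ_I)$ and recognizing the remaining sum $\bigoplus_{b+c=i-a}\bigwedge^b\fz_{I,\Sigma_L}^\vee\otimes_E\hH^c(\fd_{I,\Sigma_L},E)$ as $\hH^{i-a}(\fl_{I,\Sigma_L},E)$ via K\"unneth applied to $\fl_I=\fz_I\oplus\fd_I$---is exactly the intended computation.
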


%
\subsection{More on smooth and locally analytic extension groups}\label{comphinftyhanaexp}



The following lemma holds for any connected reductive algebraic group $\bG$ over $L$.\;
\begin{lem}\label{smoothbasiclemma2}(Frobenius reciprocity, see \cite[Lemma 8]{2012Orlsmoothextensions}).\;Let $\bP$ be a parabolic subgroup
of $\bG$ with Levi factorization $\bP=\bL_{\bP}\bN_{\bP}$.\;Let $\pi$ be a smooth representation
of $\bG(L)$ over $E$, and let $\sigma$ be a smooth representation of $\bL_{\bP}(L)$ over $E$.\;Then
\[\ext^{i,\infty}_{\bG(L)}(\pi,i^{\bG(L)}_{{\bP}(L)}\sigma)\cong \ext^{i,\infty}_{\bL_{\bP}(L)}(r^{\bG(L)}_{{\bP}(L)}\pi,\sigma) \text{   for all }i\geq 0.\]
In particular, the Frobenius reciprocity for $i=0$ recovers adjunction property (\ref{smoothadj1}).\;
\end{lem}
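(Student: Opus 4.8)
The plan is to derive this from the adjunction (\ref{smoothadj1}) between the (un-normalized) Jacquet functor $r^{\bG(L)}_{\bP(L)}$ and the (un-normalized) smooth parabolic induction $i^{\bG(L)}_{\bP(L)}$ by the standard derived-functor computation, using that the category $\Rep^{\infty}_E(\bL_{\bP}(L))$ has enough injectives (recalled in Section \ref{smanaextgps}). Note that no admissibility hypothesis is needed, since, in contrast with (\ref{smoothadj2}), the adjunction (\ref{smoothadj1}) is valid for arbitrary smooth representations.

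First I would record two exactness facts. The functor $i^{\bG(L)}_{\bP(L)}\colon \Rep^{\infty}_E(\bL_{\bP}(L))\to \Rep^{\infty}_E(\bG(L))$ is exact --- it is inflation from $\bL_{\bP}(L)$ to $\bP(L)$ followed by smooth induction along the closed subgroup $\bP(L)$, and smooth induction from a closed subgroup of a locally profinite group is an exact functor on smooth representations. The Jacquet functor $r^{\bG(L)}_{\bP(L)}$ is exact as well, being the composite of restriction to $\bP(L)$ with the functor of $\bN_{\bP}(L)$-coinvariants, which is exact because $\bN_{\bP}(L)$ is the increasing union of its compact open subgroups. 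Since (\ref{smoothadj1}) exhibits $r^{\bG(L)}_{\bP(L)}$ as a left adjoint of $i^{\bG(L)}_{\bP(L)}$ and $r^{\bG(L)}_{\bP(L)}$ is exact, the functor $i^{\bG(L)}_{\bP(L)}$ sends injective objects of $\Rep^{\infty}_E(\bL_{\bP}(L))$ to injective objects of $\Rep^{\infty}_E(\bG(L))$.

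Then I would fix an injective resolution $0\to \sigma\to J^{\bullet}$ in $\Rep^{\infty}_E(\bL_{\bP}(L))$. Applying the exact functor $i^{\bG(L)}_{\bP(L)}$ yields a resolution $0\to i^{\bG(L)}_{\bP(L)}\sigma\to i^{\bG(L)}_{\bP(L)}J^{\bullet}$ whose terms are injective in $\Rep^{\infty}_E(\bG(L))$ by the previous step; hence it computes $\ext^{\bullet,\infty}_{\bG(L)}(\pi, i^{\bG(L)}_{\bP(L)}\sigma)$. Since (\ref{smoothadj1}) is a natural isomorphism, applying it termwise gives an isomorphism of complexes of $E$-vector spaces
\[\homo_{\bG(L)}\!\big(\pi,\, i^{\bG(L)}_{\bP(L)}J^{\bullet}\big)\;\cong\;\homo_{\bL_{\bP}(L)}\!\big(r^{\bG(L)}_{\bP(L)}\pi,\, J^{\bullet}\big),\]
and $J^{\bullet}$ is an injective resolution of $\sigma$, so the degree-$i$ cohomology of the right-hand side is by definition $\ext^{i,\infty}_{\bL_{\bP}(L)}(r^{\bG(L)}_{\bP(L)}\pi, \sigma)$. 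Comparing the two computations of the cohomology gives the claimed isomorphism, which reduces to (\ref{smoothadj1}) itself when $i=0$. Everything in this argument is formal modulo the two exactness inputs; the only assertion that is not immediate is that $i^{\bG(L)}_{\bP(L)}$ preserves injectives, and that is precisely where the exactness of the Jacquet functor is used, so I would single it out as the main point (though it is entirely classical).
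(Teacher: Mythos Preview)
Your argument is correct and is the standard derived-functor proof of this adjunction at the level of $\ext$-groups. The paper itself does not supply a proof of this lemma; it is stated with a direct citation to \cite[Lemma~8]{2012Orlsmoothextensions}, so there is nothing to compare against beyond noting that your write-up is exactly the classical proof one would find behind that reference.
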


The next lemma lists some exact sequences arising from spectral sequences.\;
\begin{lem}\label{degspectral}Let $E$ be a biregular spectral sequence.\;
\begin{description}
	\item[(a)] Suppose $E_2^{p,q}=0$ when $p<0$, or $q<0$, or $0<q<m$.\;Then $E_2^{i,2}\cong H^i$ for any $i<m$ and we have an exact sequence
	\[0\rightarrow E_{2}^{m,0}\rightarrow H^m\rightarrow  E_{2}^{m-1,1}\rightarrow E_{2}^{m+1,0} \rightarrow H^{m+1}.\]
	\item[(b)] Suppose $E_2^{p,q}=0$ for any $p\neq p_1,p_2$ and $p_2-p_1=1$, then we have a short exact sequence
	\[0\rightarrow E_{2}^{p_2,m-p_2}\rightarrow H^m\rightarrow  E_{2}^{p_1,m-p_1}\rightarrow 0.\]
	\item[(c)] Suppose $E_2^{p,q}=0$ for any $q\neq q_1,q_2$ ($q_1<q_2$) , then we have a long exact sequence
	\[\cdots\rightarrow E_{2}^{m-q_1,q_1}\rightarrow H^m\rightarrow  E_{2}^{m-q_2,q_2}\rightarrow E_{2}^{m+1-q_1,q_1} \rightarrow H^{m+1}\rightarrow\cdots.\]
\end{description}
\end{lem}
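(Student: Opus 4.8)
The plan is to obtain all three statements by a routine unwinding of the structure of a biregular (cohomological) spectral sequence $(E_r^{p,q},d_r)$, $d_r\colon E_r^{p,q}\to E_r^{p+r,\,q-r+1}$, converging to $H^{\bullet}$: recall that $H^n$ carries a finite decreasing filtration $F^{\bullet}H^n$ with $F^pH^n/F^{p+1}H^n\cong E_\infty^{p,\,n-p}$, and that $E_\infty^{p,q}$ is computed from $E_2^{p,q}$ by successively taking the homology of $d_r$ for $r\ge 2$. In each part I would first use the stated support hypothesis to show, for bidegree reasons, that almost every $d_r$ meeting the entries in play vanishes, so that the relevant $E_2$-terms either already equal $E_\infty$ or differ from it only through the image or kernel of one surviving differential; then I would read the asserted (exact) sequences off the two-step filtration sequences on the $H^n$ involved, spliced together. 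This is exactly the edge-map / five-term-exact-sequence formalism, cf.\ \cite[Chapter X, \S1]{borel2000continuous}.

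For (b), since $E_2^{p,q}$ vanishes outside the two columns $p\in\{p_1,p_1+1\}$, any $d_r$ with $r\ge 2$ has source column and target column differing by $r\ge 2$, so one of them lies in a zero column; hence $d_r=0$ and $E_2=E_\infty$. The filtration on $H^m$ then has at most the two graded pieces $E_\infty^{p_1,m-p_1}$ and $E_\infty^{p_2,m-p_2}$, with the higher-$p$ one, $E_\infty^{p_2,m-p_2}$, being the subobject $F^{p_2}H^m$ and $E_\infty^{p_1,m-p_1}$ the quotient; this is the short exact sequence $0\to E_2^{p_2,m-p_2}\to H^m\to E_2^{p_1,m-p_1}\to 0$. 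For (c), with $E_2^{p,q}=0$ outside the two rows $q\in\{q_1,q_2\}$, the only $d_r$ ($r\ge 2$) with both source and target rows non-zero is the one from row $q_2$ to row $q_1$, which forces $q_2-(r-1)=q_1$, i.e.\ $r=q_2-q_1+1$; hence $E_2=E_{q_2-q_1+1}$, and $E_\infty$ in row $q_2$ (resp.\ row $q_1$) is $\ker$ (resp.\ $\operatorname{coker}$) of $d_{q_2-q_1+1}$. Splicing the two-step sequences $0\to E_\infty^{m-q_1,q_1}\to H^m\to E_\infty^{m-q_2,q_2}\to 0$ (for all $m$, the higher-$p$ piece, namely row $q_1$, being the subobject) through these kernel and cokernel descriptions yields the long exact sequence, with connecting maps $d_{q_2-q_1+1}$.

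For (a), the hypothesis leaves only the rows $q=0$ and $q\ge m$ supporting $E_2$. I would first note that no $d_r$ ($r\ge 2$) acts non-trivially on $E_r^{i,0}$ for $i\le m$: a differential out of $(i,0)$ lands in row $1-r<0$, while one into $(i,0)$ comes from row $r-1$, which is zero for $2\le r\le m$ and, for $r\ge m+1$, has source in column $i-r<0$; hence $E_2^{i,0}=E_\infty^{i,0}$ for $i\le m$, and since for $i<m$ the only non-zero graded piece of $H^i$ is $E_\infty^{i,0}$ (a piece in row $q\ge m$ would need $q\le i<m$), we get $E_2^{i,0}\cong H^i$. Then in degree $m$ the graded pieces of $H^m$ are $E_\infty^{m,0}$ and $E_\infty^{0,m}$, while in degree $m+1$ the bottom piece is $E_\infty^{m+1,0}\hookrightarrow H^{m+1}$; the unique differential relating these entries is $d_{m+1}\colon E_2^{0,m}\to E_2^{m+1,0}$ (all lower differentials out of $(0,m)$ land in a zero row, and differentials into $(0,m)$ or into $(m+1,0)$ before page $m+1$ come from zero entries), so $E_\infty^{0,m}=\ker d_{m+1}$ and $E_\infty^{m+1,0}=\operatorname{coker}d_{m+1}$. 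Splicing $0\to E_\infty^{m,0}\to H^m\to E_\infty^{0,m}\to 0$ with $E_\infty^{m+1,0}\hookrightarrow H^{m+1}$ then produces the five-term exact sequence $0\to E_2^{m,0}\to H^m\to E_2^{0,m}\xrightarrow{\,d_{m+1}\,}E_2^{m+1,0}\to H^{m+1}$. The only place where any care is needed — and the closest thing to an obstacle — is the purely combinatorial bookkeeping of which $d_r$ survive and whether each $E_\infty$-term enters $H^{\bullet}$ as a subobject or as a quotient; nothing deeper is involved, and the lemma is a standard instance of the spectral-sequence edge-map formalism.
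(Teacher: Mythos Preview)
Your argument is correct and is exactly the standard unwinding one would give; note that the paper itself states this lemma without proof, treating it as a well-known fact about spectral sequences, so there is no ``paper's own proof'' to compare against. You have also silently corrected what are evidently typos in the statement of part (a): the claim should read $E_2^{i,0}\cong H^i$ (not $E_2^{i,2}$) and the middle term of the five-term sequence should be $E_2^{0,m}$ (not $E_2^{m-1,1}$), since under the stated hypothesis both $E_2^{i,2}$ and $E_2^{m-1,1}$ vanish whenever $m>2$.
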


\begin{rmk}\textbf{(Yoneda extensions)}
Let $\cC$ be an abelian category with enough injectives, then we can define the extension groups $\ext^i_{\cC}(A,B)$ for all $r\geq 0$ and $A,B\in \cC$, by choosing an injective resolution of $B$.\;On the other hand, a Yoneda $i$-extension of $A$ by $B$ is an exact sequence starting at $B$ and ending at $A$, 
\[X:0 \rightarrow B \rightarrow X_i\rightarrow\cdots\rightarrow X_1\rightarrow A\rightarrow 0.\]
Two Yoneda $i$-extensions $X,Y$ are equivalent if there exists a commutative diagram:
\[\xymatrix{
	0 \ar[r] & B \ar[r] \ar[d]_{\mathrm{id}} &  X_i \ar[r] \ar[d] & \cdots \ar[r]  &  X_1 \ar[r]\ar[d] &  A\ar[r] \ar[d]_{\mathrm{id}} &  0\\
	0 \ar[r] & B \ar[r]& Y_i \ar[r]&\cdots\ar[r]& Y_1\ar[r]& A\ar[r]& 0}.
\]
The Yoneda $i$-extension group $\mathrm{Yext}^i_{\cC}(A,B)$ is given by the Yoneda $i$-extensions $A$ by $B$ modulo the above equivalence relation $\equiv$.\;By the arguments before \cite[Corollary 0.2]{Errschraen2011GL3}, there is a natural equivalence between the two bi-functors $\ext^i_{\cC}(A,B)$ and $\mathrm{Yext}^i_{\cC}(A,B)$.\;
\end{rmk}

Let $\eta^\infty$  be a smooth  character of $\bL^{\lrr}_{I}(L)$ over $E$.\;Recall that the categories $\textbf{Rep}_{E}^{\infty}(\bL^{\lrr}_{I}(L))$ and  $\textbf{Rep}_{E,Z_n=\eta^\infty}^{\infty}(\bL^{\lrr}_{I}(L))$ have enough injectives.\;For $V,W\in \textbf{Rep}_{E,Z_n=\eta^\infty}^{\infty}(\bL^{\lrr}_{I}(L))$, there are then natural morphism
\begin{equation*}
\begin{aligned}
	&\ext^{i,\infty}_{\bL^{\lrr}_{I}(L),Z_n=\eta^\infty}(V,W)\rightarrow  \ext^{i,\infty}_{\bL^{\lrr}_{I}(L)}(V,W),\\
\end{aligned}
\end{equation*}
induced by  Yoneda $i$-extensions.\;In general, it is not clear whether these homomorphisms are injective or surjective.\;When $i=1$, this homomorphism is injective since a short exact sequence splits in the category $\textbf{Rep}_{E,Z_n=\eta^\infty}^{\infty}(\bL^{\lrr}_{I}(L))$  if and only if it splits in  $\textbf{Rep}_{E}^{\infty}(\bL^{\lrr}_{I}(L))$.\;

Let $V,W,U,U'\in \textbf{Rep}_{E}^{\infty}(\bL^{\lrr}_{I}(L))$ \big(resp., $V',W'\in \textbf{Rep}_{E,Z_n=\eta^\infty}^{\infty}(\bL^{\lrr}_{I}(L))$\big).\;We assume that $Z_n$ acts on representation $U'$ via a smooth character $ \omega_{U'}^\infty$.\;There are natural morphisms
\begin{equation}\label{smoothtensor}
\begin{aligned}
	&\ext^{i,\infty}_{\bL^{\lrr}_{I}(L)}(V,W)\rightarrow  \ext^{i,\infty}_{\bL^{\lrr}_{I}(L)}(V\otimes_EU,W\otimes_EU), \\
	&\mathrm{resp., }\ext^{i,\infty}_{\bL^{\lrr}_{I}(L),Z_n=\eta^\infty}(V',W')\rightarrow  \ext^{i,\infty}_{\bL^{\lrr}_{I}(L),Z_n=\eta^\infty{\otimes}_E \omega_{U'}^\infty}(V'{\otimes}_EU',W'{\otimes}_EU'),
\end{aligned}
\end{equation}
induced by tensoring the Yoneda $i$-extensions in $\ext^{i,\infty}_{\bL^{\lrr}_{I}(L)}(V,W)$ \big(resp., $\ext^{i,\infty}_{\bL^{\lrr}_{I}(L),\eta}(V',W')$\big) with $U$ (resp., $U'$).\;

Let $\eta^\infty$  be a locally $\BQ_p$-analytic  character of $\bL^{\lrr}_{I}(L)$ over $E$.\;Unlike the smooth case, it is not clear that the categories $\cM(H)$ and $\cM_{Z',\eta}(H)$ have enough injectives.\;Let  $V,W\in \textbf{Rep}_{E,Z_n=\eta}(\bL^{\lrr}_{I}(L))$ be locally $\BQ_p$-analytic representations, we still have a natural morphism
\begin{equation*}
\begin{aligned}
	\ext^{i}_{\bL^{\lrr}_{I}(L),Z_n=\eta}(V',W')\rightarrow  \ext^{i}_{\bL^{\lrr}_{I}(L)}(V',W'), \forall V',W'\in \textbf{Rep}_{E,Z_n=\eta}(\bL^{\lrr}_{I}(L))
\end{aligned}
\end{equation*}
by choosing a projective resolution of $V'$ in the category  $\cM_{Z_n,\eta}(\bL^{\lrr}_{I}(L))$ (it remains a projective resolution of $V'$ in the category $\cM(\bL^{\lrr}_{I}(L))$).\;In general, it is not clear whether these homomorphisms are injective or surjective.\;If $i=1$ and $V,W$ are admissible, then by the argument after \cite[Lemma 2.2]{2019DINGSimple} (or see the argument before Remark \ref{introsmoothfixcenter} ), this morphism is injective since a short exact sequence splits in $\cM_{Z_n,\eta}(\bL^{\lrr}_{I}(L))$  if and only if it splits in  $\cM(\bL^{\lrr}_{I}(L))$.\;

Suppose that the $V,W,U,U'\in \textbf{Rep}_{E}(\bL^{\lrr}_{I}(L))$ (resp., $V',W'\in \textbf{Rep}_{E,Z_n=\eta}(\bL^{\lrr}_{I}(L))$)  are admissible locally $\BQ_p$-analytic representations over locally convex Hausdorff $E$-vector spaces of compact type.\;Assume that $Z_n$ acts on representation $U'$ via character $ \omega_{U'}$.\;We have morphisms
\begin{equation}
	\begin{aligned}
		&\homo_{\bL^{\lrr}_{I}(L)}(V,W)\rightarrow  \homo_{\bL^{\lrr}_{I}(L)}(V\widehat{\otimes}_EU,W\widehat{\otimes}_EU),\\resp.,&\;\homo_{\bL^{\lrr}_{I}(L),Z_n=\eta}(V',W')\rightarrow  \homo_{\bL^{\lrr}_{I}(L),Z_n=\eta\widehat{\otimes}_E \omega_{U'}}(V'\widehat{\otimes}_EU',W'\widehat{\otimes}_EU')
	\end{aligned}
\end{equation}
by sending $f\mapsto f\otimes_E\mathrm{id}_{U}$ (resp., $f\mapsto f\otimes_E\mathrm{id}_{U'}$).\;If $U$ (resp., $U'$) is a finite-dimensional locally $\BQ_p$-analytic representation of $\bL^{\lrr}_{I}(L)$, then by \cite[Proposition 6.1.5]{emerton2017locally}, we get that $V\widehat{\otimes}_EU,W\widehat{\otimes}_EU$ (resp., $V'\widehat{\otimes}_EU',W'\widehat{\otimes}_EU'$) are again admissible locally $\BQ_p$-analytic representations of $\bL^{\lrr}_{I}(L)$ (with the diagonal action).\;Therefore, we have the following morphisms:
\begin{equation}\label{analytictensor}
\begin{aligned}
	&\ext^{1}_{\bL^{\lrr}_{I}(L)}(V,W)\rightarrow  \ext^{1}_{\bL^{\lrr}_{I}(L)}(V\widehat{\otimes}_EU,W\widehat{\otimes}_EU), \\
	&\text{resp.}, \ext^{i}_{\bL^{\lrr}_{I}(L),Z_n=\eta}(V',W')\rightarrow  \ext^{1}_{\bL^{\lrr}_{I}(L),Z_n=\eta\widehat{\otimes}_E \omega_{U'}}(V'\widehat{\otimes}_EU',W'\widehat{\otimes}_EU').\;
\end{aligned}
\end{equation}
by tensoring $U$ (resp., $U'$) with the $1$-th extensions.\;

In (\ref{pilrr}), we define the irreducible cuspidal smooth representation  ${\pi^{\lrr}}$ of $\bL^{\lrr}(L)$ over $E$.\;Let $\omega_{\pi}$ be the central character of $\pi$.\;Then the central character of ${\pi^{\lrr}}$ is $\omega_{\pi}^{\lrr}:=\omega_{\pi} v_r^{k-1}\otimes_E\cdots\otimes_E \omega_{\pi}$.\;
In particular, $Z_n$ acts on ${\pi^{\lrr}}$ via the character $\omega_{\pi}^{\lrr}|_{Z_n}$.\;For any subset $I\subseteq {\Delta_n(k)}$, the center $Z_n$ also acts on representations $\pi_I\otimes_EL^{\lrr}(\underline{\lambda})_{{I}}$, $i_{\op^{\lrr}_{{I}}}^{G}(\pi,\underline{\lambda})$ and $\BI_{\op^{\lrr}_{{I}}}^{G}(\pi,\underline{\lambda})$ via $\omega_{\pi}^{\lrr}\chi_{\underline{\lambda}}$.\;In the sequel, we put
\begin{equation*}
\begin{aligned}
	&\ext^{i,\infty}_{\bL^{\lrr}_{I}(L),\omega_{\pi}^{\lrr}}(-,-):=\ext^{i,\infty}_{\bL^{\lrr}_{I}(L),Z_n=\omega_{\pi}^{\lrr}}(-,-), \hspace{120pt}\\
	&\ext^{i}_{\bL^{\lrr}_{I}(L),\eta}(-,-):=\ext^{i}_{\bL^{\lrr}_{I}(L),Z_n=\eta}(-,-), \text{for } \eta\in \{\omega_{\pi}^{\lrr},\omega_{\pi}^{\lrr}\chi_{\underline{\lambda}}\},
\end{aligned}
\end{equation*}
for all $i\in \BZ_{\geq 0}$.\;

\subsection{Bruhat filtrations}\label{Brufil}
In this section, we talk about the Bruhat filtrations of parabolically induced representations and describe the associated graded representations.\;

Let $I,J$ be two subsets of $\Delta_n(k)$.\;Induced by the relative Bruhat decomposition
\begin{equation}
G=\coprod_{w\in [\sW_I\backslash \sW_n/\sW_J]}\op_{{I}}(L)w\op_{{J}}(L),
\end{equation}
we define an increasing filtration $\mathcal{F}^\bullet_{B}$ on $i_{\op^{\lrr}_{{I}}}^{G}(\pi)$ by $\op^{\lrr}_{{J}}(L)$ invariant subspaces:
\begin{equation}\label{filext}
\begin{aligned}
	\mathcal{F}^h_{B}i_{\op^{\lrr}_{{I}}}^{G}(\pi)=\bigg\{f\in i_{\op^{\lrr}_{{I}}}^{G}(\pi), \mathrm{supp}(f)\in \bigcup_{\substack{w\in [\sW^{\lrr}_I\backslash \sW_n/\sW^{\lrr}_{J}]\\\mathrm{lg}(w)\leq h}}\op^{\lrr}_{{I}}(L)\backslash \op^{\lrr}_{{I}}(L)w\op^{\lrr}_{{J}}(L)\bigg\},
\end{aligned}
\end{equation}
for all $h\in\BZ_{\geq 0}$.\;By \cite[Proposition 6.3.1]{casselman1975introduction} (or a corollary of it), there are canonical isomorphisms
\[\gr^h_{\mathcal{F}_{B}^\bullet}(i_{\op^{\lrr}_{{I}}(L)}^{G}(\pi))\cong \bigoplus_{\substack{w\in [\sW^{\lrr}_I\backslash \sW_n/\sW^{\lrr}_{J}]\\\mathrm{lg}(w)=h}}c\text{-}i_{\op^{\lrr}_{{J}}(L)\cap \op^{\lrr}_{{I}}(L)^w}^{\op^{\lrr}_{{J}}(L)}\pi_{I}^w\]
of smooth $\op^{\lrr}_{{J}}(L)$-representations for all $h\in\BZ_{\geq 0}$, where  $c\text{-}i_{\op^{\lrr}_{{J}}(L)\cap \op^{\lrr}_{{I}}(L)^w}^{\op^{\lrr}_{{J}}(L)}\pi_{I}^w$ is the smooth induction with compact support.\;By \cite[Proposition 6.3.3]{casselman1975introduction}, we have isomorphism
\begin{equation}\label{rgradfilext}
\begin{aligned}
	r^G_{\op^{\lrr}_{{J}}(L)}(\gr^h_{\mathcal{F}_{B}^\bullet}(i_{\op^{\lrr}_{{I}}(L)}^{G}(\pi)))\cong
	\bigoplus_{\substack{w\in [\sW^{\lrr}_I\backslash \sW_n/\sW^{\lrr}_{J}]\\\mathrm{lg}(w)=h}}i_{\bL^{\lrr}_{J}(L)\cap \op^{\lrr}_{{I}}(L)^w}^{\bL^{\lrr}_{J}(L)}\big(\gamma_{I,J}^w\otimes_Er^{\bL^{\lrr}_{I}(L)^w}_{\op^{\lrr}_{{J}}(L)\cap \bL^{\lrr}_{I}(L)^w}\pi_{I}^w\big),
\end{aligned}
\end{equation}
of smooth $\bL^{\lrr}_{{J}}(L)$-representations for all $h\in\BZ_{\geq 0}$.\;

\begin{lem}\label{nonzeroIJw}For $w\in [\sW^{\lrr}_I\backslash \sW_n/\sW^{\lrr}_{J}]$, the representation $r^{\bL^{\lrr}_{I}(L)^w}_{\op^{\lrr}_{{J}}(L)\cap \bL^{\lrr}_{I}(L)^w}\pi_{I}^w$ is non-zero unless $w\in \sW_{I,J}(\bL^{\lrr})$.\;
\end{lem}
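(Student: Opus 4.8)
The plan is to prove the sharper statement that $r^{\bL^{\lrr}_{I}(L)^w}_{\op^{\lrr}_{{J}}(L)\cap \bL^{\lrr}_{I}(L)^w}\pi_{I}^w$ is non-zero \emph{exactly} when $w\in\sW_{I,J}(\bL^{\lrr})$, so that in particular it vanishes for every $w\in[\sW^{\lrr}_I\backslash\sW_n/\sW^{\lrr}_{J}]$ lying outside $\sW_{I,J}(\bL^{\lrr})$, which is the assertion of the lemma. The structural input is that $\pi_I$, being the irreducible subrepresentation of $i^{\bL^{\lrr}_{I}(L)}_{\op^{\lrr}(L)\cap\bL^{\lrr}_{I}(L)}\pi^{\lrr}$, is an irreducible representation of $\bL^{\lrr}_{I}(L)=\prod_{j=1}^{l_I}\GLN_{k_jr}(L)$ (with $\underline{k}_{I}^{\lrr}=(k_1,\dots,k_{l_I})$) whose supercuspidal support is the inertial class of $\pi^{\lrr}$, all of whose cuspidal constituents are unramified twists of $\pi$ living on $\GLN_r$-blocks. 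First I would reduce to a Jacquet module inside $\bL^{\lrr}_{I}(L)$: conjugation by $w$ is an equivalence of categories of smooth representations and commutes with Jacquet functors, so, up to the twist by $\gamma^w_{I,J}$ and the evident relabelling of groups, our representation is identified with $r^{\bL^{\lrr}_{I}(L)}_{{}^w\op^{\lrr}_{{J}}(L)\cap \bL^{\lrr}_{I}(L)}\pi_{I}$; and by (\ref{LEIVEIJ}), i.e. \cite[Corollary 2.8.9]{RWcarter}, the subgroup $\bL^{\lrr}_{I}(L)\cap{}^w\op^{\lrr}_{{J}}(L)$ is a parabolic of $\bL^{\lrr}_{I}(L)$ whose Levi factor is the standard Levi $\bL_{(\Delta_n^k\cup I)\cap w(\Delta_n^k\cup J)}(L)$, where $(\Delta_n^k\cup I)\cap w(\Delta_n^k\cup J)=\{\alpha\in\Delta_n^k\cup I:\ w^{-1}(\alpha)\ \text{is a simple root lying in}\ \Delta_n^k\cup J\}$.

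Next I would bring in the classical description of Jacquet modules of Zelevinsky segment representations. Decompose $\pi_I=\bigotimes_{j}\langle\Delta_j\rangle$, where $\langle\Delta_j\rangle$ is the Zelevinsky representation of $\GLN_{k_jr}(L)$ attached to a length-$k_j$ sub-segment $\Delta_j$ of $\Delta_{[k-1,0]}(\pi)$, and decompose any parabolic $Q\subseteq\bL^{\lrr}_{I}(L)$ factorwise as $Q=\prod_j Q_j$, so that $r^{\bL^{\lrr}_{I}(L)}_{Q}\pi_I=\bigotimes_j r^{\GLN_{k_jr}(L)}_{Q_j}\langle\Delta_j\rangle$. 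Because the cuspidal support of each $\langle\Delta_j\rangle$ consists of twists of $\pi$ on $\GLN_r$, the factor $r^{\GLN_{k_jr}(L)}_{Q_j}\langle\Delta_j\rangle$ vanishes unless the Levi of $Q_j$ has all its $\GLN$-blocks of size divisible by $r$, and when that holds it is a non-zero (irreducible) tensor product of shorter segment representations cut out of $\Delta_j$ according to the block sizes --- compare \cite[2.10.\;Proposition]{av1980induced2} (this also follows from the chain of injections behind \textbf{[A1]}). For a standard Levi contained in $\bL^{\lrr}_{I}$, having all $\GLN$-blocks $r$-divisible is the same as containing $\bL^{\lrr}$; hence $r^{\bL^{\lrr}_{I}(L)}_{{}^w\op^{\lrr}_{{J}}(L)\cap \bL^{\lrr}_{I}(L)}\pi_{I}\neq0$ if and only if $\Delta_n^k\subseteq(\Delta_n^k\cup I)\cap w(\Delta_n^k\cup J)$, i.e. if and only if $w^{-1}(\alpha)$ is a simple root lying in $\Delta_n^k\cup J$ for every $\alpha\in\Delta_n^k$.

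It remains to translate this condition into membership in $\sW_{I,J}(\bL^{\lrr})$. Since $w\in[\sW^{\lrr}_I\backslash\sW_n/\sW^{\lrr}_{J}]$ we have $w^{-1}(\Delta_n^k\cup I)\subseteq\Phi^+_{\Delta_n}$, so $w^{-1}$ carries every simple root of $\Delta_n^k$ to a \emph{positive} root; demanding in addition that these images be simple forces $w^{-1}$ to send each of the $k$ chains $\{(j-1)r+1,\dots,jr\}$, $1\le j\le k$, that cut out $\Delta_n^k$ onto a block of $r$ consecutive integers, order-preservingly. As these $k$ image blocks are disjoint and exhaust $\{1,\dots,n\}$, they must form a permutation of the standard $r$-blocks $\{1,\dots,r\},\dots,\{n-r+1,\dots,n\}$, so $w^{-1}$ is a block permutation; in particular $w^{-1}(\Delta_n^k)=\Delta_n^k$, $w$ normalises $\bL^{\lrr}$, and $w\in\sW(\bL^{\lrr},\bL^{\lrr})\cap[\sW^{\lrr}_I\backslash\sW_n/\sW^{\lrr}_{J}]=\sW_{I,J}(\bL^{\lrr})$. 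Conversely, if $w\in\sW_{I,J}(\bL^{\lrr})$, then $w$ normalises $\bL^{\lrr}$ and, being of minimal length in its $(\sW^{\lrr}_I,\sW^{\lrr}_J)$-double coset, has trivial ``internal'' $\sW^{\lrr}$-part, hence is a block permutation with $w^{-1}(\Delta_n^k)=\Delta_n^k\subseteq\Delta_n^k\cup J$, so the condition holds. Combining, $r^{\bL^{\lrr}_{I}(L)^w}_{\op^{\lrr}_{{J}}(L)\cap \bL^{\lrr}_{I}(L)^w}\pi_{I}^w\neq0\iff w\in\sW_{I,J}(\bL^{\lrr})$, which proves the lemma.

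The step I expect to be the main obstacle is the non-vanishing half of the second step. The non-vanishing of $r^{\GLN_{k_jr}(L)}_{Q_j}\langle\Delta_j\rangle$ is the classical segment-cutting formula when $Q_j$ is a \emph{standard} parabolic, but the parabolics produced here, namely $Q_j={}^w\op^{\lrr}_{{J}}(L)\cap\GLN_{k_jr}(L)$, are only $\sW$-conjugates of standard ones (they contain the diagonal torus but need not contain the upper-triangular Borel), so one must check that the Jacquet module of a segment representation along such a parabolic is still non-zero whenever its Levi has $r$-divisible blocks. I would handle this by transitivity of Jacquet functors together with the symmetry relating the Jacquet modules of $\langle\Delta_j\rangle$ along $P$ and along its opposite $\overline{P}$ via the contragredient (again a segment representation), or, alternatively, by re-deriving it directly from the exact sequences underlying \textbf{[A1]}. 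The combinatorial last step is routine once phrased through the $\GLN_r$-block structure of $\GLN_n$, and everything else is exactness of Jacquet functors together with Carter's double-coset formulas \cite[Theorem 2.7.4, Corollaries 2.8.8 and 2.8.9]{RWcarter}.
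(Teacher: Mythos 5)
Your argument is correct, and it proves the lemma by a route genuinely different from the paper's. The paper never touches the internal structure of $\pi_I$: it uses the exactness of the Jacquet functor on the embedding $\pi_I^w\hookrightarrow i^{\bL^{\lrr}_{I}(L)^w}_{\op^{\lrr}(L)^w\cap \bL^{\lrr}_{I}(L)^w}(\pi^{\lrr})^w$, applies the geometric lemma (Casselman 6.3.4(b), 6.3.5) to the induced-from-cuspidal representation to get a semisimplification indexed by a second double coset parameter $u$, and then kills the unwanted terms by cuspidality of $\pi^{\lrr}$ together with two invocations of Lemma \ref{nonzerolem}; you instead work directly with $\pi_I$ via its supercuspidal support (all constituents being twists of $\pi$ on $\GLN_r$-blocks), use Carter's Levi decomposition of $\bL^{\lrr}_I(L)\cap{}^w\op^{\lrr}_J(L)$ to see that non-vanishing forces $\Delta_n^k\subseteq(\Delta_n^k\cup I)\cap w(\Delta_n^k\cup J)$, and then do the block combinatorics by hand (simplicity of $w^{-1}(\alpha_i)$ forcing $w^{-1}(i+1)=w^{-1}(i)+1$, hence a tiling by $r$-intervals and a block permutation), rather than quoting Lemma \ref{nonzerolem}. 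What your approach buys is a shorter chain of references (no geometric lemma, no auxiliary $u$) at the cost of re-deriving combinatorics the paper delegates; what the paper's buys is uniformity, since the same cuspidality-plus-\ref{nonzerolem} mechanism is reused elsewhere (e.g.\ in Lemma \ref{rigeolemma} and Lemma \ref{resvanshing}). Two small remarks: the "sharper" iff you aim at is more than the lemma asserts — the statement (despite its wording) is a vanishing statement, so only your "non-zero implies block-divisible Levi implies $w\in\sW_{I,J}(\bL^{\lrr})$" chain is needed, and that chain rests solely on the cuspidal-support vanishing criterion, which is valid for an arbitrary (not necessarily standard) parabolic; hence the obstacle you flag concerning non-vanishing along non-standard parabolics is immaterial here, and if you do want the converse it follows at once from the fact that in $\GLN_{k_jr}$ any two parabolics whose Levis have the same block multiset are conjugate in the group, so non-vanishing of the Jacquet module of the irreducible $\langle\Delta_j\rangle$ depends only on the block sizes, where Zelevinsky's segment formula applies.
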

\begin{proof}For each $w\in [\sW^{\lrr}_I\backslash \sW_n/\sW^{\lrr}_{J}]\subset [\sW_n/\sW^{\lrr}_{J}]$, we have injections 
\begin{equation}
	\begin{aligned}
		&r^{\bL^{\lrr}_{I}(L)^w}_{\op^{\lrr}_{{J}}(L)\cap \bL^{\lrr}_{I}(L)^w}\pi_{I}^w\hookrightarrow r^{\bL^{\lrr}_{I}(L)^w}_{\op^{\lrr}_{{J}}(L)\cap \bL^{\lrr}_{I}(L)^w}
		i^{\bL^{\lrr}_{I}(L)^w}_{\op^{\lrr}(L)^w\cap \bL^{\lrr}_{I}(L)^w}\big(\pi^{\lrr}\big)^w
	\end{aligned}
\end{equation}
of smooth representations of $\bL^{\lrr}_{J}(L)$.\;By \cite[Corollary 6.3.4 (b), Theorem 6.3.5]{casselman1975introduction}, we see that 
\begin{equation}
	\begin{aligned}
		\Big(r^{\bL^{\lrr}_{I}(L)^w}_{\op^{\lrr}_{{J}}(L)\cap \bL^{\lrr}_{I}(L)^w}i^{\bL^{\lrr}_{I}(L)^w}_{\op^{\lrr}(L)^w\cap \bL^{\lrr}_{I}(L)^w}\pi^{\lrr}\Big)^{\mathrm{ss}}& \cong\\
		\bigoplus_{u\in [\sW^{\lrr}_{I\cap wJ}\backslash \sW^{\lrr}_I/\sW^{\lrr}]}& {i}_{(\bL^{\lrr}(L)\cap {\op^{\lrr}}(L)^w)\cap (\op^{\lrr}(L)\cap \bL^{\lrr}_{I}(L))^{wu} }^{\bL^{\lrr}_{{J}}(L)\cap \bL^{\lrr}_{I}(L)^w}\\
		&\gamma_{I,J}^{w,u}\otimes_Er_{(\op^{\lrr}_{{J}}(L)\cap \bL^{\lrr}_{I}(L)^w)\cap \bL^{\lrr}(L)^{wu} }^{\bL^{\lrr}(L)^{wu}}\big(\pi^{\lrr}\big)^{wu}
	\end{aligned}
\end{equation}
By \cite[Corollary 2.8.8]{RWcarter} (or see \cite[Corollary  6.3.4]{casselman1975introduction} and its proof), we see that the smooth representation $r_{(\op^{\lrr}_{{J}}(L)\cap \bL^{\lrr}_{I}(L)^w)\cap \bL^{\lrr}(L)^{wu} }^{\bL^{\lrr}(L)^{wu}}\big(\pi^{\lrr}\big)^{wu}$ is non-zero unless $u^{-1}w^{-1}(\Delta_n(k))\subset (\Delta_n(k)\cup J)\cap w^{-1}(\Delta_n(k)\cup I)\subset \Delta_n(k)\cup J$.\;We deduce from Lemma \ref{nonzerolem} that $wu\in \sW_{\emptyset,J}(\bL^{\lrr})$.\;Then we get that $u^{-1}w^{-1}(\Delta_n(k))=\Delta_n(k)$ and $\Delta_n(k)\subset w^{-1}(\Delta_n(k)\cup I)$, we see that $w\in \sW_{I,\emptyset}(\bL^{\lrr}) $ by using Lemma \ref{nonzerolem} again.\;We thus have $w\in \sW_{\emptyset,J}(\bL^{\lrr})\cap \sW_{I,\emptyset}(\bL^{\lrr})=\sW_{I,J}(\bL^{\lrr})$.\;
This completes the proof.\;
\end{proof}

\subsection{Extensions between locally analytic representations-I}\label{anaext1}
We study the extension groups of certain locally $\BQ_p$-analytic generalized parabolic Steinberg representations.\;This section follows a technical modification of the route of \cite[Section 2.2]{2019DINGSimple} and Section \ref{presmoothext}.\;

For any $J\subset {\Delta_n(k)}$, and $j\in \BZ_{\geq0}$, we put
\[\cE_J^j:=\ext^{i}_{\bL^{\lrr}_{J}(L)}(\pi_J,\pi_J), \text{and }\cEo_J^j:=\ext^{i}_{\bL^{\lrr}_{J}(L),\omega_{\pi}^{\lrr}}(\pi_J,\pi_J).\]
By Section \ref{comphinftyhanaexp}, there are natural morphisms for $i\in\{0,1\}$
\begin{equation}\label{charactertoanaextana}
\iota_J^i:\mathrm{H}^i_{\ana}(\bL^{\lrr}_{J}(L),E)\rightarrow \cE_{J}^i, (\text{resp.\;}\overline{\iota}_J^i: \mathrm{H}^i_{\ana}(\bL^{\lrr}_{J}(L)/\bZ_n,E)\rightarrow \cEo_{J}^i).\;
\end{equation}
More precisely, the map $\iota_J^0$ (resp., $\overline{\iota}_J^0$) is induced by tensoring $$\Psi_0\in \homo_{\bL^{\lrr}_{J}(L)}(1,1) (\text{resp., }\Psi_0\in \homo_{\bL^{\lrr}_{J}(L),\omega_{\pi}^{\lrr}}(1,1))$$ with the identity homomorphism $\mathbf{1}_{\pi_J}:\pi_J\rightarrow \pi_J$.\;On the other hand, the map $\iota_J^1$ (resp., $\overline{\iota}_J^1$) is induced by mapping
$\Psi\in \mathrm{H}^1_{\ana}(\bL^{\lrr}_{J}(L),E)\cong \homo(\bZ^{\lrr}_{J}(L),E)$
(resp., $\Psi\in \mathrm{H}^1_{\ana}(\bL^{\lrr}_{J}(L)/\bZ_n,E)\cong \homo(\bZ^{\lrr}_{J}(L)/\bZ_n,E)$) to an $E[\epsilon]/\epsilon^2$-valued representation $$ [\pi_J(1+\Psi\epsilon)]\in \ext^{1}_{\bL^{\lrr}_{J}(L)}(\pi_J,\pi_J) (\text{resp., }[\pi_J(1+\Psi\epsilon)]\in \ext^{1}_{\bL^{\lrr}_{J}(L),\omega_{\pi}^{\lrr}}(\pi_J,\pi_J)).\;$$


\begin{lem}\label{hHinj}For any $J\subset {\Delta_n(k)}$ and $i\in\{0,1\}$, the maps $\iota_J^i$ and $\overline{\iota}_J^i$ are injective.\;
\end{lem}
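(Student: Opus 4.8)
The plan is to prove injectivity by exhibiting a left inverse (or, equivalently, a splitting obstruction) for each of the four maps $\iota_J^0$, $\overline{\iota}_J^0$, $\iota_J^1$, $\overline{\iota}_J^1$. For the degree-$0$ maps this is essentially formal: an element $\Psi_0 \in \homo_{\bL^{\lrr}_{J}(L)}(1,1)$ is just a scalar, and $\iota_J^0(\Psi_0) = \Psi_0 \cdot \mathbf{1}_{\pi_J}$ is the corresponding scalar endomorphism of $\pi_J$; since $\pi_J$ is a nonzero (irreducible, by the discussion after Definition~\ref{dfnparastein} and Proposition~\ref{axioms}) representation, the scalar is recovered from the endomorphism, so $\iota_J^0$ is injective, and the same argument applies verbatim to $\overline{\iota}_J^0$ after fixing the central character $\omega_{\pi}^{\lrr}$.

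For $\iota_J^1$ the idea is to compose with restriction to the center. Concretely, given $\Psi \in \homo(\bZ^{\lrr}_{J}(L),E)$, the extension $[\pi_J(1+\Psi\epsilon)]$ restricted to $\bZ^{\lrr}_{J}(L)$ is the extension $\omega_{\pi_J}(1+\Psi\epsilon)$ of the central character of $\pi_J$ by itself, twisted by $\dim_E \pi_J$ copies. So I would factor $\iota_J^1$ through the restriction map
\[
\cE_J^1 = \ext^1_{\bL^{\lrr}_J(L)}(\pi_J,\pi_J) \longrightarrow \ext^1_{\bZ^{\lrr}_J(L)}(\pi_J|_{\bZ^{\lrr}_J(L)},\pi_J|_{\bZ^{\lrr}_J(L)}) \cong \homo(\bZ^{\lrr}_J(L),E) \otimes_E \EndO_E(\pi_J),
\]
and observe that the composite sends $\Psi$ to $\Psi \otimes \mathbf{1}_{\pi_J}$, which is visibly injective in $\Psi$ (again because $\pi_J \neq 0$). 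The point that needs a small argument is that the restriction of the locally analytic extension $[\pi_J(1+\Psi\epsilon)]$ to $\bZ^{\lrr}_J(L)$ really is the naive central twist; this follows because the $\cD^{\BQ_p\text{-}\ana}$-module structure on the dual is built from the $\bL^{\lrr}_J(L)$-action, and the cocycle defining the extension is $\Psi$ evaluated on central elements. The same strategy handles $\overline{\iota}_J^1$, restricting instead to $\bZ^{\lrr}_J(L)/\bZ_n$ and using the identification $\hH^1_{\ana}(\bL^{\lrr}_J(L)/\bZ_n,E) \cong \homo(\bZ^{\lrr}_J(L)/\bZ_n,E)$ from Section~\ref{comphinftyhana}.

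Alternatively — and this may be cleaner to write — I would invoke the edge-map exact sequences~(\ref{smoothanalyticedge}) and~(\ref{smoothanalyticedgecenter}) of Lemma~\ref{smoothtoanalytic} together with the decomposition $\hH^1_{\ana}(\bL^{\lrr}_J(L),E) \cong \homo(\bZ^{\lrr}_J(L),E)$ of Lemma~\ref{addcha}: the maps $\iota_J^i$ are precisely the "multiply the universal $i$-extension class by $\mathbf{1}_{\pi_J}$" maps, and injectivity is the statement that $\mathbf{1}_{\pi_J}$ is not a zero-divisor in the relevant cup-product pairing, which holds since $\EndO_{\bL^{\lrr}_J(L)}(\pi_J) = E$ by irreducibility of $\pi_J$. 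The main obstacle I anticipate is purely bookkeeping: one must be careful that the two descriptions of $\hH^1_{\ana}$ (via $\homo$ of the center versus via the explicit cocycle $\pi_J(1+\Psi\epsilon)$) are compatible, and that passing to a fixed central character $\omega_{\pi}^{\lrr}$ does not lose the piece of $\homo(\bZ^{\lrr}_J(L),E)$ that one restricts to — i.e. that $\homo(\bZ^{\lrr}_J(L)/\bZ_n,E) \hookrightarrow \homo(\bZ^{\lrr}_J(L),E)$ is compatible with the two $\iota$'s. Neither of these is deep, but they are the places where a careless argument would break, so I would spell them out. No genuinely hard input beyond irreducibility of $\pi_J$ and the already-established structure of $\hH^1_{\ana}$ is needed.
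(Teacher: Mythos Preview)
Your primary approach is essentially the paper's: for $i=0$ the claim is formal, and for $i=1$ one constructs a left inverse by restricting to the center $\bZ^{\lrr}_{J}(L)$ (resp.\ $\bZ^{\lrr}_{J}(L)/\bZ_n$). The paper phrases this slightly more directly: since $\bZ^{\lrr}_J(L)$ acts on $\pi_J$ by the central character $\omega_{\pi_J}$, restriction lands in $\ext^1_{\bZ^{\lrr}_J(L)}(\omega_{\pi_J},\omega_{\pi_J}) \cong \ext^1_{\bZ^{\lrr}_J(L)}(1,1)$ and the composite with $\iota_J^1$ is the identity, so there is no need to pass through $\EndO_E(\pi_J)$ (which would require care since $\pi_J$ is infinite-dimensional). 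Your alternative via the edge maps of Lemma~\ref{smoothtoanalytic} is not needed and not what the paper does.
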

\begin{proof}The assertion for $i=0$ is clear.\;The natural injection $\bZ^{\lrr}_{J}(L)\hookrightarrow \bL^{\lrr}_{J}(L)$ induces a restriction map
\begin{equation*}
\begin{aligned}
	&\;c_J:\cE_{J}^i\rightarrow \ext^1_{\bZ^{\lrr}_{J}(L)}(\omega_{\pi_J},\omega_{\pi_J})\cong \ext^1_{\bZ^{\lrr}_{J}(L)}(1,1),\\
	\text{resp., }&\;\overline{c}_J:\cEo_{J}^i\rightarrow \ext^1_{\bZ^{\lrr}_{J}(L),\omega_{\pi}^{\lrr}}(\omega_{\pi_J},\omega_{\pi_J})\cong \ext^1_{\bZ^{\lrr}_{J}(L)/\bZ_n}(1,1).
\end{aligned}
\end{equation*}
It is clear that the map $c_J$ (resp., $\overline{c}_J$)  gives a section  of $\iota_J^1$ (resp., $\overline{\iota}_J^1$).\;The results follow.\;
\end{proof}

\begin{lem}\label{EXTlemmaPre} For $J\in \{\Delta_n(k),{\Delta}_{k,j}\}$ and $i\in\{0,1\}$, we have the following isomorphisms of $E$-vector spaces:
\begin{equation}\label{EXTlemmaPre1}
\begin{aligned}
	&\iota_{\Delta_n(k)}^i:\mathrm{H}^i_{\ana}(\bL^{\lrr}_{\Delta_n(k)}(L),E)\xrightarrow{\sim} \cE_{\Delta_n(k)}^i, \hspace{220pt}\\
	&\overline{\iota}_J^i: \mathrm{H}^i_{\ana}(\bL^{\lrr}_{J}(L)/\bZ_n,E)\xrightarrow{\sim} \cEo_{J}^i.\;
\end{aligned}
\end{equation}
\end{lem}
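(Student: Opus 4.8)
The plan is to prove Lemma~\ref{EXTlemmaPre} by leveraging the spectral sequences of Lemma~\ref{smoothtoanalytic} together with the explicit smooth extension computations collected in Appendix~\ref{presmoothext} (Lemma~\ref{SmoothEXTlemmaPre}), which in turn rest on the theory of Bushnell--Kutzko types. The point is that for the two special families $J=\Delta_n(k)$ and $J=\Delta_{k,j}:=\Delta_n(k)\backslash\{jr\}$ the relevant spectral sequences degenerate, forcing the canonical maps $\iota_J^i,\overline{\iota}_J^i$ of \eqref{charactertoanaextana} to be isomorphisms. By Lemma~\ref{hHinj} these maps are already known to be injective, so the whole content is a dimension count (equivalently, a surjectivity statement).

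First I would treat $J=\Delta_n(k)$, so $\bL^{\lrr}_{\Delta_n(k)}(L)=G=\GLN_n(L)$ and $\pi_{\Delta_n(k)}=\langle\Delta_{[k-1,0]}(\pi)\rangle$ is an irreducible (essentially square-integrable) smooth representation of $G$. Apply the spectral sequence \eqref{Specsequencesmanalytic} with $V=W=\pi_{\Delta_n(k)}$: $\ext^r_{\cM^\infty(G)}(\pi_{\Delta_n(k)}^\vee,\hH^s(\fg_{\Sigma_L},\pi_{\Delta_n(k)}^\vee))\Rightarrow \cE^{r+s}_{\Delta_n(k)}$. Since $\pi_{\Delta_n(k)}$ is smooth, $\fg_{\Sigma_L}$ acts trivially, so $\hH^s(\fg_{\Sigma_L},\pi_{\Delta_n(k)}^\vee)\cong \pi_{\Delta_n(k)}^\vee\otimes_E \hH^s(\fg_{\Sigma_L},E)$, and the $E_2$-page becomes $\ext^r_{\cM^\infty(G)}(\pi_{\Delta_n(k)}^\vee,\pi_{\Delta_n(k)}^\vee)\otimes_E\hH^s(\fg_{\Sigma_L},E)\cong \ext^{r,\infty}_G(\pi_{\Delta_n(k)},\pi_{\Delta_n(k)})\otimes_E\hH^s(\fg_{\Sigma_L},E)$. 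Now I would invoke Lemma~\ref{SmoothEXTlemmaPre} from the Appendix: because $\pi_{\Delta_n(k)}$ is irreducible essentially square-integrable, its smooth self-extensions are controlled by the unramified twists, giving $\ext^{0,\infty}_G(\pi_{\Delta_n(k)},\pi_{\Delta_n(k)})=E$ and $\ext^{1,\infty}_G(\pi_{\Delta_n(k)},\pi_{\Delta_n(k)})\cong\homo_\infty(L^\times,E)$ of dimension $1$. Combined with $\hH^0(\fg_{\Sigma_L},E)=E$, $\hH^1(\fg_{\Sigma_L},E)=0$ (since $\fg$ is reductive, $\hH^1$ of the semisimple part vanishes and only the center contributes, which for $\GLN_n$ gives a $d_L$-dimensional $\hH^1(\fz_n,E)$ — here I would need to be slightly careful and instead use the decomposition $\fg_{\Sigma_L}=\fz_{n,\Sigma_L}\oplus(\text{semisimple})$, so $\hH^1(\fg_{\Sigma_L},E)\cong\hH^1(\fz_{n,\Sigma_L},E)$ has dimension $d_L$), the spectral sequence has $E_2^{r,s}$ concentrated so that $\cE^1_{\Delta_n(k)}$ sits in a short exact sequence $0\to E_2^{1,0}\to\cE^1_{\Delta_n(k)}\to E_2^{0,1}\to 0$, i.e. $0\to\ext^{1,\infty}_G(\pi,\pi)\to\cE^1_{\Delta_n(k)}\to\hH^1(\fz_{n,\Sigma_L},E)\to 0$ (the differential $d_2\colon E_2^{0,1}\to E_2^{2,0}$ must vanish because its source is detected by restriction to the torus/center, matching the section $c_{\Delta_n(k)}$ of Lemma~\ref{hHinj}). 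Hence $\dim_E\cE^1_{\Delta_n(k)}=1+d_L=d_L+1=\dim_E\homo(L^\times,E)=\dim_E\hH^1_{\ana}(G,E)$ by Lemma~\ref{addcha}; combined with injectivity of $\iota^1_{\Delta_n(k)}$ this gives the isomorphism. The $i=0$ case is immediate since both sides are $E$.

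For $J=\Delta_{k,j}=\Delta_n(k)\backslash\{jr\}$ I would run the analogous argument with the center-fixed spectral sequence \eqref{Specsequencesmanalyticcenter}, now over $\bL^{\lrr}_J(L)/\bZ_n$. Here $\bL^{\lrr}_J(L)\cong\GLN_{jr}(L)\times\GLN_{(k-j)r}(L)$ and $\pi_J$ is the external tensor product of the two segment representations $\langle\Delta_{[j-1,0]}(\pi(\cdots))\rangle\boxtimes\langle\Delta_{[k-j-1,0]}(\pi(\cdots))\rangle$ (with appropriate twists). By the Künneth formula for smooth extensions (Proposition~\ref{prosmoothKunneth}) and Lemma~\ref{SmoothEXTlemmaPre} again, $\ext^{\bullet,\infty}_{\bL^{\lrr}_J(L),\omega^{\lrr}_\pi}(\pi_J,\pi_J)$ is computed from the unramified twists of each factor modulo the common center: one gets $\ext^{0}=E$ and $\ext^{1}$ one-dimensional. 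On the Lie side, $\hH^s(\overline{\fl}_{J,\Sigma_L},E)$ with $\overline{\fl}_J=\fl_J/\fz_n$ has $\hH^0=E$, $\hH^1\cong\hH^1(\overline{\fz}_{J,\Sigma_L},E)$ of dimension $d_L$ (one copy of $L^\times$ worth of center survives after killing $\bZ_n$). The same degeneration argument (the potential $d_2$-differential dies against the restriction section $\overline{c}_J$ of Lemma~\ref{hHinj}) yields $\dim_E\cEo^1_J=1+d_L$, matching $\dim_E\hH^1_{\ana}(\bL^{\lrr}_J(L)/\bZ_n,E)=\dim_E\wedge^1\homo(\bZ^{\lrr}_J(L)/\bZ_n,E)=d_L$... — here I must recount: $\homo(\bZ^{\lrr}_J(L)/\bZ_n,E)$ has dimension $(l_J-1)(d_L+1)$ where $l_J=k-|J|=2$, so dimension $d_L+1$, which is exactly $1+d_L$. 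Good; then injectivity of $\overline{\iota}^1_J$ forces it to be an isomorphism, and $i=0$ is trivial.

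The main obstacle I anticipate is the degeneration claim for the spectral sequences — specifically, showing that the edge differential $d_2\colon E_2^{0,1}\to E_2^{2,0}$ (and in the truncated exact sequence \eqref{smoothanalyticedge}, the connecting map $\hH^1_{\ana}$-piece $\to\ext^{2,\infty}$) vanishes. The clean way to see this is precisely the existence of the splitting maps $c_J$ (resp.\ $\overline c_J$) from Lemma~\ref{hHinj}: composing $\iota^1_J$ with restriction to the torus $\bZ^{\lrr}_J(L)$ (resp.\ $\bZ^{\lrr}_J(L)/\bZ_n$) is the identity on $\hH^1_{\ana}$, so the image of $\iota^1_J$ surjects onto $E_2^{0,1}$ after the spectral sequence abuts, which forces $d_2=0$ on that entry. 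I would spell this out by comparing the five-term exact sequence \eqref{smoothanalyticedge} for $\bL^{\lrr}_J(L)$ with that for its torus and using functoriality of the spectral sequence in the group. A secondary point requiring care is the precise bookkeeping of which center survives (the difference between $\hH^1(\fl_{J,\Sigma_L},E)$ and $\hH^1(\fz_{J,\Sigma_L},E)$, and the analogous statement modulo $\bZ_n$), but this is exactly what Lemma~\ref{addcha} and the decompositions \eqref{decomp1}, \eqref{decomp2} are set up to handle, so it should be routine once the degeneration is in place.
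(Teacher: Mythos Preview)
Your approach is essentially the same as the paper's: apply the spectral sequence of Lemma~\ref{smoothtoanalytic} to $V=W=\pi_J$, identify the $E_2$-page using the smooth extension computations of Lemma~\ref{SmoothEXTlemmaPre}, and then match the abutment with $\hH^i_{\ana}$ via Lemma~\ref{addcha}, the injectivity from Lemma~\ref{hHinj} finishing the job. So the strategy is right.

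However, you have misidentified the ``main obstacle''. There is no $d_2$ to kill. Lemma~\ref{SmoothEXTlemmaPre} gives $\ssE^r_{\Delta_n(k)}=\bigwedge^r X_E^*(\GLN_n)$ and $\sEo^r_J=\bigwedge^r X_E^*(\bL^{\lrr}_J/\bZ_n)$; for $J=\Delta_n(k)$ or $J=\Delta_{k,j}$ these are wedge powers of a space of dimension at most $1$, hence vanish for $r\geq 2$. Thus $E_2^{r,s}=0$ unless $r\in\{0,1\}$, and Lemma~\ref{degspectral}(b) applies directly: the spectral sequence degenerates at $E_2$ for trivial reasons, yielding the short exact sequence
\[
0\longrightarrow X_E^*(\bL^{\lrr}_J/\bZ_n)\otimes_E\hH^{i-1}(\overline{\fl}^{\lrr}_{J,\Sigma_L},E)\longrightarrow \cEo^i_J\longrightarrow \hH^i(\overline{\fl}^{\lrr}_{J,\Sigma_L},E)\longrightarrow 0
\]
(and similarly without the bar for $\cE_{\Delta_n(k)}$). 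Your proposed argument using the section $c_J$ to force $d_2=0$ would work, but it is unnecessary: the target $E_2^{2,0}$ is already zero. Once you have this short exact sequence, Lemma~\ref{addcha} says its two outer terms are exactly the summands of $\hH^i_{\ana}(\bL^{\lrr}_J(L)/\bZ_n,E)$, and Lemma~\ref{hHinj} turns the resulting dimension equality into an isomorphism. The wobbling in your dimension counts (the self-correction about $\hH^1(\fg_{\Sigma_L},E)$, the ``here I must recount'') disappears once you organize the computation this way.
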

\begin{proof}By Lemma \ref{smoothtoanalytic}, we have spectral sequences
\begin{equation}
\begin{aligned}
	E^{r,s}_{2,I}&=\ext^r_{\cM^{\infty}(\bL^{\lrr}_{J}(L))}(\pi_{J}^\vee, \hH^s(\overline{\fl}_{{J},\Sigma_L},E)\otimes_E \pi_{J}^\vee )\Rightarrow \ext^{r+s}_{\bL^{\lrr}_{J}(L)}(\pi_{J},\pi_{J}),\\
	E^{r,s}_{2,II}&=\ext^r_{\cM^{\infty}(\bL^{\lrr}_{J}(L)),\omega_{\pi}^{\lrr}}((\pi_{J}\otimes \omega_\pi^{-1})^\vee, \hH^s(\overline{\fl}_{{J},\Sigma_L},E)\otimes_E (\pi_{J}\otimes \omega_\pi^{-1})^\vee )\\
	&\Rightarrow \ext^{r+s}_{\bL^{\lrr}_{J}(L),\omega_{\pi}^{\lrr}}(\pi_{J},\pi_{J}).
\end{aligned}
\end{equation}
For any $J\in \{\Delta_n(k),{\Delta}_{k,j}\}$, it follows from Lemma \ref{SmoothEXTlemmaPre} that the term $E^{r,s}_{2,II}=0$ is zero unless $r\in\{0,1\}$.\;Therefore, Lemma \ref{degspectral} (b) implies the following exact sequence
\[0 \rightarrow X_E^*(\bL^{\lrr}_{J}/\bZ_n) \otimes_E\hH^{i-1}(\overline{\fl}_{{J},\Sigma_L},E)\rightarrow\cEo_{J}^i \rightarrow \hH^i(\overline{\fl}_{{J},\Sigma_L},E)\rightarrow0.\]
Then for each $i\in\{0,1\}$, we deduce the second isomorphism $\overline{\iota}_J^i$ in (\ref{EXTlemmaPre1})  from  Lemma \ref{addcha} and Lemma \ref{hHinj}.\;

For $J=\Delta_n(k)$, the term $E^{r,s}_{2,I}=0$ is zero unless $r=0,1$ by Lemma \ref{SmoothEXTlemmaPre}.\;Therefore, the Lemma \ref{degspectral} (b) implies the following exact sequence
\[0 \rightarrow X_E^*(\bL^{\lrr}_{\Delta_n(k)}) \otimes_E\hH^{i-1}({\fl}_{{\Delta_n(k)},\Sigma_L},E)\rightarrow\cEo_{\Delta_n(k)}^i \rightarrow \hH^i({\fl}_{{\Delta_n(k)},\Sigma_L},E)\rightarrow0.\]
Then Lemma \ref{addcha} and Lemma \ref{hHinj} show that the first map ${\iota}_{\Delta_n(k)}^i$ in (\ref{EXTlemmaPre1})  also induces an isomorphism for each $i\in\{0,1\}$.\;
\end{proof}
\begin{rmk}In general, the higher extension groups $\cE_J^j$ and $\cEo_J^j$ are much harder to calculate than $\ssE_I^i$ and $\sEo_I^i$.\;The Lemma \ref{smoothtoanalytic} and the spectral sequence argument in Lemma \ref{addcha} are of not help here.\;
\end{rmk}

\begin{lem}\label{analyticExt1analyticExtC1}We have
\begin{equation}\label{analyticExt1}
\ext^i_{G}(i_{\op^{\lrr}_{{I}}}^{G}\big(\pi,\underline{\lambda}),\BI_{\op^{\lrr}_{{J}}}^{G}(\pi,\underline{\lambda})\big)=\left\{
\begin{array}{ll}
	\cE_J^i, & \hbox{$J\subseteq I$;} \\
	0, & \hbox{otherwise.}
\end{array}
\right.
\end{equation}
and
\begin{equation}\label{analyticExtC1}
\ext^i_{G,\omega_{\pi}^{\lrr}\chi_{\underline{\lambda}}}\big(i_{\op^{\lrr}_{{I}}}^{G}(\pi,\underline{\lambda}),\BI_{\op^{\lrr}_{{J}}}^{G}(\pi,\underline{\lambda})\big)=\left\{
\begin{array}{ll}
	\cEo_J^i, & \hbox{$J\subseteq I$;} \\
	0, & \hbox{otherwise.}
\end{array}
\right.
\end{equation}
\end{lem}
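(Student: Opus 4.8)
The plan is to compute $\ext^i_G\big(i_{\op^{\lrr}_{{I}}}^{G}(\pi,\underline{\lambda}),\BI_{\op^{\lrr}_{{J}}}^{G}(\pi,\underline{\lambda})\big)$ via Bruhat-filtration d\'evissage, treating the case without fixed central character and the case with central character $\omega_{\pi}^{\lrr}\chi_{\underline{\lambda}}$ in parallel. First I would note that, up to a twist by the finite-dimensional algebraic representation $L(\ul{\lambda})$ (which is harmless by the tensoring morphisms (\ref{analytictensor}) and, more importantly, is here an honest $\ext$-preserving operation since $L(\ul\lambda)$ is algebraic of dimension one over the relevant center-quotient), it suffices to handle $\ul{\lambda}=\ul{0}$. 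Then $i_{\op^{\lrr}_{{I}}}^{G}(\pi)$ is a smooth representation, so by the spectral sequence of Lemma \ref{smoothtoanalytic} (more precisely the variant coming from Frobenius reciprocity for the locally analytic induction $\BI_{\op^{\lrr}_{{J}}}^{G}$), the computation reduces to understanding $\ext$-groups over the Levi $\bL^{\lrr}_{J}(L)$ after passing to the Jacquet module $r^{G}_{\op^{\lrr}_{{J}}(L)}$ of $i_{\op^{\lrr}_{{I}}}^{G}(\pi)$.

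The heart of the argument is then to run the Bruhat filtration $\mathcal{F}^\bullet_B$ on $i_{\op^{\lrr}_{{I}}}^{G}(\pi)$ from Section \ref{Brufil}, applied to the pair of parabolics $(\op^{\lrr}_I,\op^{\lrr}_J)$, and to use the identification (\ref{rgradfilext}) of $r^{G}_{\op^{\lrr}_{{J}}(L)}(\gr^h_{\mathcal{F}_B^\bullet})$ as a sum over double-coset representatives $w$ of parabolically induced pieces built from $r^{\bL^{\lrr}_{I}(L)^w}_{\op^{\lrr}_{{J}}(L)\cap\bL^{\lrr}_{I}(L)^w}\pi_I^w$. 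By Lemma \ref{nonzeroIJw} all these pieces vanish except when $w\in\sW_{I,J}(\bL^{\lrr})$, and — crucially — $\sW_{I,J}(\bL^{\lrr})$ is nonempty only when $J\subseteq I$ (this is where the ``otherwise $=0$'' branch comes from: if $J\not\subseteq I$ there is no $w$ with $(\bL^{\lrr}_I)^w=\bL^{\lrr}_J$, since the two Levis have different isomorphism types/block structures, so the whole Jacquet module of every graded piece is zero and hence, by the d\'evissage along the finite filtration $\mathcal{F}^\bullet_B$ together with the spectral sequence relating $\ext^\bullet_G$ to $\ext^\bullet_{\bL^{\lrr}_J(L)}$ of the graded pieces, all the $\ext^i_G$ vanish). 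When $J\subseteq I$, the single surviving term is $w=1$ (the minimal-length element of $\sW_{I,J}(\bL^{\lrr})$ sitting in the lowest filtration step), for which $r^{\bL^{\lrr}_{I}(L)}_{\op^{\lrr}_{{J}}(L)\cap\bL^{\lrr}_{I}(L)}\pi_I\cong\pi_J$ by condition $\mathbf{[A1]}$ (\ref{[A1]-1}) of Proposition \ref{axioms}, and the twisting character $\gamma^w_{I,J}$ is trivial; so Frobenius reciprocity (Lemma \ref{smoothbasiclemma2} in the locally analytic incarnation) collapses the spectral sequence onto $\ext^i_{\bL^{\lrr}_J(L)}(\pi_J,\pi_J)=\cE^i_J$, respectively $\ext^i_{\bL^{\lrr}_J(L),\omega_{\pi}^{\lrr}}(\pi_J,\pi_J)=\cEo^i_J$ in the fixed-central-character setting.

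More carefully, the d\'evissage step runs as follows: the filtration $\mathcal{F}^\bullet_B$ is finite and $\op^{\lrr}_J(L)$-stable, so it yields a finite spectral sequence whose $E_1$-page is built from $\ext^\bullet_G\big(i_{\op^{\lrr}_I}^G(\pi),\BI_{\op^{\lrr}_J}^G(\pi)\big)$-contributions of the individual graded pieces $\gr^h$; by adjunction each such contribution is governed by $\ext^\bullet_{\bL^{\lrr}_J(L)}\big(r^G_{\op^{\lrr}_J(L)}\gr^h,\pi_J\big)$ (plus the Lie-algebra cohomology correction of Lemma \ref{smoothtoanalytic} — but that correction only acts through the smooth input, which is already $0$ on the non-surviving pieces), and all of these vanish except for $h=0$, $w=1$. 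Hence the spectral sequence degenerates and gives the asserted isomorphism. I expect the main obstacle to be the bookkeeping in this degeneration: one must check that the Bruhat filtration is compatible with the Orlik--Strauch presentation of $\BI_{\op^{\lrr}_J}^G(\pi)$ so that the adjunction/Frobenius-reciprocity isomorphism is available at the level of $\ext$-groups (not just $\homo$), and that no extension problems between the vanishing graded contributions and the surviving one can spuriously contribute — i.e. that the edge maps genuinely vanish. This is exactly the kind of argument carried out in \cite[Section 2.2]{2019DINGSimple} in the Borel case; the only genuinely new input needed here is Lemma \ref{nonzeroIJw} (to kill the $w\neq 1$ terms because $\pi$ is cuspidal rather than trivial) and the triviality of $\gamma^1_{I,J}$ together with (\ref{[A1]-1}) to identify the surviving term.
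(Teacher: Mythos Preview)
The proposal has the right overall architecture (adjunction spectral sequence followed by Bruhat d\'evissage) but contains a genuine gap in the vanishing step. Your claim that ``$\sW_{I,J}(\bL^{\lrr})$ is nonempty only when $J\subseteq I$'' is false: by definition $\sW_{I,J}(\bL^{\lrr})=\sW(\bL^{\lrr},\bL^{\lrr})\cap[\sW^{\lrr}_I\backslash\sW_n/\sW^{\lrr}_J]$ records elements of $\sW(\bL^{\lrr},\bL^{\lrr})\cong\sW_k$ that are minimal in their double coset, and in particular $w=1$ always lies in this set regardless of whether $J\subseteq I$. (You seem to have conflated it with $\sW(\bL^{\lrr}_I,\bL^{\lrr}_J)$, which is a different object.) So Lemma~\ref{nonzeroIJw} on its own neither gives the ``otherwise $=0$'' branch nor singles out $w=1$ when $J\subseteq I$.

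What the paper actually does is the following. First, Schraen's spectral sequence \cite[Cor.~4.9]{schraen2011GL3} collapses onto the $s=0$ row by Kostant's theorem \cite[Thm.~4.10]{schraen2011GL3}; this is also what cleanly handles the weight $\ul\lambda$ (your proposed reduction to $\ul\lambda=\ul 0$ by ``twisting'' is not correct as stated, since $L(\ul\lambda)$ is not one-dimensional). One is then reduced to computing $\ext^r_{\bL^{\lrr}_J(L)}\big(r^G_{\op^{\lrr}_J}(i^G_{\op^{\lrr}_I}(\pi)),\pi_J\big)$. Second, one passes through the spectral sequence of Lemma~\ref{smoothtoanalytic} to reduce to \emph{smooth} $\ext$-groups over the Levi. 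Third, and this is the step you are missing, for each surviving $w\in\sW_{I,J}(\bL^{\lrr})$ with $w\neq 1$ or $J\nsubseteq I$ the corresponding graded piece is killed by a modulus-character argument \`a la Orlik \cite[Prop.~15, Lemma~16]{2012Orlsmoothextensions}: one exhibits a central element $z\in\bZ^{\lrr}_{J\cap w^{-1}I}(L)$ on which $(\gamma^w_{I,J})^\vee\delta_{\bL^{\lrr}_J(L)\cap\op^{\lrr}_I(L)^w}$ is nontrivial, so that $z$ acts by distinct scalars on the two arguments and Lemma~\ref{centervan} forces the smooth $\ext$ to vanish. Lemma~\ref{nonzeroIJw} is still used, but only to restrict attention to $w\in\sW_{I,J}(\bL^{\lrr})$ so that the Levi decomposition (\ref{LEIVEIJrk}) is available and the modulus-character computation goes through.
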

\begin{proof}We modify the proof of \cite[Proposition 2.11]{2019DINGSimple} to our case.\;We prove (\ref{analyticExtC1}) only and (\ref{analyticExt1}) follows by the same strategy.\;By \cite[Corollary 4.9]{schraen2011GL3} (note that we need to check that $\mathrm{H}_s({\on}^{\lrr}_{J}(L),i_{\op^{\lrr}_{{I}}}^{G}(\pi,\underline{\lambda}))$ satisfies the separated assumption of  \cite[Corollary 4.9]{schraen2011GL3}.\;This is true since $i_{\op^{\lrr}_{{I}}}^{G}(\pi,\underline{\lambda})$ is locally $\BQ_p$-algebraic), we have a spectral sequence
\begin{equation}\label{spectalse1}
\begin{aligned}
	E_2^{r,s}=\ext^r_{\bL^{\lrr}_{J}(L),\omega_{\pi}^{\lrr}\chi_{\underline{\lambda}}}\Big(\mathrm{H}_s\big({\on}^{\lrr}_{J}(L),i_{\op^{\lrr}_{{I}}}^{G}(\pi,\underline{\lambda})\big),&\pi_J(\underline{\lambda})\Big)\Rightarrow \ext^{r+s}_{G,\chi(\underline{\lambda})}(i_{\op^{\lrr}_{{I}}}^{G}\Big(\pi,\underline{\lambda}),\BI_{\op^{\lrr}_{{J}}}^{G}(\pi,\underline{\lambda})\Big).
\end{aligned}
\end{equation}
One has isomorphisms of locally  $\BQ_p$-algebraic representations of $\res_{\BQ_p}^L\bL^{\lrr}_{J}(L)$, 
\begin{equation}\label{spectalse2unipotent}
\begin{aligned}
	\hH_s\big(\on^{\lrr}_J,i_{\op^{\lrr}_{{I}}}^{G}(\pi,\underline{\lambda})\big)&=r^G_{\op^{\lrr}_{{J}}}(i_{\op^{\lrr}_{{I}}}^{G}(\pi))\otimes_E\mathrm{H}_s(\overline{\mathfrak{n}}^{\lrr}_{J,\Sigma_L},L(\underline{\lambda}))\\
	&=r^G_{\op^{\lrr}_{{J}}}(i_{\op^{\lrr}_{{I}}}^{G}(\pi))\otimes_E\Big(\bigoplus_{\substack{w\in \sW_n,\leg(w)=s\\w\cdot \underline{\lambda}\in X^+_{{\Delta_n^k\cup J}}}}L^{\lrr}(w\cdot \underline{\lambda})_{{J}}\Big),
\end{aligned}
\end{equation}
by \cite[Theorem 4.10]{schraen2011GL3}.\;Therefore, by \cite[Proposition 4.7(1)]{schraen2011GL3}, only the terms $E_2^{r,s}$ with $s=0$ can be nonzero.\;Consequently, for all $r\geq 0$, we have
\begin{equation*}
\begin{aligned}
	& \ext^{r}_{G,\omega_{\pi}^{\lrr}\chi_{\underline{\lambda}}}\Big(i_{\op^{\lrr}_{{I}}}^{G}(\pi,\underline{\lambda}),\BI_{\op^{\lrr}_{{J}}}^{G}(\pi,\underline{\lambda})\Big)\\
	=&\; \ext^r_{\bL^{\lrr}_{J}(L),\omega_{\pi}^{\lrr}\chi_{\underline{\lambda}}}\Big(r^G_{\op^{\lrr}_{{J}}}(i_{\op^{\lrr}_{{I}}}^{G}(\pi))\otimes_EL^{\lrr}(\underline{\lambda})_J,\pi_J(\underline{\lambda}_J)\Big)\\
	=&\; \ext^r_{\bL^{\lrr}_{J}(L),\omega_{\pi}^{\lrr}}\Big(r^G_{\op^{\lrr}_{{J}}}(i_{\op^{\lrr}_{{I}}}^{G}(\pi)),\pi_J\otimes_E\mathrm{End}_E(L^{\lrr}(\underline{\lambda})_{{J}})\Big),\\
	=&\; \ext^r_{\bL^{\lrr}_{J}(L),\omega_{\pi}^{\lrr}}\Big(r^G_{\op^{\lrr}_{{J}}}(i_{\op^{\lrr}_{{I}}}^{G}(\pi)),\pi_J\Big), 
\end{aligned}
\end{equation*}
where the last isomorphism follows from the fact that the algebraic representation $\mathrm{End}_E(L(\underline{\lambda})_{J})$ is semisimple, in which the trivial representation  has multiplicity $1$.\;By Lemma \ref{smoothtoanalytic}, we have a spectral sequence
\begin{equation*}
\begin{aligned}
	\ext^r_{{\mathcal{M}^\infty_{Z,\chi}(\bL^{\lrr}_{J}(L))}}\Big(\pi_J^{\vee},\mathrm{H}^s(\overline{\mathfrak{l}}^{\lrr}_{{J},\Sigma_L},E)\otimes_E\big(r^G_{\op^{\lrr}_{{J}}}(i_{\op^{\lrr}_{{I}}}^{G}(\pi))\big)^{\vee}\Big)
	\Rightarrow \ext^{ r+s}_{\bL^{\lrr}_{J}(L),\omega_{\pi}^{\lrr}}(r^G_{\op^{\lrr}_{{J}}}(i_{\op^{\lrr}_{{I}}}^{G}(\pi),\pi_J).
\end{aligned}
\end{equation*}
We recall the Bruhat filtration $\cF^\bullet_B$ (\ref{filext}) on $i_{\op^{\lrr}_{{I}}}^{G}(\pi)$.\;It satisfies
\[r^G_{\op^{\lrr}_{{J}}(L)}\Big(\gr^k_{\mathcal{F}_B^\bullet}(i_{\op^{\lrr}_{{I}}}^{G}(\pi))\Big)\cong
\bigoplus_{\substack{w\in [\sW^{\lrr}_I\backslash \sW_n/\sW^{\lrr}_{J}]\\\mathrm{lg}(w)=k}}i_{\bL^{\lrr}_{J}(L)\cap \op^{\lrr}_{{I}}(L)^w}^{\bL^{\lrr}_{J}(L)}\big(\gamma_{I,J}^w\otimes_Er^{\bL^{\lrr}_{I}(L)^w}_{\op^{\lrr}_{{J}}(L)\cap \bL^{\lrr}_{I}(L)^w}\pi_{I}^w\big)\]
for each $k\geq 0$.\;We are going to show that
\[\ext^r_{\bL^{\lrr}_{J}(L)(L),\omega_{\pi}^{\lrr}}\Big(i_{\bL^{\lrr}_{J}(L)\cap \op^{\lrr}_{{I}}(L)^w}^{\bL^{\lrr}_{J}(L)}\Big(\gamma_{I,J}^w\otimes_Er^{\bL^{\lrr}_{I}(L)^w}_{\op^{\lrr}_{{J}}(L)\cap \bL^{\lrr}_{I}(L)^w}\pi_{I}^w\Big),\pi_J\Big)=0\]
for any element $1\neq w\in [\sW^{\lrr}_I\backslash \sW_n/\sW^{\lrr}_{J}]$ or $J\nsubseteq I$.\;This is a direct consequence of  the following spectral sequence (see Lemma \ref{smoothtoanalytic})
\begin{equation*}
\begin{aligned}
	\ext^r_{{\mathcal{M}^\infty_{Z,\omega_{\pi}^{\lrr}}(\bL^{\lrr}_{J}(L))}}&\Big(\pi_J^{\vee,\infty},\mathrm{H}^s(\overline{\mathfrak{l}}^{\lrr}_{{J},\Sigma_L},E)\otimes_E\Big(i_{\bL^{\lrr}_{J}(L)\cap \op^{\lrr}_{{I}}(L)^w}^{\bL^{\lrr}_{J}(L)}\big(\gamma_{I,J}^w\otimes_Er^{\bL^{\lrr}_{I}(L)^w}_{\op^{\lrr}_{{J}}(L)\cap \bL^{\lrr}_{I}(L)^w}\pi_{I}^w\big)\Big)^{\vee}\Big)\\
	\Rightarrow&\; \ext^{ r+s}_{\bL^{\lrr}_{J}(L),\omega_{\pi}^{\lrr}}\Big(i_{\bL^{\lrr}_{J}(L)\cap \op^{\lrr}_{{I}}(L)^w}^{\bL^{\lrr}_{J}(L)}\gamma_{I,J}^w r^{\bL^{\lrr}_{I}(L)^w}_{\op^{\lrr}_{{J}}(L)\cap \bL^{\lrr}_{I}(L)^w}\pi_{I}^w ,\pi_J\Big),
\end{aligned}
\end{equation*}
and (\ref{smoothzerofil}).\;It suffices to show that
\begin{equation}\label{smoooothvansidevissage}
	\ext^{i,\infty}_{\bL^{\lrr}_{J}(L),\omega_{\pi}^{\lrr}}\Big(i_{\bL^{\lrr}_{J}(L)\cap \op^{\lrr}_{{I}}(L)^w}^{\bL^{\lrr}_{J}(L)}\big(\gamma_{I,J}^w\otimes_Er^{\bL^{\lrr}_{I}(L)^w}_{\op^{\lrr}_{{J}}(L)\cap \bL^{\lrr}_{I}(L)^w}\pi_{I}^w\big) ,\pi_J\Big)=0
\end{equation}
for $J\nsubseteq I$ or $w\neq 1$.\;By Lemma \ref{nonzeroIJw}, we can assume that 
$w\in \sW_{I,J}(\bL^{\lrr})$.\;In this case, we see that 
\begin{equation}\label{extIJfil1}
	\begin{aligned}
		& \ext^{i,\infty}_{\bL^{\lrr}_{J}(L),\omega_{\pi}^{\lrr}}\Big(i_{\bL^{\lrr}_{J}(L)\cap \op^{\lrr}_{{I}}(L)^w}^{\bL^{\lrr}_{J}(L)}\big(\gamma_{I,J}^w\otimes_Er^{\bL^{\lrr}_{I}(L)^w}_{\op^{\lrr}_{{J}}(L)\cap \bL^{\lrr}_{I}(L)^w}\pi_{I}^w\big) ,\pi_J\Big)\\
		\cong&\; \ext^{i,\infty}_{\bL^{\lrr}_{J}(L),\omega_{\pi}^{\lrr}}\Big(\pi_J^{\sim},i_{\bL^{\lrr}_{J}(L)\cap \op^{\lrr}_{{I}}(L)^w}^{\bL^{\lrr}_{J}(L)}\big(\gamma_{I,J}^w\otimes_Er^{\bL^{\lrr}_{I}(L)^w}_{\op^{\lrr}_{{J}}(L)\cap \bL^{\lrr}_{I}(L)^w}\pi_{I}^w \big)^{\sim}\delta_{\bL^{\lrr}_{J}(L)\cap \op^{\lrr}_{{I}}(L)^w}\Big)\\
		\cong&\;\ext^{i,\infty}_{\bL^{\lrr}_{J\cap w^{-1}I}(L),\omega_{\pi}^{\lrr}}\Big(r_{\bL^{\lrr}_{J}(L)\cap \op^{\lrr}_{{I}}(L)^w}^{\bL^{\lrr}_{J}(L)}\pi_J^{\sim},\big(\gamma_{I,J}^w\otimes_Er^{\bL^{\lrr}_{I}(L)^w}_{\op^{\lrr}_{{J}}(L)\cap \bL^{\lrr}_{I}(L)^w}\pi_{I}^w \big)^{\sim}\delta_{\bL^{\lrr}_{J}(L)\cap \op^{\lrr}_{{I}}(L)^w}\Big)\\
		=&: \ext^{i,\infty}_{\bL^{\lrr}_{J\cap w^{-1}I}(L),\omega_{\pi}^{\lrr}}(A,B), 
	\end{aligned}
\end{equation}
where the first isomorphism follows from \cite[Lemma 14]{2012Orlsmoothextensions} (note that $W^{\sim}$ denotes the smooth dual of $W$,\;if $W$ is a smooth representation), and the second isomorphism follows from Lemma \ref{smoothbasiclemma2}.\;If $J\nsubseteq I$ or $w\neq 1$, by the Proof of \cite[Proposition 15, especially (2) and (3), \& Lemma 16]{2012Orlsmoothextensions}, there exists an element $z\in \bZ^{\lrr}_{J\cap w^{-1}I}(L)$ such that $(\gamma_{I,J}^w)^{\vee}(z)\delta_{\bL^{\lrr}_{J}(L)\cap \op^{\lrr}_{{I}}(L)^w}(z)\neq 1.\;$Therefore, we deduce from (\ref{LEIVEIJrk}) that $z$ acts on $B$ (resp, $A$) by multiplication by scalar
$$z_B:=(\pi_{J\cap w^{-1}I}\otimes_E\gamma_{I,J}^w)^{\sim}(z)\delta_{\bL^{\lrr}_{J}(L)\cap \op^{\lrr}_{{I}}(L)^w}(z), (\mathrm{resp.} z_A:=(\pi_{J\cap w^{-1}I})^{\sim}(z)).\;$$
Since  $z_A\neq z_B$, we deduce from Lemma \ref{centervan} that 
\begin{equation}\label{smoothzerofil}
	\begin{aligned}
		\ext^{i,\infty}_{\bL^{\lrr}_{J}(L),\omega_{\pi}^{\lrr}}\Big(i_{\bL^{\lrr}_{J}(L)\cap \op^{\lrr}_{{I}}(L)^w}^{\bL^{\lrr}_{J}(L)}\big(\gamma_{I,J}^w\otimes_Er^{\bL^{\lrr}_{I}(L)^w}_{\op^{\lrr}_{{J}}(L)\cap \bL^{\lrr}_{I}(L)^w}\pi_{I}^w\big) ,\pi_J\Big)=0
	\end{aligned}
\end{equation}
for $J\nsubseteq I$ or $w\neq 1$.\;If $J\nsubseteq I$, we deduce by d\'{e}vissage that $\ext^i_{G}(i_{\op^{\lrr}_{{I}}}^{G}(\pi,\underline{\lambda}),\BI_{\op^{\lrr}_{{J}}}^{G}(\pi,\underline{\lambda}))=0$.\;If $J \subseteq I$, we have
\begin{equation*}
\begin{aligned}
	& \ext^r_{\bL^{\lrr}_{J}(L),\omega_{\pi}^{\lrr}}\Big(r_{\op^{\lrr}_{{J}}(L)}\big(i_{\op^{\lrr}_{{I}}(L)}^{G}(\pi)),\pi_J\Big)= \ext^r_{\bL^{\lrr}_{J}(L),\omega_{\pi}^{\lrr}}\Big(r_{\op^{\lrr}_{{J}}(L)}\big(\mathcal{F}^0_Bi_{\op^{\lrr}_{{I}}(L)}^{G}(\pi)),\pi_J\Big)\\
	=\;& \ext^r_{\bL^{\lrr}_{J}(L),\omega_{\pi}^{\lrr}}\Big({r}^{\bL^{\lrr}_{J}(L)}_{\op^{\lrr}_I(L)\cap \bL^{\lrr}_{J}(L)}\pi_J,\pi_J\Big)= \ext^{r}_{\bL^{\lrr}_{J}(L),\omega_{\pi}^{\lrr}}\big(\pi_J,\pi_J\big).\;
\end{aligned}
\end{equation*}
This completes the proof of (\ref{analyticExtC1}).
\end{proof}

As a corollary of Lemma \ref{analyticExt1analyticExtC1}, we have
\begin{cor}\label{coranalyticExt2}
We have
\begin{equation}\label{analyticExt2}
\ext^i_{G}\big(v_{\op^{\lrr}_{{I}}}^{\infty}(\pi,\underline{\lambda}),\BI_{\op^{\lrr}_{{J}}}^{G}(\pi,\underline{\lambda})\big)=\left\{
\begin{array}{ll}
	\cE_J^{i-|{\Delta_n(k)}\backslash I|}, & \hbox{$I\cup J={\Delta_n(k)}$;} \\
	0, & \hbox{otherwise.}
\end{array}
\right.
\end{equation}
\begin{equation}\label{analyticExtC2}
\ext^i_{G,\omega_{\pi}^{\lrr}\chi_{\underline{\lambda}}}\big(v_{\op^{\lrr}_{{I}}}^{\infty}(\pi,\underline{\lambda}),\BI_{\op^{\lrr}_{{J}}}^{G}(\pi,\underline{\lambda})\big)=\left\{
\begin{array}{ll}
	\cEo_J^{i-|{\Delta_n(k)}\backslash I|}, & \hbox{$I\cup J={\Delta_n(k)}$;} \\
	0, & \hbox{otherwise.}
\end{array}
\right.
\end{equation}
\end{cor}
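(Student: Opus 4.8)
The plan is to reduce both identities to Lemma~\ref{analyticExt1analyticExtC1} by feeding the locally $\BQ_p$-algebraic Tits complex of Proposition~\ref{smoothexact} into a spectral sequence. I would prove (\ref{analyticExtC2}) in detail and deduce (\ref{analyticExt2}) by the identical argument, using (\ref{analyticExt1}) in place of (\ref{analyticExtC1}) and $\cE_{\bullet}$ in place of $\cEo_{\bullet}$. First, fix $I\subseteq {\Delta_n(k)}$ and recall from Proposition~\ref{smoothexact} that the Tits complex
\[
0\rightarrow C^\infty_{I,k-1-|I|}\rightarrow\cdots\rightarrow C^\infty_{I,1}\rightarrow C^\infty_{I,0}=i_{\op^{\lrr}_{{I}}}^{G}(\pi,\underline{\lambda})\rightarrow v_{\op^{\lrr}_{{I}}}^{\infty}(\pi,\underline{\lambda})\rightarrow 0
\]
is exact, with $C^\infty_{I,p}=\bigoplus_{I\subseteq K\subseteq {\Delta_n(k)},\ |K\backslash I|=p}i_{\op^{\lrr}_{{K}}}^{G}(\pi,\underline{\lambda})$ a locally $\BQ_p$-algebraic representation of $G$. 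Applying the exact contravariant duality $V\mapsto V^\vee$ (whose values on these objects lie in $\cM^\infty(G)\subseteq \cM(G)$) turns this into a resolution of $(v_{\op^{\lrr}_{{I}}}^{\infty}(\pi,\underline{\lambda}))^\vee$ in $\cM_{Z_n,\omega_{\pi}^{\lrr}\chi_{\underline{\lambda}}}(G)$ by the modules $(C^\infty_{I,p})^\vee$, and the resulting spectral sequence for $\ext^\bullet_{G,\omega_{\pi}^{\lrr}\chi_{\underline{\lambda}}}\big(-,\BI_{\op^{\lrr}_{{J}}}^{G}(\pi,\underline{\lambda})\big)$ reads
\[
E_1^{p,q}=\ext^{q}_{G,\omega_{\pi}^{\lrr}\chi_{\underline{\lambda}}}\big(C^\infty_{I,p},\BI_{\op^{\lrr}_{{J}}}^{G}(\pi,\underline{\lambda})\big)\ \Longrightarrow\ \ext^{p+q}_{G,\omega_{\pi}^{\lrr}\chi_{\underline{\lambda}}}\big(v_{\op^{\lrr}_{{I}}}^{\infty}(\pi,\underline{\lambda}),\BI_{\op^{\lrr}_{{J}}}^{G}(\pi,\underline{\lambda})\big),
\]
with $d_1$ induced by the transition maps $p_{K',K}$ of Proposition~\ref{smoothexact}.

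Next I would compute the $E_1$-page. By (\ref{analyticExtC1}), the group $\ext^{q}_{G,\omega_{\pi}^{\lrr}\chi_{\underline{\lambda}}}\big(i_{\op^{\lrr}_{{K}}}^{G}(\pi,\underline{\lambda}),\BI_{\op^{\lrr}_{{J}}}^{G}(\pi,\underline{\lambda})\big)$ equals $\cEo_J^q$ when $J\subseteq K$ and vanishes otherwise, and by the naturality in that Lemma the map induced by $p_{K',K}$ (for $J\subseteq K\subseteq K'$) is the identity of $\cEo_J^q$. Hence $E_1^{p,q}=\bigoplus_{I\cup J\subseteq K\subseteq {\Delta_n(k)},\ |K\backslash I|=p}\cEo_J^q$, and $(E_1^{\bullet,q},d_1)$ is $\cEo_J^q$ tensored with the cochain complex supported on $\{K:I\cup J\subseteq K\subseteq {\Delta_n(k)}\}$ whose differential sends $[K]$ to the alternating sum of the $[K\cup\{x\}]$, $x\in {\Delta_n(k)}\backslash K$. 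Re-indexing $K$ by the subset $K\backslash(I\cup J)$ of $S:={\Delta_n(k)}\backslash(I\cup J)$, with $K$ placed in degree $p=|J\backslash I|+|K\backslash(I\cup J)|$, this is exactly $\cEo_J^q$ tensored with the augmented simplicial cochain complex of the simplex with vertex set $S$. Therefore, if $I\cup J\neq {\Delta_n(k)}$, i.e.\ $S\neq\emptyset$, this complex is acyclic (the simplex is contractible), so $E_2=0$ and both sides of (\ref{analyticExtC2}) vanish; and if $I\cup J={\Delta_n(k)}$, i.e.\ $S=\emptyset$, the only surviving term is $K={\Delta_n(k)}$, sitting in degree $p=|{\Delta_n(k)}\backslash I|$, so $E_2^{p,q}=\cEo_J^q$ for $p=|{\Delta_n(k)}\backslash I|$ and $0$ otherwise.

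In the case $I\cup J={\Delta_n(k)}$ the $E_2$-page is concentrated in the single column $p=|{\Delta_n(k)}\backslash I|$, so the spectral sequence degenerates and $\ext^{n}_{G,\omega_{\pi}^{\lrr}\chi_{\underline{\lambda}}}\big(v_{\op^{\lrr}_{{I}}}^{\infty}(\pi,\underline{\lambda}),\BI_{\op^{\lrr}_{{J}}}^{G}(\pi,\underline{\lambda})\big)\cong\cEo_J^{\,n-|{\Delta_n(k)}\backslash I|}$, which is (\ref{analyticExtC2}); the parallel argument built on (\ref{analyticExt1}) gives (\ref{analyticExt2}). The step I expect to be the main obstacle is the bookkeeping needed to identify $d_1$ with the simplicial differential of the simplex on $S$ --- matching the signs of Proposition~\ref{smoothexact} with the simplicial signs and checking that each $p_{K',K}$ really is carried to $\mathrm{id}_{\cEo_J^q}$ --- but this should fall out of the functoriality already present in Lemma~\ref{analyticExt1analyticExtC1}, since every $\cEo_J^q$ occurring is the common value of $\ext^q_{G,\omega_{\pi}^{\lrr}\chi_{\underline{\lambda}}}\big(-,\BI_{\op^{\lrr}_{{J}}}^{G}(\pi,\underline{\lambda})\big)$ transported along the compatible family $\{p_{K',K}\}$; the remaining verifications are routine spectral-sequence bookkeeping.
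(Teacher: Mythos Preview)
Your proposal is correct and follows essentially the same approach as the paper: feed the locally $\BQ_p$-algebraic Tits complex of Proposition~\ref{smoothexact} into a spectral sequence, evaluate the $E_1$-page via Lemma~\ref{analyticExt1analyticExtC1}, and recognise the resulting row as the constant coefficient system $\cEo_J^q$ on the simplex with vertex set ${\Delta_n(k)}\backslash(I\cup J)$, which is acyclic unless $I\cup J={\Delta_n(k)}$ and then collapses to a single column. The only place where the paper is more explicit than you is the construction of the spectral sequence itself: since it is not known that $\cM_{Z_n,\chi}(G)$ has enough injectives, the paper builds the double complex by hand using the functorial acyclic resolutions $C^{\ana}(G^{q+1},-)$ of \cite[(25)]{kohlhaase2011cohomology} (as in the discussion after \cite[Proposition~5.4]{schraen2011GL3}), rather than simply invoking ``the resulting spectral sequence''; you may want to cite these references to justify that step.
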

\begin{proof}We prove (\ref{analyticExtC2}) only, and (\ref{analyticExt2}) follows by the same arguments.\;We apply the acyclic complex in Proposition \ref{smoothexact} to the locally $\BQ_p$-algebraic representation $v_{\op^{\lrr}_{{I}}}^{\infty}(\pi,\underline{\lambda})$, i.e., 
\begin{equation*}
\Big[C^\infty_{I,k-1-|I|} \ra C^\infty_{I,k-2-|I|} \ra \cdots  \ra C^\infty_{I,0}\Big], \text{with }C^\infty_{I,l}=\bigoplus_{\substack{I\subseteq K\subseteq {\Delta_n(k)}\\|K\backslash I|=l}}i_{\op^{\lrr}_{{K}}}^{G}(\pi,\underline{\lambda}).
\end{equation*}
We recall  the discussion after \cite[Proposition 5.4]{schraen2011GL3} briefly.\;By \cite[(25)]{kohlhaase2011cohomology}, we have a resolution $(C^{\ana}(G^{q+1},C^\infty_{I,l}))_{q\geq 0}$ for each term $C^\infty_{I,l}$.\;Then we form a double complex $(C^{q,l})_{q,l}$ with $C^{q,l}:=C^{\ana}(G^{q+1},C^\infty_{I,-l})$.\;Then the extension group in this corollary is equal to the cohomology of the total complex associated to the $
(\homo_G(C^{q,l},\BI_{\op^{\lrr}_{{J}}}^{G}(\pi,\underline{\lambda})))_{q,l}$, which leads to a spectral sequence
\begin{equation*}
E_1^{s,r}=\ext_{G,\omega_{\pi}^{\lrr}\chi_{\underline{\lambda}}}^r\Big(\bigoplus_{\substack{I\subseteq K\subseteq {\Delta_n(k)}\\|K\backslash I|=s}}i_{\op^{\lrr}_{{K}}}^{G}(\pi,\underline{\lambda}),\BI_{\op^{\lrr}_{{J}}}^G(\pi,\underline{{\lambda}})\Big) \Rightarrow \ext^{r+s}_{G,\omega_{\pi}^{\lrr}\chi_{\underline{\lambda}}}\big(v_{\op^{\lrr}_{{I}}}^\infty(\pi,\underline{{\lambda}}), \BI_{\op^{\lrr}_{{J}}}^G(\pi,\underline{{\lambda}})\big).
\end{equation*}
By (\ref{analyticExtC1}), for any $r\in \mathbb{Z}_{\geq 0}$, the $r$-th row of the $E_1$-page of the spectral sequence is given by
\begin{equation*}
\begin{matrix}& E_1^{|I\cup J|-|I|,r}& \ra & E_1^{|I\cup J|-|I|+1,r} & \ra & \cdots & \ra & E_1^{ n-1-|I|,r} \\
	&\parallel &\empty&\parallel &\empty &\empty &\empty &\parallel  \\
	&\overline{\cE}_J^i&\ra &\bigoplus\limits_{\substack{I\cup J\subseteq K,\\ |K\backslash (I\cup J)|=1}}\overline{\cE}_J^i& \ra &\cdots & \ra &\overline{\cE}_J^i.
\end{matrix}
\end{equation*}
If $I\cup J\neq {\Delta_n(k)}$, the above sequence is a constant coefficient system on the standard simplex associated to $I\cup J$, which is exact.\;This implies that  $\ext^{r}_{G,\omega_{\pi}^{\lrr}\chi_{\underline{\lambda}}}\big(v_{\op^{\lrr}_{{I}}}^\infty(\pi,\underline{\lambda}), \BI_{\op^{\lrr}_{{J}}}^G(\pi,\underline{\lambda})\big)=0$ for all $r\in \mathbb{Z}_{\geq 0}$.\;On the other hand, if $I\cup J ={\Delta_n(k)}$, all the elements of the $E_1$-page vanish except the elements on the $|{\Delta_n(k)}\backslash I|$-th column, and hence
\begin{equation*}
\overline{\cE}_J^i\cong \ext^r_{G,\omega_{\pi}^{\lrr}\chi_{\underline{\lambda}}}\big(i_{\op^{\lrr}_{\Delta_n(k)}}^G(\pi,\underline{\lambda}),\BI_{\op^{\lrr}_{{J}}}^G(\pi,\underline{\lambda})\big)\cong \ext^{r+|{\Delta_n(k)}\backslash I|}_{G,\omega_{\pi}^{\lrr}\chi_{\underline{\lambda}}}\big(v_{\op^{\lrr}_{{I}}}^{\infty}(\pi,\underline{\lambda}),\BI_{\op^{\lrr}_{{J}}}^G(\pi,\underline{\lambda})\big).
\end{equation*}
This completes the proof.\;
\end{proof}

We now give an explicit description for the $\ext^1$-groups in the Lemma \ref{analyticExt1analyticExtC1}.\;Let $J\subseteq I\subseteq{\Delta_n(k)}$ be subsets of $\Delta_n(k)$.\;Let $\widetilde{\pi}_{J}$ be a locally $\BQ_p$-analytic representation of $\bL^{\lrr}_J(L)$ over $E$, which is a locally $\BQ_p$-analytic extension of $\pi_J$ by $\pi_J$.\;It gives a cohomology class $[\widetilde{\pi}_{J}]\in \cE_J^1$.\;Thus, the locally $\BQ_p$-analytic parabolic induction $\big(\ind_{\op^{\lrr}_{{J}}(L)}^G \widetilde{\pi}_{J}\otimes_EL^{\lrr}(\ul{\lambda})_J\big)^{\BQ_p-\ana}$ lies in an exact sequence
\begin{equation}\label{explicitexactseq}
0 \longrightarrow \BI_{\op^{\lrr}_{{J}}}^G(\pi,\ul{\lambda}) \longrightarrow \big(\ind_{\op^{\lrr}_{{J}}(L)}^G \widetilde{\pi}_{J}\otimes_EL(\ul{\lambda})_{{J}}\big)^{\BQ_p-\ana} \xrightarrow{\pr} \BI_{\op^{\lrr}_{{J}}}^G(\pi,\ul{\lambda}) \longrightarrow 0.
\end{equation}
Pullback (\ref{explicitexactseq}) via a natural injection
\begin{equation*}i_{\op^{\lrr}_{{I}}}^G(\pi,\ul{\lambda}) \hooklongrightarrow \BI_{\op^{\lrr}_{{I}}}^G(\pi,\ul{\lambda}) \hooklongrightarrow \BI_{\op^{\lrr}_{{J}}}^G(\pi,\ul{\lambda}),
\end{equation*}
gives a locally $\BQ_p$-analytic representation  $\sE_{I}^J(\pi,\ul{\lambda},\widetilde{\pi}_{J})^+$, which is an extension of $i_{\op^{\lrr}_{{I}}}^G(\pi,\ul{\lambda})$ by $\BI_{\op^{\lrr}_{{J}}}^G(\pi,\ul{\lambda})$.\;It gives a cohomology class  $[\sE_{I}^J(\pi,\ul{\lambda},\widetilde{\pi}_{J})^+]\in \ext^{1}_{G}\big(i_{\op^{\lrr}_{{I}}}^G(\pi,\ul{\lambda}), \BI_{\op^{\lrr}_{{J}}}^G(\pi,\ul{\lambda})\big)$.\;By the argument in Section \ref{comphinftyhana}, we have $\hH^1_{\ana}(\bL^{\lrr}_{I}(L),E)\cong \homo(\bZ^{\lrr}_{I}(L),E)$ (and the same for $J$).\;Then the natural injection $\bL^{\lrr}_{J}(L)\hookrightarrow \bL^{\lrr}_{I}(L)$ induces a natural morphism \[\hH^1_{\ana}(\bL^{\lrr}_{I}(L),E)\cong \homo(\bZ^{\lrr}_{{I}}(L),E) \rightarrow\hH^1_{\ana}(\bL^{\lrr}_{J}(L),E)\cong \homo(\bZ^{\lrr}_{J}(L),E)\]In particular, we will view  $\homo(\bZ^{\lrr}_{{I}}(L),E)$ as a subspace of $\homo(\bZ^{\lrr}_{J}(L),E)$.\;Let
\begin{equation*}
\Psi\in \hH^1_{\ana}\big(\bL^{\lrr}_{J}(L),E\big)\cong \homo\big(\bZ^{\lrr}_{J}(L),E\big),
\end{equation*}
which induces an extension $\iota_J(\Psi)$ of $\pi_J$ of $\bL^{\lrr}_{J}(L)$:
\begin{equation*}
\iota_J(\Psi)(a)= \pi_J(a)\otimes_E\begin{pmatrix}
1 & \Psi(a) \\ 0 & 1
\end{pmatrix}, \\ \forall ~a\in \bL^{\lrr}_{J}(L).
\end{equation*}
We may identify $\iota_J(\Psi)$  with the $E[\epsilon]/\epsilon^2$-valued representation $\pi_J(1+\Psi\epsilon)$.\;We put $$\sE_{I}^J(\pi,\ul{\lambda},\Psi)^+:=\sE_{I}^J(\pi,\ul{\lambda},\iota_J(\Psi))^+.\;$$

Applying and modifying the proof of \cite[Lemma 2.13, Remark 2.18]{2019DINGSimple} to our case, we can show that
\begin{lem}\label{explictextgroup1}
The representation $[\sE_{I}^J(\pi,\ul{\lambda},\widetilde{\pi}_{J})^+]$  is mapped (up to nonzero scalars) to $\widetilde{\pi}_{J}\in \cE_J^1$ via the isomorphism in (\ref{analyticExt1}).\;
\end{lem}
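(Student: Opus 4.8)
The plan is to trace through the isomorphism of Lemma \ref{analyticExt1analyticExtC1} (i.e.\ (\ref{analyticExt1})) explicitly and check that the class $[\sE_{I}^J(\pi,\ul{\lambda},\widetilde{\pi}_{J})^+]$ corresponds to $[\widetilde\pi_J]\in\cE_J^1$. Recall how that isomorphism was built: it came from the collapsing of the spectral sequence
\[
E_2^{r,s}=\ext^r_{\bL^{\lrr}_{J}(L)}\Big(\mathrm{H}_s\big({\on}^{\lrr}_{J}(L),i_{\op^{\lrr}_{{I}}}^{G}(\pi,\underline{\lambda})\big),\pi_J(\underline{\lambda})\Big)\Rightarrow \ext^{r+s}_{G}\big(i_{\op^{\lrr}_{{I}}}^{G}(\pi,\underline{\lambda}),\BI_{\op^{\lrr}_{{J}}}^{G}(\pi,\underline{\lambda})\big)
\]
of \cite[Corollary 4.9]{schraen2011GL3}, together with the identification $r^G_{\op^{\lrr}_{{J}}}(i_{\op^{\lrr}_{{I}}(L)}^{G}(\pi))\cong\cdots\cong\pi_J$ coming from the bottom step $\mathcal F^0_B$ of the Bruhat filtration (all other graded pieces and all higher $\mathrm H_s$ contribute zero, as shown in the proof of Lemma \ref{analyticExt1analyticExtC1}). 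So the isomorphism (\ref{analyticExt1}) is, concretely, the composite
\[
\ext^1_G\big(i_{\op^{\lrr}_{{I}}}^{G}(\pi,\underline{\lambda}),\BI_{\op^{\lrr}_{{J}}}^{G}(\pi,\underline{\lambda})\big)\xrightarrow{\ \sim\ }\ext^1_{\bL^{\lrr}_J(L)}\big(r^G_{\op^{\lrr}_{{J}}}(i^G_{\op^{\lrr}_{{I}}}(\pi,\underline\lambda)),\pi_J(\underline\lambda)\big)\xrightarrow{\ \sim\ }\ext^1_{\bL^{\lrr}_J(L)}(\pi_J,\pi_J)=\cE_J^1,
\]
where the first arrow is Frobenius reciprocity / the edge map of the spectral sequence and the second is induced by the Bruhat-filtration identification and the splitting off of the algebraic part $\mathrm{End}_E(L^{\lrr}(\underline\lambda)_J)$.

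First I would reduce to $\underline\lambda=\underline 0$ by the same tensoring argument used throughout Section \ref{comphinftyhanaexp}: the class $\sE_I^J(\pi,\ul\lambda,\widetilde\pi_J)^+$ is obtained from $\sE_I^J(\pi,\ul 0,\widetilde\pi_J)^+$ by tensoring with $L(\ul\lambda)$ (this is visible from the construction (\ref{explicitexactseq})), and the identifications above are all compatible with this twist since $\mathrm{End}_E(L^{\lrr}(\underline\lambda)_J)$ contributes the trivial representation with multiplicity one. Then, with $\underline\lambda=\underline 0$, I would compute the image of $[\sE_I^J(\pi,\widetilde\pi_J)^+]$ under Frobenius reciprocity directly. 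Frobenius reciprocity (Lemma \ref{smoothbasiclemma2}, or rather its locally analytic enhancement via \cite[Corollary 4.9]{schraen2011GL3}) is, on the level of extensions, given by applying the Jacquet-type functor $r^G_{\op^{\lrr}_J}=\mathrm H_0(\on^{\lrr}_J(L),-)$. So I need to understand $r^G_{\op^{\lrr}_J}$ applied to the middle term $\sE_I^J(\pi,\widetilde\pi_J)^+$ of the extension (\ref{explicitexactseq}) pulled back along $i^G_{\op^{\lrr}_I}(\pi)\hookrightarrow\BI^G_{\op^{\lrr}_J}(\pi)$. The key point is that $\big(\ind_{\op^{\lrr}_J(L)}^G\widetilde\pi_J\big)^{\BQ_p-\ana}$ is a locally analytic parabolic induction of $\widetilde\pi_J$, and by the same Bruhat-filtration computation as in Lemma \ref{analyticExt1analyticExtC1} (applied now with the extension $\widetilde\pi_J$ in place of $\pi_J$, all the ``wrong'' Weyl-group and higher-homology contributions still vanish for the same central-character reasons, since $\widetilde\pi_J$ has the same central character data as $\pi_J$ on the relevant tori), one gets $r^G_{\op^{\lrr}_J}\big(\mathcal F^0_B(\ind^G_{\op^{\lrr}_J}\widetilde\pi_J)^{\BQ_p-\ana}\big)\cong\widetilde\pi_J$. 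Chasing this through the pullback diagram, the bottom piece of the Bruhat filtration of $\sE_I^J(\pi,\widetilde\pi_J)^+$ has $r^G_{\op^{\lrr}_J}$ exactly equal to $\widetilde\pi_J$ as an extension of $\pi_J$ by $\pi_J$, which is precisely the statement that the image of $[\sE_I^J(\pi,\widetilde\pi_J)^+]$ in $\cE_J^1$ is $[\widetilde\pi_J]$.

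The main obstacle — and the reason the statement only claims agreement \emph{up to a nonzero scalar} — is bookkeeping the various normalizations and edge-map signs: the modulus characters $\gamma^w_{I,J}$, $\delta_{\op^{\lrr}_J(L)}$ and the $\delta^{1/2}$ twists built into $\pi^{\lrr}$ (see (\ref{pilrr})), and the sign/scalar ambiguity in the edge morphism of the spectral sequence of \cite[Corollary 4.9]{schraen2011GL3}. I would handle this exactly as in \cite[Lemma 2.13, Remark 2.18]{2019DINGSimple}: rather than pinning down the scalar, observe that both $\Psi\mapsto[\sE_I^J(\pi,\ul 0,\Psi)^+]$ and the isomorphism (\ref{analyticExt1}) restricted to the image of $\iota_J^1$ (Lemma \ref{hHinj}) are $E$-linear maps $\homo(\bZ^{\lrr}_J(L),E)\to\cE_J^1$, and it suffices to check they agree up to scalar on the subspace where everything is visible, using the section $c_J$ of Lemma \ref{hHinj} (restriction to the central torus $\bZ^{\lrr}_J(L)$, where a parabolically induced extension of $\pi_J$ by $\pi_J$ twisted by $1+\Psi\epsilon$ manifestly restricts to the extension $1+\Psi\epsilon$ of the trivial character, up to the fixed choices). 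Composing $c_J$ with the two maps gives two $E$-linear endomorphisms of $\homo(\bZ^{\lrr}_J(L),E)$ that are each the identity up to a nonzero scalar, which forces the original two maps into the desired relation. A secondary, more routine, point to verify is that the Bruhat-filtration vanishing statements of Lemma \ref{analyticExt1analyticExtC1} indeed go through verbatim with $\widetilde\pi_J$ in place of $\pi_J$ — this is immediate since those vanishings only used the central character of $\pi^{\lrr}$ on $\bZ^{\lrr}_{J\cap w^{-1}I}(L)$, which is unchanged in a self-extension.
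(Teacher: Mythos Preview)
Your overall strategy agrees with the paper's: identify the isomorphism (\ref{analyticExt1}) as the edge map of the spectral sequence (\ref{spectalse1}) composed with the Bruhat-filtration identification, then trace the class through. The difference is in how the tracing is carried out.

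The paper does not compute forward by applying $H_0(\on^{\lrr}_J(L),-)$ to the middle term. Instead it records (following \cite[(4.32)]{schraen2011GL3} and \cite[Lemma 2.13]{2019DINGSimple}) that the edge map has an explicit description: restrict an extension to $\op^{\lrr}_J(L)$, then push forward along the evaluation $\BI^G_{\op^{\lrr}_J}(\pi,\ul\lambda)\twoheadrightarrow\pi_J(\ul\lambda)$, $f\mapsto f(1)$, and finally pass to $\on^{\lrr}_J$-coinvariants. It then writes down an inverse composite and observes that the very construction of $\sE_I^J(\pi,\ul\lambda,\widetilde\pi_J)^+$ --- parabolically induce $\widetilde\pi_J$, then pull back along $i^G_{\op^{\lrr}_I}(\pi,\ul\lambda)\hookrightarrow\BI^G_{\op^{\lrr}_J}(\pi,\ul\lambda)$ --- \emph{is} this inverse applied to $\widetilde\pi_J$. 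This handles arbitrary $\widetilde\pi_J$ in one stroke, with no reduction to $\ul\lambda=\ul 0$ and no separate scalar bookkeeping.

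Your forward computation can be made to work, but note a gap in the execution: applying $H_0(\on^{\lrr}_J(L),-)$ to the short exact sequence defining $\sE_I^J(\pi,\widetilde\pi_J)^+$ yields on the left $H_0(\on^{\lrr}_J(L),\BI^G_{\op^{\lrr}_J}(\pi))$, which is not $\pi_J$ but the full Jacquet module of a locally analytic induction (with many Weyl-group pieces). You still have to project via the evaluation $f\mapsto f(1)$ to land in $\pi_J$; once you make that projection explicit you are back at the paper's argument. Relatedly, the Bruhat filtration you invoke lives on the \emph{smooth} induction $i^G_{\op^{\lrr}_I}(\pi)$ in the first variable, not on the locally analytic induction of $\widetilde\pi_J$, so the sentence about $\mathcal F^0_B\big((\ind^G_{\op^{\lrr}_J}\widetilde\pi_J)^{\BQ_p-\ana}\big)$ needs reformulation. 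Finally, your scalar check via $c_J$ on the image of $\iota_J^1$ would at best pin down a constant on that subspace; it does not by itself force the composite $\cE_J^1\to\cE_J^1$ to be a scalar on all of $\cE_J^1$. This issue disappears entirely if you adopt the paper's evaluation-map description of the edge morphism.
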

\begin{proof}By the discussion in \cite[Section 4.4,(4.32)]{schraen2011GL3}, we see that the isomorphism (\ref{analyticExt1}) factors though the following composition,
\begin{equation}\label{expsends}
\begin{aligned}
	& \ext^{1}_{G}\big(i_{\op^{\lrr}_{{I}}}^G(\pi,\ul{\lambda}), \BI_{\op^{\lrr}_{{J}}}^G(\pi,\ul{\lambda})\big)\\
	\xrightarrow{\sim}&\; \ext^1_{\op^{\lrr}_{J}(L)}\Big(i_{\op^{\lrr}_{{I}}(L)}^{G}(\pi,\ul{\lambda}))|_{\op^{\lrr}_{J}(L)},\pi_J(\underline{\lambda})\Big)\\
	\xrightarrow{\sim}&\; \ext^1_{\bL^{\lrr}_{J}(L)}\Big(r^G_{\op^{\lrr}_{{J}}(L)}(i_{\op^{\lrr}_{{I}}(L)}^{G}(\pi,\ul{\lambda})),\pi_J\Big)\\
	\xrightarrow{\sim}&\; \ext^1_{\bL^{\lrr}_{J}(L)}\left(\pi_J(\underline{\lambda}),\pi_J(\underline{\lambda})\right),
\end{aligned}
\end{equation}
where the first map sends $V$ to the push-forward of $V$ to the push-forward of $V|_{\op^{\lrr}_{J}(L)}$ via the natural evaluation map $\BI_{\op^{\lrr}_{{J}}}^G(\pi,\ul{\lambda})\twoheadrightarrow \pi_J(\underline{\lambda})$, $f\mapsto f(1)$, the second map is inverse of the map induced by the natural projection $i_{\op^{\lrr}_{{I}}(L)}^{G}(\pi,\ul{\lambda}))|_{\op^{\lrr}_{J}(L)}\twoheadrightarrow r^G_{\op^{\lrr}_{{J}}(L)}(i_{\op^{\lrr}_{{I}}(L)}^{G}(\pi,\ul{\lambda}))$, and the third map is induced by  the injection $\pi_J(\underline{\lambda})\hookrightarrow r^G_{\op^{\lrr}_{{J}}(L)}(i_{\op^{\lrr}_{{I}}(L)}^{G}(\pi,\ul{\lambda}))$.\;Note that composition \ref{expsends} is also the edge morphism given by the spectral sequence  (\ref{spectalse1}).\;

We next show that $[\sE_{I}^J(\pi,\ul{\lambda},\widetilde{\pi}_{J})^+]$ is mapped  to $\widetilde{\pi}_{J}$ via (\ref{expsends}).\;Similar to the discussion in \cite[Lemma 2.13]{2019DINGSimple}, we can show that the following composition
\begin{equation}
\begin{aligned}
	&\ext^1_{\bL^{\lrr}_{J}(L)}\left(\pi_J(\underline{\lambda}),\pi_J(\underline{\lambda})\right)\\
	\rightarrow&\;
	\ext^1_{\op^{\lrr}_{J}(L)}\Big(\BI_{\op^{\lrr}_{{I}}(L)}^{G}(\pi,\ul{\lambda}))|_{\op^{\lrr}_{J}(L)},\pi_J(\underline{\lambda})\Big) \\
	\rightarrow&\; \ext^1_{\op^{\lrr}_{J}(L)}\Big(i_{\op^{\lrr}_{{I}}(L)}^{G}(\pi,\ul{\lambda}))|_{\op^{\lrr}_{J}(L)},\pi_J(\underline{\lambda})\Big)
\end{aligned}
\end{equation}
is the inverse of the composition of the last two maps in (\ref{expsends}).\;By the construction of $[\sE_{I}^J(\pi,\ul{\lambda},\widetilde{\pi}_{J})^+]$, we see that the representation $[\sE_{I}^J(\pi,\ul{\lambda},\widetilde{\pi}_{J})^+]$ is just the image of $\widetilde{\pi}_{J}$ via the above composition and the inverse of the first isomorphism in (\ref{expsends}).\;The lemma follows.\;
\end{proof}

Furthermore, the injection $\BI_{\op^{\lrr}_{{I}}}^G(\pi,\ul{\lambda}) \hooklongrightarrow \BI_{\op^{\lrr}_{{J}}}^G(\pi,\ul{\lambda})$ induces a map $$\ext^{1}_{G}\big(i_{\op^{\lrr}_{{I}}}^G(\pi,\ul{\lambda}), \BI_{\op^{\lrr}_{{I}}}^G(\pi,\ul{\lambda})\big)\rightarrow \ext^{1}_{G}\big(i_{\op^{\lrr}_{{I}}}^G(\pi,\ul{\lambda}), \BI_{\op^{\lrr}_{{J}}}^G(\pi,\ul{\lambda})\big).\;$$
By the isomorphism (\ref{analyticExt1}), we get a map $\cE_I\rightarrow \cE_J$.\;We now describe this map explicitly.\;By \cite[Corollary 4.9]{schraen2011GL3}, we have a spectral sequence
\begin{equation}\label{spectalse1IJ}
\begin{aligned}
E_2^{r,s}=\ext^r_{\bL^{\lrr}_{J}(L)}&\Big(\mathrm{H}_s\big(\on^{\lrr}_{J}(L)\cap\bL^{\lrr}_{I}(L),\pi_I(\underline{\lambda})\big),\pi_J(\underline{\lambda})\Big)\Rightarrow\ext^{r+s}_{\bL^{\lrr}_{I}(L)}\Big(\pi_I(\underline{\lambda}),\BI_{\op^{\lrr}_{J}(L)\cap\bL^{\lrr}_{I}(L)}^{\bL^{\lrr}_{I}(L)}(\pi,\underline{\lambda})\Big).
\end{aligned}
\end{equation}
One has isomorphisms of locally algebraic representations of $\res_{\BQ_p}^L\bL^{\lrr}_{J}(L)$, 
\begin{equation}
\begin{aligned}
\hH_s\big(\on^{\lrr}_{J}(L)\cap\bL^{\lrr}_{I}(L),\pi_I(\underline{\lambda})\big)=&\; r^G_{\op^{\lrr}_{{I}}}\pi_I\otimes_E\mathrm{H}_s(\overline{\mathfrak{n}}^{\lrr}_{I,\Sigma_L},L(\underline{\lambda}))\\
=&\;\pi_J\otimes_E\Big(\bigoplus_{\substack{w\in \sW_n,\leg(w)=s\\w\cdot \underline{\lambda}\in X^+_{{\Delta_n^k\cup J}}}}L^{\lrr}(w\cdot \underline{\lambda})_{{J}}\Big).
\end{aligned}
\end{equation}
by \cite[Theorem 4.10]{schraen2011GL3}.\;This implies
\begin{equation}\label{exconsofIJ}
	\begin{aligned}
\ext^r_{\bL^{\lrr}_{J}(L)}(\pi_J(\underline{\lambda}),&(\underline{\lambda}))\cong \ext^{r+s}_{\bL^{\lrr}_{I}(L)}\Big(\pi_I(\underline{\lambda}),\BI_{\op^{\lrr}_{J}(L)\cap\bL^{\lrr}_{I}(L)}^{\bL^{\lrr}_{I}(L)}(\pi,\underline{\lambda})\Big).	
\end{aligned}
\end{equation}
Therefore, we have the following compositions of morphisms:
\begin{equation}\label{rhoIJchange}
\begin{aligned}
\cE_I^1 &\rightarrow \ext^1_{\bL^{\lrr}_{I}(L)}\Big(\pi_I(\underline{\lambda}),i_{\op^{\lrr}_{J}(L)\cap\bL^{\lrr}_{I}(L)}^{\bL^{\lrr}_{I}(L)}(\pi,\underline{\lambda})\Big)\\
&\rightarrow \ext^1_{\bL^{\lrr}_{I}(L)}\Big(\pi_I(\underline{\lambda}),\BI_{\op^{\lrr}_{J}(L)\cap\bL^{\lrr}_{I}(L)}^{\bL^{\lrr}_{I}(L)}(\pi,\underline{\lambda})\Big)\\
&\xrightarrow{\sim}
\ext^1_{\op^{\lrr}_{J}(L)\cap\bL^{\lrr}_{I}(L)}\Big((\pi_I(\underline{\lambda}))\big|_{\op^{\lrr}_{J}(L)\cap\bL^{\lrr}_{I}(L)},
\pi_J(\underline{\lambda})\Big)\\
&\xrightarrow{\sim}
\ext^1_{\bL^{\lrr}_{J}(L)}\Big(r_{\op^{\lrr}_{J}(L)\cap\bL^{\lrr}_{I}(L)}^{\bL^{\lrr}_{I}(L)}(\pi_I)\otimes_EL^{\lrr}(\underline{\lambda})_{J},
\pi_J(\underline{\lambda})\Big)\\
&\xrightarrow{\sim} \cE_J^1, 
\end{aligned}
\end{equation}
where the first map is induced by the injection $\pi_I(\underline{\lambda})\hookrightarrow i_{\op^{\lrr}_{J}(L)\cap\bL^{\lrr}_{I}(L)}^{\bL^{\lrr}_{I}(L)}(\pi,\underline{\lambda})$, the second map is induced by the injection $i_{\op^{\lrr}_{J}(L)\cap\bL^{\lrr}_{I}(L)}^{\bL^{\lrr}_{I}(L)}(\pi,\underline{\lambda})\hookrightarrow \BI_{\op^{\lrr}_{J}(L)\cap\bL^{\lrr}_{I}(L)}^{\bL^{\lrr}_{I}(L)}(\pi,\underline{\lambda})$, and the third map sends $V$ to the push-forward of $V|_{\op^{\lrr}_{J}(L)\cap\bL^{\lrr}_{I}(L)}$ via the natural evaluation map $\BI_{\op^{\lrr}_{J}(L)\cap\bL^{\lrr}_{I}(L)}^{\bL^{\lrr}_{I}(L)}(\pi,\underline{\lambda})\twoheadrightarrow \pi_J(\underline{\lambda})$,  $f\mapsto f(1)$.\;The fourth map is the inverse of the map induced by the natural projection $(\pi_I(\underline{\lambda}))\big|_{\op^{\lrr}_{J}(L)\cap\bL^{\lrr}_{I}(L)}\twoheadrightarrow r_{\op^{\lrr}_{J}(L)\cap\bL^{\lrr}_{I}(L)}^{\bL^{\lrr}_{I}(L)}\pi_I\otimes_EL^{\lrr}(\underline{\lambda})_{J}$.\;The last map is induced by the isomorphism  (see Proposition \ref{axioms}, (\ref{[A1]-1})) $\pi_J\xrightarrow{\sim} r_{\op^{\lrr}_{J}(L)\cap\bL^{\lrr}_{I}(L)}^{\bL^{\lrr}_{I}(L)}(\pi_I)\otimes_EL^{\lrr}(\underline{\lambda})_{J}$.\;Using a parallel discussion in (\ref{expsends}), we see that the last three maps in (\ref{rhoIJchange}) are isomorphisms (where we use the isomorphism (\ref{exconsofIJ})).\;We denote the map (\ref{rhoIJchange}) by $\varrho_{I,J}$.\;In a similar way, we can construct an explicit map $\overline{\varrho}_{I,J}:\cEo_I^1\rightarrow \cEo_J^1$ which is induced by  \[\ext^{1}_{G,\omega_{\pi}^{\lrr}\chi_{\underline{\lambda}}}\big(i_{\op^{\lrr}_{{I}}}^G(\pi,\ul{\lambda}), \BI_{\op^{\lrr}_{{I}}}^G(\pi,\ul{\lambda})\big)\rightarrow \ext^{1}_{G,\omega_{\pi}^{\lrr}\chi_{\underline{\lambda}}}\big(i_{\op^{\lrr}_{{I}}}^G(\pi,\ul{\lambda}), \BI_{\op^{\lrr}_{{J}}}^G(\pi,\ul{\lambda})\big).\]The natural injection $\bL^{\lrr}_{J}(L)\hookrightarrow \bL^{\lrr}_{I}(L)$ induces a natural morphism $\hH^1_{\ana}(\bL^{\lrr}_{I}(L),E)\rightarrow\hH^1_{\ana}(\bL^{\lrr}_{J}(L),E)$, which induces a morphism $ \homo(\bZ^{\lrr}_{{I}}(L),E) \rightarrow\homo(\bZ^{\lrr}_{J}(L),E)$.\;This morphism is also induced by the morphism composition
$$\bZ^{\lrr}_{J}(L)\hookrightarrow \bL^{\lrr}_{J}(L)\hookrightarrow \bL^{\lrr}_{I}(L)\xrightarrow{\det}\bZ^{\lrr}_{I}(L).\;$$By the construction of $\varrho_{I,J}$ in (\ref{rhoIJchange}), we deduce:
\begin{lem}\label{restocharacompatible}The following diagram is commutative:
\[\xymatrix{
\hH^1_{\ana}(\bL^{\lrr}_{I}(L),E)
\ar[d]^{\iota_I^1} \ar[r] & \hH^1_{\ana}(\bL^{\lrr}_{J}(L),E) \ar[d]^{\iota_J^1} \\
\cE_I^1 \ar[r]^{\varrho_{I,J}} & \cE_J^1 ,}\]
where the top morphism is induced by the natural injection $\bL^{\lrr}_{J}(L)\hookrightarrow \bL^{\lrr}_{I}(L)$.\;
\end{lem}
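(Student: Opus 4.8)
\textbf{Proof proposal for Lemma \ref{restocharacompatible}.}

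The plan is to trace a single cohomology class $\Psi \in \hH^1_{\ana}(\bL^{\lrr}_{I}(L),E) \cong \homo(\bZ^{\lrr}_{I}(L),E)$ through both paths of the square and compare the outputs in $\cE_J^1$. The right-hand/top path sends $\Psi$ first to its restriction $\Psi|_{\bL^{\lrr}_J(L)} \in \hH^1_{\ana}(\bL^{\lrr}_J(L),E) \cong \homo(\bZ^{\lrr}_J(L),E)$ (which, as recalled just before the lemma, is the character obtained by composing $\bZ^{\lrr}_J(L) \hookrightarrow \bL^{\lrr}_J(L) \hookrightarrow \bL^{\lrr}_I(L) \xrightarrow{\det} \bZ^{\lrr}_I(L)$ with $\Psi$), and then applies $\iota_J^1$, producing the self-extension $[\pi_J(1+(\Psi|_{\bL^{\lrr}_J(L)})\epsilon)]$ of $\pi_J$. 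The left-hand/bottom path applies $\iota_I^1$ first, yielding the self-extension $\iota_I(\Psi) = [\pi_I(1+\Psi\epsilon)]$ of $\pi_I$, and then applies $\varrho_{I,J}$, which is the explicit composition \eqref{rhoIJchange}.

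First I would make explicit what $\varrho_{I,J}$ does to a self-extension of $\pi_I$ of the form $\pi_I(1+\Psi\epsilon)$ for $\Psi \in \homo(\bZ^{\lrr}_I(L),E)$ — here I note that $\Psi$ is pulled back to $\bL^{\lrr}_I(L)$ via $\det: \bL^{\lrr}_I(L) \to \bZ^{\lrr}_I(L)$, so $\pi_I(1+\Psi\epsilon)$ really means the twist by the additive character $\Psi \circ \det$. Following \eqref{rhoIJchange}: one embeds $\pi_I(\ul{\lambda})(1+\Psi\epsilon) \hookrightarrow i^{\bL^{\lrr}_I(L)}_{\op^{\lrr}_J(L)\cap\bL^{\lrr}_I(L)}(\pi,\ul{\lambda})$ with its induced $E[\epsilon]/\epsilon^2$-structure (the twist by $\Psi\circ\det$ survives because $\det$ is defined on all of $\bL^{\lrr}_I(L)$), then pushes into the analytic induction, then applies the evaluation-at-$1$ map $f \mapsto f(1)$ and the inverse of the Jacquet-projection map, landing in $\ext^1_{\bL^{\lrr}_J(L)}(\pi_J(\ul{\lambda}),\pi_J(\ul{\lambda}))$. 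The key point is that under the identification $r^{\bL^{\lrr}_I(L)}_{\op^{\lrr}_J(L)\cap\bL^{\lrr}_I(L)}\pi_I \cong \pi_J$ of Proposition \ref{axioms}\eqref{[A1]-1}, the $E[\epsilon]/\epsilon^2$-structure transported this way is precisely multiplication by $1 + (\Psi|_{\bZ^{\lrr}_J(L)})\epsilon$, because the evaluation map is $\op^{\lrr}_J(L)\cap\bL^{\lrr}_I(L)$-equivariant and on $\bZ^{\lrr}_J(L)$ the character $\Psi\circ\det$ restricts exactly to $\Psi$ composed with $\bZ^{\lrr}_J(L)\hookrightarrow \bL^{\lrr}_I(L)\xrightarrow{\det}\bZ^{\lrr}_I(L)$. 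This matches the description of the top arrow, giving commutativity.

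The cleanest way to organize this, and the one I would actually write, is to avoid unwinding \eqref{rhoIJchange} term-by-term and instead use the functoriality already established in Lemma \ref{explictextgroup1} (and its proof via the factorization \eqref{expsends}). Both $\iota_I^1$ followed by the edge map into $\cE_J^1$, and the direct construction $\iota_J^1$, are compatible with the tensor-product construction of extensions by the $E[\epsilon]/\epsilon^2$ formalism; since $\hH^1_{\ana}(\bL^{\lrr}_I(L),E) \to \hH^1_{\ana}(\bL^{\lrr}_J(L),E)$ is itself visibly the restriction map on characters, and the sections $c_I, c_J$ (from the proof of Lemma \ref{hHinj}) are compatible with restriction $\bZ^{\lrr}_J(L)\hookrightarrow\bZ^{\lrr}_I(L)$, the square commutes after composing with the injective section, hence commutes. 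The main obstacle I anticipate is purely bookkeeping: verifying that the $E[\epsilon]/\epsilon^2$-structure is transported correctly through the evaluation map and the Jacquet functor in \eqref{rhoIJchange}, i.e. that no spurious twist by a modulus character or by the $\op^{\lrr}_I(L)^w \cap \bL^{\lrr}_I(L)$-data creeps in — here one uses that for the relevant double-coset element $w = 1$ (the only one contributing, by Lemma \ref{nonzeroIJw} and the computations in the proof of Lemma \ref{analyticExt1analyticExtC1}) all such characters are trivial, so the transport is clean. Everything else is formal diagram-chasing.
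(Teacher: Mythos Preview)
Your proposal is correct and takes essentially the same approach as the paper: the paper's proof is literally the one-line statement ``By the construction of $\varrho_{I,J}$ in (\ref{rhoIJchange}), we deduce'', and your write-up simply unpacks what that means by tracing the extension $\pi_I(1+\Psi\epsilon)$ through the composition (\ref{rhoIJchange}) and checking it lands on $\pi_J(1+(\Psi|_{\bL^{\lrr}_J(L)})\epsilon)$. One minor remark: your worry about double cosets and modulus characters is unnecessary here, since (\ref{rhoIJchange}) involves no Bruhat filtration --- the transport through the evaluation map and Jacquet functor is direct.
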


\begin{lem}\label{IJCHANGE}Suppose that $J\subseteq I$, the following diagram is commutative:
\[\xymatrix{
\ext^{1}_{G}\big(i_{\op^{\lrr}_{{I}}}^G(\pi,\ul{\lambda}), \BI_{\op^{\lrr}_{{I}}}^G(\pi,\ul{\lambda})\big)
\ar[d]^{\sim}_{(\ref{analyticExt1})} \ar[r] & \ext^{1}_{G}\big(i_{\op^{\lrr}_{{I}}}^G(\pi,\ul{\lambda}), \BI_{\op^{\lrr}_{{J}}}^G(\pi,\ul{\lambda})\big) \ar[d]^{\sim}_{(\ref{analyticExt1})} \\
\cE_I^1 \ar[r]^{\varrho_{I,J}} & \cE_J^1 .}\]
A similar result holds for  $\overline{\varrho}_{I,J}$.\;
\end{lem}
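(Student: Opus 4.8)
The strategy is a diagram chase. I would unwind both vertical isomorphisms and the map $\varrho_{I,J}$ into their constituent steps and show the square of Lemma~\ref{IJCHANGE} breaks up into commuting squares, each of which is a functoriality statement for parabolic induction, for Jacquet modules, or for the Orlik--Strauch functor. The organizing principle is transitivity of parabolic induction along the chain of parabolics $\op^{\lrr}_{{J}}(L)\subseteq\op^{\lrr}_{{I}}(L)$ (valid since $J\subseteq I$, so that $\op^{\lrr}_{{J}}(L)\cap\bL^{\lrr}_{I}(L)$ is a parabolic of $\bL^{\lrr}_{I}(L)$ and $\op^{\lrr}_{{J}}(L)=(\op^{\lrr}_{{J}}(L)\cap\bL^{\lrr}_{I}(L))\cdot\on^{\lrr}_{I}(L)$): this lets one interpolate the group $\bL^{\lrr}_{I}(L)$ between $G$ and $\bL^{\lrr}_{J}(L)$, in parallel with the fact that $\varrho_{I,J}$ in (\ref{rhoIJchange}) is constructed entirely inside $\bL^{\lrr}_{I}(L)$.

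First I would recall from the proof of Lemma~\ref{analyticExt1analyticExtC1} and its reformulation (\ref{expsends}) (used to prove Lemma~\ref{explictextgroup1}) that, for a pair $(I,K)$ with $K\subseteq I$, the isomorphism (\ref{analyticExt1}) is the edge morphism of the spectral sequence (\ref{spectalse1}): it is the composite of the Frobenius reciprocity isomorphism $\ext^1_G\big(i^G_{\op^{\lrr}_{{I}}}(\pi,\ul{\lambda}),\BI^G_{\op^{\lrr}_{{K}}}(\pi,\ul{\lambda})\big)\xrightarrow{\sim}\ext^1_{\bL^{\lrr}_{K}(L)}\big(r^G_{\op^{\lrr}_{{K}}(L)}i^G_{\op^{\lrr}_{{I}}}(\pi,\ul{\lambda}),\pi_K(\ul{\lambda})\big)$ (the higher $\mathrm{H}_s(\on^{\lrr}_K(L),-)$ contributing nothing by (\ref{spectalse2unipotent}) and \cite[Proposition 4.7(1)]{schraen2011GL3}), the identification $r^G_{\op^{\lrr}_{{K}}(L)}i^G_{\op^{\lrr}_{{I}}}(\pi,\ul{\lambda})\cong\pi_K(\ul{\lambda})$ (only the $w=1$ term of the Bruhat filtration (\ref{filext}) survives, by Lemma~\ref{nonzerolem}, Lemma~\ref{nonzeroIJw} and (\ref{[A1]-1})), and finally the $\mathrm{End}_E(L^{\lrr}(\ul{\lambda})_K)$-dévissage of Lemma~\ref{analyticExt1analyticExtC1} down to $\cE_K^1$. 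Next, transitivity of locally $\BQ_p$-analytic parabolic induction gives $\BI^G_{\op^{\lrr}_{{J}}}(\pi,\ul{\lambda})\cong\big(\ind^G_{\op^{\lrr}_{{I}}(L)}\BI^{\bL^{\lrr}_{I}(L)}_{\op^{\lrr}_{{J}}(L)\cap\bL^{\lrr}_{I}(L)}(\pi,\ul{\lambda})\big)^{\BQ_p-\ana}$ — this is exactly the isomorphism built into (\ref{injINANAN}) through the Orlik--Strauch identity $\cF^G_{\op^{\lrr}_{I}}(-,-)\cong\cF^G_{\op^{\lrr}_{J}}(-,i^{\bL^{\lrr}_{J}(L)}_{\bL^{\lrr}_{J}(L)\cap\op^{\lrr}_{I}(L)}(-))$ recalled in Section~\ref{osfunctor} — and under it $p^{\ana}_{I,J}$ is identified with $\ind^G_{\op^{\lrr}_{{I}}(L)}(\iota)$, where $\iota\colon\pi_I(\ul{\lambda})\hookrightarrow i^{\bL^{\lrr}_{I}(L)}_{\op^{\lrr}_{{J}}(L)\cap\bL^{\lrr}_{I}(L)}(\pi,\ul{\lambda})\hookrightarrow\BI^{\bL^{\lrr}_{I}(L)}_{\op^{\lrr}_{{J}}(L)\cap\bL^{\lrr}_{I}(L)}(\pi,\ul{\lambda})$ is precisely the composition appearing as the first two arrows of (\ref{rhoIJchange}).

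Functoriality of the spectral sequence (\ref{spectalse1}) in the second variable then produces the commutative square
\[
\begin{array}{ccc}
\ext^1_G\big(i^G_{\op^{\lrr}_{{I}}}(\pi,\ul{\lambda}),\BI^G_{\op^{\lrr}_{{I}}}(\pi,\ul{\lambda})\big) & \xrightarrow{\ (p^{\ana}_{I,J})_\ast\ } & \ext^1_G\big(i^G_{\op^{\lrr}_{{I}}}(\pi,\ul{\lambda}),\BI^G_{\op^{\lrr}_{{J}}}(\pi,\ul{\lambda})\big)\\[4pt]
\wr\big\downarrow & & \wr\big\downarrow\\[4pt]
\ext^1_{\bL^{\lrr}_{I}(L)}\big(r^G_{\op^{\lrr}_{{I}}(L)}i^G_{\op^{\lrr}_{{I}}}(\pi,\ul{\lambda}),\pi_I(\ul{\lambda})\big) & \xrightarrow{\ \iota_\ast\ } & \ext^1_{\bL^{\lrr}_{I}(L)}\big(r^G_{\op^{\lrr}_{{I}}(L)}i^G_{\op^{\lrr}_{{I}}}(\pi,\ul{\lambda}),\BI^{\bL^{\lrr}_{I}(L)}_{\op^{\lrr}_{{J}}(L)\cap\bL^{\lrr}_{I}(L)}(\pi,\ul{\lambda})\big)
\end{array}
\]
whose left vertical arrow, after the identification $r^G_{\op^{\lrr}_{{I}}(L)}i^G_{\op^{\lrr}_{{I}}}(\pi,\ul{\lambda})\cong\pi_I(\ul{\lambda})$ and the dévissage, is $(\ref{analyticExt1})_{(I,I)}$; whose bottom composite is, by the previous paragraph, the first two arrows of (\ref{rhoIJchange}); and whose right-hand corner is identified with $\cE^1_J$ by a second application of transitivity: comparing the spectral sequence (\ref{spectalse1}) for $G$ and target $\BI^G_{\op^{\lrr}_{{J}}}(\pi,\ul{\lambda})$ with the spectral sequence (\ref{spectalse1IJ}) for $\bL^{\lrr}_{I}(L)$, via $r^G_{\op^{\lrr}_{{J}}(L)}\cong r^{\bL^{\lrr}_{I}(L)}_{\op^{\lrr}_{{J}}(L)\cap\bL^{\lrr}_{I}(L)}\circ r^G_{\op^{\lrr}_{{I}}(L)}$ and the weight computation of Section~\ref{notationger}, this identification is the composition of the last three arrows of (\ref{rhoIJchange}), and the whole right-hand column of the square then equals $(\ref{analyticExt1})_{(I,J)}$. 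Assembling these commuting squares gives the commutativity asserted in Lemma~\ref{IJCHANGE}; the version with the central character $\omega_{\pi}^{\lrr}\chi_{\ul{\lambda}}$ of $Z_n$ fixed throughout (the map $\overline{\varrho}_{I,J}$) is proved verbatim, since all the spectral sequences, Frobenius reciprocities and $\mathrm{End}_E(-)$-dévissages used are stable under fixing the action of $Z_n$.

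The main obstacle is pure bookkeeping, concentrated in two places. The first is extracting from (\ref{injINANAN}) the precise description $p^{\ana}_{I,J}=\ind^G_{\op^{\lrr}_{{I}}(L)}(\iota)$: one must check that the Orlik--Strauch isomorphism $\cF^G_{\op^{\lrr}_{I}}(-,-)\cong\cF^G_{\op^{\lrr}_{J}}(-,i(-))$ intertwines the transition morphism $\cF^G_{\op^{\lrr}_{J}(L)}(q_{I,J},\mathrm{incl.})$ with induction of $\iota$, i.e.\ that the functor $\cF$ is compatible with transitivity of parabolic induction in the expected way. The second is the ``double transitivity'' of the Frobenius-reciprocity edge maps when one passes $G\rightsquigarrow\bL^{\lrr}_{I}(L)\rightsquigarrow\bL^{\lrr}_{J}(L)$: here one must make sure that the chain of identifications of $r^G_{\op^{\lrr}_{{J}}(L)}i^G_{\op^{\lrr}_{{I}}}(\pi,\ul{\lambda})$ with $\pi_J(\ul{\lambda})$ is compatible with the restriction $L^{\lrr}(\ul{\lambda})_I$ to $L^{\lrr}(\ul{\lambda})_J$ and with the two $\mathrm{End}_E(-)$-dévissages, so that no spurious scalar or algebraic twist is introduced along the way. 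No new idea beyond those already present in the proofs of Lemmas~\ref{analyticExt1analyticExtC1}, \ref{explictextgroup1} and \ref{restocharacompatible} is needed; the work is in assembling those proofs into a single compatible diagram.
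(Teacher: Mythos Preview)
Your proposal is correct and follows essentially the same approach as the paper: both arguments interpolate $\bL^{\lrr}_{I}(L)$ between $G$ and $\bL^{\lrr}_{J}(L)$ via transitivity of parabolic induction, then break the square into commuting blocks whose commutativity follows from functoriality of the relevant spectral-sequence edge maps. The paper simply draws out the full $3\times 3$ diagram with all intermediate nodes and labeled arrows, whereas you describe the same decomposition in prose; the content is the same.
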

\begin{proof}Firstly, we write 
\begin{equation}
\begin{aligned}
	&\ext^r_I:=\ext^r_{\bL^{\lrr}_{I}(L)}   \big(\text{resp., }\ext^r_J:=\ext^1_{\bL^{\lrr}_{J}(L)}\big),\\
	&i_I\pi_I^{\underline{\lambda}}:=i_{\op^{\lrr}_{{I}}}^G(\pi,\ul{\lambda})   \big(\text{resp., }i_J\pi_J^{\underline{\lambda}}:=i_{\op^{\lrr}_{{J}}}^G(\pi,\ul{\lambda})\big),\\
	&\BI_I\pi_I^{\underline{\lambda}}:=\BI_{\op^{\lrr}_{{I}}}^G(\pi,\ul{\lambda})   \big(\text{resp., }\BI_J\pi_J^{\underline{\lambda}}:=\BI_{\op^{\lrr}_{{J}}}^G(\pi,\ul{\lambda})\big),\\
	&\pi_I^{\underline{\lambda}}:=\pi_I(\underline{\lambda})   \big(\text{resp., }\pi_J^{\underline{\lambda}}:=\pi_J(\underline{\lambda})\big),\\
	&\text{and }\BI^I_J(\pi_J^{\underline{\lambda}}):=\BI_{\op^{\lrr}_{J}(L)\cap\bL^{\lrr}_{I}(L)}^{\bL^{\lrr}_{I}(L)}(\pi,\underline{\lambda})
\end{aligned}
\end{equation}
for simplicity.\;

Considering the following commutative diagram:
\[\xymatrix{
\ext^{1}_{G}\big(i_I\pi_I^{\underline{\lambda}}, \BI_I\pi_I^{\underline{\lambda}}\big) \ar@<1ex>[d]^{(b), \simeq} \ar@<-1ex>[d]_{(a), \simeq}\ar[r]^{(e)}  & \ext^{1}_{G}\big(i_I\pi_I^{\underline{\lambda}}, \BI_J\pi_J^{\underline{\lambda}}\big) \ar[d]_{(a')}\ar@/^2.0em/[dr]^{(b''), \simeq} \ar@/_1.0em/[dr]^{(a''), \simeq}  &  \\
\ext^1_{I}(r^G_{\op^{\lrr}_{{I}}(L)}(i_I\pi_I^{\underline{\lambda}}),\pi_I^{\underline{\lambda}})  \ar[r]^{(e')} \ar@<-0.5ex>[d]_{(c), \simeq} &
\ext^1_{I}(r^G_{\op^{\lrr}_{{I}}(L)}(i_I\pi_I^{\underline{\lambda}}),\BI^I_J(\pi_J^{\underline{\lambda}}))  \ar@<-0.5ex>[d]_{(c')} \ar[r]_{(d'), \simeq}
& \ext^1_{J}(r^G_{\op^{\lrr}_{{J}}(L)}(i_I\pi_I^{\underline{\lambda}},\pi_J^{\underline{\lambda}}) \ar[d]_{(c'')} \\
\ext^1_{I}(\pi_I^{\underline{\lambda}}, \pi_I^{\underline{\lambda}}) \ar[r]^{(e'')} &
\ext^1_{I}(\pi_I^{\underline{\lambda}},\BI^I_J(\pi_J^{\underline{\lambda}}))\ar[r]^{(d''), \simeq}
&\ext^1_{J}(\pi_J^{\underline{\lambda}}, \pi_J^{\underline{\lambda}}), }\]
The maps $(a),(a''),(d')$ are edge morphisms of spectral sequence (and an easy variation of it)  in (\ref{spectalse1}), and the map  $(d'')$ is edge morphism of spectral sequence (\ref{spectalse1IJ}).\;The map $(e)$ is induced by the injection $\BI_I\pi_I^{\underline{\lambda}}\hookrightarrow \BI_J\pi_J^{\underline{\lambda}}$.\;The maps $(e')$ and $(e'')$ are induced by the
injection $\pi_I^{\underline{\lambda}}\hookrightarrow \BI^I_J(\pi_J^{\underline{\lambda}})$.\;The maps  $(c)$ and $(c')$ are induced by the injection $\pi_I^{\underline{\lambda}}\hookrightarrow r^G_{\op^{\lrr}_{{I}}(L)}(i_I\pi_I^{\underline{\lambda}})$, and the map $(c'')$ is
induced by the injection $\pi_J^{\underline{\lambda}}\hookrightarrow r^G_{\op^{\lrr}_{{J}}(L)}(i_I\pi_I^{\underline{\lambda}})$.\;The maps $(b),(b'')$ are given by the composition of first and second isomorphisms in (\ref{expsends}).\;The map $(a')$ is the edge morphism of the spectral sequence $E_2^{r,s}=\ext^r_{I}(\mathrm{H}_s(\on^{\lrr}_{I}(L),i_I\pi_I^{\underline{\lambda}}),\BI^I_J(\pi_J^{\underline{\lambda}}))\Rightarrow\ext^{r+s}_{G}\big(i_I\pi_I^{\underline{\lambda}}, \BI_J\pi_J^{\underline{\lambda}}\big)$.\;It is clear that all the four blocks of the above diagram are commutative.\;In this case, we see that $(a')$ is also an isomorphism.\;Note that $\varrho_{I,J}$ is the composition of $(e'')$ and  the isomorphism $(d'')$.\;The result follows by an easy diagram chasing.\;
\end{proof}

\begin{rmk}Note that all the above description also hold for smooth case, i.e., we are working with the smooth extension group $\ext^{1,\infty}_{G}\big(i_{\op^{\lrr}_{{I}}}^G(\pi,\ul{\lambda}), i_{\op^{\lrr}_{{J}}}^G(\pi,\ul{\lambda})\big)\xrightarrow{\sim}\cS_J^1$ instead of the
$\ext^{1}_{G}\big(i_{\op^{\lrr}_{{I}}}^G(\pi,\ul{\lambda}), \BI_{\op^{\lrr}_{{J}}}^G(\pi,\ul{\lambda})\big)\xrightarrow{\sim}\cE_J^1$.\;Moreover, the injection $i_{\op^{\lrr}_{{J}}}^G(\pi,\ul{\lambda})\hookrightarrow \BI_{\op^{\lrr}_{{J}}}^G(\pi,\ul{\lambda})$ induces an injection
\[\ext^{1}_{G}\big(i_{\op^{\lrr}_{{I}}}^G(\pi,\ul{\lambda}), i_{\op^{\lrr}_{{J}}}^G(\pi,\ul{\lambda})\big)\hookrightarrow \ext^{1}_{G}\big(i_{\op^{\lrr}_{{I}}}^G(\pi,\ul{\lambda}), \BI_{\op^{\lrr}_{{J}}}^G(\pi,\ul{\lambda})\big)\]
by an easy d\'{e}vissage (using $\homo_{G}\Big(i_{\op^{\lrr}_{{I}}}^G(\pi,\ul{\lambda}),\BI_{\op^{\lrr}_{{J}}}^G(\pi,\ul{\lambda})\big/i_{\op^{\lrr}_{{J}}}^G(\pi,\ul{\lambda})\Big)=0$).\;Using Lemma \ref{smoothtoanalytic} (\ref{smoothanalyticedge}), Lemma \ref{SmoothExt1} and the explicit description for $\ext^{1}$, we get the following commutative diagram:
\[\xymatrix@C=2ex{
\ext^{1,\infty}_{G}\big(i_{\op^{\lrr}_{{I}}}^G(\pi,\ul{\lambda}), i_{\op^{\lrr}_{{J}}}^G(\pi,\ul{\lambda})\big)  
\ar[d]^{\sim} \ar@{^(->}[r] &  \ext^{1}_{G}\big(i_{\op^{\lrr}_{{I}}}^G(\pi,\ul{\lambda}), \BI_{\op^{\lrr}_{{J}}}^G(\pi,\ul{\lambda})\big) \ar[d]^{\sim} \\
X_E^*(\bL^{\lrr}_{I})\cong \homo_{\infty}(\bL^{\lrr}_{I}(L),E)   \ar@{^(->}[r] & \homo(\bL^{\lrr}_{I}(L),E) ,}\]
where the top injection is induced by 
\begin{equation}
\begin{aligned}
	\ext^{1,\infty}_{G}\big(i_{\op^{\lrr}_{{I}}}^G(\pi,\ul{\lambda}), i_{\op^{\lrr}_{{J}}}^G(\pi,\ul{\lambda})\big)\hookrightarrow &\ext^{1}_{G}\big(i_{\op^{\lrr}_{{I}}}^G(\pi,\ul{\lambda}), i_{\op^{\lrr}_{{J}}}^G(\pi,\ul{\lambda})\big)\hookrightarrow \ext^{1}_{G}\big(i_{\op^{\lrr}_{{I}}}^G(\pi,\ul{\lambda}), \BI_{\op^{\lrr}_{{J}}}^G(\pi,\ul{\lambda})\big) .
\end{aligned}
\end{equation}
In particular, we see that $\sE_{I}^J(\ul{\lambda},\Psi)^+$ comes from a locally algebraic extension of $i_{\op^{\lrr}_{{I}}}^G(\pi,\ul{\lambda})$ by $i^{\infty}_{\op^{\lrr}_{{J}}}(\pi,\ul{\lambda})$ if and only if $\Psi\in X_E^*(\bL^{\lrr}_{I})$.\;
\end{rmk}

We push forward (\ref{explicitexactseq}) along surjection $\BI_{\op^{\lrr}_{{J}}}^G(\pi,\ul{\lambda})\twoheadrightarrow v^{\ana}_{\op^{\lrr}_{{J}}}(\pi,\ul{\lambda})$ and then pullback via natural injections
\begin{equation*}i_{\op^{\lrr}_{{I}}}^G(\pi,\ul{\lambda}) \hooklongrightarrow \BI_{\op^{\lrr}_{{I}}}^G(\pi,\ul{\lambda}) \hooklongrightarrow \BI_{\op^{\lrr}_{{J}}}^G(\pi,\ul{\lambda}),
\end{equation*}
we get a locally $\BQ_p$-analytic representation  $\sE_{I}^J(\pi,\ul{\lambda},\widetilde{\pi}_{J})^0$, which is an extension of $i_{\op^{\lrr}_{{I}}}^G(\pi,\ul{\lambda})$ by $v^{\ana}_{\op^{\lrr}_{{J}}}(\pi,\ul{\lambda})$. It gives a cohomology class  $[\sE_{I}^J(\pi,\ul{\lambda},\widetilde{\pi}_{J})^0]\in \ext^{1}_{G}\big(i_{\op^{\lrr}_{{I}}}^G(\pi,\ul{\lambda}), v^{\ana}_{\op^{\lrr}_{{J}}}(\pi,\ul{\lambda})\big)$.\;

Let $\widetilde{\pi}_{I}$ be an locally analytic representation of $\bL^{\lrr}_I(L)$ over $E$, which is a locally analytic extension of $\pi_I$ by $\pi_I$.\;It gives a cohomology class $[\widetilde{\pi}_{I}]\in \cE_I^1$.\;Then the  locally $\BQ_p$-analytic parabolic induction $\big(\ind_{\op^{\lrr}_{{I}}(L)}^G \widetilde{\pi}_{I}\otimes_EL^{\lrr}(\ul{\lambda})_I\big)^{\BQ_p-\ana}$ lies in the following exact sequence
\begin{equation}\label{explicitexactseq1}
0 \longrightarrow \BI_{\op^{\lrr}_{{I}}}^G(\pi,\ul{\lambda}) \longrightarrow \big(\ind_{\op^{\lrr}_{{I}}(L)}^G \widetilde{\pi}_{I}\otimes_EL(\ul{\lambda})_{I}\big)^{\BQ_p-\ana} \xrightarrow{\pr} \BI_{\op^{\lrr}_{{I}}}^G(\pi,\ul{\lambda}) \longrightarrow 0.
\end{equation}
We push forward (\ref{explicitexactseq1}) along  natural injections $\BI_{\op^{\lrr}_{{I}}}^G(\pi,\ul{\lambda})\hooklongrightarrow\BI_{\op^{\lrr}_{{J}}}^G(\pi,\ul{\lambda})$ and then pullback via natural injections
$i_{\op^{\lrr}_{{I}}}^G(\pi,\ul{\lambda}) \hooklongrightarrow \BI_{\op^{\lrr}_{{I}}}^G(\ul{\lambda})$, we reconstruct $\sE_{I}^J(\pi,\ul{\lambda},\varrho_{I,J}([\widetilde{\pi}_{I}]))^+$ by Lemma \ref{explictextgroup1} and Lemma \ref{IJCHANGE}.\;Furthermore, we pushforward (\ref{explicitexactseq1}) along the  natural morphism $$\BI_{\op^{\lrr}_{{I}}}^G(\pi,\ul{\lambda})\hooklongrightarrow\BI_{\op^{\lrr}_{{J}}}^G(\pi,\ul{\lambda})\twoheadrightarrow v^{\ana}_{\op^{\lrr}_{{J}}}(\pi,\ul{\lambda})$$ and then pullback via natural injections
$i_{\op^{\lrr}_{{I}}}^G(\pi,\ul{\lambda}) \hooklongrightarrow \BI_{\op^{\lrr}_{{I}}}^G(\ul{\lambda})$, we may reconstruct the representation $\sE_{I}^J(\pi,\ul{\lambda},\varrho_{I,J}([\widetilde{\pi}_{I}]))^0$ by Lemma \ref{explictextgroup1} and Lemma \ref{IJCHANGE}.\;But by definition it is zero.\;Therefore, we get a homomorphism
\begin{equation}\label{IJmap0}
\varrho_{I,J}^0: \cE_J^1/\varrho_{I,J}(\cE_I^1)\longrightarrow \ext^{1}_{G}\big(i_{\op^{\lrr}_{{I}}}^G(\pi,\ul{\lambda}),v^{\ana}_{\op^{\lrr}_{{J}}}(\pi,\ul{\lambda})\big).\;
\end{equation}

The remainder of this section is devoted to showing 
\begin{equation*}
\homo(L^{\times},E)  \xrightarrow{\sim}  \ext^1_{G}\big(v_{\op^{\lrr}_{{ir}}}^{\infty}(\pi,\ul{\lambda}), \st_{(r,k)}^{\ana}(\pi,\ul{\lambda})\big).
\end{equation*}
This is the main result of this paper.\;

We begin with a more general situation.\;Let $J\subseteq I\subseteq{\Delta_n(k)}$ be subsets of $\Delta_n(k)$ such that $|I|=|J|+1$.\;Suppose that  $I=J\cup\{\upsilon r\}$ for some $1\leq \upsilon\leq k-1$.\;We are going to construct an injection (see (\ref{rpoIJ1}))
\begin{equation*}
\varrho_{I,J}^{\sharp}: \homo\big(\bZ_{J}^{\lrr}(L),E\big)/\homo\big(\bZ^{\lrr}_{I}(L),E\big) \longrightarrow \ext^1_G(v_{\op^{\lrr}_{{I}}}^{\infty}(\pi,\ul{\lambda}),v^{\ana}_{\op^{\lrr}_{{J}}}(\pi,\ul{\lambda})).
\end{equation*}
We see that the first term is isomorphic to $\homo(L^{\times},E)$.\;We will show that this injection is an isomorphism when $J=\emptyset$ and $I=\{ir\}$ for some $1\leq i\leq k-1$ by comparing dimensions (see Theorem \ref{analyticExt3}).\;Then we get the desired result.\;

We first have the following proposition.\;

\begin{pro}\label{analyticExt3.2}
$\mathbf{(a)}$ We have an isomorphism of $E$-vector spaces
$$\ext^1_{G,\omega_{\pi}^{\lrr}\chi_{\underline{\lambda}}}\big(i_{\op^{\lrr}_{I}}^G(\pi,\ul{\lambda}),v^{\ana}_{\op^{\lrr}_{{J}}}(\pi,\ul{\lambda})\big)\xrightarrow{\sim}\ext^1_G\big(i_{\op^{\lrr}_{I}}^G(\pi,\ul{\lambda}),v^{\ana}_{\op^{\lrr}_{{J}}}(\pi,\ul{\lambda})\big).$$
$\mathbf{(b)}$ The map $\varrho_{I,J}:\cE_I^1\rightarrow \cE_J^1$ is injective.\;We  further have an exact sequence:
\begin{equation*}
\begin{aligned}
	0\rightarrow \cE_J^1/\varrho_{I,J}(\cE_I^1)\rightarrow \ext^1_{G}(i_{\op^{\lrr}_{I}}^G\big(\pi,\ul{\lambda}),v^{\ana}_{\op^{\lrr}_{{J}}}(\pi,\ul{\lambda})\big)
	\rightarrow \ker(\cE_I^2\rightarrow \cE_J^2)\rightarrow0.
\end{aligned}
\end{equation*}
In particular, we may view $\cE_I^1$ as a subspace of $\cE_J^1$.\;\\
$\mathbf{(c)}$ The map $\overline{\varrho}_{I,J}:\cEo_I^1\rightarrow \cEo_J^1$ is injective.\;We further have an exact sequence:
\begin{equation*}
\begin{aligned}
	0\rightarrow \cEo_J^1/\overline{\varrho}_{I,J}(\cEo_I^1)\rightarrow \ext^1_{G,\omega_{\pi}^{\lrr}\chi_{\underline{\lambda}}}\big(i_{\op^{\lrr}_{I}}^G(\pi,\ul{\lambda}),v^{\ana}_{\op^{\lrr}_{{J}}}(\pi,\ul{\lambda})\big)
	\rightarrow \ker(\cEo_I^2\rightarrow \cEo_J^2)\rightarrow0.
\end{aligned}
\end{equation*}
In particular, we will view $\cEo_I^1$ as a subspace of $\cEo_J^1$ via the map $\varrho_{I,J}$ with no further mention.\;\end{pro}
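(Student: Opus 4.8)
\emph{Strategy.} Write $M:=i_{\op^{\lrr}_{I}}^{G}(\pi,\ul{\lambda})$ and $\chi:=\omega_{\pi}^{\lrr}\chi_{\ul{\lambda}}$, and recall $I=J\cup\{\upsilon r\}$. The plan is to read off all three assertions from the long exact sequence obtained by applying $\ext^{\bullet}_{G}(M,-)$ (resp.\ $\ext^{\bullet}_{G,\chi}(M,-)$) to the short exact sequence
\[0\lra\sum_{J\subsetneq K\subseteq\Delta_n(k)}\BI_{\op^{\lrr}_{K}}^{G}(\pi,\ul{\lambda})\lra\BI_{\op^{\lrr}_{J}}^{G}(\pi,\ul{\lambda})\lra v_{\op^{\lrr}_{J}}^{\ana}(\pi,\ul{\lambda})\lra 0\]
coming from Definition \ref{dfnlagpstrep}. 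By Lemma \ref{analyticExt1analyticExtC1} (and $J\subseteq I$) one has $\ext^{m}_{G}(M,\BI_{\op^{\lrr}_{J}}^{G}(\pi,\ul{\lambda}))=\cE_{J}^{m}$ and $\ext^{m}_{G,\chi}(M,\BI_{\op^{\lrr}_{J}}^{G}(\pi,\ul{\lambda}))=\cEo_{J}^{m}$. For the sub-object I would truncate the acyclic locally analytic Tits complex of $v_{\op^{\lrr}_{J}}^{\ana}(\pi,\ul{\lambda})$ (Proposition \ref{analyticexact} with the index there equal to $J$), which realizes $\sum_{K\supsetneq J}\BI_{\op^{\lrr}_{K}}^{G}(\pi,\ul{\lambda})$ as the image of $C^{\ana}_{J,1}\to C^{\ana}_{J,0}$, hence as the cokernel of $C^{\ana}_{J,2}\to C^{\ana}_{J,1}$, and so gives a resolution of it by the terms $C^{\ana}_{J,s}=\bigoplus_{|K\backslash J|=s}\BI_{\op^{\lrr}_{K}}^{G}(\pi,\ul{\lambda})$, $s\geq 1$. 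Since $I=J\cup\{\upsilon r\}$, Lemma \ref{analyticExt1analyticExtC1} forces $\ext^{\bullet}_{G}(M,\BI_{\op^{\lrr}_{K}}^{G}(\pi,\ul{\lambda}))=0$ for every $K$ with $J\subsetneq K\subseteq\Delta_n(k)$ other than $K=I$, so a d\'evissage along this resolution collapses onto the $K=I$ summand of $C^{\ana}_{J,1}$ and gives $\ext^{m}_{G}(M,\sum_{K\supsetneq J}\BI_{\op^{\lrr}_{K}}^{G}(\pi,\ul{\lambda}))\cong\cE_{I}^{m}$ (and likewise $\cEo_{I}^{m}$ in the $\chi$-fixed case), the isomorphism being induced by the inclusion $\BI_{\op^{\lrr}_{I}}^{G}(\pi,\ul{\lambda})\hookrightarrow\sum_{K\supsetneq J}\BI_{\op^{\lrr}_{K}}^{G}(\pi,\ul{\lambda})$. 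By Lemma \ref{IJCHANGE} the resulting connecting map $\cE_{I}^{m}\to\cE_{J}^{m}$ — which is induced by $\BI_{\op^{\lrr}_{I}}^{G}(\pi,\ul{\lambda})\hookrightarrow\BI_{\op^{\lrr}_{J}}^{G}(\pi,\ul{\lambda})$ — is $\pm\varrho_{I,J}$ (resp.\ $\pm\overline{\varrho}_{I,J}$). So the long exact sequence reads
\[\cdots\to\cE_{I}^{m}\xrightarrow{\varrho_{I,J}}\cE_{J}^{m}\to\ext^{m}_{G}\big(M,v_{\op^{\lrr}_{J}}^{\ana}(\pi,\ul{\lambda})\big)\to\cE_{I}^{m+1}\xrightarrow{\varrho_{I,J}}\cE_{J}^{m+1}\to\cdots,\]
and similarly with $\cEo$, $\overline{\varrho}_{I,J}$ and $\ext^{\bullet}_{G,\chi}$.

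\emph{Proof of (b) and (c).} In degree $0$ we have $\cE_{I}^{0}=\homo_{\bL^{\lrr}_{I}(L)}(\pi_I,\pi_I)=E=\cE_{J}^{0}$, and $\varrho_{I,J}^{0}$ sends $\mathrm{id}$ to $\mathrm{id}$ under the Jacquet-module identification of Proposition \ref{axioms}\,(\ref{[A1]-1}), hence is an isomorphism; thus $\homo_{G}(M,v_{\op^{\lrr}_{J}}^{\ana}(\pi,\ul{\lambda}))$ embeds into $\cE_{I}^{1}$ with image $\ker\varrho_{I,J}^{1}$. The key point is the vanishing $\homo_{G}(i_{\op^{\lrr}_{I}}^{G}(\pi,\ul{\lambda}),v_{\op^{\lrr}_{J}}^{\ana}(\pi,\ul{\lambda}))=0$. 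The image of any such morphism is simultaneously a quotient of the locally algebraic representation $i_{\op^{\lrr}_{I}}^{G}(\pi,\ul{\lambda})$ — whose Jordan--H\"older constituents are the $v_{\op^{\lrr}_{K}}^{\infty}(\pi,\ul{\lambda})$, $K\supseteq I$ (Proposition \ref{smoothexact}, Definition \ref{dfnlagpstreplalg}) — and a sub-object of $v_{\op^{\lrr}_{J}}^{\ana}(\pi,\ul{\lambda})$; but in the short exact sequence above each $v_{\op^{\lrr}_{K}}^{\infty}(\pi,\ul{\lambda})$ with $K\supseteq I$ occurs with multiplicity one in $\BI_{\op^{\lrr}_{J}}^{G}(\pi,\ul{\lambda})$ (its locally algebraic constituents are exactly those of $i_{\op^{\lrr}_{J}}^{G}(\pi,\ul{\lambda})$, by the Orlik--Strauch description of Section \ref{osfunctor}) and already occurs in the sub-object $\BI_{\op^{\lrr}_{I}}^{G}(\pi,\ul{\lambda})\subseteq\sum_{K'\supsetneq J}\BI_{\op^{\lrr}_{K'}}^{G}(\pi,\ul{\lambda})$, so it cannot occur in $v_{\op^{\lrr}_{J}}^{\ana}(\pi,\ul{\lambda})$; hence the image has no constituents and is $0$. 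Consequently $\varrho_{I,J}^{1}$ is injective, and the long exact sequence then gives precisely the four-term exact sequence of (b). Part (c) follows verbatim, replacing $\ext^{\bullet}_{G}$, the $\cE$-groups, $\varrho_{I,J}$ and (\ref{analyticExt1}) by $\ext^{\bullet}_{G,\chi}$, the $\cEo$-groups, $\overline{\varrho}_{I,J}$ and (\ref{analyticExtC1}), and using that $\homo_{G,\chi}(M,v_{\op^{\lrr}_{J}}^{\ana}(\pi,\ul{\lambda}))=\homo_{G}(M,v_{\op^{\lrr}_{J}}^{\ana}(\pi,\ul{\lambda}))=0$ since every $G$-morphism already preserves the central character $\chi$.

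\emph{Proof of (a).} Both $M$ and $v_{\op^{\lrr}_{J}}^{\ana}(\pi,\ul{\lambda})$ carry the central character $\chi$ of $Z_n$, so $Z_n$ acts trivially on $v_{\op^{\lrr}_{J}}^{\ana}(\pi,\ul{\lambda})\otimes M^{\vee}$, and the Hochschild--Serre spectral sequence for the central subgroup $Z_n\subseteq G$ (cf.\ \cite[Section 5.1]{breuil2019ext1}) gives an exact sequence
\[0\lra\ext^{1}_{G,\chi}\big(M,v_{\op^{\lrr}_{J}}^{\ana}(\pi,\ul{\lambda})\big)\lra\ext^{1}_{G}\big(M,v_{\op^{\lrr}_{J}}^{\ana}(\pi,\ul{\lambda})\big)\lra\homo(Z_n,E)\otimes_{E}\homo_{G}\big(M,v_{\op^{\lrr}_{J}}^{\ana}(\pi,\ul{\lambda})\big).\]
The right-hand term vanishes by the $\homo$-vanishing just established, so the first arrow is an isomorphism — which is (a). (Its injectivity is in any case the elementary remark, recalled before Remark \ref{introsmoothfixcenter}, that an extension splits $Z_n$-equivariantly exactly when it splits.) The one step that carries real content is this $\homo$-vanishing, which I expect to be the delicate point; it is worth stressing that the multiplicity argument used for it is strictly weaker than, and does not require, a computation of $\soc_{G}v_{\op^{\lrr}_{J}}^{\ana}(\pi,\ul{\lambda})$ — the socle the author does not know how to determine.
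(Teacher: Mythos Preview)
Your proof is correct and follows essentially the same approach as the paper. The paper runs the spectral sequence $E_1^{-s,r}=\bigoplus_{|K\backslash J|=s}\ext^r_{G,\chi}(M,\BI_{\op^{\lrr}_{K}}^{G}(\pi,\ul{\lambda}))\Rightarrow\ext^{r-s}_{G,\chi}(M,v^{\ana}_{\op^{\lrr}_{J}}(\pi,\ul{\lambda}))$ coming from the full Tits complex (Proposition \ref{analyticexact}), observes via Lemma \ref{analyticExt1analyticExtC1} that only the columns $s=0$ (giving $\cEo_J^{\bullet}$) and $s=1$ (giving $\cEo_I^{\bullet}$) survive, and then invokes Lemma \ref{degspectral}(b) to extract the two short exact sequences; your truncation of the Tits complex plus long exact sequence is exactly the unravelling of this two-column spectral sequence. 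For (a) the paper simply cites \cite[Lemma 3.1]{HigherLinvariantsGL3Qp}, whose content is precisely your Hochschild--Serre argument, and takes the vanishing $\homo_G(M,v^{\ana}_{\op^{\lrr}_{J}}(\pi,\ul{\lambda}))=0$ for granted from the Jordan--H\"older analysis; your multiplicity argument makes this explicit and is the same reasoning used elsewhere in the paper (e.g.\ around (\ref{injectionwv})).
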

\begin{proof}Since $\homo_G\big(i_{\op^{\lrr}_{{I}}}^{G}(\pi,\ul{\lambda}), v^{\ana}_{\op^{\lrr}_{{J}}}(\pi,\ul{\lambda})\big)=0$, we deduce Part $\mathbf{(a)}$ from \cite[Lemma 3.1]{HigherLinvariantsGL3Qp} or \cite[Lemma 3.2]{wholeLINV}.\;We apply the acyclic complex in Proposition \ref{analyticexact}, i.e., 
\begin{equation}
\begin{aligned}
	&0\rightarrow C^{\ana}_{J,k-1} \rightarrow C^{\ana}_{J,k-2} \rightarrow C^{\ana}_{J,k-3} \rightarrow \cdots \rightarrow C^{\infty}_{J,1} \rightarrow C^{\ana}_{J,0}=\BI_{\op^{\lrr}_{{J}}}^{\ana}(\pi,\ul{\lambda}), \\
\end{aligned}
\end{equation}	
to representation $v_{\op^{\lrr}_{{J}}}^{\ana}(\pi,\ul{\lambda})$, where $C^{\ana}_{J,j}=\bigoplus_{\substack{J\subseteq K\subseteq {\Delta_n(k)}\\|K\backslash J|=j}}\BI_{\op^{\lrr}_{{K}}}^{G}(\pi,\ul{\lambda})$.\;As in the proof of Corollary \ref{coranalyticExt2}, we get a spectral sequence
\begin{equation}\label{ss1}
E_1^{-s,r}=\bigoplus_{\substack{K\subseteq {\Delta_n(k)}\\|K\backslash J|=0}} \ext^{r,\infty}_{G,\omega_{\pi}^{\lrr}\chi_{\underline{\lambda}}}\big(i_{\op^{\lrr}_{{I}}}^G(\pi,\ul{\lambda}), \BI_{\op^{\lrr}_{{K}}}^G(\pi,\ul{\lambda})\big) \Rightarrow \ext^{r-s,\infty}_{G,\omega_{\pi}^{\lrr}\chi_{\underline{\lambda}}}\big(i_{\op^{\lrr}_{{I}}}^G(\pi,\ul{\lambda}), v_{\op^{\lrr}_{{J}}}^{\ana}(\pi,\ul{\lambda})\big).
\end{equation}
By (\ref{analyticExt1}), only the $0$-th and $(-1)$-th columns of the $E_1$-page of (\ref{ss1}) can have non-zero terms.\;The $1$-th row is
\begin{equation*}
\begin{matrix}
	& \cEo_I^1 &\lra & \cEo_J^1 \\
	&\parallel &\empty &\parallel \\
	&  \ext^{1}_{G,\omega_{\pi}^{\lrr}\chi_{\underline{\lambda}}}\big(i_{\op^{\lrr}_{{I}}}^G(\pi,\ul{\lambda}), \BI_{\op^{\lrr}_{{I}}}^G(\pi,\ul{\lambda})\big) &\lra & \ext^{1}_{G,\omega_{\pi}^{\lrr}\chi_{\underline{\lambda}}}\big(i_{\op^{\lrr}_{{I}}}^G(\pi,\ul{\lambda}), \BI_{\op^{\lrr}_{{J}}}^G(\pi,\ul{\lambda})\big)\\
	&\parallel &\empty &\parallel \\
	&E_1^{-1, 1} &\stackrel{d_1^{-1,1}}{\longrightarrow} &E_1^{0, 1}
\end{matrix}
\end{equation*}
The $2$-th row is given by
\begin{equation*}
\begin{matrix}
	& \cEo_I^2 &\lra & \cEo_I^2 \\
	&\parallel &\empty &\parallel \\
	&  \ext^{2}_{G,\omega_{\pi}^{\lrr}\chi_{\underline{\lambda}}}\big(i_{\op^{\lrr}_{{I}}}^G(\pi,\ul{\lambda}), \BI_{\op^{\lrr}_{{I}}}^G(\pi,\ul{\lambda})\big) &\lra & \ext^{2}_{G,\omega_{\pi}^{\lrr}\chi_{\underline{\lambda}}}\big(i_{\op^{\lrr}_{{I}}}^G(\pi,\ul{\lambda}), \BI_{\op^{\lrr}_{{J}}}^G(\pi,\ul{\lambda})\big)\\
	&\parallel &\empty &\parallel \\
	&E_1^{-1, 2} &\stackrel{d_1^{-1,2}}{\longrightarrow} &E_1^{0, 2}.
\end{matrix}
\end{equation*}
Therefore, by Lemma \ref{degspectral}, we obtain the following exact sequences
\begin{equation*}
\begin{aligned}
	&0\rightarrow E_2^{0,0}\rightarrow \ext^0_{G,\omega_{\pi}^{\lrr}\chi_{\underline{\lambda}}}\big(i_{\op^{\lrr}_{I}}^G(\pi,\ul{\lambda}),v^{\ana}_{\op^{\lrr}_{{J}}}(\pi,\ul{\lambda})\big)
	\rightarrow \ker(\cEo_I^1\rightarrow \cEo_J^1)\rightarrow0,\\
	&0\rightarrow \cEo_J^1/\cEo_I^1\rightarrow \ext^1_{G,\omega_{\pi}^{\lrr}\chi_{\underline{\lambda}}}\big(i_{\op^{\lrr}_{I}}^G(\pi,\ul{\lambda}),v^{\ana}_{\op^{\lrr}_{{J}}}(\pi,\ul{\lambda})\big)
	\rightarrow \ker(\cEo_I^2\rightarrow \cEo_J^2)\rightarrow0.
\end{aligned}
\end{equation*}
These prove Part $(c)$.\;Part $(b)$ follows by the same argument.\;
\end{proof}
\begin{rmk}It is difficult to describe the morphisms $\cE_I^2\rightarrow \cE_J^2$ and $\cEo_I^2\rightarrow \cEo_J^2$ explicitly.\;Therefore, we cannot explain the terms $\ker(\cE_I^2\rightarrow \cE_J^2)$ and $\ker(\cEo_I^2\rightarrow \cEo_J^2)$.\;Moreover, we will see below that certain subspaces of $\cE_J^1/{\varrho}_{I,J}(\cE_I^1)$ or $\cEo_J^1/\overline{\varrho}_{I,J}(\cEo_I^1)$ are easy to control, which are enough for our computation.\;
\end{rmk}

By Lemma \ref{restocharacompatible} and Lemma \ref{hHinj}, the injections $\iota_I^1$ and $\iota_J^1$ induce a map 
$$\iota_{J,I}^1: \homo\big(\bZ_{J}^{\lrr}(L),E\big)/\homo\big(\bZ^{\lrr}_{I}(L),E\big)\rightarrow \cE_J^1/\varrho_{I,J}(\cE_I^1).\;$$
Consider the composition:
\begin{equation}\label{IJmap00}
\begin{aligned}
\varrho_{I,J}^1:=\varrho_{I,J}^0\circ\iota_{J,I}^1:\homo\big(\bZ_{J}^{\lrr}(L),E\big)/\homo\big(\bZ^{\lrr}_{I}(L),E\big)\longrightarrow \ext^{1}_{G}\big(i_{\op^{\lrr}_{{I}}}^G(\pi,\ul{\lambda}),v^{\ana}_{\op^{\lrr}_{{J}}}(\pi,\ul{\lambda})\big).\;
\end{aligned}
\end{equation}
More precisely, let $\psi\in \homo\big(\bZ_{J}^{\lrr}(L),E\big)/\homo\big(\bZ^{\lrr}_{I}(L),E\big)$ and let $\Psi$ be a lifting of $\psi$ to $\homo\big(\bZ_{J}^{\lrr}(L),E\big)$. Then $\varrho_{I,J}^0(\psi)=[\sE_{I}^J(\pi,\ul{\lambda},\iota_J(\Psi))^0]$ (it is clear that $\varrho_{I,J}^0(\psi)$ is independent of the choice of lifting $\Psi$).\;

\begin{lem} The maps $\iota_{J,I}^1$ and $\varrho_{I,J}^1$ are injective.\;
\end{lem}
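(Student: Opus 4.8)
The plan is to prove that both maps are injective by using the explicit descriptions established earlier and by reducing the injectivity of $\varrho_{I,J}^1 = \varrho_{I,J}^0 \circ \iota_{J,I}^1$ to a direct computation with the representations $\sE_{I}^J(\pi,\ul{\lambda},\iota_J(\Psi))^0$. First I would handle $\iota_{J,I}^1$. Recall from Lemma \ref{restocharacompatible} that the square relating $\iota_I^1,\iota_J^1$ and $\varrho_{I,J}$ commutes, and from Lemma \ref{hHinj} that $\iota_I^1$ and $\iota_J^1$ are injective (with the restriction maps $c_I, c_J$ to the central extension groups providing explicit sections). Thus $\varrho_{I,J}(\iota_I^1(\hH^1_{\ana}(\bL^{\lrr}_I(L),E))) = \iota_J^1(\text{image of }\homo(\bZ^{\lrr}_I(L),E)\text{ in }\homo(\bZ^{\lrr}_J(L),E))$. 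Since $\iota_J^1$ is injective, it induces an isomorphism from $\homo(\bZ^{\lrr}_J(L),E)/\homo(\bZ^{\lrr}_I(L),E)$ onto its image modulo $\varrho_{I,J}(\cE_I^1)$; the point is only that the composite with the quotient by $\varrho_{I,J}(\cE_I^1)$ remains injective, which follows because $\iota_J^1(\Psi) \in \varrho_{I,J}(\cE_I^1)$ forces, by the section $c_J$ (which kills the part of $\varrho_{I,J}(\cE_I^1)$ coming from $\fd^{\lrr}_J$-cohomology and is compatible with $c_I$ via $\varrho_{I,J}$), that $\Psi$ lies in $\homo(\bZ^{\lrr}_I(L),E)$. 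This gives injectivity of $\iota_{J,I}^1$.

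Next I would prove that $\varrho_{I,J}^0$ is injective on the image of $\iota_{J,I}^1$; combined with the previous paragraph this yields injectivity of $\varrho_{I,J}^1$, and then injectivity of $\iota_{J,I}^1$ together with injectivity of $\varrho_{I,J}^1$ on its image recovers both statements. Concretely, take $\psi \in \homo(\bZ^{\lrr}_J(L),E)/\homo(\bZ^{\lrr}_I(L),E)$ nonzero, lift to $\Psi \in \homo(\bZ^{\lrr}_J(L),E)$, and suppose $\varrho_{I,J}^0(\psi) = [\sE_I^J(\pi,\ul{\lambda},\iota_J(\Psi))^0] = 0$ in $\ext^1_G(i^G_{\op^{\lrr}_I}(\pi,\ul{\lambda}), v^{\ana}_{\op^{\lrr}_J}(\pi,\ul{\lambda}))$. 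Recall that $\sE_I^J(\pi,\ul{\lambda},\iota_J(\Psi))^0$ is obtained from the extension $\sE_I^J(\pi,\ul{\lambda},\iota_J(\Psi))^+ \in \ext^1_G(i^G_{\op^{\lrr}_I}(\pi,\ul{\lambda}), \BI^G_{\op^{\lrr}_J}(\pi,\ul{\lambda}))$ by pushing forward along $\BI^G_{\op^{\lrr}_J}(\pi,\ul{\lambda}) \twoheadrightarrow v^{\ana}_{\op^{\lrr}_J}(\pi,\ul{\lambda})$. By the exact sequence of Proposition \ref{analyticExt3.2}(b), the kernel of $\ext^1_G(i^G_{\op^{\lrr}_I}(\pi,\ul{\lambda}), \BI^G_{\op^{\lrr}_J}(\pi,\ul{\lambda})) \to \ext^1_G(i^G_{\op^{\lrr}_I}(\pi,\ul{\lambda}), v^{\ana}_{\op^{\lrr}_J}(\pi,\ul{\lambda}))$ is precisely $\varrho_{I,J}(\cE_I^1)$ (the image of $\ext^1_G(i^G_{\op^{\lrr}_I}(\pi,\ul{\lambda}), \BI^G_{\op^{\lrr}_I}(\pi,\ul{\lambda}))$). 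Now by Lemma \ref{explictextgroup1} the class of $\sE_I^J(\pi,\ul{\lambda},\iota_J(\Psi))^+$ corresponds (up to nonzero scalar) to $[\iota_J(\Psi)] = \iota_J^1(\Psi) \in \cE_J^1$ under the isomorphism (\ref{analyticExt1}), and by Lemma \ref{IJCHANGE} the subspace $\varrho_{I,J}(\cE_I^1) \subseteq \cE_J^1$ corresponds exactly to the image of $\ext^1_G(i^G_{\op^{\lrr}_I}(\pi,\ul{\lambda}), \BI^G_{\op^{\lrr}_I}(\pi,\ul{\lambda}))$. Hence $\varrho_{I,J}^0(\psi) = 0$ forces $\iota_J^1(\Psi) \in \varrho_{I,J}(\cE_I^1)$, i.e. $\Psi \in \homo(\bZ^{\lrr}_I(L),E)$ by the argument of the previous paragraph, i.e. $\psi = 0$.

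I expect the main obstacle to be the bookkeeping needed to see that $\iota_J^1(\Psi) \in \varrho_{I,J}(\cE_I^1)$ implies $\Psi \in \homo(\bZ^{\lrr}_I(L),E)$, rather than just $\Psi$ being in the preimage of $\varrho_{I,J}(\cE_I^1)$ under $\iota_J^1$; this requires knowing that $\varrho_{I,J}(\iota_I^1(\hH^1_{\ana}(\bL^{\lrr}_I(L),E))) = \varrho_{I,J}(\cE_I^1) \cap \iota_J^1(\hH^1_{\ana}(\bL^{\lrr}_J(L),E))$, which one gets by applying the central-restriction sections $c_I, \overline{c}_J$ compatibly: the composite $\cE_I^1 \xrightarrow{\varrho_{I,J}} \cE_J^1 \xrightarrow{c_J} \ext^1_{\bZ^{\lrr}_J(L)}(1,1)$ factors through $c_I$ and the natural pullback $\ext^1_{\bZ^{\lrr}_I(L)}(1,1) \to \ext^1_{\bZ^{\lrr}_J(L)}(1,1)$ (via $\bZ^{\lrr}_J(L) \hookrightarrow \bL^{\lrr}_J(L) \hookrightarrow \bL^{\lrr}_I(L) \xrightarrow{\det} \bZ^{\lrr}_I(L)$, as noted before Lemma \ref{restocharacompatible}), so that an element of $\varrho_{I,J}(\cE_I^1)$ lying in $\iota_J^1(\hH^1_{\ana})$ must already be $\iota_J^1$ of a character pulled back from $\bZ^{\lrr}_I(L)$. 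Once this compatibility is in place, everything else is formal diagram-chasing with the isomorphisms (\ref{analyticExt1}), Lemma \ref{explictextgroup1} and Lemma \ref{IJCHANGE}.
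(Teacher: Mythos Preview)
Your proposal is correct and follows essentially the same approach as the paper: the key step in both is proving $\varrho_{I,J}(\cE_I^1)\cap \iota_J^1\big(\homo(\bZ^{\lrr}_J(L),E)\big)=\iota_J^1\big(\homo(\bZ^{\lrr}_I(L),E)\big)$ via the compatibility of the central-restriction sections $c_I,c_J$ with $\varrho_{I,J}$ through the map $\bZ^{\lrr}_J(L)\hookrightarrow\bL^{\lrr}_I(L)\xrightarrow{\det}\bZ^{\lrr}_I(L)$. The paper's version is slightly more economical: since Proposition~\ref{analyticExt3.2}(b) already gives that $\varrho_{I,J}^0:\cE_J^1/\varrho_{I,J}(\cE_I^1)\to\ext^1_G\big(i_{\op^{\lrr}_I}^G(\pi,\ul{\lambda}),v^{\ana}_{\op^{\lrr}_J}(\pi,\ul{\lambda})\big)$ is injective, the injectivity of $\varrho_{I,J}^1=\varrho_{I,J}^0\circ\iota_{J,I}^1$ follows immediately once $\iota_{J,I}^1$ is injective, so your second paragraph is redundant.
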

\begin{proof}By Lemma \ref{analyticExt3.2} (b), it suffices to show that $\iota_{J,I}^1$ is injective.\;By Lemma \ref{analyticExt3.2} (b), we will view $\homo\big(\bZ_{J}^{\lrr}(L),E\big)$, $\homo\big(\bZ^{\lrr}_{I}(L),E\big)$ and $\cE_I^1$ as subspaces of $\cE_J^1$.\;It suffices to prove that $\homo\big(\bZ^{\lrr}_{I}(L),E\big)=\varrho_{I,J}(\cE_I^1)\cap \homo\big(\bZ_{J}^{\lrr}(L),E\big)$.\;By the definition  of $\varrho_{I,J}$ in (\ref{rhoIJchange}), we have a commutative diagram:
\[\xymatrix@C=11ex{
\homo\big(\bZ^{\lrr}_{I}(L),E\big) \ar[r]^{\subseteq} \ar[d]^{\iota_I^1} & \homo\big(\bZ^{\lrr}_{J}(L),E\big) \ar[d]^{\iota_J^1}\\
  \cE_I^1  \ar@/_8.0em/[dd]_{c_I} \ar[d]^{c_I'} \ar@{^(->}[r]^{\varrho_{I,J}}   &   \cE_J^1  \ar[d]^{c_J}\\
\ext^1_{\bZ^{\lrr}_{J}(L)}({\pi_I}|_{\bZ^{\lrr}_{J}(L)},{\pi_I}|_{\bZ^{\lrr}_{J}(L)}) \ar[d]^{c_I''} \ar@{^(->}[r]^{\varrho_{I,J}|_{\bZ^{\lrr}_{J}(L)}} & \ext^1_{\bZ^{\lrr}_{J}(L)}(\omega_{\pi_J},\omega_{\pi_J})\\
\ext^1_{\bZ^{\lrr}_{I}(L)}(\omega_{\pi_I},\omega_{\pi_I}) ,& }\]
where the morphism $c_I'$ (resp., $c_I''$) is induced by the natural injection $\bZ^{\lrr}_{J}(L)\hookrightarrow \bL^{\lrr}_{I}(L)$ (resp., $\bZ^{\lrr}_{I}(L)\hookrightarrow \bZ^{\lrr}_{J}(L)$) and the morphism $c_I$ (resp., $c_J$) is induced by the natural injection $\bZ^{\lrr}_{I}(L)\hookrightarrow \bL^{\lrr}_{I}(L)$ (resp., \\$\bZ^{\lrr}_{J}(L)\hookrightarrow \bL^{\lrr}_{J}(L)$).\;Using the above commutative diagram, we see that
if $\Psi\in \varrho_{I,J}(\cE_I^1)\cap \homo\big(\bZ_{J}^{\lrr}(L),E\big)$, then $c_J([\pi_J(\Psi)])$ comes from an element
$$[\widetilde{\pi}_I|_{\bZ^{\lrr}_{J}(L)}]\in\ext^1_{\bZ^{\lrr}_{J}(L)}({\pi_I}|_{\bZ^{\lrr}_{J}(L)},{\pi_I}|_{\bZ^{\lrr}_{J}(L)}).$$
Let  $\Psi':=c_I''([\widetilde{\pi}_I|_{\bZ^{\lrr}_{J}(L)}])\in \ext^1_{\bZ^{\lrr}_{I}(L)}(\omega_{\pi_I},\omega_{\pi_I})$.\;By the injectivity of $\varrho_{I,J}$, and the proof of Lemma \ref{hHinj}, we deduce that $\varrho_{I,J}$ sends $\omega_{\pi_I}^{-1}\otimes_E\Psi'\in \homo\big(\bZ^{\lrr}_{I}(L),E\big)$ to $\Psi$.\;This completes the proof.\;
\end{proof}

Consider the exact sequence
\begin{equation*}
0 \longrightarrow u_{\op^{\lrr}_{{I}}}^{\infty}(\pi,\ul{\lambda}) \longrightarrow i_{\op^{\lrr}_{{I}}}^G(\pi,\ul{\lambda}) \longrightarrow v_{\op^{\lrr}_{{I}}}^{\infty}(\pi,\ul{\lambda}) \longrightarrow 0, 
\end{equation*}
which induces a long exact sequence
\begin{equation}\label{compareimages}
\begin{aligned}
0 \longrightarrow &\homo_G(v_{\op^{\lrr}_{{I}}}^{\infty}(\pi,\ul{\lambda}),v^{\ana}_{\op^{\lrr}_{{J}}}\big(\pi,\ul{\lambda})\big) \longrightarrow \homo_G(i_{\op^{\lrr}_{{I}}}^G\big(\pi,\ul{\lambda}),v^{\ana}_{\op^{\lrr}_{{J}}}(\pi,\ul{\lambda})\big) \\
&\longrightarrow \homo_G\big( u_{\op^{\lrr}_{{I}}}^{\infty}(\pi,\ul{\lambda}), v^{\ana}_{\op^{\lrr}_{{J}}}(\pi,\ul{\lambda})\big)
\longrightarrow \ext^1_G(v_{\op^{\lrr}_{{I}}}^{\infty}\big(\pi,\ul{\lambda}),v^{\ana}_{\op^{\lrr}_{{J}}}(\pi,\ul{\lambda})\big) \\
&        \ra \ext^1_G(i_{\op^{\lrr}_{{I}}}^G\big(\pi,\ul{\lambda}),v^{\ana}_{\op^{\lrr}_{{J}}}(\pi,\ul{\lambda})\big) \longrightarrow \ext_G^1\big( u_{\op^{\lrr}_{{I}}}^{\infty}(\pi,\ul{\lambda}), v^{\ana}_{\op^{\lrr}_{{J}}}(\pi,\ul{\lambda})\big).
\end{aligned}
\end{equation}
Since $\homo_G\big(u_{\op^{\lrr}_{{I}}}^{\infty}(\pi,\ul{\lambda}), v^{\ana}_{\op^{\lrr}_{{J}}}(\pi,\ul{\lambda})\big)=0$ (this is a direct consequence of the calculation of Jordan-H\"{o}lder factors of  locally $\BQ_p$-analytic generalized parabolic Steinberg representation, see Section \ref{JHofanaparastrep}), we get an injection
\begin{equation}\label{injectionwv}
\ext^1_G(v_{\op^{\lrr}_{{I}}}^{\infty}\big(\pi,\ul{\lambda}),v^{\ana}_{\op^{\lrr}_{{J}}}(\pi,\ul{\lambda})\big) \hooklongrightarrow \ext^1_G(i_{\op^{\lrr}_{{I}}}^G\big(\pi,\ul{\lambda}),v^{\ana}_{\op^{\lrr}_{{J}}}(\pi,\ul{\lambda})\big).
\end{equation}

\begin{pro}\label{analyticExt3.3}The injection $$\varrho_{I,J}^1: \homo\big(\bZ_{J}^{\lrr}(L),E\big)/\homo\big(\bZ^{\lrr}_{I}(L),E\big) \longrightarrow \ext^{1}_{G}\big(i_{\op^{\lrr}_{{I}}}^G(\pi,\ul{\lambda}),v^{\ana}_{\op^{\lrr}_{{J}}}(\pi,\ul{\lambda})\big)$$ factors through the injection (\ref{injectionwv}).\;We put
\begin{equation}\label{rpoIJ1}
\varrho_{I,J}^{\sharp}: \homo\big(\bZ_{J}^{\lrr}(L),E\big)/\homo\big(\bZ^{\lrr}_{I}(L),E\big) \longrightarrow \ext^1_G(v_{\op^{\lrr}_{{I}}}^{\infty}(\pi,\ul{\lambda}),v^{\ana}_{\op^{\lrr}_{{J}}}(\pi,\ul{\lambda}))
\end{equation}
the resulting homomorphism.\;
\end{pro}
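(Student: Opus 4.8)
\textbf{Proof proposal for Proposition \ref{analyticExt3.3}.}

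The plan is to show that the image of $\varrho_{I,J}^1$ lands in the subspace $\ext^1_G(v_{\op^{\lrr}_{{I}}}^{\infty}(\pi,\ul{\lambda}),v^{\ana}_{\op^{\lrr}_{{J}}}(\pi,\ul{\lambda}))$ of $\ext^{1}_{G}\big(i_{\op^{\lrr}_{{I}}}^G(\pi,\ul{\lambda}),v^{\ana}_{\op^{\lrr}_{{J}}}(\pi,\ul{\lambda})\big)$ cut out by the injection (\ref{injectionwv}). By the long exact sequence (\ref{compareimages}), an element of $\ext^{1}_{G}\big(i_{\op^{\lrr}_{{I}}}^G(\pi,\ul{\lambda}),v^{\ana}_{\op^{\lrr}_{{J}}}(\pi,\ul{\lambda})\big)$ lies in the image of (\ref{injectionwv}) if and only if its image under the restriction map $\ext^1_G(i_{\op^{\lrr}_{{I}}}^G(\pi,\ul{\lambda}),v^{\ana}_{\op^{\lrr}_{{J}}}(\pi,\ul{\lambda})) \to \ext_G^1( u_{\op^{\lrr}_{{I}}}^{\infty}(\pi,\ul{\lambda}), v^{\ana}_{\op^{\lrr}_{{J}}}(\pi,\ul{\lambda}))$ vanishes (using that $\homo_G(u_{\op^{\lrr}_{{I}}}^{\infty}(\pi,\ul{\lambda}), v^{\ana}_{\op^{\lrr}_{{J}}}(\pi,\ul{\lambda}))=0$). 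So the entire content is: for any $\psi$, the restriction of $\varrho_{I,J}^1(\psi)=[\sE_{I}^J(\pi,\ul{\lambda},\iota_J(\Psi))^0]$ to $u_{\op^{\lrr}_{{I}}}^{\infty}(\pi,\ul{\lambda})$ is the trivial extension.

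First I would recall that $\sE_{I}^J(\pi,\ul{\lambda},\iota_J(\Psi))^0$ was constructed by pushing forward the exact sequence (\ref{explicitexactseq}) along $\BI_{\op^{\lrr}_{{J}}}^G(\pi,\ul{\lambda})\twoheadrightarrow v^{\ana}_{\op^{\lrr}_{{J}}}(\pi,\ul{\lambda})$ and then pulling back along $i_{\op^{\lrr}_{{I}}}^G(\pi,\ul{\lambda}) \hookrightarrow \BI_{\op^{\lrr}_{{I}}}^G(\pi,\ul{\lambda}) \hookrightarrow \BI_{\op^{\lrr}_{{J}}}^G(\pi,\ul{\lambda})$. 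Restricting this pullback further to the subrepresentation $u_{\op^{\lrr}_{{I}}}^{\infty}(\pi,\ul{\lambda})=\sum_{K\supsetneq I}i_{\op^{\lrr}_{{K}}}^G(\pi,\ul{\lambda})$ of $i_{\op^{\lrr}_{{I}}}^G(\pi,\ul{\lambda})$ amounts to pulling (\ref{explicitexactseq}) back along $u_{\op^{\lrr}_{{I}}}^{\infty}(\pi,\ul{\lambda}) \hookrightarrow \BI_{\op^{\lrr}_{{J}}}^G(\pi,\ul{\lambda})$ and then pushing to $v^{\ana}_{\op^{\lrr}_{{J}}}(\pi,\ul{\lambda})$. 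By the combinatorial additivity of the construction $\sE_{I}^J(-)^+$ in the subset variable (and the commutativity Lemma \ref{IJCHANGE}, which expresses how the classes change along $i_{\op^{\lrr}_{{K}}}^G\hookrightarrow i_{\op^{\lrr}_{{I}}}^G$ via $\varrho_{K,J}$), it suffices to check that for each $K\supsetneq I$ the restricted class $[\sE_{K}^J(\pi,\ul{\lambda},\iota_J(\Psi))^0]$, i.e. the image of $\varrho_{K,J}^0\circ\iota_{J,K}^1$ applied to the class of $\Psi$, is already zero in $\ext^1_G(i_{\op^{\lrr}_{{K}}}^G(\pi,\ul{\lambda}),v^{\ana}_{\op^{\lrr}_{{J}}}(\pi,\ul{\lambda}))$. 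The key input here is that $\Psi\in\homo(\bZ^{\lrr}_J(L),E)$ restricts, via $\bZ^{\lrr}_J(L)\hookrightarrow\bL^{\lrr}_J(L)\hookrightarrow\bL^{\lrr}_I(L)\hookrightarrow\bL^{\lrr}_K(L)\xrightarrow{\det}\bZ^{\lrr}_K(L)$, to a character that is a well-defined element of $\homo(\bZ^{\lrr}_K(L),E)$; but the construction of $\varrho_{K,J}^0$ already kills the subspace $\varrho_{K,J}(\cE_K^1)$, and $\iota_{J,K}^1$ of this restricted character lands precisely in that subspace (by Lemma \ref{restocharacompatible} applied to the pair $K\supseteq J$, the image of $\iota^1_K$ is $\varrho_{K,J}$-compatible). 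Hence $\varrho_{K,J}^0(\iota_{J,K}^1(\overline\Psi))=0$, and the restriction of $\varrho_{I,J}^1(\psi)$ to $u_{\op^{\lrr}_{{I}}}^{\infty}(\pi,\ul{\lambda})$ is trivial.

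Therefore $\varrho_{I,J}^1(\psi)$ lies in the image of (\ref{injectionwv}) for every $\psi$, and since (\ref{injectionwv}) is injective there is a unique class in $\ext^1_G(v_{\op^{\lrr}_{{I}}}^{\infty}(\pi,\ul{\lambda}),v^{\ana}_{\op^{\lrr}_{{J}}}(\pi,\ul{\lambda}))$ mapping to it; define $\varrho_{I,J}^{\sharp}(\psi)$ to be that class. Linearity and injectivity of $\varrho_{I,J}^{\sharp}$ follow from those of $\varrho_{I,J}^1$ (already established in the preceding lemma) together with the injectivity of (\ref{injectionwv}). The step I expect to be the main obstacle is the bookkeeping in the previous paragraph: carefully tracking how the pullback of (\ref{explicitexactseq}) along the various inclusions $i_{\op^{\lrr}_{{K}}}^G(\pi,\ul{\lambda})\hookrightarrow i_{\op^{\lrr}_{{I}}}^G(\pi,\ul{\lambda})$ interacts with the push-forward to $v^{\ana}_{\op^{\lrr}_{{J}}}(\pi,\ul{\lambda})$, and confirming that the relevant extension classes of the $i_{\op^{\lrr}_{{K}}}^G$ with $K\supsetneq I$ are absorbed into $\varrho_{K,J}(\cE_K^1)$ — i.e. genuinely verifying that the "$+$"-extensions restrict from locally analytic parabolic inductions induced from $\bL^{\lrr}_K(L)$ and hence die after passing to $v^{\ana}_{\op^{\lrr}_{{J}}}(\pi,\ul{\lambda})$, exactly as the representation $\sE^J_I(\pi,\ul\lambda,\varrho_{I,J}([\widetilde\pi_I]))^0$ was shown to vanish in the discussion preceding (\ref{IJmap0}).
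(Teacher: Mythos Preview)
Your overall strategy matches the paper's: use the long exact sequence (\ref{compareimages}) to reduce to showing that the image of $[\sE_{I}^J(\pi,\ul{\lambda},\iota_J(\Psi))^0]$ in $\ext_G^1\big(u_{\op^{\lrr}_{{I}}}^{\infty}(\pi,\ul{\lambda}),v^{\ana}_{\op^{\lrr}_{{J}}}(\pi,\ul{\lambda})\big)$ vanishes, and then test this on the various $i_{\op^{\lrr}_{K}}^G(\pi,\ul{\lambda})$ sitting above $I$. The paper makes this reduction precise via the injection
\[
\ext^1_G\big(u_{\op^{\lrr}_{{I}}}^{\infty}(\pi,\ul{\lambda}),v^{\ana}_{\op^{\lrr}_{{J}}}(\pi,\ul{\lambda})\big)\hooklongrightarrow \bigoplus_{qr\in\Delta_n(k)\setminus I}\ext^1_G\big(i_{\op^{\lrr}_{I\cup\{qr\}}}^G(\pi,\ul{\lambda}),v^{\ana}_{\op^{\lrr}_{{J}}}(\pi,\ul{\lambda})\big),
\]
so one only has to check the \emph{minimal} $K=I\cup\{qr\}$.

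The gap is in your ``key input'' paragraph. You claim that $\Psi\in\homo(\bZ^{\lrr}_J(L),E)$ produces an element of $\homo(\bZ^{\lrr}_K(L),E)$ via the composition $\bZ^{\lrr}_J\hookrightarrow\bL^{\lrr}_J\hookrightarrow\bL^{\lrr}_K\xrightarrow{\det}\bZ^{\lrr}_K$. But that composition is a map $\bZ^{\lrr}_J\to\bZ^{\lrr}_K$, and characters pull back along it, not push forward: it induces $\homo(\bZ^{\lrr}_K,E)\to\homo(\bZ^{\lrr}_J,E)$, which is exactly how the paper views $\homo(\bZ^{\lrr}_K,E)$ as a subspace of $\homo(\bZ^{\lrr}_J,E)$. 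There is no reason for a general $\Psi$ to lie in this subspace; in fact if $\Psi$ did lie in $\homo(\bZ^{\lrr}_I,E)\supset\homo(\bZ^{\lrr}_K,E)$ its class in the quotient would already be zero. Consequently your conclusion $\varrho_{K,J}^0(\iota_{J,K}^1(\overline\Psi))=0$ does not follow.

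The paper's fix is the point you are missing. Fix $K=I\cup\{qr\}$ with $qr\notin I$. One uses the natural isomorphism
\[
\homo(\bZ^{\lrr}_{J\cup\{qr\}}(L),E)/\homo(\bZ^{\lrr}_{I\cup\{qr\}}(L),E)\;\xrightarrow{\ \sim\ }\;\homo(\bZ^{\lrr}_{J}(L),E)/\homo(\bZ^{\lrr}_{I}(L),E)
\]
to choose a lift $\Psi_{qr}\in\homo(\bZ^{\lrr}_{J\cup\{qr\}}(L),E)$ of $\overline\Psi$. Since $[\sE_I^J(\pi,\ul\lambda,\iota_J(\Psi))^0]$ depends only on $\overline\Psi$, one may replace $\Psi$ by $\Psi_{qr}$. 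The pullback of $[\sE_I^J(\pi,\ul\lambda,\iota_J(\Psi_{qr}))^0]$ along $i_{\op^{\lrr}_{I\cup\{qr\}}}^G\hookrightarrow i_{\op^{\lrr}_{I}}^G$ then equals the pullback of $[\sE_{J\cup\{qr\}}^J(\pi,\ul\lambda,\iota_J(\Psi_{qr}))^0]$ along $i_{\op^{\lrr}_{I\cup\{qr\}}}^G\hookrightarrow i_{\op^{\lrr}_{J\cup\{qr\}}}^G$, and \emph{this} class vanishes by exactly the mechanism you quote at the end --- it is an instance of $\sE_{I'}^J(\pi,\ul\lambda,\varrho_{I',J}([\widetilde\pi_{I'}]))^0=0$ with $I'=J\cup\{qr\}$. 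The subtlety is that the ``higher level'' here is $J\cup\{qr\}$, not $K=I\cup\{qr\}$; it is this asymmetric choice that makes the argument go through.
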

\begin{proof}	Let $\widetilde{w}_{\op^{\lrr}_{{I}}}^{\infty}(\pi,\ul{\lambda})$ be the kernel of the surjection $\bigoplus_{qr\in {\Delta_n(k)}\backslash I}i_{\op^{\lrr}_{{I\cup\{qr\}}}}^G(\pi,\ul{\lambda}) \twoheadrightarrow u_{\op^{\lrr}_{{I}}}^{\infty}(\pi,\ul{\lambda})$.\;We then consider the exact sequence
\[0 \rightarrow \widetilde{w}_{\op^{\lrr}_{{I}}}^{\infty}(\pi,\ul{\lambda}) \rightarrow\bigoplus_{qr\in {\Delta_n(k)}}i_{\op^{\lrr}_{{I\cup\{qr\}}}}^G(\pi,\ul{\lambda})
\rightarrow u_{\op^{\lrr}_{{I}}}^{\infty}(\pi,\ul{\lambda}) \rightarrow 0.\]	
Since $\homo_G(\widetilde{w}_{\op^{\lrr}_{{I}}}^{\infty}(\pi,\ul{\lambda}), v^{\ana}_{\op^{\lrr}_{{J}}}(\pi,\ul{\lambda}))=0$, we deduce from the long exact sequence of extension groups that the natural map
\begin{equation}\label{inside5556}
\ext^1_G\big(u_{\op^{\lrr}_{{I}}}^{\infty}(\pi,\ul{\lambda}),v^{\ana}_{\op^{\lrr}_{{J}}}(\pi,\ul{\lambda})\big) \hooklongrightarrow \bigoplus_{qr\in {\Delta_n(k)}\backslash I} \ext^1_G\big(i_{\op^{\lrr}_{{I\cup\{qr\}}}}^G(\pi,\ul{\lambda}),v^{\ana}_{\op^{\lrr}_{{J}}}(\pi,\ul{\lambda})\big)
\end{equation}
is injective.\;For $\Psi\in \homo\big(\bZ_{J}^{\lrr}(L),E\big)$, the long exact sequence (\ref{compareimages})  implies that $[\sE_{I}^J(\pi,\ul{\lambda},\Psi)^0]$ falls into $\ext^1_G(v_{\op^{\lrr}_{{I}}}^{\infty}(\pi,\ul{\lambda}),v^{\ana}_{\op^{\lrr}_{{J}}}(\pi,\ul{\lambda}))$ if and only if its image in $\ext^1_G(u_{\op^{\lrr}_{{I}}}^{\infty}(\pi,\ul{\lambda}),v^{\ana}_{\op^{\lrr}_{{J}}}(\pi,\ul{\lambda}))$ is trivial.\;Therefore, by the long exact sequence (\ref{compareimages}) and the injection (\ref{inside5556}), it suffices to prove that the image of any $[\sE_{I}^J(\pi,\ul{\lambda},\Psi)^0]$ in $\ext^1_G(i_{\op^{\lrr}_{{I\cup\{qr\}}}}^G(\pi,\ul{\lambda}),v^{\ana}_{\op^{\lrr}_{{J}}}(\pi,\ul{\lambda}))$ induced by the inclusion
$i_{\op^{\lrr}_{{I\cup\{qr\}}}}^G(\pi,\ul{\lambda})\hookrightarrow i_{\op^{\lrr}_{{I}}}^G(\pi,\ul{\lambda})$ is trivial.\;Note that the natural inclusion induces an isomorphism
$$\homo(\bZ^{\lrr}_{J\cup\{qr\}}(L),E)/\homo(\bZ^{\lrr}_{I\cup\{qr\}}(L),E)\xrightarrow{\sim}\homo(\bZ^{\lrr}_{J}(L),E)/\homo(\bZ^{\lrr}_{I}(L),E), $$we can choose an element $\Psi_{qr}\in \homo(\bZ^{\lrr}_{{J\cup\{qr\}}}(L),E)$ such that its image $\overline{\Psi}_{qr}$ in the group $$\homo(\bZ^{\lrr}_{J}(L),E)/\homo(\bZ^{\lrr}_{I}(L),E)$$ agrees with $\overline{\Psi}$.\;Then the extension class  $[\sE_{I}^J(\pi,\ul{\lambda},\Psi_{qr})^0]$ agrees with $[\sE_{I}^J(\pi,\ul{\lambda},\Psi)^0]$ by (\ref{IJmap00}).\;The pullback of  $[\sE_{I}^J(\pi,\ul{\lambda},\Psi_{qr})^0]$ along the inclusion $i_{\op^{\lrr}_{{I\cup\{qr\}}}}^G(\pi,\ul{\lambda})\hookrightarrow i_{\op^{\lrr}_{{I}}}^G(\pi,\ul{\lambda})$ is equal to the pullback of  $[\sE_{J\cup\{qr\}}^J(\pi,\ul{\lambda},\Psi_{qr})^0]$ along the inclusion $i_{\op^{\lrr}_{{I\cup\{qr\}}}}^G(\pi,\ul{\lambda})\hookrightarrow i_{\op^{\lrr}_{J\cup\{qr\}}}^G(\pi,\ul{\lambda})$.\;But the latter extension is split by the discussion before (\ref{IJmap0}).\;This completes the proof.\;
\end{proof}


We are ready to prove the first main theorem.\;

\begin{thm}\label{analyticExt3}
Let $ir\in {\Delta_n(k)}$, we have  isomorphisms of $E$-vector spaces
\begin{equation*}
	\begin{aligned}
\homo\big(\bZ^{\lrr}_{{\Delta}_{k,i}}(L)/\bZ_n,E\big)  \xrightarrow{\sim} \ext^1_{G,\omega^{\lrr}_\pi\chi_{\ul{\lambda}}}&\big(v_{\op^{\lrr}_{{ir}}}^{\infty}(\pi,\ul{\lambda}), \st_{(r,k)}^{\ana}(\ul{\lambda})\big)\xrightarrow{\sim} \ext^1_{G}\big(v_{\op^{\lrr}_{{ir}}}^{\infty}(\pi,\ul{\lambda}), \st_{(r,k)}^{\ana}(\pi,\ul{\lambda})\big).
\end{aligned}
\end{equation*}
In particular, we have $\dim_E  \ext^1_{G}\big(v_{\op^{\lrr}_{{I}}}^{\infty}(\pi,\ul{\lambda}), \st_{(r,k)}^{\ana}(\pi,\ul{\lambda})\big)=d_L+1$.\;Furthermore, the homomorphism
\begin{equation}\label{keyingremorphism}
\varrho_{\{ir\},\emptyset}^{\sharp}: \homo\big(\bZ^{\lrr}(L),E\big)/\homo\big(\bZ^{\lrr}_{{ir}}(L),E\big)\longrightarrow \ext^1_{G}\big(v_{\op^{\lrr}_{{ir}}}^{\infty}(\pi,\ul{\lambda}), \st_{(r,k)}^{\ana}(\pi,\ul{\lambda})\big)
\end{equation}
is an isomorphism.\;
\end{thm}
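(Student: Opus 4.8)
The plan is to deduce Theorem \ref{analyticExt3} from the machinery developed in Section \ref{anaext1}, following closely the route of \cite[Section 2.2]{2019DINGSimple} but paying attention to the extra difficulties created by the cuspidal representation $\pi$. The starting point is Proposition \ref{analyticExt3.3} applied to $J=\emptyset$, $I=\{ir\}$: it gives the injection
\[
\varrho_{\{ir\},\emptyset}^{\sharp}:\homo\big(\bZ^{\lrr}(L),E\big)/\homo\big(\bZ^{\lrr}_{ir}(L),E\big)\hooklongrightarrow \ext^1_G\big(v_{\op^{\lrr}_{ir}}^{\infty}(\pi,\ul{\lambda}),\st^{\ana}_{(r,k)}(\pi,\ul{\lambda})\big),
\]
and the source is easily identified with $\homo(L^\times,E)$ (it is $\homo(\bZ^{\lrr}(L),E)$ modulo the characters factoring through the determinant into $\bZ^{\lrr}_{ir}(L)$, and a direct computation with the block structure of $\bL^{\lrr}$ shows the quotient is one copy of $\homo(L^\times,E)$, hence of dimension $d_L+1$). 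So the whole theorem reduces to the \textbf{surjectivity} of $\varrho^{\sharp}_{\{ir\},\emptyset}$, equivalently to the dimension bound $\dim_E\ext^1_G\big(v_{\op^{\lrr}_{ir}}^{\infty}(\pi,\ul{\lambda}),\st^{\ana}_{(r,k)}(\pi,\ul{\lambda})\big)\le d_L+1$.

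First I would establish the chain of isomorphisms
\[
\ext^1_{G,\omega^{\lrr}_\pi\chi_{\ul{\lambda}}}\big(v_{\op^{\lrr}_{ir}}^{\infty}(\pi,\ul{\lambda}),\st^{\ana}_{(r,k)}(\pi,\ul{\lambda})\big)\xrightarrow{\sim}\ext^1_G\big(v_{\op^{\lrr}_{ir}}^{\infty}(\pi,\ul{\lambda}),\st^{\ana}_{(r,k)}(\pi,\ul{\lambda})\big).
\]
This should come from the long exact sequence relating the two extension groups (the smooth-to-analytic machinery of Lemma \ref{smoothtoanalytic}, or the argument of \cite[Lemma 3.1]{HigherLinvariantsGL3Qp} used already in Proposition \ref{analyticExt3.2}(a)), together with the vanishing of $\homo_G\big(v_{\op^{\lrr}_{ir}}^{\infty}(\pi,\ul{\lambda}),\st^{\ana}_{(r,k)}(\pi,\ul{\lambda})\big)$ which follows from the Jordan–H\"older analysis of Section \ref{JHofanaparastrep}. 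Then I would compute the left-hand (fixed-central-character) group using the exact sequence of Proposition \ref{analyticExt3.2}(c) with $I=\{ir\}$, $J=\emptyset$, namely
\[
0\rightarrow \cEo_\emptyset^1/\overline{\varrho}_{\{ir\},\emptyset}(\cEo_{ir}^1)\rightarrow \ext^1_{G,\omega^{\lrr}_\pi\chi_{\ul{\lambda}}}\big(i^G_{\op^{\lrr}_{ir}}(\pi,\ul{\lambda}),\st^{\ana}_{(r,k)}(\pi,\ul{\lambda})\big)\rightarrow \ker(\cEo_{ir}^2\rightarrow\cEo_\emptyset^2)\rightarrow 0,
\]
combined with the injection (\ref{injectionwv}) at the level of $v^\infty_{\op^{\lrr}_{ir}}$ versus $i^G_{\op^{\lrr}_{ir}}$. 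The key input is Lemma \ref{EXTlemmaPre}: for $J=\emptyset$ one has $\overline{\iota}_\emptyset^i:\hH^i_{\ana}(\bL^{\lrr}(L)/\bZ_n,E)\xrightarrow{\sim}\cEo_\emptyset^i$, and for $J=\Delta_n(k)$ (or the $\Delta_{k,i}$ appearing after $I=\{ir\}$) one has the analogous identification, so that $\cEo_\emptyset^1\cong\homo(\bZ^{\lrr}(L)/\bZ_n,E)$ and $\cEo_{ir}^1\cong\homo(\bZ^{\lrr}_{ir}(L)/\bZ_n,E)$, with $\overline{\varrho}_{\{ir\},\emptyset}$ the map induced by $\bZ^{\lrr}(L)/\bZ_n\hookleftarrow\bZ^{\lrr}_{ir}(L)/\bZ_n$ (Lemma \ref{restocharacompatible}). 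This pins the first term of the exact sequence to $\homo(\bZ^{\lrr}_{\Delta_{k,i}}(L)/\bZ_n,E)$, which has exactly the expected dimension $d_L$ — wait, one must be careful: the identification of the quotient $\homo(\bZ^{\lrr}(L)/\bZ_n,E)/\homo(\bZ^{\lrr}_{ir}(L)/\bZ_n,E)$ with $\homo(\bZ^{\lrr}_{\Delta_{k,i}}(L)/\bZ_n,E)$ is the combinatorial heart here, and it should follow from the explicit block description of the centers of the Levi subgroups.

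The main obstacle — exactly as flagged in the sketch and in the remark after Proposition \ref{analyticExt3.2} — is controlling $\ker(\cEo_{ir}^2\rightarrow\cEo_\emptyset^2)$, since we cannot describe the transition maps on the degree-$2$ extension groups explicitly. My plan to circumvent this is to \emph{not} compute $\cEo^2$ at all, but instead to bound the target $\ext^1_{G,\omega^{\lrr}_\pi\chi_{\ul{\lambda}}}\big(v_{\op^{\lrr}_{ir}}^{\infty}(\pi,\ul{\lambda}),\st^{\ana}_{(r,k)}(\pi,\ul{\lambda})\big)$ directly. The idea: run the spectral sequence (\ref{ss1}) (with $J=\emptyset$, $I=\{ir\}$, and using $v^\infty_{\op^{\lrr}_{ir}}$ in place of $i^G_{\op^{\lrr}_{ir}}$ via the acyclic complex of Proposition \ref{smoothexact}), so that by Corollary \ref{coranalyticExt2} only the columns with $I\cup K=\Delta_n(k)$ survive; the $E_1$-page is then governed entirely by the groups $\cEo_K^\bullet$ with $ir\in K$, i.e. by $\cEo_{ir}^1,\cEo_{ir}^2$ and $\cEo_{\Delta_n(k)}^\bullet$ (all computable via Lemma \ref{EXTlemmaPre}). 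Chasing this double spectral sequence should yield an upper bound $\dim_E\ext^1\le \dim_E\big(\cEo_\emptyset^1/\overline{\varrho}_{\{ir\},\emptyset}\cEo_{ir}^1\big)=d_L+1$ — the key point being that the potentially problematic $\cEo^2$-contributions enter only through a kernel that, combined with the injectivity statements of Proposition \ref{analyticExt3.2} and the explicit description of $\varrho^{\sharp}$, cannot enlarge the image beyond $\varrho^{\sharp}_{\{ir\},\emptyset}(\homo(L^\times,E))$. Since $\varrho^{\sharp}_{\{ir\},\emptyset}$ is already injective with image of dimension $d_L+1$, the bound forces it to be an isomorphism, and pushing the resulting class along $\Sigma^{\lrr}(\pi,\ul\lambda)\hookrightarrow\st^{\ana}_{(r,k)}(\pi,\ul\lambda)$ gives the compatibility with the fixed-central-character version. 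If the clean dimension count via the spectral sequence turns out to be too lossy, the fallback is to mimic \cite[Corollary 2.16, Theorem 2.19]{2019DINGSimple} more literally: bound $\ext^1_{G,\omega^{\lrr}_\pi\chi_{\ul{\lambda}}}\big(v_{\op^{\lrr}_{ir}}^{\infty}(\pi,\ul{\lambda}),\BI^G_{\op^{\lrr}_K}(\pi,\ul\lambda)\big)$ for all relevant $K$ using Lemma \ref{analyticExt1analyticExtC1} and the $\cEo^i$-computations, then assemble via the two acyclic Tits complexes (Propositions \ref{smoothexact} and \ref{analyticexact}) and a diagram chase.
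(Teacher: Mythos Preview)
Your primary approach has a genuine gap: you invoke Lemma \ref{EXTlemmaPre} to compute $\cEo_\emptyset^i$ and $\cEo_{\{ir\}}^i$, but that lemma only covers $J\in\{\Delta_n(k),\Delta_{k,j}\}$, not $J=\emptyset$ or $J=\{ir\}$. The whole difficulty the paper flags (see the remark after Lemma \ref{EXTlemmaPre}) is precisely that for general $J$ the locally analytic groups $\cEo_J^i$ are not known to equal $\hH^i_{\ana}(\bL^{\lrr}_J(L)/\bZ_n,E)$. So the exact sequence of Proposition \ref{analyticExt3.2}(c) with $I=\{ir\}$, $J=\emptyset$ hands you $\cEo_\emptyset^1/\overline{\varrho}(\cEo_{\{ir\}}^1)$ and $\ker(\cEo_{\{ir\}}^2\to\cEo_\emptyset^2)$, neither of which you can evaluate; this route is a dead end as written.

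Your fallback is the correct idea and is essentially the paper's proof, but you have the indices wrong and you miss the key step. In the spectral sequence obtained from Proposition \ref{analyticexact} together with Corollary \ref{coranalyticExt2} (the paper's (\ref{equ: lgln-sps1})), the condition $\{ir\}\cup K=\Delta_n(k)$ forces $K\supseteq\Delta_{k,i}$, so the only surviving $K$'s are $\Delta_{k,i}$ and $\Delta_n(k)$, \emph{not} $\{ir\}$. Hence the $E_1$-page involves $\cEo_{\Delta_{k,i}}^\bullet$ and $\cEo_{\Delta_n(k)}^\bullet$, and these \emph{are} the cases covered by Lemma \ref{EXTlemmaPre}. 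The decisive observation you omit is that
\[
\cEo_{\Delta_n(k)}^i\cong\hH^i_{\ana}(G/Z_n,E)\cong\hH^i(\usl_{n,\Sigma_L},E)=0\quad\text{for }i=1,2,
\]
by semisimplicity of $\usl_n$. This kills one of the two nonzero columns in the relevant degrees, so the spectral sequence degenerates and yields directly
\[
\ext^1_{G,\omega^{\lrr}_\pi\chi_{\ul\lambda}}\big(v_{\op^{\lrr}_{ir}}^{\infty}(\pi,\ul\lambda),\st_{(r,k)}^{\ana}(\pi,\ul\lambda)\big)\cong\cEo_{\Delta_{k,i}}^1\cong\homo\big(\bZ^{\lrr}_{\Delta_{k,i}}(L)/Z_n,E\big),
\]
of dimension $d_L+1$. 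Comparing with the injection $\varrho^\sharp_{\{ir\},\emptyset}$ from Proposition \ref{analyticExt3.3} (whose source also has dimension $d_L+1$) then forces it to be an isomorphism. There is no need to control any $\cEo^2$-kernel.
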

\begin{proof}Since $\homo_G\big(v_{\op^{\lrr}_{{ir}}}^{\infty}(\pi,\ul{\lambda}), \st_{(r,k)}^{\ana}(\pi,\ul{\lambda})\big)=0$, the second isomorphism follows from \cite[Lemma 3.1]{HigherLinvariantsGL3Qp} or \cite[Lemma 3.2]{Dilogarithm}.\;We apply the exact sequence in Proposition \ref{analyticexact}, 
\begin{equation}
\begin{aligned}
0\rightarrow C^{\ana}_{\emptyset,k-1} \rightarrow C^{\ana}_{\emptyset,k-2} \rightarrow C^{\ana}_{\emptyset,k-3} \rightarrow \cdots \rightarrow C^{\ana}_{\emptyset,1} \rightarrow C^{\ana}_{\emptyset,0}=\BI_{\op}^{G}(\pi,\underline{\lambda})
\end{aligned}
\end{equation}	
to the locally analytic representation $\st_{(r,k)}^{\ana}(\pi,\ul{\lambda})$.\;As in the proof of Corollary \ref{coranalyticExt2}, we deduce a spectral sequence
\begin{equation}\label{equ: lgln-sps1}
E_1^{-s,r}=\bigoplus_{\substack{K\subseteq {\Delta_n(k)}\\|K|=s}}\ext^r_{G,\omega^{\lrr}_\pi\chi_{\ul{\lambda}}}\big(v_{\op^{\lrr}_{{ir}}}^{\infty}(\pi,\ul{\lambda}), \BI_{\op^{\lrr}_{{K}}}^G(\pi,\ul{\lambda})\big) \Rightarrow \ext^{r-s}_{G,\omega^{\lrr}_\pi\chi_{\ul{\lambda}}}\big(v_{\op^{\lrr}_{{ir}}}^{\infty}(\pi,\ul{\lambda}), \st_{(r,k)}^{\ana}(\pi,\ul{\lambda})\big).
\end{equation}
By Proposition \ref{coranalyticExt2}, Lemma \ref{EXTlemmaPre}, only the objects in the $(1-k)$-th and $(2-k)$-th columns of the $E_1$-page can be non-zero, where the $(k-1)$-th row is given by
\begin{equation*}
\begin{matrix}
& \hH^1_{\ana}(\bL^{\lrr}_{\Delta_n(k)}(L)/Z_n,E) &\lra &\homo(\bZ^{\lrr}_{{\Delta}_{k,i}}(L)/Z_n,E) \\
&\parallel &\empty &\parallel \\
&E_1^{1-k, k-1} &\xrightarrow{d_1^{1-k,k-1}} &E_1^{2-k, k-1}
\end{matrix}
\end{equation*}
and the $k$-th row is given by
\begin{equation*}
\begin{matrix}
&\hH^2_{\ana}(\bL^{\lrr}_{\Delta_n(k)}(L)/Z_n,E) &\lra &\cEo_{{\Delta}_{k,i}}^2\\
&\parallel &\empty &\parallel \\
& E_1^{1-k,k} &\xrightarrow{d_1^{1-k,k}} & E_1^{2-k,k}
\end{matrix}.
\end{equation*}
Since $\mathfrak{sl}_{n,\Sigma_L}$ is semisimple, $\hH^i_{\ana}(\bL^{\lrr}_{\Delta_n(k)}(L)/Z_n,E)\cong \hH^i(\usl_{n,\Sigma_L},E)=0$ for $i=1,2$.\;By Lemma \ref{degspectral}, we deduce
\[E_2^{2-k, k-1}=E_\infty^{2-k, k-1}=F^{k-1}H^1/F^{k}H^1=\ext^1_{G,\omega^{\lrr}_\pi\chi_{\ul{\lambda}}}\big(v_{\op^{\lrr}_{{ir}}}^{\infty}(\pi,\ul{\lambda}), \st_{(r,k)}^{\ana}(\pi,\ul{\lambda})\big).\]
The first isomorphism follows.\;Note that we also have an isomorphism $$\homo(L^\times,E)\xrightarrow{\sim}\homo\big(\bZ^{\lrr}_{{\Delta}_{k,i}}(L)/Z_n,E\big), $$which is an $E$-vector space of dimension $d_L+1$.\;At last, by applying Proposition \ref{analyticExt3.3} to the case $J=\emptyset$ and $I=\{ir\}$, we get an injection $\varrho_{\{ir\},\emptyset}^{\sharp}$, which in fact is an isomorphism since both of these two $E$-vector spaces are $d_L+1$-dimensional by Proposition \ref{analyticExt3}.\;This completes the proof.\;
\end{proof}
\begin{rmk}
when $J=\emptyset$ and $I=\{ir\}$, we have an isomorphism
\begin{equation}\label{homLE}
\begin{aligned}
\iota_{ir}: \homo(L^{\times},E)& \xrightarrow{\sim}  \homo(\bZ^{\lrr}(L),E)/\homo(\bZ^{\lrr}_{ir}(L),E),\\
\psi& \mapsto [(a_1I_{r},\cdots, a_{k}I_{r})\mapsto \psi(a_i/a_{i+1})],
\end{aligned}
\end{equation}
Then the composition $\varrho_{\{ir\},\emptyset}^{\sharp}\circ\iota_{ir}$ gives the desired isomorphism
\begin{equation}\label{homLEext}
\homo(L^{\times},E) \longrightarrow \ext^{1}_{G}\big(v_{\op^{\lrr}_{{I}}}^G(\pi,\ul{\lambda}),v^{\ana}_{\op^{\lrr}_{{J}}}(\pi,\ul{\lambda})\big).
\end{equation}
\end{rmk}

\begin{rmk}\label{rem: lgln-ext3}We  describe explicitly the isomorphism $\varrho^1_{\{ir\},\emptyset}$ in Theorem \ref{analyticExt3}.\;For $\psi\in \homo(L^\times,E)$, let $\sE_{\{ir\}}^\emptyset(\pi,\ul{\lambda},\iota_i(\psi))^0$ be the extension of $i_{\op^{\lrr}_{{ir}}}^G(\pi,\ul{\lambda})$ by $v^{\ana}_{\op}(\pi,\ul{\lambda})$, associated with $\psi$ as in the (\ref{homLEext}).\;By Theorem \ref{analyticExt3}, we see that the injection (\ref{injectionwv})
$$\ext^1_{G}\big(v_{\op^{\lrr}_{{ir}}}^{\infty}\big(\pi,\ul{\lambda}, \st_{(r,k)}^{\ana}(\pi,\ul{\lambda})\big)\hookrightarrow\ext^1_{G}\big(i_{\op^{\lrr}_{{ir}}}^G(\pi,\ul{\lambda}),\st_{(r,k)}^{\ana}(\pi,\ul{\lambda})\big)$$ is actually an isomorphism.\;Then the pull back of $\sE_{\{ir\}}^\emptyset(\pi,\ul{\lambda},\iota_\nu(\psi))^0$ via the natural injection $u_{\op^{\lrr}_{{ir}}}^{\infty}(\pi,\ul{\lambda})\rightarrow i_{\op^{\lrr}_{{ir}}}^G(\pi,\ul{\lambda})$ is split (as an extension of $u_{\op^{\lrr}_{{ir}}}^{\infty}(\pi,\ul{\lambda})$ by $\st_{(r,k)}^{\ana}(\pi,\ul{\lambda})$).\;Quotient it by $u_{\op^{\lrr}_{{ir}}}^{\infty}(\pi,\ul{\lambda})$, we deduce $\widetilde{{\Sigma}}_i^{\lrr}(\pi,\ul{\lambda}, \psi)$, which is an extension of $v_{\op^{\lrr}_{{ir}}}^{\infty}(\pi,\ul{\lambda})$ by $\st_{(r,k)}^{\ana}(\pi,\ul{\lambda})$.\;We therefore get that $[\widetilde{{\Sigma}}_i^{\lrr}(\pi,\ul{\lambda}, \psi)]$ is the extension class associated with $\psi$ via $\varrho_{\{ir\},\emptyset}$.\;Note that $\widetilde{{\Sigma}}_i^{\lrr}(\pi,\ul{\lambda}, \psi)$ comes from a locally algebraic extension of $v_{\op^{\lrr}_{{ir}}}^G(\pi,\ul{\lambda})$ by $\st_{(r,k)}^{\infty}(\ul{\lambda})$ if and only if $\psi\in \homo_\infty(L^{\times},E)$.\;
\end{rmk}

	\begin{rmk}A modification of the recent work of Zicheng Qian \cite{wholeLINV}  may give a  way to compute the higher $\ext$-groups $\ext^{i}_{G}\big(v_{\op^{\lrr}_{{I}}}^{\ana}(\pi,\ul{\lambda}),v^{\ana}_{\op^{\lrr}_{{J}}}(\pi,\ul{\lambda})\big)$ for general $J\subseteq I\subseteq{\Delta_n(k)}$.\;
\end{rmk}

\subsection{Extensions between locally analytic representations-II}\label{anaext2}
This section follows along the line of \cite[Section 2.3]{2019DINGSimple}.\;Let $\underline{\lambda}\in X_{\Delta_{n}}^+$, and let $I\subseteq {\Delta_n(k)}$.\;Recall the locally $\BQ_p$-analytic (resp., locally $\BQ_p$-algebraic, resp., smooth) generalized Steinberg representations $\st_{I}^{\ana}(\pi,\underline{\lambda})$ \\ (resp., $\st_{I}^{\infty}(\pi,\underline{\lambda})$, resp., $\st_{I}^{\infty}(\pi)$)  of $\bL^{\lrr}_{I}(L)$ defined in Definition \ref{dfnlagpstrep} (resp., Definition \ref{dfnlagpstreplalg}).\;In this section, we study certain subrepresentations of $\st_{(r,k)}^{\ana}(\pi,\ul{\lambda})$, and compute the extensions of $v_{\op^{\lrr}_{{ir}}}^{\infty}(\pi,\ul{\lambda})$ by such representations.\;These subrepresentations have more clear structure than $\st_{(r,k)}^{\ana}(\pi,\ul{\lambda})$.\;

By the transitivity of parabolic inductions, we have isomorphisms
\begin{equation*}
\begin{aligned}
&\Big(\mathrm{Ind}_{\op^{\lrr}_{{I}}(L)}^{G}\st_{I}^{\infty}(\pi)\Big)^\infty\otimes_EL(\underline{\lambda})=i_{\op^{\lrr}}^{G}(\pi,\underline{\lambda})\Big/\sum_{\emptyset \neq J\subseteq I}i_{\op^{\lrr}_{{J}}}^{G}(\pi,\underline{\lambda}),\\
&\Big(\mathrm{Ind}_{\op^{\lrr}_{{I}}(L)}^{G}\st_{I}^{\ana}(\pi,\underline{\lambda})\Big)^{\mathbb{Q}_p-\mathrm{an}}=\BI_{\op^{\lrr}}^{G}(\pi,\underline{\lambda})\Big/\sum_{\emptyset \neq J\subseteq I}\BI_{\op^{\lrr}_{{J}}}^{G}(\pi,\underline{\lambda}).\;
\end{aligned}
\end{equation*}
Consider the following composition
\begin{equation}\label{tautauI}
\tau_I: \Big(\mathrm{Ind}_{\op^{\lrr}_{{I}}(L)}^{G}\st_{I}^\infty(\pi,\underline{\lambda})\Big)^{\mathbb{Q}_p-\mathrm{an}} \hooklongrightarrow \Big(\mathrm{Ind}_{\op^{\lrr}_{{I}}(L)}^{G}\st_{I}^{\ana}(\pi,\underline{\lambda})\Big)^{\mathbb{Q}_p-\mathrm{an}} \twoheadlongrightarrow \mathrm{St}_{(r,k)}^{\ana}(\pi,\underline{\lambda}).\;
\end{equation}
Let $\widetilde{\Sigma}^{\lrr}_{{\Delta_n(k)}\backslash I}(\pi,\ul{\lambda})$ be its image.\;When $I=\Delta_{k,i}$, we denote $\widetilde{\Sigma}^{\lrr}_{i}(\pi,\ul{\lambda}):=\widetilde{\Sigma}^{\lrr}_{\{ir\}}(\pi,\ul{\lambda})$.
\begin{lem}\label{dfnsumwidetilde}
Let $ir\in {\Delta_n(k)}$.\;We have the following commutative diagram with all the rows exact, 
\[\xymatrix{0 \ar[r] & v_{\op^{\lrr}_{{ir}}}^{\infty}(\pi,\underline{\lambda}) \ar@{=}[d]  \ar[r] & \Big(\mathrm{Ind}_{\op^{\lrr}_{{\Delta}_{k,i}}(L)}^{G}\st_{{\Delta}_{k,i}}^{\infty}(\pi)\Big)^\infty\otimes_EL(\underline{\lambda}) \ar@{^(->}[d] \ar[r] & \mathrm{St}_{(r,k)}^{\infty}(\pi,\underline{\lambda}) \ar@{^(->}[d] \ar[r] & 0  \\
0  \ar[r] & v_{\op^{\lrr}_{{ir}}}^{\infty}(\pi,\underline{\lambda}) \ar@{^(->}[d] \ar[r] & \Big(\mathrm{Ind}_{\op^{\lrr}_{{\Delta}_{k,i}}(L)}^{G}\st_{{\Delta}_{k,i}}^\infty(\pi,\underline{\lambda})\Big)^{\mathbb{Q}_p-\mathrm{an}} \ar@{^(->}[d] \ar[r]^{\hspace{40pt}\tau_{ir}}_{\hspace{40pt}(\ref{tautauI})} & \widetilde{\Sigma}^{\lrr}_{i}(\pi,\underline{\lambda})\ar@{^(->}[d] \ar[r] & 0\\
0 \ar[r] & v_{\op^{\lrr}_{{ir}}}^{\ana}(\pi,\underline{\lambda}) \ar[r] & \Big(\mathrm{Ind}_{\op^{\lrr}_{{\Delta}_{k,i}}(L)}^{G}\st_{{\Delta}_{k,i}}^{\ana}(\pi,\underline{\lambda})\Big)^{\mathbb{Q}_p-\mathrm{an}} \ar[r] & \mathrm{St}_{(r,k)}^{\ana}(\pi,\underline{\lambda}) \ar[r] & 0.}\]
\end{lem}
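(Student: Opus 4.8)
\textbf{Proof proposal for Lemma \ref{dfnsumwidetilde}.}

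The plan is to build the $3\times 3$ commutative diagram row by row, treating the bottom row as the known object and the other two rows as subobjects pulled back through the explicit maps already constructed. The bottom row is simply the defining exact sequence of $\st_{(r,k)}^{\ana}(\pi,\ul{\lambda}) = v_{\op^{\lrr}}^{\ana}(\pi,\ul{\lambda})$ restated after the transitivity-of-parabolic-induction isomorphism $\big(\mathrm{Ind}_{\op^{\lrr}_{{\Delta}_{k,i}}(L)}^{G}\st_{{\Delta}_{k,i}}^{\ana}(\pi,\underline{\lambda})\big)^{\BQ_p-\ana} = \BI_{\op^{\lrr}}^{G}(\pi,\underline{\lambda})/\sum_{\emptyset\ne J\subseteq \{ir\}}\BI_{\op^{\lrr}_{{J}}}^{G}(\pi,\underline{\lambda})$ displayed just above the lemma; its kernel is exactly $v_{\op^{\lrr}_{ir}}^{\ana}(\pi,\ul{\lambda}) = \BI_{\op^{\lrr}_{ir}}^{G}(\pi,\ul{\lambda})/(\sum_{J\supsetneq \{ir\}}\BI_{\op^{\lrr}_J}^G + \text{image of }\BI_{\op^{\lrr}_{ir}})$, which for $I=\{ir\}$ maximal is simply $v^{\ana}_{\op^{\lrr}_{ir}}(\pi,\ul{\lambda})$. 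The top row is the entirely parallel locally $\BQ_p$-algebraic (in fact smooth-tensor-algebraic) statement: Proposition \ref{smoothexact} applied with $I = \{ir\}$ gives $0\to C^\infty_{\{ir\},k-2}\to\cdots\to i_{\op^{\lrr}_{ir}}^G(\pi,\ul\lambda)\to v_{\op^{\lrr}_{ir}}^\infty(\pi,\ul\lambda)\to 0$, and the same transitivity isomorphism in the smooth setting identifies $\big(\mathrm{Ind}_{\op^{\lrr}_{{\Delta}_{k,i}}(L)}^{G}\st_{{\Delta}_{k,i}}^{\infty}(\pi)\big)^\infty\otimes_E L(\ul\lambda) = i_{\op^{\lrr}}^G(\pi,\ul\lambda)/\sum_{\emptyset\ne J\subseteq\{ir\}}i_{\op^{\lrr}_J}^G(\pi,\ul\lambda)$, whose quotient by $v_{\op^{\lrr}_{ir}}^\infty(\pi,\ul\lambda)$ (sitting inside as the image of $i_{\op^{\lrr}_{ir}}^G$) is $\st_{(r,k)}^\infty(\pi,\ul\lambda)$. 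So the top and bottom rows are both immediate from the definitions plus the already-proven Propositions \ref{smoothexact} and \ref{analyticexact}; nothing new is needed there.

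The substance is the middle row and the verticals. First I would note that the inclusion $\st_{{\Delta}_{k,i}}^{\infty}(\pi,\ul\lambda)\hookrightarrow \st_{{\Delta}_{k,i}}^{\ana}(\pi,\ul\lambda)$ of representations of $\bL^{\lrr}_{{\Delta}_{k,i}}(L)$ — which exists because the locally algebraic generalized Steinberg is, by construction (Definitions \ref{dfnlagpstreplalg}, \ref{dfnlagpstrep}), the locally algebraic part of its locally analytic analogue, the injectivity following from exactness of $\cF^G_{\op_I}(-,-)$ together with the injection $i_{\op^{\lrr}(L)\cap\bL^{\lrr}_I(L)}^{\bL^{\lrr}_I(L)}\pi^{\lrr}\hookrightarrow \BI^{\bL^{\lrr}_I(L)}_{\op^{\lrr}(L)\cap\bL^{\lrr}_I(L)}\pi^{\lrr}$ — induces by exactness of $\big(\mathrm{Ind}_{\op^{\lrr}_{{\Delta}_{k,i}}(L)}^{G}-\big)^{\BQ_p-\ana}$ the central vertical injection in the diagram, and $\tau_{ir}$ of (\ref{tautauI}) is the composite of this with the surjection $\big(\mathrm{Ind}_{\op^{\lrr}_{{\Delta}_{k,i}}(L)}^{G}\st_{{\Delta}_{k,i}}^{\ana}(\pi,\ul\lambda)\big)^{\BQ_p-\ana}\twoheadrightarrow \st_{(r,k)}^{\ana}(\pi,\ul\lambda)$ from the bottom row. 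I would then \emph{define} $\widetilde{\Sigma}^{\lrr}_i(\pi,\ul\lambda) := \mathrm{im}(\tau_{ir})$, as the lemma's statement itself says, and define the middle row to be $0\to \ker\tau_{ir}\to \big(\mathrm{Ind}\,\st^\infty_{{\Delta}_{k,i}}(\pi,\ul\lambda)\big)^{\BQ_p-\ana}\xrightarrow{\tau_{ir}}\widetilde\Sigma^{\lrr}_i(\pi,\ul\lambda)\to 0$, which is tautologically exact. The real claim is then: $\ker\tau_{ir} = v_{\op^{\lrr}_{ir}}^\infty(\pi,\ul\lambda)$, and the left and right verticals are the stated injections and the left square and right square commute. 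The left identification $\ker\tau_{ir}=v_{\op^{\lrr}_{ir}}^\infty(\pi,\ul\lambda)$ is the heart: by the snake lemma applied to the central vertical injection together with the bottom exact row, $\ker\tau_{ir}$ is the preimage in $\big(\mathrm{Ind}\,\st^\infty_{{\Delta}_{k,i}}(\pi,\ul\lambda)\big)^{\BQ_p-\ana}$ of $v^{\ana}_{\op^{\lrr}_{ir}}(\pi,\ul\lambda)=\ker(\text{bottom surjection})$, i.e. the intersection of the locally algebraic induction with the submodule $\BI_{\op^{\lrr}_{ir}}^G(\pi,\ul\lambda)/\sum_{J\supsetneq\{ir\}}\BI_{\op^{\lrr}_J}^G$ of $\big(\mathrm{Ind}\,\st^{\ana}_{{\Delta}_{k,i}}(\pi,\ul\lambda)\big)^{\BQ_p-\ana}$; and by the transitivity isomorphisms that intersection is $\big(i_{\op^{\lrr}_{ir}}^G(\pi,\ul\lambda) \cap \sum i_{\op^{\lrr}_J}^G\big)$-type data, which by the compatibility of the locally algebraic and locally analytic filtrations — precisely Proposition \ref{axioms}, property \textbf{[A2]}, equation (\ref{[A3]-1}), together with the injection $i_{\op^{\lrr}_{ir}}^G(\pi,\ul\lambda)\hookrightarrow\BI_{\op^{\lrr}_{ir}}^G(\pi,\ul\lambda)$ and the fact (from the Orlik--Strauch functor being exact) that the locally algebraic vectors of $\BI_{\op^{\lrr}_J}^G(\pi,\ul\lambda)$ are $i_{\op^{\lrr}_J}^G(\pi,\ul\lambda)$ — collapses to exactly $v_{\op^{\lrr}_{ir}}^\infty(\pi,\ul\lambda)$. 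Given this, the left verticals are identity maps and the left squares commute trivially; the right vertical $\st_{(r,k)}^\infty(\pi,\ul\lambda)\hookrightarrow\widetilde\Sigma^{\lrr}_i(\pi,\ul\lambda)\hookrightarrow\st_{(r,k)}^{\ana}(\pi,\ul\lambda)$ is then forced by the universal property of cokernels (it is the map induced on quotients by the central injection, and it is injective because we just computed the kernels agree), and commutativity of all remaining squares is automatic from the construction.

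The main obstacle I anticipate is the identification $\ker\tau_{ir}=v_{\op^{\lrr}_{ir}}^\infty(\pi,\ul\lambda)$, i.e. verifying that intersecting the locally algebraic parabolic induction with the locally analytic submodule $\sum_{J\supsetneq\{ir\}}\BI_{\op^{\lrr}_J}^G(\pi,\ul\lambda)$ inside $\BI_{\op^{\lrr}_{ir}}^G(\pi,\ul\lambda)$ recovers the locally algebraic $u^\infty_{\op^{\lrr}_{ir}}(\pi,\ul\lambda)=\sum_{J\supsetneq\{ir\}}i_{\op^{\lrr}_J}^G(\pi,\ul\lambda)$, and not something larger. This is the point where one genuinely uses that taking locally algebraic vectors is compatible with the finite sums and intersections in \textbf{[A2]}, which in turn rests on the exactness and Jordan--Hölder properties of the Orlik--Strauch functor recalled in Section \ref{osfunctor} (an irreducible subquotient of $\BI_{\op^{\lrr}_J}^G$ is locally algebraic iff it is one of the $v^\infty$'s, by the classification of composition factors via $\cF^G_{\op_{I(w)}}(\overline L(-w\cdot\ul\lambda),-)$ with $w\cdot\ul\lambda$ dominant forcing $w=1$). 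Once that compatibility is in hand the rest is formal diagram-chasing with the snake lemma, so I would state the compatibility cleanly as a sub-claim, prove it by comparing Jordan--Hölder factors on both sides using Proposition \ref{JHanastein1} and \textbf{[A2]}, and then assemble the diagram.
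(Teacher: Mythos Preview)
Your overall architecture is correct and matches the paper's: the top and bottom rows come immediately from Propositions~\ref{smoothexact} and~\ref{analyticexact} via the transitivity identifications displayed just before the lemma, and the whole content is the identification $\ker\tau_{ir}=v_{\op^{\lrr}_{ir}}^{\infty}(\pi,\ul\lambda)$. The paper proves exactly this, by showing that the induced map
\[
\tau_{ir}':\ \Big(\mathrm{Ind}_{\op^{\lrr}_{{\Delta}_{k,i}}(L)}^{G}\st_{{\Delta}_{k,i}}^\infty(\pi,\underline{\lambda})\Big)^{\mathbb{Q}_p-\mathrm{an}}\Big/v_{\op^{\lrr}_{ir}}^{\infty}(\pi,\ul\lambda)\ \longrightarrow\ \st_{(r,k)}^{\ana}(\pi,\ul\lambda)
\]
is injective, via a Jordan--H\"older argument through the Orlik--Strauch functor. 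So in outline you and the paper agree.

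There is, however, a conceptual slip in your write-up that would derail the execution if not corrected. You repeatedly describe the middle term $\big(\mathrm{Ind}_{\op^{\lrr}_{{\Delta}_{k,i}}(L)}^{G}\st_{{\Delta}_{k,i}}^\infty(\pi,\underline{\lambda})\big)^{\mathbb{Q}_p-\mathrm{an}}$ as ``the locally algebraic induction'' and then invoke the fact that the locally algebraic vectors of $\BI_{\op^{\lrr}_J}^G(\pi,\ul\lambda)$ are $i_{\op^{\lrr}_J}^G(\pi,\ul\lambda)$. But the middle term is \emph{not} any $\BI_{\op^{\lrr}_J}^G$, nor a sum or intersection of such: it is $\cF^G_{\op^{\lrr}_{\Delta_{k,i}}}\big(\overline{M}^{\lrr}_{\Delta_{k,i}}(-\ul\lambda),\,\st_{\Delta_{k,i}}^{\infty}(\pi)\big)$, whose smooth input is the \emph{Steinberg} $\st_{\Delta_{k,i}}^{\infty}(\pi)$, not $\pi_J$. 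So your compatibility sub-claim (``locally algebraic vectors of $\BI_J$ are $i_J$'' plus \textbf{[A2]}) does not by itself pin down $\ker\tau_{ir}$; you need to analyze the constituents of this specific object.

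The paper's sharpening is exactly this missing step. One first notes that on locally algebraic vectors $\tau_{ir}'$ is injective (this is the top row). Hence any irreducible constituent $V$ of $\ker\tau_{ir}'$ is \emph{not} locally algebraic, so by the Orlik--Strauch classification it has the form
\[
V\ \cong\ \cF^G_{\op^{\lrr}_{\Delta_{k,i}}}\big(\overline{L}(-s\cdot\ul\lambda),\,\st_{\Delta_{k,i}}^{\infty}(\pi)\big),\qquad s\cdot\ul\lambda\in X^+_{\Delta_n^k\cup\Delta_{k,i}}.
\]
On the other hand $\ker\tau_{ir}'$ is a subquotient of $\sum_{\emptyset\ne I\subseteq\Delta_n(k)}\BI_{\op^{\lrr}_I}^G(\pi,\ul\lambda)$, so $V$ is a constituent of some $\BI_{\op^{\lrr}_I}^G(\pi,\ul\lambda)=\cF^G_{\op^{\lrr}_I}\big(\overline{M}^{\lrr}_I(-\ul\lambda),\pi_I\big)$ with $I\ne\emptyset$. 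Matching the two descriptions via the Orlik--Strauch theorem forces $I\subseteq\Delta_{k,i}$ and forces $\st_{\Delta_{k,i}}^{\infty}(\pi)$ to occur as a constituent of $i^{\bL^{\lrr}_{\Delta_{k,i}}(L)}_{\op^{\lrr}_I(L)\cap\bL^{\lrr}_{\Delta_{k,i}}(L)}\pi_I$; but this is impossible for $I\ne\emptyset$ by the very definition of the smooth Steinberg. This is the precise Jordan--H\"older comparison your last paragraph is reaching for, and it is what replaces your appeal to ``locally algebraic vectors of $\BI_J$''. (Minor: the middle-to-bottom left vertical is the inclusion $v^{\infty}_{\op^{\lrr}_{ir}}\hookrightarrow v^{\ana}_{\op^{\lrr}_{ir}}$, not an identity.)
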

\begin{proof}
By \cite[Theorem]{orlik2015jordan}, the locally $\BQ_p$-algebraic subrepresentation of $$\Big(\mathrm{Ind}_{\op^{\lrr}_{{\Delta}_{k,i}}(L)}^{G}\st_{{\Delta}_{k,i}}^{\infty}(\pi,\underline{\lambda})\Big)^{\mathbb{Q}_p-\mathrm{an}}\cong \mathcal{F}^G_{\op^{\lrr}(L)}\big(\lM^{\lrr}(-\underline{\lambda}),\pi^{\lrr}\big)$$ is $\Big(\mathrm{Ind}_{\op^{\lrr}_{{\Delta}_{k,i}}(L)}^{G}\st_{{\Delta}_{k,i}}^{\infty}(\pi)\Big)^\infty\otimes_EL(\underline{\lambda})$.\;By (\ref{[A3]-1}) and (\ref{[A3]-2}), this locally $\BQ_p$-algebraic subrepresentation is an extension of $\mathrm{St}_{(r,k)}^{\infty}(\pi,\underline{\lambda})$ by $v_{\op^{\lrr}_{{ir}}}^{\infty}(\pi,\underline{\lambda})$.\;The first row is exact.\;In a similar way, we deduce from (\ref{analytic[A3]}) the exactness of the third row.\;It remains to prove that the second row is exact.\;By the exactness of the first row, we see that the composition $\tau_{ir}$ factors through
\begin{equation}\label{secondfirst}
\tau_{ir}': \Big(\mathrm{Ind}_{\op^{\lrr}_{{\Delta}_{k,i}}(L)}^{G}\st_{{\Delta}_{k,i}}^{\infty}(\pi,\underline{\lambda})\Big)^{\mathbb{Q}_p-\mathrm{an}}\Big/v_{\op^{\lrr}_{{ir}}}^{\infty}(\pi,\underline{\lambda})\longrightarrow
\mathrm{St}_{(r,k)}^{\ana}(\pi,\underline{\lambda}).
\end{equation}
It suffices to show that $\tau_{ir}'$ is still injective.\;The restriction of $\tau_{ir}'$ on its locally $\BQ_p$-algebraic vectors is injective.\;Suppose that $\tau_{ir}'$ is not injective.\;Let $V$ be an irreducible constituent of $\ker(\tau_{ir}')$, then $V$ has the form (since $V$ is not locally algebraic)
\begin{equation}\label{secondfirstform}
\mathcal{F}^G_{\op^{\lrr}_{{\Delta}_{k,i}}(L)}\Big(\overline{L}(-s\cdot\underline{\lambda}),\st_{{\Delta}_{k,i}}^\infty(\pi)\Big),
\end{equation}
with $s\cdot\underline{\lambda}\in X^+_{\Delta_n^k\cup{\Delta}_{k,i}}$.\;Note that the kernel of (\ref{secondfirst}) is also a subquotient of $\sum\limits_{\emptyset \neq I\subseteq \Delta_n(k)}\BI_{\op^{\lrr}_{{I}}}^{G}(\pi,\underline{\lambda})$, so $V$ is an irreducible constituent of
\[\BI_{\op^{\lrr}_{{I}}}^{G}(\pi,\underline{\lambda})\cong
\mathcal{F}^G_{\op^{\lrr}_{{I}}(L)}\big(\lM_{I}^{\lrr}(-\underline{\lambda}),\pi_I\big)\]
for some $\emptyset \neq I\subseteq \Delta_n(k)$.\;If (\ref{secondfirstform}) appears as an irreducible subquotient of $\BI_{\op^{\lrr}_{{I}}}^{G}(\pi,\underline{\lambda})$, then we see that $I$ is a subset of $\Delta_{k,i}$ and $\st_{{\Delta}_{k,i}}^\infty(\pi)$ is also an irreducible subquotient of $i^{\bL^{\lrr}_{{\Delta}_{k,i}}(L)}_{\op^{\lrr}_{{I}}(L)\cap \bL^{\lrr}_{{\Delta}_{k,i}}(L)}\pi_I$ by \cite[Theorem]{orlik2015jordan}.\;This is impossible by the definition of $\st_{{\Delta}_{k,i}}^\infty(\pi)$.\;The result follows.\;
\end{proof}

\begin{lem}
The injections $\widetilde{\Sigma}^{\lrr}_{i}(\pi,\ul{\lambda})\hookrightarrow \st_{(r,k)}^{\ana}(\pi,\underline{\lambda})$ for all $ir\in {\Delta_n(k)}$ induce an injection of $G$-representations:
\begin{equation*}
\widetilde{\Sigma}^{\lrr}(\pi,\ul{\lambda}):=\bigoplus_{\mathrm{St}_{(r,k)}^{\infty}(\pi,\underline{\lambda})}^{i\in \Delta_n(k)}\widetilde{\Sigma}^{\lrr}_{i}(\pi,\ul{\lambda}) \hooklongrightarrow \st_{(r,k)}^{\ana}(\pi,\underline{\lambda}).
\end{equation*}
\end{lem}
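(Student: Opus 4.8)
The plan is to reduce the claim to the statement that the various subrepresentations $\widetilde{\Sigma}^{\lrr}_{i}(\pi,\ul{\lambda})$ of $\st_{(r,k)}^{\ana}(\pi,\underline{\lambda})$ are ``independent'' over their common locally $\BQ_p$-algebraic part $\st_{(r,k)}^{\infty}(\pi,\underline{\lambda})$. Concretely, by Lemma \ref{dfnsumwidetilde} each $\widetilde{\Sigma}^{\lrr}_{i}(\pi,\ul{\lambda})$ fits into a short exact sequence $0\to v_{\op^{\lrr}_{{ir}}}^{\infty}(\pi,\underline{\lambda})\to \widetilde{\Sigma}^{\lrr}_{i}(\pi,\ul{\lambda})\to \st_{(r,k)}^{\ana}(\pi,\underline{\lambda})\to 0$ is \emph{not} what we have — rather $\widetilde{\Sigma}^{\lrr}_{i}(\pi,\ul{\lambda})$ is an extension of $\st_{(r,k)}^{\infty}(\pi,\underline{\lambda})$ by $v_{\op^{\lrr}_{{ir}}}^{\infty}(\pi,\underline{\lambda})$ sitting inside $\st_{(r,k)}^{\ana}(\pi,\underline{\lambda})$, and moreover the quotient $\widetilde{\Sigma}^{\lrr}_{i}(\pi,\ul{\lambda})/\st_{(r,k)}^{\infty}(\pi,\underline{\lambda})\cong v_{\op^{\lrr}_{{ir}}}^{\infty}(\pi,\underline{\lambda})$ (this is the bottom two rows of the diagram in Lemma \ref{dfnsumwidetilde}, read modulo the algebraic part). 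The point is that the $v_{\op^{\lrr}_{{ir}}}^{\infty}(\pi,\underline{\lambda})$ for distinct $i$ are pairwise non-isomorphic irreducible (smooth-tensor-algebraic) constituents of $\st_{(r,k)}^{\ana}(\pi,\underline{\lambda})$, each appearing with multiplicity one.

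First I would set $W:=\st_{(r,k)}^{\infty}(\pi,\underline{\lambda})$, regarded as a subrepresentation of each $\widetilde{\Sigma}^{\lrr}_{i}(\pi,\ul{\lambda})$ and of $\st_{(r,k)}^{\ana}(\pi,\underline{\lambda})$ via the bottom-left injections in Lemma \ref{dfnsumwidetilde}; then the claimed map is the natural one from the fibre coproduct $\bigoplus^{i}_{W}\widetilde{\Sigma}^{\lrr}_{i}(\pi,\ul{\lambda})$ (the amalgamated direct sum over $W$, which is what the notation $\bigoplus_{\st_{(r,k)}^{\infty}(\pi,\underline{\lambda})}$ denotes) into $\st_{(r,k)}^{\ana}(\pi,\underline{\lambda})$. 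Injectivity of such an amalgamated-sum map is equivalent to showing that for each $i$, the intersection inside $\st_{(r,k)}^{\ana}(\pi,\underline{\lambda})$ of $\widetilde{\Sigma}^{\lrr}_{i}(\pi,\ul{\lambda})$ with the sum $\sum_{j\neq i}\widetilde{\Sigma}^{\lrr}_{j}(\pi,\ul{\lambda})$ is contained in $W$ (hence equals $W$, since $W$ lies in every term). I would prove this by looking at Jordan--H\"older constituents: by Lemma \ref{dfnsumwidetilde}, the constituents of $\widetilde{\Sigma}^{\lrr}_{i}(\pi,\ul{\lambda})$ are exactly those of $W$ together with (one copy of) $v_{\op^{\lrr}_{{ir}}}^{\infty}(\pi,\underline{\lambda})$; dually, the image of $\sum_{j\neq i}\widetilde{\Sigma}^{\lrr}_{j}(\pi,\ul{\lambda})$ in the quotient $\st_{(r,k)}^{\ana}(\pi,\underline{\lambda})/W$ has constituents among $\{v_{\op^{\lrr}_{{jr}}}^{\infty}(\pi,\underline{\lambda})\}_{j\neq i}$, which are disjoint from the single constituent $v_{\op^{\lrr}_{{ir}}}^{\infty}(\pi,\underline{\lambda})$ of the image of $\widetilde{\Sigma}^{\lrr}_{i}(\pi,\ul{\lambda})$. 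Using that $v_{\op^{\lrr}_{{ir}}}^{\infty}(\pi,\underline{\lambda})$ occurs with multiplicity one in $\st_{(r,k)}^{\ana}(\pi,\underline{\lambda})$ (Proposition \ref{JHanastein1}, or directly from the smooth multiplicity-one statement after Definition \ref{dfnlagpstreplalg} together with the Orlik--Strauch description of the composition factors), the image of the intersection in $\st_{(r,k)}^{\ana}(\pi,\underline{\lambda})/W$ must be zero, i.e. the intersection lies in $W$. This gives the desired injection.

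A cleaner way to organize the same argument, which I would likely use in writing it up, is by induction on the number of indices and the snake lemma, exactly as in the proof of the second identity in Proposition \ref{analyticexact}: one builds the commutative diagram with rows $0\to \big(\sum_{j<m}\widetilde{\Sigma}^{\lrr}_{j}\big)\cap\widetilde{\Sigma}^{\lrr}_{m}\to \widetilde{\Sigma}^{\lrr}_{m}\oplus\big(\sum_{j<m}\widetilde{\Sigma}^{\lrr}_{j}\big)\to \widetilde{\Sigma}^{\lrr}_{m}+\sum_{j<m}\widetilde{\Sigma}^{\lrr}_{j}\to 0$ and uses that the pairwise amalgamated sums $\widetilde{\Sigma}^{\lrr}_{j}+_W\widetilde{\Sigma}^{\lrr}_{m}$ embed (the base case, again by the constituent/multiplicity-one argument above applied to two indices), together with the identity $\big(\sum_{j<m}\widetilde{\Sigma}^{\lrr}_{j}\big)\cap\widetilde{\Sigma}^{\lrr}_{m}=W$. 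The main obstacle I anticipate is \emph{not} the combinatorics but verifying carefully that $v_{\op^{\lrr}_{{ir}}}^{\infty}(\pi,\underline{\lambda})$ genuinely occurs with multiplicity one in $\st_{(r,k)}^{\ana}(\pi,\underline{\lambda})$ and is not isomorphic to $v_{\op^{\lrr}_{{jr}}}^{\infty}(\pi,\underline{\lambda})$ for $j\neq i$ — one needs the irreducibility results of Orlik--Strauch (Section \ref{osfunctor}) to identify $v_{\op^{\lrr}_{{ir}}}^{\infty}(\pi,\underline{\lambda})$ as $\cF^G_{\op^{\lrr}_{ir}}(L(\ul\lambda)_{\ldots},\pi_{ir})$ up to the appropriate algebraic twist, and then distinguish these via the different parabolics $\op^{\lrr}_{ir}$ (equivalently via central characters of the inducing data, as in the proof of Lemma \ref{rigeolemma}). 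Once that multiplicity-one-and-distinctness input is in hand, the rest is the formal amalgamated-sum argument.
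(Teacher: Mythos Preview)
Your overall reduction is right: to show the amalgamated sum embeds it suffices to check that the quotients $\widetilde{\Sigma}^{\lrr}_{i}(\pi,\ul{\lambda})/\st_{(r,k)}^{\infty}(\pi,\underline{\lambda})$ have pairwise disjoint sets of irreducible constituents. But you have misread Lemma~\ref{dfnsumwidetilde}, and this makes the heart of your argument collapse. The representation $\widetilde{\Sigma}^{\lrr}_{i}(\pi,\ul{\lambda})$ is the image of the full locally analytic parabolic induction $\big(\ind_{\op^{\lrr}_{{\Delta}_{k,i}}(L)}^G \st_{{\Delta}_{k,i}}^{\infty}(\pi,\ul{\lambda})\big)^{\BQ_p-\ana}$ in $\st_{(r,k)}^{\ana}(\pi,\underline{\lambda})$; its locally algebraic subrepresentation is $\st_{(r,k)}^{\infty}(\pi,\underline{\lambda})$, and the irreducible $v_{\op^{\lrr}_{ir}}^{\infty}(\pi,\ul{\lambda})$ is the \emph{kernel} of the map onto $\widetilde{\Sigma}^{\lrr}_{i}$, not a constituent of it. So $\widetilde{\Sigma}^{\lrr}_{i}(\pi,\ul{\lambda})/\st_{(r,k)}^{\infty}(\pi,\underline{\lambda})$ is not the single irreducible $v_{\op^{\lrr}_{ir}}^{\infty}(\pi,\ul{\lambda})$; rather, its constituents are the (many) non--locally-algebraic Orlik--Strauch pieces
\[
\cF^G_{\op^{\lrr}_{{\Delta}_{k,i}}}\big(\overline{L}(-s\cdot\ul{\lambda}),\ \st_{{\Delta}_{k,i}}^{\infty}(\pi)\big),\qquad s\neq 1,\ s\cdot\ul{\lambda}\in X^+_{\Delta_n^k\cup\Delta_{k,i}}.
\]
Your multiplicity-one statement for $v_{\op^{\lrr}_{ir}}^{\infty}(\pi,\ul{\lambda})$ is therefore irrelevant, and the distinctness input you propose (comparing the locally algebraic $v_{\op^{\lrr}_{ir}}^{\infty}$ for varying $i$) does not address the actual constituents.

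What the paper does is exactly your framework with the correct constituents plugged in: one must show that no constituent of the form displayed above (for index $i$) can occur among the constituents of $\widetilde{\Sigma}^{\lrr}_{j}(\pi,\ul{\lambda})$ for $j\neq i$. This is where the Orlik--Strauch classification is used in earnest: by \cite[Corollary~2.7]{breuil2016socle}, an irreducible $\cF^G_{\op}(\overline{L}(-\mu),\pi')$ determines the triple $(\op,\mu,\pi')$ with $\op$ maximal for $\overline{L}(-\mu)$; since for $s\neq 1$ the maximal parabolic is $\op^{\lrr}_{\Delta_{k,i}}$ and the smooth input is $\st_{\Delta_{k,i}}^{\infty}(\pi)$, varying $i$ gives genuinely different parabolics (and different smooth inputs), so no coincidence is possible. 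Once you correct your identification of the constituents and invoke this classification, your snake-lemma/amalgamated-sum packaging goes through.
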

\begin{proof}It suffices to prove that any irreducible constituent of $\widetilde{\Sigma}^{\lrr}_{i}(\pi,\ul{\lambda})$ that is not locally algebraic, i.e., is of the form
\[\cV=\mathcal{F}^G_{\op^{\lrr}_{{\Delta}_{k,i}}}(\overline{L}(-s\cdot\underline{\lambda}),\st_{{\Delta}_{k,i}}^\infty(\pi)),\]
with $s\cdot\underline{\lambda}\in X^+_{\Delta_n^k\cup{\Delta}_{k,i}}$, cannot appear as an irreducible constituent of $\widetilde{\Sigma}^{\lrr}_j(\pi,\ul{\lambda})$ for $j\neq i$.\;This is a direct consequence of \cite[Corollary 2.7]{breuil2019ext1}.\;The lemma follows.
\end{proof}

We next compute the socle of $\widetilde{\Sigma}^{\lrr}_{i}(\pi,\ul{\lambda})$, by modifying the proof of \cite[Proposition 2.23]{2019DINGSimple} to our case, but we need some preliminaries.\;

In our case, the very strongly admissibility of locally $\BQ_p$-algebraic parabolic generalized Steinberg representation cannot be deduced directly from the strategy of the proof of \cite[Proposition 2.23]{2019DINGSimple}.\;We prove it by using global methods (more precisely, the Emerton's completed cohomology).\;

\begin{lem}\label{verystrongad}The locally $\BQ_p$-algebraic parabolic generalized Steinberg representations $\st_{I}^{\infty}(\pi)$ of $\bL^{\lrr}_{I}(L)$ is very strongly admissible.\;
\end{lem}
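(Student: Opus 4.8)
The plan is to construct a continuous $\bL^{\lrr}_I(L)$-equivariant injection of $\st_I^{\infty}(\pi)$ into an admissible $E$-Banach representation, realized inside the degree-$0$ completed cohomology of a totally definite unitary group. This follows the strategy of \cite[Proposition 2.23]{2019DINGSimple}, except that, because $\pi$ is cuspidal, $\st_I^{\infty}(\pi)$ is (up to twist) a generalized \emph{Steinberg}, i.e.\ essentially square-integrable, representation rather than a subrepresentation of a continuous principal series from the Borel; so a genuinely global input is needed.

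The first step is a reduction to a single general linear factor. Writing $\bL^{\lrr}_I(L)\cong\prod_{j=1}^{l_I}\GLN_{k_jr}(L)$, one checks (using the transitivity of parabolic induction together with condition \textbf{[A1]}) that $\st_I^{\infty}(\pi)$ is the outer tensor product $\boxtimes_{j=1}^{l_I}\st^{\infty}_{(r,k_j)}\big(\pi|\mathrm{det}|_L^{a_j}\big)$ of generalized parabolic Steinberg representations of the individual blocks, for suitable integers $a_j$. Since a completed projective tensor product of admissible $E$-Banach representations is again admissible Banach, and since very strong admissibility is insensitive to twisting by a continuous character $\chi\colon\GLN_m(L)\to E^{\times}$ (replace the ambient Banach representation $B$ by $B\otimes_E\chi$), it suffices to show that for a fixed irreducible cuspidal representation $\pi_0$ of $\GLN_r(L)$ and $m=k'r$ the generalized parabolic Steinberg representation $\st^{\infty}_{(r,k')}(\pi_0)$ of $\GLN_m(L)$ is very strongly admissible; by the classification recalled in the Appendix this $\st^{\infty}_{(r,k')}(\pi_0)$ is, up to an unramified twist, an essentially square-integrable representation of $\GLN_m(L)$.

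The second step is the global construction. I would choose a CM field $F$, quadratic over a totally real field $F^{+}$, with a place $w\mid p$ of $F$ such that $F_w\cong L$ and $w$ lies above a place of $F^{+}$ split in $F/F^{+}$, together with a totally definite unitary group $\bG$ over $F^{+}$ attached to $\GLN_m$, quasi-split at all finite places, such that $\GLN_m(F_w)\cong\GLN_m(L)$ is a direct factor of $\bG(F^{+}\otimes_{\BQ}\BQ_p)$. By Emerton's theory the degree-$0$ completed cohomology $\widehat{H}^{0}$ of $\bG$ (the space of continuous $E$-valued $p$-adic automorphic forms on $\bG$) is an admissible $E$-Banach representation of $\bG(F^{+}\otimes_{\BQ}\BQ_p)$, hence of $\GLN_m(L)$; moreover its locally $\BQ_p$-algebraic vectors decompose, as a $\GLN_m(L)$-representation, as a direct sum $\bigoplus_{\sigma}(\sigma_p\otimes_E W_{\sigma})^{\oplus m(\sigma)}$ over the automorphic representations $\sigma$ of $\bG(\BA_{F^{+}})$, with $W_{\sigma}$ the irreducible algebraic representation attached to the weight of $\sigma$ (there is no classicality obstruction in degree $0$). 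Thus it is enough to produce a single $\sigma$ whose component at $w$ is $\st^{\infty}_{(r,k')}(\pi_0)$ up to a continuous twist $\chi$: then $\st^{\infty}_{(r,k')}(\pi_0)\otimes_E\chi$ embeds into the admissible $E$-Banach representation $\widehat{H}^{0}\otimes_E W_{\sigma}^{\vee}$, and untwisting by $\chi$ yields the desired embedding of $\st^{\infty}_{(r,k')}(\pi_0)$.

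The last step, which I expect to be the main obstacle, is this globalization. I would first globalize $\pi_0$ to a regular algebraic, conjugate self-dual, cuspidal automorphic representation $\Pi_0$ of $\GLN_r(\BA_F)$ whose component at $w$ is an unramified twist of $\pi_0$ (globalizations of this kind, with the self-duality, regularity and prescribed-local-component constraints, are by now standard in the automorphy-lifting literature); then form the Moeglin--Waldspurger residual representation $\st(\Pi_0,k')$, which lies in the discrete automorphic spectrum of $\GLN_{k'r}(\BA_F)$, is conjugate self-dual and regular algebraic, and has local component at $w$ equal to $\st^{\infty}_{(r,k')}(\pi_0)$ up to a twist; and finally descend $\st(\Pi_0,k')$, via the base-change/descent correspondence between $\bG$ and $\GLN_{k'r}$ over $F$, to an automorphic representation $\sigma$ of $\bG(\BA_{F^{+}})$, which occurs in $\widehat{H}^{0}$ since $\bG$ is anisotropic. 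The points requiring care are the simultaneous choice of $F$, $F^{+}$ and $\bG$ compatible with the prescribed place $w$, the globalization of $\pi_0$ under the self-duality and regularity constraints, and the descent step; all of the needed ingredients are, however, standard. Finally, the same argument applies verbatim to $\st_I^{\infty}(\pi)\otimes_E L^{\lrr}(\ul{\lambda})_I$ for any $\ul{\lambda}\in X_{\Delta_n^k\cup I}^+$, since tensoring an admissible $E$-Banach representation by a finite-dimensional algebraic representation preserves admissibility.
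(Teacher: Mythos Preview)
Your reduction in the first step is exactly what the paper does (invoking \cite[Lemma A.3]{breuil2015ordinary} to pass from the tensor product to a single block), and the overall strategy---embed the representation into the degree-$0$ completed cohomology of a group that is anisotropic at infinity---also matches. The difference is in the choice of global group and in how the prescribed local component is realized.

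The paper does not use unitary groups, CM fields, Moeglin--Waldspurger residual representations, or base change/descent. Instead it takes a division algebra $D$ of dimension $n^{2}$ over a number field $F$ (not assumed CM), split at a chosen finite place $w$ with $F_{w}\cong L$ and ramified at all infinite places, so that $\BG_{D}^{\mathrm{ad}}:=\big(\res_{F/\BQ}D^{\times}\big)^{\mathrm{ad}}$ is semisimple anisotropic over $\BQ$ and $\BG_{D}^{\mathrm{ad}}(F_{w})\cong\PGLN_{n}(L)$. Since $\st_{(r,k)}^{\infty}(\pi)$ is, after a smooth character twist, a unitary discrete series of $\PGLN_{n}(L)$, the paper simply quotes Sorensen's simple-trace-formula result \cite[Corollary 3, Lemma 4]{sorensen2013proof} to produce directly a classical algebraic automorphic representation $\Pi$ of $\BG_{D}^{\mathrm{ad}}(\BA)$ with $\Pi_{w}$ isomorphic to that discrete series and $\Pi_{\infty}=1$; the injection into Emerton's $\widetilde{H}^{0}(K^{w},W)$ then follows from classicality in degree $0$ exactly as you describe.

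Your route via definite unitary groups would also work, but it purchases the globalization at a higher price: you must (i) globalize an \emph{arbitrary} cuspidal $\pi_{0}$ of $\GLN_{r}(L)$ to a regular algebraic, conjugate self-dual cuspidal $\Pi_{0}$ over a carefully chosen CM field, (ii) invoke the Moeglin--Waldspurger description of the residual spectrum to build $\st(\Pi_{0},k')$, and (iii) use the endoscopic classification (Mok; Kaletha--Minguez--Shin--White) to descend to the definite unitary group, checking along the way the sign/parity condition on the Arthur parameter $\Pi_{0}\boxtimes\nu(k')$ and adjusting by a character twist if it fails. The paper's division-algebra approach sidesteps all of this: Sorensen's argument handles any essentially discrete series local component in one stroke, with no self-duality constraint, no residual spectrum, and no descent step.
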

\begin{proof}\;\\ 
\textbf{Step 1.} We write $\bL^{\lrr}_{I}(L)=\GLN_{k_1r}(L)\times \cdots\times\GLN_{k_lr}(L)$, where $\underline{k}_{I}^{\lrr}:=(k_1,\cdots,k_{k-|I|})$ is the ordered partition of integer $k$ associated with the $\bL^{\lrr}_{I}(L)$.\;We thus have an isomorphism $\st_{I}^{\infty}(\pi,\underline{\lambda})\cong V_1\otimes_E\cdots \otimes_E V_l$
of smooth representations of $\bL^{\lrr}_{I}(L)$, where $V_i$ is also a locally algebraic parabolic Steinberg representation of $\GLN_{k_ir}(L)$ for each $1\leq i\leq l$.\;By definition, it suffices to show that there is a continuous $G$-equivariant $E$-linear injection $\st_{I}^{\infty}(\pi)\rightarrow W$, where $W$ is an admissible continuous representation of $\bL^{\lrr}_{I}(L)$ on an $E$-Banach space (see \cite[Proposition-Definition 6.2.3]{emerton2017locally} for admissible continuous representation).\;By \cite[Lemma A.3]{breuil2015ordinary}, it suffices to prove that each $V_i$ (as a smooth representation of $\GLN_{k_ir}(L)$ over $E$) is very strongly admissible.\;Therefore, we can assume that $I=\Delta_n(k)$.\;Fix an isomorphism $\iota:\overline{\BQ}_p\xrightarrow{\sim}\BC$.\;In this case, by the discussion after \cite[Definition 3]{sorensen2013proof}, $\st_{I}^{\infty}(\pi)$ is an irreducible essentially discrete series representation of $G$ over $E$, in the terminology of \cite[Definition 3]{sorensen2013proof}, i.e., $\iota(\st_{I}^{\infty}(\pi))$ is an irreducible essentially discrete series representation of $G$ over $\iota(E)\subset \BC$.\;By definition, there exists a smooth character $\eta:G\rightarrow E^\times$ (by enlarging $E$ if necessary), such that $\iota(\st_{(r,k)}^{\infty}(\pi)\otimes_E\eta)$ is (unitary) discrete series representation of $G$ over $\iota(E)\subset \BC$.\;Put $\cW:=\st_{(r,k)}^{\infty}(\pi)\otimes_E\eta$.\;Let $\omega_{\cW}$ be the central character of $\cW$.\;\\
\textbf{Step 2.} We remark first that there exists a number field $F$, a finite place $w$ of $F$ satisfying the completion $F_w$ of $F$ at $w$ is isomorphic to our local field $L$.\;Let $D$ be a division algebra of dimension $n^2$ and center over $F$.\;Suppose that $D$ is ramified at a set $S_D$ of places disjoint from $w$.\;Let $S_\infty$ be the set of infinite places of $F$, we further suppose that $D$ is ramified at the set  $S_\infty$.\;Let $\BG_{D,\BQ}=\res_{F/\BQ}D^\times$ be the scalar restriction of the algebraic group associated to the division algebra $D$.\;Then $\BG_{D,\BQ}^{\mathrm{ad}}$ is a semisimple anisotropic $\BQ$-group.\;By \cite[Corollary 3, Lemma 4]{sorensen2013proof}, we see that 
there is a classical algebraic automorphic representation $\Pi$ of $\BG_{D,\BQ}^{\mathrm{ad}}$ such that the local factor $\Pi_w\cong \iota(\cW\otimes_E \omega_{\cW}^{-1})$ and $\Pi_\infty=1$ (trivial algebraic representation).\;In \cite{sorensen2013proof}, this result is proved  by a simplified form of the Selberg trace formula.\;By enlarging $E$ if necessary, we may realize $\iota^{-1}(\Pi)$ over $E$.\\
\textbf{Step 3.} Let $W$ be a finite-dimensional algebraic representation of $\BG_{D,\BQ}^{\mathrm{ad}}$ over $E$, and $K^w\subset \BG_{D,\BQ}^{\mathrm{ad}}(\BA_f^w)$ be a tame level such that $(\Pi^w)^{K^w}\neq 0$.\;Let $W_0$ be an $\cO_E$-lattice of $W$, and denote by $\cS_{W_0}$ the set (ordered by inclusions) of open compact subgroups of $\BG_{D,\BQ}^{\mathrm{ad}}(\BA_f^w)(\BQ_p)\cong \mathrm{PGL}_{n}(F_w)$ which stabilize $W_0$.\;Then we can associate to $W_0$ (resp., ${W_0/\varpi_E^s}$) a local system $\cV_{W_0}$ (resp., $\cV_{W_0/\varpi_E^s}$) of $\cO_E$-modules over $ Y_{K_w K^w}:=\BG_{D,\BQ}^{\mathrm{ad}}(\BQ)\backslash\BG_{D,\BQ}^{\mathrm{ad}}(\BA)/A_\infty^\circ K_{\infty}^\circ K_w K^w$.\;We  recall that the Emerton's completed cohomology (see \cite{Em1})
$$\widetilde{H}^i(K^w,W_0):=\varprojlim_{s}\varinjlim_{K_w \in \cS_{W_0}}H^i( Y_{K_w K^w}, \cV_{W_0/\varpi_E^s}).$$
Then  $\widetilde{H}^i(K^\fp,W_0)_E :=\widetilde{H}^i(K^\fp,W_0) \otimes_{\co_E}E$ gives an admissible Banach representation of $\BG_D^{\mathrm{ad}}(F_w)=\mathrm{PGL}_{n}(F_w)$ over $E$.\;Let $\widehat{\Pi}$ be the classical $p$-adic automorphic representation of $\BG_{D,\BQ}^{\mathrm{ad}}(\bA_f)$ over $E$ attached to the automorphic form $\iota^{-1}(\Pi)$ constructed in Step 2 (see \cite[Definition 3.1.5]{Em1}), then we have an injection
\[(W^\vee\otimes \Pi_w)\otimes (\Pi^w)^{K^w}\hookrightarrow \widetilde{H}^0(K^w,W),\]
by \cite[Corollary 2.2.25, Proposition 3.2.2]{Em1}.\;A choice of non-zero vector $v\in (\Pi^w)^{K^w}$ then gives rise to an injection $\cW\otimes_E \omega_{\cW}^{-1}=W^\vee\otimes \Pi_w\hookrightarrow \widetilde{H}^0(K^w,W)$ of representation of $\mathrm{PGL}_{n}(F_w)=\mathrm{PGL}_{n}(L)$ over $E$.\;We finish the proof by twisting the character $\eta^{-1}\omega_{\cW}$.\;
\end{proof}

The socle of $\widetilde{\Sigma}^{\lrr}_{i}(\pi,\ul{\lambda})$ is given by the following proposition.\;

\begin{pro}\label{scolesumwidetilde}We have
$\soc_G(\widetilde{\Sigma}^{\lrr}_{i}(\pi,\ul{\lambda}))\cong \mathrm{St}_{(r,k)}^{\infty}(\pi,\underline{\lambda})$.
\end{pro}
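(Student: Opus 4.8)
The plan is to show that $\soc_G\big(\widetilde{\Sigma}^{\lrr}_i(\pi,\ul\lambda)\big)$ is both contained in and contains $\st^\infty_{(r,k)}(\pi,\ul\lambda)$. For the containment $\soc_G\big(\widetilde{\Sigma}^{\lrr}_i(\pi,\ul\lambda)\big)\supseteq\st^\infty_{(r,k)}(\pi,\ul\lambda)$, recall from Lemma~\ref{dfnsumwidetilde} that $\widetilde{\Sigma}^{\lrr}_i(\pi,\ul\lambda)$ is an extension of $\st^\infty_{(r,k)}(\pi,\ul\lambda)$ by $v^\infty_{\op^{\lrr}_{ir}}(\pi,\ul\lambda)$ sitting inside $\st^\ana_{(r,k)}(\pi,\ul\lambda)$; in particular $\st^\infty_{(r,k)}(\pi,\ul\lambda)$ embeds into $\st^\ana_{(r,k)}(\pi,\ul\lambda)$ as a quotient of the locally algebraic vectors. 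But actually one wants $\st^\infty_{(r,k)}(\pi,\ul\lambda)$ to be a \emph{sub}object of $\widetilde{\Sigma}^{\lrr}_i(\pi,\ul\lambda)$, not just a quotient; here I would argue as in \cite[Proposition 2.23]{2019DINGSimple}, using that $\st^\infty_{(r,k)}(\pi,\ul\lambda)$ is the socle of $\big(\mathrm{Ind}^G_{\op^{\lrr}(L)}\pi^{\lrr}\big)^\infty\otimes_EL(\ul\lambda)$ (itself a consequence of the standard fact that the smooth parabolic Steinberg is the unique irreducible submodule of the corresponding smooth parabolic induction, via \cite[2.10.~Proposition]{av1980induced2}) together with the exactness of parabolic induction and the structure in Lemma~\ref{dfnsumwidetilde}. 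So $\st^\infty_{(r,k)}(\pi,\ul\lambda)$ is an irreducible subrepresentation and the question reduces to showing the socle is not bigger.

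For the reverse inclusion, suppose $\cX$ is an irreducible subrepresentation of $\widetilde{\Sigma}^{\lrr}_i(\pi,\ul\lambda)$. Using the Jordan--H\"older description from Section~\ref{JHofanaparastrep} (Proposition~\ref{JHanastein1}) and the Orlik--Strauch classification, every irreducible constituent of $\widetilde{\Sigma}^{\lrr}_i(\pi,\ul\lambda)$ is either locally algebraic — and the only locally algebraic constituents are among the $v^\infty_{\op^{\lrr}_K}(\pi,\ul\lambda)$, with $\st^\infty_{(r,k)}(\pi,\ul\lambda)=v^\infty_{\op^{\lrr}_\emptyset}(\pi,\ul\lambda)$ the unique one occurring in the socle by the previous paragraph — or of the non--locally--algebraic form $\cF^G_{\op^{\lrr}_{ir}}\big(\overline L(-s\cdot\ul\lambda),\st^\infty_{\Delta_{k,i}}(\pi)\big)$ with $s\cdot\ul\lambda\in X^+_{\Delta_n^k\cup\Delta_{k,i}}$. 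I would rule out the second possibility being a subobject: if such a $\cV$ were a subrepresentation of $\widetilde{\Sigma}^{\lrr}_i(\pi,\ul\lambda)\subseteq\st^\ana_{(r,k)}(\pi,\ul\lambda)$, then pulling back along $\widetilde{\Sigma}^{\lrr}_i(\pi,\ul\lambda)\hookrightarrow\big(\mathrm{Ind}^G_{\op^{\lrr}_{\Delta_{k,i}}(L)}\st^\ana_{\Delta_{k,i}}(\pi,\ul\lambda)\big)^{\bQ_p-\ana}$ and using the Orlik--Strauch adjunction/socle computations (as in \cite[Corollary 2.7]{breuil2016socle} and the argument of Lemma~\ref{dfnsumwidetilde}), $\cV$ would have to embed into $\BI^G_{\op^{\lrr}_I}(\pi,\ul\lambda)$ for some $\emptyset\ne I\subseteq\Delta_n(k)$ appearing in $\widetilde{\Sigma}^{\lrr}_i(\pi,\ul\lambda)$ — but it is the image $\tau_{ir}$ of the locally $\bQ_p$-analytic induction from $\st^\infty_{\Delta_{k,i}}(\pi,\ul\lambda)$, whose non--locally--algebraic constituents, by the proof of Lemma~\ref{dfnsumwidetilde}, are \emph{not} submodules. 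The key input that makes the socle argument work (rather than just a composition-series count) is the \emph{very strong admissibility} of $\st^\infty_{\Delta_{k,i}}(\pi)$ established in Lemma~\ref{verystrongad}: this lets one invoke the Orlik--Strauch machinery on locally analytic representations and control submodules of $\big(\mathrm{Ind}^G_{\op^{\lrr}_{\Delta_{k,i}}(L)}\st^\ana_{\Delta_{k,i}}(\pi,\ul\lambda)\big)^{\bQ_p-\ana}$ — concretely, that a non--locally--algebraic irreducible sub would force a non--locally--algebraic irreducible sub of $\st^\infty_{\Delta_{k,i}}(\pi)$ itself (in the smooth category, via the adjunction of \cite{orlik2015jordan}), which is absurd.

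Assembling these, the socle of $\widetilde{\Sigma}^{\lrr}_i(\pi,\ul\lambda)$ consists only of locally algebraic constituents, each of which must be $\st^\infty_{(r,k)}(\pi,\ul\lambda)$ by the structure in Lemma~\ref{dfnsumwidetilde} (the only irreducible sub of the locally algebraic part $\big(\mathrm{Ind}^G_{\op^{\lrr}_{\Delta_{k,i}}(L)}\st^\infty_{\Delta_{k,i}}(\pi)\big)^\infty\otimes_EL(\ul\lambda)$ is $\st^\infty_{(r,k)}(\pi,\ul\lambda)$, and it occurs with multiplicity one), so $\soc_G\big(\widetilde{\Sigma}^{\lrr}_i(\pi,\ul\lambda)\big)\cong\st^\infty_{(r,k)}(\pi,\ul\lambda)$. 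The main obstacle I anticipate is precisely the step ruling out a non--locally--algebraic irreducible submodule: one must carefully transfer the problem from $\st^\ana_{(r,k)}(\pi,\ul\lambda)$ back to the parabolically induced picture and then to a statement purely about the smooth representation $\st^\infty_{\Delta_{k,i}}(\pi)$ of $\bL^{\lrr}_{\Delta_{k,i}}(L)$, where the multiplicity-one and uniqueness-of-socle facts of Bernstein--Zelevinsky theory (via \cite{av1980induced2}) close the argument; doing this rigorously needs the very strong admissibility of Lemma~\ref{verystrongad} so that the Orlik--Strauch functor behaves well enough to detect submodules, and this is exactly why that lemma (proved via completed cohomology) was needed.
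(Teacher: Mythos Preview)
Your proposal has a genuine gap in the crucial step: ruling out a non--locally--algebraic irreducible submodule $W$ of $\widetilde{\Sigma}^{\lrr}_i(\pi,\ul\lambda)$. The argument you sketch---``$\cV$ would have to embed into $\BI^G_{\op^{\lrr}_I}(\pi,\ul\lambda)$ for some $\emptyset\ne I$'' and ``a non--locally--algebraic irreducible sub would force a non--locally--algebraic irreducible sub of $\st^\infty_{\Delta_{k,i}}(\pi)$ in the smooth category''---does not follow from anything available. There is no adjunction in the Orlik--Strauch formalism that transfers a submodule of a quotient of an analytic induction back to a smooth submodule of the inducing representation; the functors $\cF^G_{\op}(-,-)$ detect \emph{constituents}, not submodule structure. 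You also mis-describe $\widetilde{\Sigma}^{\lrr}_i(\pi,\ul\lambda)$ in the first paragraph: it is not a two-step extension of $\st^\infty_{(r,k)}$ by $v^\infty_{\op^{\lrr}_{ir}}$, but the (much larger) quotient of $\big(\ind^G_{\op^{\lrr}_{\Delta_{k,i}}(L)}\st^\infty_{\Delta_{k,i}}(\pi,\ul\lambda)\big)^{\BQ_p-\ana}$ by $v^\infty_{\op^{\lrr}_{ir}}(\pi,\ul\lambda)$; the inclusion $\st^\infty_{(r,k)}(\pi,\ul\lambda)\hookrightarrow\widetilde{\Sigma}^{\lrr}_i(\pi,\ul\lambda)$ is immediate from the first row of Lemma~\ref{dfnsumwidetilde}, so no argument is needed there.

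What the paper actually does is the following. First, using \cite[Corollary~2.5]{breuil2016socle} one computes directly that $\soc_G\big(\ind^G_{\op^{\lrr}_{\Delta_{k,i}}(L)}\st^\infty_{\Delta_{k,i}}(\pi,\ul\lambda)\big)^{\BQ_p-\ana}\cong v^\infty_{\op^{\lrr}_{ir}}(\pi,\ul\lambda)$. Now suppose some irreducible $W\neq\st^\infty_{(r,k)}(\pi,\ul\lambda)$ embeds in $\widetilde{\Sigma}^{\lrr}_i(\pi,\ul\lambda)$; pulling back the second row of Lemma~\ref{dfnsumwidetilde} along this embedding produces an extension $V$ of $W$ by $v^\infty_{\op^{\lrr}_{ir}}(\pi,\ul\lambda)$ sitting inside the analytic induction, and the socle computation forces $V$ to be \emph{non-split}. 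One then proves $V$ \emph{is} split, a contradiction. This splitting is where very strong admissibility enters, and not in the way you suggest: it is needed so that $V$ satisfies the hypotheses of \cite[Lemma~2.24]{2019DINGSimple}, which reduces the splitting to the vanishing of a concrete Hom space
\[
\homo_{\bL^{\lrr,+}_{\Delta_{k,i}}}\Big(\st^\infty_{\Delta_{k,i}}(\pi)\otimes_E\delta_{\op^{\lrr}_{\Delta_{k,i}}(L)}\otimes_E L^{\lrr}(\mu)_{\Delta_{k,i}},\ \hH^1_{\ana}\big(\bN_0(L),v^\infty_{\op^{\lrr}_{ir}}(\pi,\ul\lambda)\big)\Big)=0,
\]
which in turn is checked via Emerton's Jacquet-module adjunction and the smooth vanishing $\homo_G\big(i^G_{\op^{\lrr}}(\pi),v^\infty_{\op^{\lrr}_{ir}}(\pi)\big)=0$. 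Your proposal never reaches this $\ext^1$/splitting mechanism, and without it the socle claim is unproved.
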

\begin{proof}We have
\begin{equation}\label{scolesumwidetildeidentity}
\begin{aligned}
&\soc_G \big(\ind_{\op^{\lrr}_{{\Delta}_{k,i}}(L)}^G \st^{\infty}_{\Delta_{k,i}}(\pi,\ul{\lambda})\big)^{\BQ_p-\ana}   \\
\cong&\; \soc_G \cF_{\op^{\lrr}_{{\Delta}_{k,i}}}^G\big(\overline{L}(-\ul{\lambda}),\st_{\Delta_{k,i}}^{\infty}(\pi)\big)\cong\cF_{G}^G\big(\overline{L}(-\ul{\lambda}),\soc_G\big(i^{G}_{\op^{\lrr}_{{\Delta}_{k,i}}(L)}\st_{\Delta_{k,i}}^{\infty}(\pi)\big) \big)\\
\cong &\; \soc_G\Big(i_{\op^{\lrr}}^{G}(\pi,\underline{\lambda})\big/\sum_{\emptyset \neq J\subseteq \Delta_{k,i}}i_{\op^{\lrr}_{{J}}}^{G}(\pi,\underline{\lambda})\Big)\\
\cong&\;  v_{\op^{\lrr}_{{ir}}}^{\infty}(\pi,\ul{\lambda}),
\end{aligned}
\end{equation}
where the first and second isomorphisms follow from \cite[Corollary 2.5]{breuil2016socle}, and the last isomorphism follows from Lemma \ref{SmoothExt2}.\;Suppose that there exists an irreducible constituent  $W$ of $\widetilde{\Sigma}^{\lrr}_{i}(\pi,\ul{\lambda})/\st_{(r,k)}^{\infty}(\pi,\ul{\lambda})$ such that $W\hookrightarrow \soc_G(\widetilde{\Sigma}^{\lrr}_{i}(\pi,\ul{\lambda}))$.\;We see that the pull-back $V$ of $(\ind_{\op^{\lrr}_{{\Delta}_{k,i}}(L)}^G \st_{\Delta_{k,i}}^{\infty}(\pi,\ul{\lambda}))^{\BQ_p-\ana}$ via this injection gives an extension of $W$ by $v_{\op^{\lrr}_{{I}}}^{\infty}(\pi,\ul{\lambda})$, i.e., such $V$ lies in the following commutative diagram:
\[\xymatrix{0 \ar[r] & v_{\op^{\lrr}_{{ir}}}^{\infty}(\pi,\underline{\lambda}) \ar@{=}[d]  \ar[r] & \hspace{20pt}V_{\hspace{0.2pt}}\hspace{20pt} \ar@{^(->}[d] \ar[r] & \hspace{16pt}W_{\hspace{0.2pt}}\hspace{16pt} \ar@{^(->}[d] \ar[r] & 0  \\
0 \ar[r] & v_{\op^{\lrr}_{{ir}}}^{\infty}(\pi,\underline{\lambda})  \ar[r] & \Big(\mathrm{Ind}_{\op^{\lrr}_{{\Delta}_{k,i}}(L)}^{G}\st_{{\Delta}_{k,i}}^\infty(\pi,\underline{\lambda})\Big)^{\mathbb{Q}_p-\mathrm{an}} \ar[r] & \widetilde{\Sigma}^{\lrr}_{i}(\pi,\underline{\lambda})\ar[r] & 0.}\]
By (\ref{scolesumwidetildeidentity}), the first row of this diagram is non-split.\;Similar to the proof of Lemma \ref{dfnsumwidetilde}, $W$ has the form $\cF_{\op^{\lrr}_{{\Delta}_{k,i}}}^G(\overline{L}(-s\cdot \ul{\lambda}), \st_{\Delta_{k,i}}^{\infty}(\pi))$
with $s\in S_n^{|\Sigma_L|}$ satisfying $s\cdot \ul{\lambda}\in X_{\Delta^k_n\cup{\Delta}_{k,i}}^+$.\;\\
\textbf{Claim.} $V$ is very strongly admissible, i.e., there is a continuous $G$-equivariant $E$-linear injection $V \rightarrow W$, where $W$ is an admissible continuous representation of $G$ on an $E$-Banach space (see \cite[Proposition-Definition 6.2.3]{emerton2017locally} for admissible continuous representation).\;Indeed, by \cite[Proposition 2.1.2]{emerton2007jacquet} and $V\hookrightarrow (\ind_{\op^{\lrr}_{{\Delta}_{k,i}}(L)}^G \st_{\Delta_{k,i}}^{\infty}(\pi,\ul{\lambda}))^{\BQ_p-\ana}$, it suffices to show that $\st_{\Delta_{k,i}}^{\infty}(\pi,\ul{\lambda})$ is very strongly admissible.\;Since an admissible Banach representation tensoring with a finite-dimensional $\BQ_p$-algebraic representation of $\bL^{\lrr}_{{\Delta}_{k,i}}(L)$ is still an admissible Banach representation, it remains to show that $\st_{\Delta_{k,i}}^{\infty}(\pi)$ is very strongly admissible.\;This fact is proved in Lemma \ref{verystrongad}.
We are going to prove that $V$ is split by using the  same strategy of \cite[Lemma 2.4]{2019DINGSimple}.\;Indeed, this result follows by an easy variation of the proof of \cite[Lemma 2.24]{2019DINGSimple}.\;We briefly indicate below the changes.\;Only Part $(e)$ of the proof of \cite[Lemma 2.4]{2019DINGSimple} need some more modification.\;In our case, we need to show that
\begin{equation}\label{equ: lgln-dJE}
\homo_{{\bL^{\lrr,+}_{{\Delta}_{k,i}}}}\Big(\st_{\Delta_{k,i}}^{\infty}(\pi) \otimes_E \delta_{\op^{\lrr}_{\Delta_{k,i}(L)}}\otimes_E L^{\lrr}(\mu)_{\Delta_{k,i}},\hH^1_{\ana}\big({\bN}_0(L), v_{\op^{\lrr}_{{I}}}^{\infty}(\pi,\ul{\lambda})\big)\Big)=0, 
\end{equation}
where ${\bN}_0(L)$ is a compact open subgroup of $\bN^{\lrr}_{\Delta_{k,i}}$, and ${\bL^{\lrr,+}_{{\Delta}_{k,i}}}:=\{z\in \bL^{\lrr}_{{\Delta}_{k,i}}: z{\bN}_0(L)z^{-1}\subset {\bN}_0(L)\}$.\;If this vector space is non-zero, then the same argument after \cite[(55)]{2019DINGSimple} and the adjunction property (\ref{smoothadj2})) implies $\homo_{G}\Big((\ind_{\op^{\lrr}_{{\Delta}_{k,i}}(L)}^G \st_{{\Delta}_{k,i}}^{\infty})^{\infty}, v_{\op^{\lrr}_{{I}}}^{\infty}(\pi)\Big) \neq 0.\;$Then we deduce $\homo_G\big(i_{\op^{\lrr}}^G(\pi), v_{\op^{\lrr}_{{I}}}^{\infty}(\pi)\big)\neq 0$.\;This is impossible by Lemma \ref{SmoothExt1}.\;The proposition follows.\;
\end{proof}

Recall that $\ul{\lambda}:=(\lambda_{1,\sigma}, \cdots, \lambda_{n,\sigma})_{\sigma\in \Sigma_L}$.\;Let $\sigma\in \Sigma_L$, we put $$\ul{\lambda}_{\sigma}:=(\lambda_{1,\sigma}, \cdots, \lambda_{n,\sigma}), \ul{\lambda}^{\sigma}:=(\lambda_{1,\sigma'}, \cdots, \lambda_{n,\sigma'})_{\sigma'\in \Sigma_L\backslash\{\sigma\}}.$$Let  $ir\in {\Delta_n(k)}$.\;We put $\st_{{\Delta}_{k,i}}^{\infty}(\pi,\ul{\lambda}_{\sigma})\cong \st_{{\Delta}_{k,i}}^{\infty}(\pi)\otimes_E L^{\lrr}(\ul{\lambda}_{\sigma})_{{\Delta}_{k,i}}$, which is a locally $\sigma$-analytic representation of $\bL^{\lrr}_{{\Delta}_{k,i}}(L)$.\;Consider the locally $\sigma$-analytic parabolic induction
\begin{equation*}
\big(\ind_{\op^{\lrr}_{{\Delta}_{k,i}}(L)}^G \st_{{\Delta}_{k,i}}^{\infty}(\pi,\ul{\lambda}_{\sigma}) \big)^{\sigma-\ana}.
\end{equation*}
By \cite[the main theorem]{orlik2015jordan} and \cite[Lemma 2.7, Lemma 2.10]{2019DINGSimple}, we have
\begin{equation}
\begin{aligned}
&\big(\ind_{\op^{\lrr}_{{\Delta}_{k,i}}(L)}^G \st_{{\Delta}_{k,i}}^{\infty}(\pi)\big)^{\infty}\otimes_E L(\ul{\lambda})
\hooklongrightarrow  \big(\ind_{\op^{\lrr}_{{\Delta}_{k,i}}(L)}^G \st_{{\Delta}_{k,i}}^{\infty}(\pi,\ul{\lambda}_{\sigma}) \big)^{\sigma-\ana} \otimes_E L(\ul{\lambda}^{\sigma}) \\
\hooklongrightarrow&\;\big(\ind_{\op^{\lrr}_{{\Delta}_{k,i}}(L)}^G \st_{{\Delta}_{k,i}}^{\infty}(\pi,\ul{\lambda}) \big)^{\BQ_p-\ana}
\longrightarrow \st_{(r,k)}^{\ana}(\pi,\ul{\lambda}).
\end{aligned}
\end{equation}
Denote by
\begin{equation}
	\begin{aligned}
		\widetilde{\Sigma}^{\lrr}_{i,\sigma}(\pi,\ul{\lambda}):=\Big(\big(\ind_{\op^{\lrr}_{{\Delta}_{k,i}}(L)}^G \st_{{\Delta}_{k,i}}^{\infty}(\pi,\ul{\lambda}_{\sigma}) \Big)^{\sigma-\ana}\otimes_E L(\ul{\lambda}^{\sigma})\Big/v_{\op^{\lrr}_{{ir}}}^{\infty}(\pi,\ul{\lambda})\hookrightarrow \widetilde{\Sigma}^{\lrr}_{i}(\pi,\ul{\lambda})
\end{aligned}
\end{equation}
We put
\begin{equation*}
C_{i,\sigma}:=\cF_{\op^{\lrr}_{{\Delta}_{k,i}}}^G\Big(\overline{L}(-\ul{\lambda}^{\sigma})\otimes_E \overline{L}(-s_{ir,\sigma}\cdot \ul{\lambda}_{\sigma}), \st_{{\Delta}_{k,i}}^{\infty}(\pi)\Big),
\end{equation*}
which is an irreducible subrepresentation of $\widetilde{\Sigma}^{\lrr}_{i,\sigma}(\pi,\ul{\lambda})/\st_{(r,k)}^{\infty}(\pi,\ul{\lambda})$.\;By Proposition \ref{JHanastein1} (applied to $J=S=\emptyset$), $C_{i,\sigma}$ appears as an irreducible constituent in $\st_{(r,k)}^{\ana}(\pi,\ul{\lambda})$
with nonzero multiplicity
\begin{equation}
[\st_{(r,k)}^{\ana}(\pi,\ul{\lambda}):C_{i,\sigma}]=\sum_{\substack{w'\in \sW_n\\ \mathrm{supp}(w')=\emptyset}}(-1)^{l(w')}[\lM(-w'\cdot\underline{\lambda}):\overline{L}(-\ul{\lambda}^{\sigma})\otimes_E \overline{L}(-s_{ir,\sigma}\cdot \ul{\lambda}_{\sigma})]=1.
\end{equation}

We are going to define certain subrepresentations of $\st_{(r,k)}^{\ana}(\pi,\ul{\lambda})$.\;

\begin{dfn}\hspace{20pt}
\begin{itemize}
\item[(1)] Denote by $\Sigma_{i,\sigma}^{\lrr}(\pi,\ul{\lambda})$ the extension of $C_{i,\sigma}$ by $\st_{(r,k)}^{\infty}(\pi,\ul{\lambda})$ appearing as a subrepresentation of $\widetilde{\Sigma}^{\lrr}_{i,\sigma}(\pi,\ul{\lambda})$.\;
\item[(2)] Put \begin{equation}\label{substanrep}
\begin{aligned}
&\Sigma_i^{\lrr}(\pi,\ul{\lambda}):=\bigoplus^{\sigma\in \Sigma_L}_{\st_{(r,k)}^{\infty}(\pi,\ul{\lambda})}\Sigma_{i,\sigma}^{\lrr}(\pi,\ul{\lambda})\hookrightarrow \widetilde{\Sigma}^{\lrr}_{i}(\pi,\ul{\lambda}),\\
& \Sigma^{\lrr}(\pi,\ul{\lambda}):=\bigoplus^{ir\in {\Delta_n(k)}}_{\st_{(r,k)}^{\infty}(\pi,\ul{\lambda})} \Sigma_i^{\lrr}(\pi,\ul{\lambda})
\hooklongrightarrow\widetilde{\Sigma}^{\lrr}(\pi,\ul{\lambda})\subseteq\st_{(r,k)}^{\ana}(\pi,\underline{\lambda}).
\end{aligned}
\end{equation}
\end{itemize}
\end{dfn}
By definition, $\Sigma_i^{\lrr}(\pi,\ul{\lambda})$ is an extension of $\bigoplus_{\sigma\in \Sigma_L}C_{i,\sigma}$ by $\st_{(r,k)}^{\infty}(\pi,\ul{\lambda})$.\;

\begin{pro}\label{socleofreps1}For any $V\in \{\widetilde{\Sigma}^{\lrr}_{i,\sigma}(\pi,\ul{\lambda}),\Sigma_i^{\lrr}(\pi,\ul{\lambda}),\Sigma^{\lrr}(\pi,\ul{\lambda})\}$, we have $\soc_{G}V=\st_{(r,k)}^{\infty}(\pi,\ul{\lambda}).$
\end{pro}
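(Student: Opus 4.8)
The plan is to reduce everything to the already-established fact $\soc_G(\widetilde{\Sigma}^{\lrr}_{i}(\pi,\ul{\lambda}))\cong \st_{(r,k)}^{\infty}(\pi,\ul{\lambda})$ (Proposition \ref{scolesumwidetilde}) together with the analogous statement for the locally $\sigma$-analytic induction. First I would treat $V=\widetilde{\Sigma}^{\lrr}_{i,\sigma}(\pi,\ul{\lambda})$. By construction it is a subrepresentation of $\widetilde{\Sigma}^{\lrr}_{i}(\pi,\ul{\lambda})$, so its socle injects into $\soc_G(\widetilde{\Sigma}^{\lrr}_{i}(\pi,\ul{\lambda}))\cong \st_{(r,k)}^{\infty}(\pi,\ul{\lambda})$; since $\st_{(r,k)}^{\infty}(\pi,\ul{\lambda})$ is a subrepresentation of $\widetilde{\Sigma}^{\lrr}_{i,\sigma}(\pi,\ul{\lambda})$ itself and is irreducible, the socle must equal it. The same containment argument applies verbatim to $\Sigma_i^{\lrr}(\pi,\ul{\lambda})$ and $\Sigma^{\lrr}(\pi,\ul{\lambda})$, since both contain $\st_{(r,k)}^{\infty}(\pi,\ul{\lambda})$ and are (by (\ref{substanrep})) subrepresentations of $\widetilde{\Sigma}^{\lrr}_{i}(\pi,\ul{\lambda})$.

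So in fact the whole statement follows from the single observation that each of the three representations sits between $\st_{(r,k)}^{\infty}(\pi,\ul{\lambda})$ (as a sub) and $\widetilde{\Sigma}^{\lrr}_{i}(\pi,\ul{\lambda})$: if $W_1\subseteq W_2\subseteq W_3$ with $W_1$ irreducible and $\soc_G W_3\cong W_1$, then $\soc_G W_2\cong W_1$, because $\soc_G W_2\hookrightarrow \soc_G W_3$ forces $\soc_G W_2$ to be a subobject of $W_1$, hence (being nonzero and $W_1$ simple) equal to $W_1$, while the inclusion $W_1\subseteq W_2$ shows $W_1\subseteq \soc_G W_2$. The containments $\st_{(r,k)}^{\infty}(\pi,\ul{\lambda})\subseteq \Sigma_{i,\sigma}^{\lrr}(\pi,\ul{\lambda})\subseteq \widetilde{\Sigma}^{\lrr}_{i,\sigma}(\pi,\ul{\lambda})\subseteq\widetilde{\Sigma}^{\lrr}_{i}(\pi,\ul{\lambda})$ and $\st_{(r,k)}^{\infty}(\pi,\ul{\lambda})\subseteq\Sigma_i^{\lrr}(\pi,\ul{\lambda})\subseteq\Sigma^{\lrr}(\pi,\ul{\lambda})\subseteq\widetilde{\Sigma}^{\lrr}(\pi,\ul{\lambda})\subseteq\st_{(r,k)}^{\ana}(\pi,\ul{\lambda})$ are all part of the data set up before the proposition; the one extra point needed is that $\soc_G\widetilde{\Sigma}^{\lrr}(\pi,\ul{\lambda})\cong\st_{(r,k)}^{\infty}(\pi,\ul{\lambda})$, which follows from the socle computation for each $\widetilde{\Sigma}^{\lrr}_{i}(\pi,\ul{\lambda})$ and the amalgamated-sum description of $\widetilde{\Sigma}^{\lrr}(\pi,\ul{\lambda})$ over the common sub $\st_{(r,k)}^{\infty}(\pi,\ul{\lambda})$ (a direct sum over $\st_{(r,k)}^{\infty}(\pi,\ul{\lambda})$ of representations each with socle $\st_{(r,k)}^{\infty}(\pi,\ul{\lambda})$ still has socle $\st_{(r,k)}^{\infty}(\pi,\ul{\lambda})$, using that the distinct non-locally-algebraic constituents $C_{i,\sigma}$ occurring in different summands are pairwise non-isomorphic).

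I do not expect a serious obstacle here: the content has already been absorbed into Proposition \ref{scolesumwidetilde} and Lemma \ref{dfnsumwidetilde}, and this proposition is essentially a formal consequence. The only mild care needed is the elementary lemma that socle is compatible with the relevant amalgamated direct sums; I would phrase this cleanly (socle commutes with arbitrary subobjects in one direction, and an amalgamated sum $\bigoplus_{W_1}W_i$ with each $\soc_G W_i\cong W_1$ irreducible has socle $W_1$ provided the $W_i/W_1$ share no common constituent, which holds by \cite[Corollary 2.7]{breuil2016socle}), and then the proof is two lines.

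\begin{proof}
Each of the three representations $V$ contains $\st_{(r,k)}^{\infty}(\pi,\ul{\lambda})$ as a subrepresentation, and each is a subrepresentation of $\widetilde{\Sigma}^{\lrr}_{i}(\pi,\ul{\lambda})$ (for $V\in\{\widetilde{\Sigma}^{\lrr}_{i,\sigma}(\pi,\ul{\lambda}),\Sigma_i^{\lrr}(\pi,\ul{\lambda})\}$ this is immediate from the constructions, and $\Sigma^{\lrr}(\pi,\ul{\lambda})\subseteq \widetilde{\Sigma}^{\lrr}(\pi,\ul{\lambda})\subseteq \st_{(r,k)}^{\ana}(\pi,\ul{\lambda})$).\;First note that $\soc_G(\widetilde{\Sigma}^{\lrr}(\pi,\ul{\lambda}))\cong \st_{(r,k)}^{\infty}(\pi,\ul{\lambda})$: the amalgamated sum $\widetilde{\Sigma}^{\lrr}(\pi,\ul{\lambda})=\bigoplus^{i\in \Delta_n(k)}_{\mathrm{St}_{(r,k)}^{\infty}(\pi,\underline{\lambda})}\widetilde{\Sigma}^{\lrr}_{i}(\pi,\ul{\lambda})$ has each summand with socle $\st_{(r,k)}^{\infty}(\pi,\ul{\lambda})$ by Proposition \ref{scolesumwidetilde}, and the quotients $\widetilde{\Sigma}^{\lrr}_{i}(\pi,\ul{\lambda})/\st_{(r,k)}^{\infty}(\pi,\ul{\lambda})$ for distinct $i$ have no common irreducible constituent by \cite[Corollary 2.7]{breuil2016socle}; hence any irreducible subobject of $\widetilde{\Sigma}^{\lrr}(\pi,\ul{\lambda})$ already lies in a single summand, and the claim follows.\;Now let $V$ be any of the three representations above.\;Since $\st_{(r,k)}^{\infty}(\pi,\ul{\lambda})\subseteq V$, we have $\st_{(r,k)}^{\infty}(\pi,\ul{\lambda})\subseteq \soc_G V$.\;Conversely, $V$ embeds into $\widetilde{\Sigma}^{\lrr}_{i}(\pi,\ul{\lambda})$ (resp.\;$\widetilde{\Sigma}^{\lrr}(\pi,\ul{\lambda})$ for $V=\Sigma^{\lrr}(\pi,\ul{\lambda})$), so $\soc_G V$ embeds into $\soc_G \widetilde{\Sigma}^{\lrr}_{i}(\pi,\ul{\lambda})\cong \st_{(r,k)}^{\infty}(\pi,\ul{\lambda})$ (resp.\;into $\soc_G\widetilde{\Sigma}^{\lrr}(\pi,\ul{\lambda})\cong\st_{(r,k)}^{\infty}(\pi,\ul{\lambda})$).\;As $\st_{(r,k)}^{\infty}(\pi,\ul{\lambda})$ is irreducible, these two inclusions force $\soc_G V\cong \st_{(r,k)}^{\infty}(\pi,\ul{\lambda})$.\;
\end{proof}
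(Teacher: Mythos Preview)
Your proof is correct and follows essentially the same approach as the paper: both reduce to Proposition \ref{scolesumwidetilde} via a sandwich argument and use the pairwise distinctness of the non-locally-algebraic constituents to handle the amalgamated sum. The only cosmetic difference is that for $\Sigma^{\lrr}(\pi,\ul{\lambda})$ you route through $\soc_G\widetilde{\Sigma}^{\lrr}(\pi,\ul{\lambda})$, whereas the paper argues directly on $\Sigma^{\lrr}(\pi,\ul{\lambda})$ using that the $\{C_{i,\sigma}\}$ are pairwise non-isomorphic (citing \cite[Corollary 2.7]{breuil2019ext1} rather than \cite{breuil2016socle}); either way the input is the same.
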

\begin{proof}For each $V\in \{\widetilde{\Sigma}^{\lrr}_{i,\sigma}(\pi,\ul{\lambda}),\Sigma_i^{\lrr}(\pi,\ul{\lambda})\}$, the result follows from Proposition \ref{scolesumwidetilde}.\;By  \cite[Corollary 2.7]{breuil2019ext1}, the irreducible constituents $\{C_{i,\sigma}\}_{ir\in {\Delta_n(k)}, \sigma\in \Sigma_L}$ are all distinct, hence we  have $\soc_{G}\Sigma_i^{\lrr}(\pi,\ul{\lambda})\cong \st_{(r,k)}^{\infty}(\pi,\ul{\lambda})$.
\end{proof}

\begin{pro}\label{threeisoextensions}
Let $ir, jr\in {\Delta_n(k)}$ and let $\sigma\in \Sigma_L$, the following natural morphisms are isomorphisms
\begin{eqnarray}
\ext^1_G\big(v_{\op^{\lrr}_{{ir}}}^{\infty}(\pi,\ul{\lambda}), \Sigma^{\lrr}(\pi,\ul{\lambda})\big) &\xrightarrow{\sim}& \ext^1_G\big(v_{\op^{\lrr}_{{ir}}}^{\infty}(\pi,\ul{\lambda}), \st_{(r,k)}^{\ana}(\pi,\ul{\lambda})\big),\nonumber\\
\ext^1_G\big(v_{\op^{\lrr}_{{ir}}}^{\infty}(\pi,\ul{\lambda}), \Sigma_j^{\lrr}(\pi,\ul{\lambda})\big) &\xrightarrow{\sim}& \ext^1_G\big(v_{\op^{\lrr}_{{ir}}}^{\infty}(\pi,\ul{\lambda}), \widetilde{\Sigma}^{\lrr}_j(\pi,\ul{\lambda})\big), \nonumber \\
\ext^1_G\big(v_{\op^{\lrr}_{{ir}}}^{\infty}(\pi,\ul{\lambda}), \Sigma_{j,\sigma}^{\lrr}(\pi,\ul{\lambda})\big) &\xrightarrow{\sim}& \ext^1_G\big(v_{\op^{\lrr}_{{ir}}}^{\infty}(\pi,\ul{\lambda}), \widetilde{\Sigma}^{\lrr}_{j,\sigma}(\pi,\ul{\lambda})\big).\;\nonumber
\end{eqnarray}
\end{pro}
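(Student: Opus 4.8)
The plan is to deduce all three isomorphisms from the long exact sequences of $\ext$-groups attached to the defining short exact sequences of these subrepresentations, by showing the relevant boundary maps are either zero or isomorphisms. The common mechanism is: if $0\to A\to B\to C\to 0$ is exact and $W=v_{\op^{\lrr}_{ir}}^{\infty}(\pi,\ul{\lambda})$, then $\ext^1_G(W,A)\to\ext^1_G(W,B)$ is an isomorphism provided $\homo_G(W,C)=0$ and $\ext^1_G(W,C)=0$ (the latter ensures surjectivity, together with exactness $\ext^1_G(W,B)\to\ext^1_G(W,C)$; injectivity follows from $\homo_G(W,C)=0$). So for each of the three comparisons I would identify the quotient $C$ and prove vanishing of $\homo_G(W,C)$ and $\ext^1_G(W,C)$.

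First, for the third (finest) isomorphism: the quotient $\widetilde{\Sigma}^{\lrr}_{j,\sigma}(\pi,\ul{\lambda})/\Sigma_{j,\sigma}^{\lrr}(\pi,\ul{\lambda})$ is a successive extension of irreducible constituents of $\widetilde{\Sigma}^{\lrr}_{j,\sigma}(\pi,\ul{\lambda})/\st_{(r,k)}^{\infty}(\pi,\ul{\lambda})$ other than $C_{j,\sigma}$, i.e.\ of representations of the form $\cF^G_{\op^{\lrr}_{{\Delta}_{k,j}}}(\overline{L}(-\ul{\lambda}^\sigma)\otimes_E\overline{L}(-w_\sigma\cdot\ul{\lambda}_\sigma),\st^{\infty}_{\Delta_{k,j}}(\pi))$ with $l(w_\sigma)\ge 2$. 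By the dévissage/spectral-sequence computations in Lemma \ref{analyticExt1analyticExtC1} and Corollary \ref{coranalyticExt2} (more precisely, the analogue for these irreducible $\cF^G_{-}(-,-)$-constituents, reduced via the Orlik–Strauch exactness and Bruhat filtrations to Lie-algebra cohomology computations), both $\homo_G$ and $\ext^1_G$ from $W$ into such a constituent vanish — the key point being that $W$ is locally algebraic while these constituents are not, combined with a central-character or weight obstruction as in Lemma \ref{centervan} and Proposition \ref{analyticExt3.2}. Hence the third isomorphism. Then summing over $\sigma\in\Sigma_L$ (using that the $C_{j,\sigma}$ are pairwise distinct, \cite[Corollary 2.7]{breuil2019ext1}, so the relevant $\ext^1$'s decompose compatibly) gives the second isomorphism, and summing over $jr\in{\Delta_n(k)}$ (again using distinctness of the $C_{i,\sigma}$ across $i$) gives the first, provided we also check $\homo_G\big(v_{\op^{\lrr}_{ir}}^{\infty}(\pi,\ul{\lambda}),\st_{(r,k)}^{\ana}(\pi,\ul{\lambda})/\Sigma^{\lrr}(\pi,\ul{\lambda})\big)=0$, which follows from Proposition \ref{socleofreps1} ($\soc_G\st_{(r,k)}^{\ana}(\pi,\ul{\lambda})$ contains no constituent of the quotient other than those already in $\Sigma^{\lrr}$) together with the Jordan–Hölder description in Section \ref{JHofanaparastrep}.

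Concretely I would carry it out in the following order: (i) establish, for each irreducible constituent $\cX$ of $\st_{(r,k)}^{\ana}(\pi,\ul{\lambda})/\Sigma^{\lrr}(\pi,\ul{\lambda})$, that $\homo_G(W,\cX)=0$ and $\ext^1_G(W,\cX)=0$ — this is the technical core, handled by the same mix of Orlik–Strauch exactness, Proposition \ref{analyticexact}, the Bruhat filtration of Section \ref{Brufil}, the spectral sequences of \cite[Corollary 4.9]{schraen2011GL3}, and Lemma \ref{centervan}; (ii) use the long exact sequences for the short exact sequences $0\to\Sigma_{j,\sigma}^{\lrr}\to\widetilde{\Sigma}^{\lrr}_{j,\sigma}\to(\text{quotient})\to 0$, $0\to\Sigma_j^{\lrr}\to\widetilde{\Sigma}^{\lrr}_j\to(\text{quotient})\to0$, and $0\to\Sigma^{\lrr}\to\st_{(r,k)}^{\ana}\to(\text{quotient})\to0$ to conclude. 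The main obstacle is step (i): unlike the Borel case of \cite{2019DINGSimple}, the constituents involve generalized parabolic Verma modules and the cuspidal $\pi$ sitting inside a Zelevinsky segment, so the vanishing of $\ext^1_G(W,\cX)$ cannot be read off from an explicit dimension count. I would instead isolate, as in the proof of Theorem \ref{analyticExt3}, the precise $I$ and $w$ for which the dévissage terms $\ext^i_{\bL^{\lrr}_J(L),\omega_{\pi}^{\lrr}}(\cdots)$ are forced to vanish by a central element $z\in\bZ^{\lrr}_{J}(L)$ acting by different scalars, reducing everything to Lemma \ref{centervan} and the computations already recorded in Lemma \ref{EXTlemmaPre}; the fact that $\cX$ is not locally algebraic (its infinitesimal character differs from that of $W$ unless $w=1$, which is excluded) is what makes this work.
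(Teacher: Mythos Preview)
Your overall strategy matches the paper's: analyse the quotients $\st_{(r,k)}^{\ana}/\Sigma^{\lrr}$, $\widetilde{\Sigma}^{\lrr}_j/\Sigma_j^{\lrr}$, $\widetilde{\Sigma}^{\lrr}_{j,\sigma}/\Sigma_{j,\sigma}^{\lrr}$, observe that all their irreducible constituents have the form $\cF^G_{\op^{\lrr}_{J}}(\overline{L}(-s\cdot\ul{\lambda}),W')$ with $l(s)\ge 2$, and show $\homo_G$ and $\ext^1_G$ from $W=v^{\infty}_{\op^{\lrr}_{ir}}(\pi,\ul{\lambda})$ into such constituents vanish, then conclude by d\'evissage and the long exact sequence. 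The paper does exactly this, but instead of reproving the vanishing it simply invokes \cite[Lemma~2.26]{2019DINGSimple}, which already records that $\homo_G(W,\cX)=\ext^1_G(W,\cX)=0$ for any such $\cX$ with $l(s)\ge 2$.

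There is, however, a genuine gap in your justification of the vanishing. You assert that ``its infinitesimal character differs from that of $W$ unless $w=1$'' and propose to finish via Lemma~\ref{centervan}. Neither is correct here. Since $s\cdot\ul{\lambda}+\rho=s(\ul{\lambda}+\rho)$ lies in the same Weyl orbit as $\ul{\lambda}+\rho$, the infinitesimal characters of $\overline{L}(-s\cdot\ul{\lambda})$ and $\overline{L}(-\ul{\lambda})$ \emph{coincide}; and Lemma~\ref{centervan} concerns smooth central characters, which also agree. The actual mechanism behind \cite[Lemma~2.26]{2019DINGSimple} is a degree shift coming from Kostant's theorem: in the spectral sequence of \cite[Corollaire~4.9]{schraen2011GL3} one has
\[
\hH_b\big(\overline{\fn}^{\lrr}_{J,\Sigma_L},L(\ul{\lambda})\big)\cong\bigoplus_{\substack{w,\ l(w)=b\\ w\cdot\ul{\lambda}\in X^+_{\Delta_n^k\cup J}}}L^{\lrr}(w\cdot\ul{\lambda})_{J},
\]
so a nonzero contribution to $\ext^i_G(W,\cX)$ requires matching $w\cdot\ul{\lambda}=s\cdot\ul{\lambda}$, forcing $b=l(s)\ge 2$; hence $\ext^i_G(W,\cX)=E_2^{i-l(s),l(s)}$ vanishes for $i=0,1$. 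If you rewrite your step~(i) along these lines (replacing the central-character/infinitesimal-character claims by this Kostant degree argument, exactly as in the proof of \cite[Proposition~4.7]{schraen2011GL3} that you already cite elsewhere), your proof goes through and is essentially equivalent to the paper's.
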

\begin{proof}By Proposition \ref{JHanastein1}, the irreducible constituent of  $\st_{(r,k)}^{\ana}(\pi,\ul{\lambda})/\Sigma^{\lrr}(\pi,\ul{\lambda})$ (resp., $\widetilde{\Sigma}^{\lrr}_j(\pi,\ul{\lambda})/\Sigma_j^{\lrr}(\pi,\ul{\lambda})$, resp., $\widetilde{\Sigma}^{\lrr}_{j,\sigma}(\pi,\ul{\lambda})/\Sigma_{j,\sigma}^{\lrr}(\pi,\ul{\lambda})$) has the form 
$\cF_{\op^{\lrr}_{{J}}}^G(\overline{L}(-s\cdot \ul{\lambda}), W)$ with $\lg(s)>1$.\;Therefore, all the isomorphisms follow from \cite[Lemma 2.26]{2019DINGSimple} and an easy d\'evissage argument.\;
\end{proof}
\begin{rmk}Note that $\overline{M}^{\lrr}_{{\Delta}_{k,i}}(-\underline{{\lambda}})$ admits a natural increasing filtration $\fil^\bullet\overline{M}^{\lrr}_{{\Delta}_{k,i}}(-\underline{{\lambda}})$ in BGG category $\cO_{\alge}^{\overline{\fp}^{\lrr}_{{\Delta}_{k,i}}}$   such that
\[\fil^s\overline{M}^{\lrr}_{{\Delta}_{k,i}}(-\underline{{\lambda}})/\fil^{s-1}\overline{M}^{\lrr}_{{\Delta}_{k,i}}(-\underline{{\lambda}})\cong \bigoplus_{\substack{w\in \sW_{n,\Sigma_L
}, \leg(w)=s\\m_{{\Delta}_{k,i},w}(\underline{{\lambda}})>0, w\cdot \underline{\lambda} \in X^+_{\Delta_n^k\cup {\Delta}_{k,i}}}}\overline{L}(-w\cdot\underline{{\lambda}}),\]
where $m_{{\Delta}_{k,i},w}(\underline{{\lambda}}):=[\overline{M}^{\lrr}_{{\Delta}_{k,i}}(-\underline{{\lambda}}):\overline{L}(-w\cdot\underline{{\lambda}})]$ (note that this multiplicity may be greater than one).\;By \cite[Theorem]{orlik2015jordan}, it induces
an increasing filtration $\fil^\bullet\widetilde{\Sigma}^{\lrr}_{i}(\pi,\underline{\lambda})$ on $\widetilde{\Sigma}^{\lrr}_{i}(\pi,\underline{\lambda})$.\;Then $\Sigma_j^{\lrr}(\pi,\ul{\lambda})$ lies in $\fil^1\widetilde{\Sigma}^{\lrr}_{i}(\pi,\underline{\lambda})$.\;
\end{rmk}

We will see below that only the part $\Sigma_i^{\lrr}(\pi,\ul{\lambda})$ of $\st_{(r,k)}^{\ana}(\pi,\ul{\lambda})$ contributes  non-split extensions in the $\ext^1_G\big(v_{\op^{\lrr}_{{ir}}}^{\infty}(\pi,\ul{\lambda}), \st_{(r,k)}^{\ana}(\pi,\ul{\lambda})\big)$.\;Similar to the proof of \cite[Lemma 2.28]{2019DINGSimple},\;we have

\begin{lem}\label{twoisoextensions}
Let $ir, jr \in {\Delta_n(k)}$, then for any $\sigma\in \Sigma_L$, we have\\
\noindent
(1) $\dim_E\ext^1_G\Big(v_{\op^{\lrr}_{{ir}}}^{\infty}(\pi,\ul{\lambda}), \Big(\ind_{\op^{\lrr}_{{\Delta}_{k,j}}}^{G} L^{\lrr}(s_{j, \sigma}\cdot \ul{\lambda})_{\Delta_{k,j}}\otimes_E \st_{{\Delta}_{k,j}}^{\infty}(\pi)\Big)^{\BQ_p-\ana}\Big)=\left\{
\begin{array}{ll}
1, & \hbox{$i=j$;} \\
0, & \hbox{$i\neq j$}.
\end{array}
\right.$\\
(2) $\dim_E\ext^1_G\big(v_{\op^{\lrr}_{{ir}}}^{\infty}(\pi,\ul{\lambda}), C_{j,\sigma}\big)=\left\{
\begin{array}{ll}
1, & \hbox{$i=j$;} \\
0, & \hbox{$i\neq j$}.
\end{array}
\right.$\\
Denote by $\Pi^{i,\sigma}_1\in \ext^1_G\big(v_{\op^{\lrr}_{{ir}}}^{\infty}(\pi,\ul{\lambda}), C_{i,\sigma}\big)$ the unique nonsplit element.\;
\end{lem}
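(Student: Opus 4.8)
The plan is to follow the route of \cite[Lemma 2.28]{2019DINGSimple}: prove \textbf{(1)} by a direct application of Schraen's spectral sequence, and deduce \textbf{(2)} from \textbf{(1)} by a dévissage. Write $\Pi_{j,\sigma}$ for the representation in \textbf{(1)}; by \cite[The main theorem]{orlik2015jordan} it is $\cF^G_{\op^{\lrr}_{\Delta_{k,j}}(L)}\big(\overline{M}^{\lrr}_{\Delta_{k,j}}(-s_{jr,\sigma}\cdot\ul\lambda),\st^{\infty}_{\Delta_{k,j}}(\pi)\big)$. For \textbf{(1)} I would apply \cite[Corollary 4.9]{schraen2011GL3} with the parabolic $\op^{\lrr}_{\Delta_{k,j}}$ --- the separatedness hypothesis on $\hH_s(\on^{\lrr}_{\Delta_{k,j}}(L),v^{\infty}_{\op^{\lrr}_{ir}}(\pi,\ul\lambda))$ holds because $v^{\infty}_{\op^{\lrr}_{ir}}(\pi,\ul\lambda)$ is locally $\BQ_p$-algebraic, exactly as in the proof of Lemma \ref{analyticExt1analyticExtC1} --- to obtain a spectral sequence
\[
E_2^{r,s}=\ext^r_{\bL^{\lrr}_{\Delta_{k,j}}(L)}\Big(\hH_s\big(\on^{\lrr}_{\Delta_{k,j}}(L),v^{\infty}_{\op^{\lrr}_{ir}}(\pi,\ul\lambda)\big),\, L^{\lrr}(s_{jr,\sigma}\cdot\ul\lambda)_{\Delta_{k,j}}\otimes_E \st^{\infty}_{\Delta_{k,j}}(\pi)\Big)\Rightarrow \ext^{r+s}_G\big(v^{\infty}_{\op^{\lrr}_{ir}}(\pi,\ul\lambda),\Pi_{j,\sigma}\big).
\]

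By \cite[Theorem 4.10]{schraen2011GL3}, $\hH_s(\on^{\lrr}_{\Delta_{k,j}}(L),v^{\infty}_{\op^{\lrr}_{ir}}(\pi,\ul\lambda))\cong r^G_{\op^{\lrr}_{\Delta_{k,j}}}(v^{\infty}_{\op^{\lrr}_{ir}}(\pi))\otimes_E\bigoplus_{\lg(w)=s,\ w\cdot\ul\lambda\in X^+_{\Delta_n^k\cup\Delta_{k,j}}}L^{\lrr}(w\cdot\ul\lambda)_{\Delta_{k,j}}$, and, just as in Lemma \ref{analyticExt1analyticExtC1}, matching of algebraic highest weights should force the only surviving term to be $w=s_{jr,\sigma}$, hence $s=\lg(s_{jr,\sigma})=1$. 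Cancelling the common algebraic factor (semisimplicity of $\mathrm{End}_E(L^{\lrr}(s_{jr,\sigma}\cdot\ul\lambda)_{\Delta_{k,j}})$, trivial representation of multiplicity one), the edge of the spectral sequence then gives $\ext^1_G(v^{\infty}_{\op^{\lrr}_{ir}}(\pi,\ul\lambda),\Pi_{j,\sigma})\cong \homo_{\bL^{\lrr}_{\Delta_{k,j}}(L)}(r^G_{\op^{\lrr}_{\Delta_{k,j}}}(v^{\infty}_{\op^{\lrr}_{ir}}(\pi)),\st^{\infty}_{\Delta_{k,j}}(\pi))$. Invoking Frobenius reciprocity (Lemma \ref{smoothbasiclemma2}, i.e.\ (\ref{smoothadj1})), together with $i^G_{\op^{\lrr}_{\Delta_{k,j}}}\st^{\infty}_{\Delta_{k,j}}(\pi)\cong i^G_{\op^{\lrr}}(\pi)\big/\sum_{\emptyset\neq K\subseteq\Delta_{k,j}}i^G_{\op^{\lrr}_K}(\pi)$ and the smooth analogue of the socle computation (\ref{scolesumwidetildeidentity}), namely $\soc_G\big(i^G_{\op^{\lrr}}(\pi)/\sum_{\emptyset\neq K\subseteq\Delta_{k,j}}i^G_{\op^{\lrr}_K}(\pi)\big)\cong v^{\infty}_{\op^{\lrr}_{jr}}(\pi)$, this reduces (up to the appropriate $\delta^{1/2}$-twist) to $\homo_G(v^{\infty}_{\op^{\lrr}_{ir}}(\pi),v^{\infty}_{\op^{\lrr}_{jr}}(\pi))$, which is $E$ when $i=j$ and $0$ otherwise, the $v^{\infty}_{\op^{\lrr}_{\ell r}}(\pi)$ being pairwise non-isomorphic irreducibles (Appendix, Lemma \ref{SmoothExt1} and the computations there). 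This would prove \textbf{(1)}.

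For \textbf{(2)} I would use the surjection $\overline{M}^{\lrr}_{\Delta_{k,j}}(-s_{jr,\sigma}\cdot\ul\lambda)\twoheadrightarrow\overline{L}^{\lrr}(-s_{jr,\sigma}\cdot\ul\lambda)_{\Delta_{k,j}}$: applying the exact contravariant functor $\cF^G_{\op^{\lrr}_{\Delta_{k,j}}(L)}(-,\st^{\infty}_{\Delta_{k,j}}(\pi))$ embeds $C_{j,\sigma}$ in $\Pi_{j,\sigma}$ with a cokernel all of whose Jordan--H\"older constituents have the form $\cF^G_{\op^{\lrr}_{J'}(L)}(\overline{L}(-w\cdot\ul\lambda),-)$ with $\lg(w)\geq2$. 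These are not locally algebraic and, by \cite[Lemma 2.26]{2019DINGSimple}, carry no $\ext^1$ from $v^{\infty}_{\op^{\lrr}_{ir}}(\pi,\ul\lambda)$; so a dévissage gives $\homo_G(v^{\infty}_{\op^{\lrr}_{ir}}(\pi,\ul\lambda),\Pi_{j,\sigma}/C_{j,\sigma})=\ext^1_G(v^{\infty}_{\op^{\lrr}_{ir}}(\pi,\ul\lambda),\Pi_{j,\sigma}/C_{j,\sigma})=0$, and the long exact sequence of $0\to C_{j,\sigma}\to\Pi_{j,\sigma}\to\Pi_{j,\sigma}/C_{j,\sigma}\to0$ yields $\ext^1_G(v^{\infty}_{\op^{\lrr}_{ir}}(\pi,\ul\lambda),C_{j,\sigma})\xrightarrow{\sim}\ext^1_G(v^{\infty}_{\op^{\lrr}_{ir}}(\pi,\ul\lambda),\Pi_{j,\sigma})$. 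With \textbf{(1)} this gives \textbf{(2)}, $\Pi^{i,\sigma}_1$ being the nonsplit class.

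The hard part will be the degeneration of the Schraen spectral sequence: one must check that for each $w\neq s_{jr,\sigma}$ in the Kostant index set the representations $L^{\lrr}(w\cdot\ul\lambda)_{\Delta_{k,j}}$ and $L^{\lrr}(s_{jr,\sigma}\cdot\ul\lambda)_{\Delta_{k,j}}$ do not differ by an algebraic character of $\bL^{\lrr}_{\Delta_{k,j}}$, so that no extension of smooth-times-algebraic representations can contribute in positive Lie-algebra-homology degree; this, together with the careful bookkeeping of the $\delta^{1/2}$-normalizations through Frobenius reciprocity and the appeal to the Appendix computations of $\homo$ and $\ext^{1,\infty}$ among the $v^{\infty}_{\op^{\lrr}_K}(\pi)$, is where the work lies. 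The remaining ingredients (exactness of the Orlik--Strauch functor, the dévissages, \cite[Lemma 2.26]{2019DINGSimple}) are formal.
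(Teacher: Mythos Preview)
Your proposal is correct and follows essentially the same route as the paper. The paper's own treatment (the text merely cites \cite[Lemma 2.28]{2019DINGSimple}, but a full proof appears in the commented-out material after \verb|\end{document}|) runs the Schraen spectral sequence \cite[Corollary 4.9]{schraen2011GL3} with Kostant's formula \cite[Theorem 4.10]{schraen2011GL3}, observes that only $s=1$ survives, cancels the algebraic factor via \cite[Proposition 4.7]{schraen2011GL3}, applies smooth Frobenius reciprocity, and finishes with the socle computation from the first row of Lemma \ref{dfnsumwidetilde}; part (2) is then deduced from (1) by the same d\'evissage against \cite[Lemma 2.26]{2019DINGSimple}. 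Your concern about the degeneration is precisely what \cite[Proposition 4.7(1)]{schraen2011GL3} handles: since $\ul\lambda$ is dominant, distinct $w$ in the Kostant index set ${}^{\Delta_{k,j}}\sW^{\lrr}_{n,\Sigma_L}$ give distinct highest weights $w\cdot\ul\lambda$, so the only $w$ matching the target weight $s_{jr,\sigma}\cdot\ul\lambda$ is $w=s_{jr,\sigma}$ itself, forcing $s=1$.
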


\begin{rmk}For each $1\leq i\leq k-1$, we expect that the unique non-split element  $\Pi^{i,\sigma}_1$ is a subrepresentation of the conjectural representations in \cite[\textbf{(EXT)}]{breuil2019ext1}.\;
\end{rmk}

\begin{lem}\label{lastextensions}
Let $ir, jr \in {\Delta_n(k)}$.

(1) If $i\neq j$, then the following natural morphism is an isomorphism:
\begin{equation*}
\ext^1_G\big(v_{\op^{\lrr}_{{ir}}}^{\infty}(\pi,\ul{\lambda}), \st_{(r,k)}^{\infty}(\pi,\ul{\lambda})\big)\xrightarrow{\sim} \ext^1_G\big(v_{\op^{\lrr}_{{ir}}}^{\infty}(\pi,\ul{\lambda}),\Sigma^{\lrr}_{j}(\pi,\ul{\lambda})\big) .
\end{equation*}

(2) The following natural morphism is an isomorphism:
\begin{equation*}
\ext^1_G\big(v_{\op^{\lrr}_{{ir}}}^{\infty}(\pi,\ul{\lambda}), \Sigma_i^{\lrr}(\pi,\ul{\lambda})\big)\xrightarrow{\sim} \ext^1_G\big(v_{\op^{\lrr}_{{ir}}}^{\infty}(\pi,\ul{\lambda}),\Sigma^{\lrr}(\pi,\ul{\lambda})\big).
\end{equation*}

(3) For $\sigma\in \Sigma_L$, $\dim_E \ext^1_G\big(v_{\op^{\lrr}_{{ir}}}^{\infty}(\pi,\ul{\lambda}),  \Sigma_{i,\sigma}^{\lrr}(\pi,\ul{\lambda}) \big)= 2$.
\end{lem}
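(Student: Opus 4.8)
\textbf{Proof plan for Lemma \ref{lastextensions}.}

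The plan is to extract everything from the structure of the short exact sequence
\[0\longrightarrow \st_{(r,k)}^{\infty}(\pi,\ul{\lambda})\longrightarrow \Sigma_{i,\sigma}^{\lrr}(\pi,\ul{\lambda})\longrightarrow C_{i,\sigma}\longrightarrow 0,\]
and the analogous sequences defining $\Sigma_i^{\lrr}(\pi,\ul{\lambda})$ and $\Sigma^{\lrr}(\pi,\ul{\lambda})$ as amalgamated sums over $\st_{(r,k)}^{\infty}(\pi,\ul{\lambda})$.  For Part $(1)$, I would apply $\homo_G(v_{\op^{\lrr}_{{ir}}}^{\infty}(\pi,\ul{\lambda}),-)$ to the sequence defining $\Sigma_j^{\lrr}(\pi,\ul{\lambda})$, namely $0\to \st_{(r,k)}^{\infty}(\pi,\ul{\lambda})\to \Sigma_j^{\lrr}(\pi,\ul{\lambda})\to \bigoplus_{\sigma\in\Sigma_L}C_{j,\sigma}\to 0$, obtaining a long exact sequence
\[\cdots\to \homo_G(v_{\op^{\lrr}_{{ir}}}^{\infty}(\pi,\ul{\lambda}),\textstyle\bigoplus_\sigma C_{j,\sigma})\to \ext^1_G(v_{\op^{\lrr}_{{ir}}}^{\infty}(\pi,\ul{\lambda}),\st_{(r,k)}^{\infty}(\pi,\ul{\lambda}))\to \ext^1_G(v_{\op^{\lrr}_{{ir}}}^{\infty}(\pi,\ul{\lambda}),\Sigma_j^{\lrr}(\pi,\ul{\lambda}))\to \ext^1_G(v_{\op^{\lrr}_{{ir}}}^{\infty}(\pi,\ul{\lambda}),\textstyle\bigoplus_\sigma C_{j,\sigma})\to\cdots.\]
When $i\neq j$, the first and last $\homo$/$\ext^1$ terms involving $\bigoplus_\sigma C_{j,\sigma}$ vanish: the $\homo$-term because $C_{j,\sigma}$ is irreducible and not isomorphic to $v_{\op^{\lrr}_{{ir}}}^{\infty}(\pi,\ul{\lambda})$, and the $\ext^1$-term by Lemma \ref{twoisoextensions}(2) which gives $\dim_E\ext^1_G(v_{\op^{\lrr}_{{ir}}}^{\infty}(\pi,\ul{\lambda}),C_{j,\sigma})=0$ for $i\neq j$.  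Hence the middle arrow is an isomorphism, which is exactly Part $(1)$.

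For Part $(2)$, I would use the amalgamated-sum description $\Sigma^{\lrr}(\pi,\ul{\lambda})=\bigoplus^{jr\in{\Delta_n(k)}}_{\st_{(r,k)}^{\infty}(\pi,\ul{\lambda})}\Sigma_j^{\lrr}(\pi,\ul{\lambda})$, which fits into
\[0\longrightarrow \st_{(r,k)}^{\infty}(\pi,\ul{\lambda})\longrightarrow \Sigma^{\lrr}(\pi,\ul{\lambda})\longrightarrow \bigoplus_{jr\in{\Delta_n(k)}}\big(\Sigma_j^{\lrr}(\pi,\ul{\lambda})/\st_{(r,k)}^{\infty}(\pi,\ul{\lambda})\big)\longrightarrow 0,\]
equivalently $\Sigma^{\lrr}(\pi,\ul{\lambda})/\Sigma_i^{\lrr}(\pi,\ul{\lambda})\cong \bigoplus_{j\neq i}\big(\Sigma_j^{\lrr}(\pi,\ul{\lambda})/\st_{(r,k)}^{\infty}(\pi,\ul{\lambda})\big)$, whose irreducible constituents are the $C_{j,\sigma}$ with $j\neq i$ (for the degree-one layer) together with factors $\cF^G_{\op^{\lrr}_{{J}}}(\overline{L}(-s\cdot\ul{\lambda}),W)$ with $\lg(s)>1$ coming from the higher Jordan--H\"older layers.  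Applying $\homo_G(v_{\op^{\lrr}_{{ir}}}^{\infty}(\pi,\ul{\lambda}),-)$ to $0\to \Sigma_i^{\lrr}(\pi,\ul{\lambda})\to \Sigma^{\lrr}(\pi,\ul{\lambda})\to \Sigma^{\lrr}(\pi,\ul{\lambda})/\Sigma_i^{\lrr}(\pi,\ul{\lambda})\to 0$, it suffices to show that $\homo_G$ and $\ext^1_G$ of $v_{\op^{\lrr}_{{ir}}}^{\infty}(\pi,\ul{\lambda})$ against each constituent of the quotient vanish; for the length-one constituents $C_{j,\sigma}$ ($j\neq i$) this is Lemma \ref{twoisoextensions}(2), and for the $\lg(s)>1$ constituents this follows from \cite[Lemma 2.26]{2019DINGSimple} exactly as in Proposition \ref{threeisoextensions}.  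A short d\'evissage over the filtration then upgrades this from the graded pieces to the full quotient, giving the isomorphism of Part $(2)$.

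For Part $(3)$, I would apply $\homo_G(v_{\op^{\lrr}_{{ir}}}^{\infty}(\pi,\ul{\lambda}),-)$ to $0\to \st_{(r,k)}^{\infty}(\pi,\ul{\lambda})\to \Sigma_{i,\sigma}^{\lrr}(\pi,\ul{\lambda})\to C_{i,\sigma}\to 0$ and read off
\[0\to \ext^1_G(v_{\op^{\lrr}_{{ir}}}^{\infty}(\pi,\ul{\lambda}),\st_{(r,k)}^{\infty}(\pi,\ul{\lambda}))\to \ext^1_G(v_{\op^{\lrr}_{{ir}}}^{\infty}(\pi,\ul{\lambda}),\Sigma_{i,\sigma}^{\lrr}(\pi,\ul{\lambda}))\to \ext^1_G(v_{\op^{\lrr}_{{ir}}}^{\infty}(\pi,\ul{\lambda}),C_{i,\sigma})\to \ext^2_G(v_{\op^{\lrr}_{{ir}}}^{\infty}(\pi,\ul{\lambda}),\st_{(r,k)}^{\infty}(\pi,\ul{\lambda})),\]
where the leftmost $\homo_G$-term is killed because $C_{i,\sigma}$ is irreducible non-isomorphic to $v_{\op^{\lrr}_{{ir}}}^{\infty}(\pi,\ul{\lambda})$.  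By Lemma \ref{SmoothExt2} (together with the computation of $\ext^{1,\infty}$ in Appendix \ref{presmoothext}) one has $\dim_E\ext^1_G(v_{\op^{\lrr}_{{ir}}}^{\infty}(\pi,\ul{\lambda}),\st_{(r,k)}^{\infty}(\pi,\ul{\lambda}))=1$, and by Lemma \ref{twoisoextensions}(2) $\dim_E\ext^1_G(v_{\op^{\lrr}_{{ir}}}^{\infty}(\pi,\ul{\lambda}),C_{i,\sigma})=1$.  Thus $\dim_E\ext^1_G(v_{\op^{\lrr}_{{ir}}}^{\infty}(\pi,\ul{\lambda}),\Sigma_{i,\sigma}^{\lrr}(\pi,\ul{\lambda}))$ is either $1$ or $2$, and it equals $2$ precisely when the connecting map $\ext^1_G(v_{\op^{\lrr}_{{ir}}}^{\infty}(\pi,\ul{\lambda}),C_{i,\sigma})\to \ext^2_G(v_{\op^{\lrr}_{{ir}}}^{\infty}(\pi,\ul{\lambda}),\st_{(r,k)}^{\infty}(\pi,\ul{\lambda}))$ is zero.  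This vanishing is the main obstacle, and I expect to argue it the way \cite[Lemma 2.28]{2019DINGSimple} does: the image of the nonsplit class $\Pi^{i,\sigma}_1$ under this boundary map would, if nonzero, force the existence of a self-extension of $v_{\op^{\lrr}_{{ir}}}^{\infty}(\pi,\ul{\lambda})$-related data inside $\st_{(r,k)}^{\ana}(\pi,\ul{\lambda})$ of Jordan--H\"older length incompatible with Proposition \ref{JHanastein1} and the multiplicity-one statement $[\st_{(r,k)}^{\ana}(\pi,\ul{\lambda}):C_{i,\sigma}]=1$; concretely, by Theorem \ref{analyticExt3} one has $\dim_E\ext^1_G(v_{\op^{\lrr}_{{ir}}}^{\infty}(\pi,\ul{\lambda}),\st_{(r,k)}^{\ana}(\pi,\ul{\lambda}))=d_L+1$, and comparing with the dimensions contributed through $\Sigma^{\lrr}(\pi,\ul{\lambda})$ via Proposition \ref{threeisoextensions} and Parts $(1)$--$(2)$ forces each $\Sigma_{i,\sigma}^{\lrr}(\pi,\ul{\lambda})$-contribution to be exactly $2$ (one ``smooth'' direction from $\psi_{\ur}$ and one $\sigma$-analytic direction from $\psi_{\sigma,L}$).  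Running this dimension count backwards pins down the $\ext^1_G(v_{\op^{\lrr}_{{ir}}}^{\infty}(\pi,\ul{\lambda}),\Sigma_{i,\sigma}^{\lrr}(\pi,\ul{\lambda}))$ to be $2$-dimensional, completing Part $(3)$.
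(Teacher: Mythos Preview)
Your overall strategy is exactly the one underlying the paper's proof (which simply delegates to \cite[Lemma 2.29]{2019DINGSimple} with Proposition \ref{smoothExt4} and Lemma \ref{twoisoextensions}(2) substituted in): long exact sequences from the defining extensions of the $\Sigma$'s, the vanishing of $\ext^1_G(v_{\op^{\lrr}_{ir}}^{\infty}(\pi,\ul{\lambda}),C_{j,\sigma})$ for $i\neq j$, and a dimension count. Two points need tightening.

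First, in Part (2) you overcomplicate the quotient. By definition $\Sigma_j^{\lrr}(\pi,\ul{\lambda})$ is an extension of $\bigoplus_{\sigma}C_{j,\sigma}$ by $\st_{(r,k)}^{\infty}(\pi,\ul{\lambda})$, so $\Sigma^{\lrr}(\pi,\ul{\lambda})/\Sigma_i^{\lrr}(\pi,\ul{\lambda})\cong\bigoplus_{j\neq i}\bigoplus_{\sigma}C_{j,\sigma}$ exactly; there are no $\lg(s)>1$ constituents here (those live in $\widetilde{\Sigma}^{\lrr}_j$, not in $\Sigma_j^{\lrr}$). So Part (2) follows directly from Lemma \ref{twoisoextensions}(2) and a trivial d\'evissage, without invoking \cite[Lemma 2.26]{2019DINGSimple}.

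Second, in Part (3) you assert $\dim_E\ext^1_G\big(v_{\op^{\lrr}_{ir}}^{\infty}(\pi,\ul{\lambda}),\st_{(r,k)}^{\infty}(\pi,\ul{\lambda})\big)=1$ citing the smooth computation. This is the \emph{locally analytic} $\ext^1$, not $\ext^{1,\infty}$, and the two are not equal in general. What makes it work here is that $\homo_G\big(v_{\op^{\lrr}_{ir}}^{\infty}(\pi),\st_{(r,k)}^{\infty}(\pi)\big)=0$, so in the edge sequence (\ref{smoothanalyticedge}) the term $\homo_G(V,W\otimes_E\hH_1(\fg_{\Sigma_L},E))$ vanishes and $\ext^{1,\infty}\xrightarrow{\sim}\ext^1$; the $L(\ul{\lambda})$-twist is then harmless because $\mathrm{End}(L(\ul{\lambda}))$ has trivial representation with multiplicity one and the other summands contribute nothing (Whitehead's lemmas kill $\hH^{\leq 2}(\fg_{\Sigma_L},L(\mu))$ for $\mu\neq 0$). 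Once you have this, your dimension count via Theorem \ref{analyticExt3}, Proposition \ref{threeisoextensions}, and Part (2) is clean: $d_L+1=1+d_L-\dim\mathrm{im}(\delta)$ forces the global connecting map $\delta$ to vanish, hence each $\delta_\sigma$ vanishes by naturality, and $\dim_E\ext^1_G(v_{\op^{\lrr}_{ir}}^{\infty}(\pi,\ul{\lambda}),\Sigma_{i,\sigma}^{\lrr}(\pi,\ul{\lambda}))=1+1=2$. This is precisely how the argument in \cite[Lemma 2.29]{2019DINGSimple} goes, and is why the paper singles out Proposition \ref{smoothExt4} as the required input.
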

\begin{proof}The lemma follows by an easy variation of the proof of \cite[Lemma 2.29]{2019DINGSimple}.\;The ``[31,Theorem 1]" (resp., ``[Lemma 2.28(2)]") in the proof of \cite[Lemma 2.29]{2019DINGSimple} has to be replaced by  Proposition \ref{smoothExt4} (resp., Lemma \ref{twoisoextensions} (2)).\;
\end{proof}

Finally, we have:
\begin{pro}\label{prop: sigmaan}
Let $ir\in {\Delta_n(k)}$ and $\sigma\in \Sigma_L$, we have isomorphisms 
\begin{equation}\label{equ: Lsigma}
\begin{aligned}
&\homo(\bZ^{\lrr}(L),E)/\homo(\bZ^{\lrr}_{{ir}}(L),E)\xrightarrow{\sim}  \ext^1_G\big(v_{\op^{\lrr}_{{ir}}}^{\infty}(\pi,\ul{\lambda}), \Sigma^{\lrr}_{i}(\pi,\ul{\lambda})\big),\\
&\homo_{\sigma}(\bZ^{\lrr}(L)/\bZ^{\lrr}_{{ir}}(L), E) \xrightarrow{\sim} \ext^1_G\big(v_{\op^{\lrr}_{{ir}}}^{\infty}(\pi,\ul{\lambda}), \Sigma_{i,\sigma}^{\lrr}(\pi,\ul{\lambda})\big).
\end{aligned}
\end{equation}
\end{pro}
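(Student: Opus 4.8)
The plan is to deduce both isomorphisms in \eqref{equ: Lsigma} from the work already assembled in Sections \ref{anaext1} and \ref{anaext2}, using the subrepresentation structure of $\Sigma_i^{\lrr}(\pi,\ul\lambda)$ together with the dimension counts. For the first isomorphism, I would start from the chain
\[
\homo(\bZ^{\lrr}(L),E)/\homo(\bZ^{\lrr}_{ir}(L),E)\xrightarrow{\varrho^{\sharp}_{\{ir\},\emptyset}}\ext^1_G\big(v_{\op^{\lrr}_{ir}}^{\infty}(\pi,\ul\lambda),\st_{(r,k)}^{\ana}(\pi,\ul\lambda)\big)\xleftarrow{\sim}\ext^1_G\big(v_{\op^{\lrr}_{ir}}^{\infty}(\pi,\ul\lambda),\Sigma^{\lrr}(\pi,\ul\lambda)\big),
\]
where the left arrow is an isomorphism by Theorem \ref{analyticExt3} and the right arrow is the isomorphism of Proposition \ref{threeisoextensions} (first line). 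Composing with the isomorphism $\ext^1_G(v_{\op^{\lrr}_{ir}}^{\infty}(\pi,\ul\lambda),\Sigma_i^{\lrr}(\pi,\ul\lambda))\xrightarrow{\sim}\ext^1_G(v_{\op^{\lrr}_{ir}}^{\infty}(\pi,\ul\lambda),\Sigma^{\lrr}(\pi,\ul\lambda))$ from Lemma \ref{lastextensions}(2), I obtain a natural isomorphism onto $\ext^1_G(v_{\op^{\lrr}_{ir}}^{\infty}(\pi,\ul\lambda),\Sigma_i^{\lrr}(\pi,\ul\lambda))$. The only thing to check is that the composite is realized by the \emph{natural} map induced by the inclusion $\Sigma_i^{\lrr}(\pi,\ul\lambda)\hookrightarrow\st_{(r,k)}^{\ana}(\pi,\ul\lambda)$; this is exactly the content of Proposition \ref{threeisoextensions} and the construction after \eqref{Intro:equ: BrLi}, so I would phrase it as: the isomorphism $\varrho^{\sharp}_{\{ir\},\emptyset}$ factors through $\ext^1_G(v_{\op^{\lrr}_{ir}}^{\infty}(\pi,\ul\lambda),\Sigma_i^{\lrr}(\pi,\ul\lambda))$ because the explicit extension $\widetilde\Sigma_i^{\lrr}(\pi,\ul\lambda,\psi)$ of Remark \ref{rem: lgln-ext3} is pushed forward from $\Sigma_i^{\lrr}(\pi,\ul\lambda,\psi)$ (its locally $\sigma$-analytic pieces land in $\Sigma_{i,\sigma}^{\lrr}(\pi,\ul\lambda)$ by construction of $\widetilde\Sigma^{\lrr}_{i,\sigma}$).

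For the second isomorphism I would argue $\sigma$-component by $\sigma$-component. By Lemma \ref{twoisoextensions} the representation $C_{i,\sigma}$ is the unique (up to the $\homo$-class) nonsplit quotient piece giving an extension of $v_{\op^{\lrr}_{ir}}^{\infty}(\pi,\ul\lambda)$, and by Lemma \ref{lastextensions}(3) we have $\dim_E\ext^1_G(v_{\op^{\lrr}_{ir}}^{\infty}(\pi,\ul\lambda),\Sigma_{i,\sigma}^{\lrr}(\pi,\ul\lambda))=2$. On the other hand $\homo_{\sigma}(\bZ^{\lrr}(L)/\bZ^{\lrr}_{ir}(L),E)$ is $2$-dimensional: indeed $\bZ^{\lrr}(L)/\bZ^{\lrr}_{ir}(L)\cong L^{\times}$ via the map in \eqref{homLE}, and $\homo_\sigma(L^\times,E)$ is spanned by $\psi_{\sigma,L}$ and $\psi_{\ur}$ (two of the basis vectors of $\homo(L^\times,E)$ recalled in Section \ref{comphinftyhana}). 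So it suffices to produce a natural injection from the left-hand side into the right-hand side. I would build it by restricting the construction of $\varrho^{\sharp}_{\{ir\},\emptyset}$ to the locally $\sigma$-analytic inductions: for $\psi\in\homo_\sigma(L^\times,E)$, the extension $\widetilde\Sigma^{\lrr}_{i,\sigma}(\pi,\ul\lambda,\psi)$ obtained by replacing $\big(\ind_{\op^{\lrr}_{{\Delta}_{k,i}}(L)}^G\st_{{\Delta}_{k,i}}^\infty(\pi,\ul\lambda)\big)^{\BQ_p-\ana}$ by $\big(\ind_{\op^{\lrr}_{{\Delta}_{k,i}}(L)}^G\st_{{\Delta}_{k,i}}^\infty(\pi,\ul\lambda_\sigma)\big)^{\sigma-\ana}\otimes_E L(\ul\lambda^\sigma)$ in Remark \ref{rem: lgln-ext3} is an extension of $v_{\op^{\lrr}_{ir}}^{\infty}(\pi,\ul\lambda)$ by $\Sigma_{i,\sigma}^{\lrr}(\pi,\ul\lambda)$; mapping $\psi\mapsto[\widetilde\Sigma^{\lrr}_{i,\sigma}(\pi,\ul\lambda,\psi)]$ gives the desired morphism, and its composition with the pushforward to $\Sigma_i^{\lrr}(\pi,\ul\lambda)$ (summing over $\sigma$) recovers the restriction of $\varrho^{\sharp}_{\{ir\},\emptyset}$ to $\homo_\sigma(L^\times,E)$. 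Injectivity then follows from the injectivity part of Theorem \ref{analyticExt3} (the images for distinct $\sigma$ are linearly independent because the constituents $C_{i,\sigma}$ are pairwise distinct by \cite[Corollary 2.7]{breuil2019ext1}), and a dimension count closes the argument.

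The main obstacle I anticipate is not the dimension bookkeeping but verifying the \emph{compatibility} of the various natural maps: one must check that the isomorphism of Proposition \ref{threeisoextensions}, the isomorphism of Lemma \ref{lastextensions}(2), and the explicit class $[\widetilde\Sigma^{\lrr}_i(\pi,\ul\lambda,\psi)]$ of Remark \ref{rem: lgln-ext3} all fit into one commutative diagram, so that the composite $\homo(\bZ^{\lrr}(L),E)/\homo(\bZ^{\lrr}_{ir}(L),E)\to\ext^1_G(v_{\op^{\lrr}_{ir}}^{\infty}(\pi,\ul\lambda),\Sigma_i^{\lrr}(\pi,\ul\lambda))$ is genuinely $\varrho^{\sharp}$ followed by the inverse pushforward. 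The cleanest way I see to do this is to note that $\Sigma_i^{\lrr}(\pi,\ul\lambda)\subseteq\fil^1\widetilde\Sigma^{\lrr}_i(\pi,\ul\lambda)$ (the remark after Proposition \ref{socleofreps1}), that all irreducible constituents of the quotients $\st_{(r,k)}^{\ana}(\pi,\ul\lambda)/\Sigma^{\lrr}(\pi,\ul\lambda)$, $\widetilde\Sigma^{\lrr}_i/\Sigma_i^{\lrr}$, $\widetilde\Sigma^{\lrr}_{i,\sigma}/\Sigma_{i,\sigma}^{\lrr}$ are of the form $\cF^G_{\op^{\lrr}_J}(\overline L(-s\cdot\ul\lambda),W)$ with $\lg(s)>1$, and then invoke \cite[Lemma 2.26]{2019DINGSimple} plus a dévissage exactly as in Proposition \ref{threeisoextensions} to see that every extension class of $v_{\op^{\lrr}_{ir}}^{\infty}(\pi,\ul\lambda)$ by any of the bigger representations already lives in $\ext^1_G$ of the smaller one and uniquely so. Once that diagram commutes, the two displayed isomorphisms in the proposition drop out, and the final sentence of the proof records that $\varrho^{\sharp}_{\{ir\},\emptyset}$ decomposes over $\Sigma_L$ as $\sum_\sigma$ of the second isomorphism modulo the common $\psi_{\ur}$-line.
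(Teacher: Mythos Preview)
Your proposal is correct and follows essentially the same approach as the paper. The paper's proof is extremely terse: for the first isomorphism it simply invokes Theorem \ref{analyticExt3}, Proposition \ref{threeisoextensions}, and Lemma \ref{lastextensions}(2) in one breath, and for the second it refers to an easy variation of \cite[Proposition 2.30]{2019DINGSimple}. Your argument unpacks exactly these ingredients; the compatibility verifications you worry about are handled in the paper implicitly by the fact that all maps involved are pushforwards along natural inclusions, so functoriality of $\ext^1$ makes the diagrams commute automatically (this is the content of the d\'evissage in Proposition \ref{threeisoextensions}), and your explicit $\sigma$-analytic construction plus the dimension count via Lemma \ref{lastextensions}(3) is precisely what the cited \cite[Proposition 2.30]{2019DINGSimple} does in the Borel case.
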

\begin{proof}The first assertion follows from Theorem (\ref{analyticExt3}), Proposition \ref{threeisoextensions}, Lemma \ref{lastextensions} (2).\;The second follows from an easy variation of the proof of  \cite[Proposition 2.30]{2019DINGSimple}.\;
\end{proof}


\subsection{Collection of locally \texorpdfstring{$\BQ_p$}{Lg}-analytic representations}

In Remark \ref{rem: lgln-ext3}, we obtain a locally $\BQ_p$-analytic representation $\widetilde{{\Sigma}}_i^{\lrr}(\pi,\ul{\lambda}, \psi)$, which is an extension of $v_{\op^{\lrr}_{{ir}}}^{\infty}(\pi,\ul{\lambda})$ by $\st_{(r,k)}^{\ana}(\pi,\ul{\lambda})$.\;Let $ir \in {\Delta_n(k)}$, and $\psi \in \homo(L^{\times}, E)\cong \homo(\bZ^{\lrr}(L),E)/\homo(\bZ^{\lrr}_{{ir}}(L),E)$. We denote by $\Sigma_i^{\lrr}(\pi,\ul{\lambda}, \psi)$ the extension of $v_{\op^{\lrr}_{{ir}}}^{\infty}(\pi,\ul{\lambda})$ by $\Sigma_i^{\lrr}(\pi,\ul{\lambda})$ attached to $\psi$ via (\ref{equ: Lsigma}), which is thus a subrepresentation of $\widetilde{\Sigma}^{\lrr}_{ir}(\pi,\ul{\lambda},\psi)$.\;Let  $[\widetilde{{\Sigma}}_i^{\lrr}(\pi,\ul{\lambda}, \psi)]\in \ext^1_{G}\big(v_{\op^{\lrr}_{{ir}}}^{\infty}(\pi,\ul{\lambda}), \st_{(r,k)}^{\ana}(\ul{\lambda})\big)$ be the  extension class associated with $\widetilde{{\Sigma}}_i^{\lrr}(\pi,\ul{\lambda}, \psi)$.\;By (\ref{analyticExt3}), Proposition \ref{threeisoextensions} and Lemma \ref{lastextensions} (2), we see that $[\widetilde{{\Sigma}}_i^{\lrr}(\pi,\ul{\lambda}, \psi)]$ actually comes by pushing-forward from $[\Sigma_i^{\lrr}(\pi,\ul{\lambda}, \psi)]$ via an injection $\Sigma_i^{\lrr}(\pi,\ul{\lambda})\hookrightarrow \st_{(r,k)}^{\ana}(\pi,\ul{\lambda})$.\;If $\psi\in\homo_{\sigma}(L^{\times},E)$ for some $\sigma\in \Sigma_L$, then $\Sigma_i^{\lrr}(\pi,\ul{\lambda},\psi)$ is isomorphic to the push-forward of an extension, denoted by $\Sigma^{\lrr}_{i,\sigma}(\pi,\ul{\lambda}, \psi)$, of $v_{\op^{\lrr}_{{ir}}}^{\infty}(\ul{\lambda})$ by $\Sigma_{i,\sigma}^{\lrr}(\pi,\ul{\lambda})$.\;It is clear that $[\Sigma^{\lrr}_{i,\sigma}(\pi,\ul{\lambda},\psi)]$ is isomorphic to the extension attached to $\psi$ via (\ref{equ: Lsigma}).

\section{Appendix: smooth extensions of generalized parabolic Steinberg representations}

\subsection{Bernstein-Zelevinsky theory}\label{introBZ}

We now  recall some details of the Bernstein-Zelevinsky theory, following  \cite{bernstein1977induced1} and \cite{av1980induced2}.\;Although several of our references work over $\BC$, the various results over $\BC$ are transferred to our content over $\overline{\BQ}_p$ if we choose (and fix) an isomorphism $\iota:\overline{\BQ}_p\xrightarrow{\sim}\BC$.\;Bernstein-Zelevinsky classifies the irreducible smooth representations of $G=\GLN_n(L)$ on $\overline{\BQ}_p$-vector spaces in terms of \textit{Zelevinsky-segments}.\;

Let $\underline{\alpha}=(n_1,n_2,\cdots,n_s)$ be an ordered  partition of $n$.\;We put
\[\GLN_{\underline{\alpha}}:=\left(\begin{array}{cccc}
\GLN_{n_1} & 0 & \cdots & 0 \\
0 & \GLN_{n_2} & \cdots & 0 \\
\vdots & \vdots & \ddots & 0 \\
0 & 0 & 0 & \GLN_{n_s} \\
\end{array}\right)\subseteq \bP_{\underline{\alpha}}:=\left(\begin{array}{cccc}
\GLN_{n_1} & \ast & \cdots & \ast \\
0 & \GLN_{n_r} & \cdots & \ast \\
\vdots & \vdots & \ddots & \ast \\
0 & 0 & \cdots & \GLN_{n_s} \\
\end{array}\right).\;\]
For each $1\leq i\leq s$, let $\pi_i$ be a smooth representation of $\GLN_{n_i}(L)$ over $\overline{\BQ}_p$.\;Then $\pi_1\otimes \cdots \otimes \pi_s$ is naturally a smooth representation of $\GLN_{\underline{\alpha}}$ over $\overline{\BQ}_p$ and thus of $\bP_{\underline{\alpha}}$ (where the tensor products are taken over $\overline{\BQ}_p$).\;Put
\begin{equation}\label{ind}
\pi_1\times \cdots \times \pi_s:={i}_{\bP_{\underline{\alpha}}(L)}^{G}(\delta_{\bP_{\underline{\alpha}}(L)}^{1/2}\pi_1\otimes \cdots \otimes \pi_s),
\end{equation}
where $\delta_{\bP_{\underline{\alpha}}(L)}$ is the modulus character of $\bP_{\underline{\alpha}}(L)$.\;If all the representations $\pi_1, \cdots, \pi_s$ are irreducible cuspidal, then \cite[2.2, 2.3.\;Corollary]{av1980induced2} showed that the Jordan H\"{o}lder factors $\mathbf{JH}(\pi_1\times \cdots \times \pi_s)$ of $\pi_1\times \cdots \times \pi_s$ are classified by the orientations of some graph.\;The length of the Jordan H\"{o}lder series of $\pi_1\times \cdots \times \pi_s$ is $2^{l}$, where $l$ is the number of the pairs of the form $\{\pi,\pi\otimes|\det|_L\}\subseteq \{\pi_1, \cdots, \pi_s\}$.\;Recall that any smooth irreducible representation of $G$ is necessarily a  sub-representation of some $\pi_1\times \cdots \times \pi_s$, for some choices of an ordered partition $\underline{\alpha}$ and some multiset of irreducible cuspidal representations $\{\pi_1, \cdots, \pi_s\}$.\;This multiset is called the cuspidal support of $\pi$.\;

%

Let $\pi$ be a smooth representation of $\GLN_{m}(L)$ over $\overline{\BQ}_p$.\;For $l\in \BZ$, we put $\pi(l):=\pi\otimes_{\overline{\BQ}_p}v_m^l$, where $v_m=|\det|_L:\GLN_m(L)\rightarrow E^\times\subset \overline{\BQ}_p$.\;
\begin{dfn}\label{Zelevinskysegment}A \textit{Zelevinsky-segment} is a non-empty set of irreducible cuspidal representations of $\GLN_{m}(L)$ of the form $\Delta=[\pi,\pi(1),\cdots,\pi(s-1)]$ for some integers $m$ and $s$.\;We put $l(\Delta)=ms$.\;
\end{dfn}
By \cite[2.10.\;Proposition]{av1980induced2}, $\pi\times \cdots \times \pi(s-1)$
admits a unique irreducible subrepresentation $\langle\Delta\rangle$ and a unique irreducible quotient $\langle\Delta\rangle^t$ (in the terminology of \cite[3.1, 9.1]{av1980induced2}).\;By \cite{av1980induced2}, every irreducible smooth representation of $G$ can be constructed by a sequence of  Zelevinsky-segments.\;

\begin{dfn}\label{dfnparastein}\textbf{(Smooth parabolic Steinberg representation)}\\
Since the parabolic subgroups that we use are the opposite of those that appeared in \cite{av1980induced2}, the Zelevinsky-segment is arranged in descending order in our content.\;More precisely, let $s\geq s'$ be two integers, and let $\pi$ be an irreducible cuspidal representation of $\GLN_m(L)$ over $E$.\;We put the Zelevinsky-segment \[\Delta_{[s,s']}(\pi):=[\pi(s) ,\cdots,\pi(s')]=\Delta_{[s-s',0]}(\pi(s')).\]
In this case, $\pi(s-1)\times \cdots \times \pi:={i}_{\overline{\bP}_{(m,m,\cdots,m)}(L)}^{G}(\delta_{\overline{\bP}_{(m,m,\cdots,m)}(L)}^{1/2}\pi(s-1)\otimes_E\cdots \otimes_E \pi)$ admits a unique irreducible generic quotient $\langle\Delta_{[s-1,0]}(\pi)\rangle^t$.\;In this paper, we call it the \textit{(smooth) parabolic Steinberg representation} of $\GLN_{ms}(L)$ over $E$ (with respect to the Zelevinsky-segment $\Delta_{[s-1,0]}(\pi)$).\;In the sequel, we may rewrite it as $\st_{(m,s)}^\infty(\pi)$.\;
\end{dfn}

\subsection{Proof of Proposition \ref{axioms}}

\begin{proof}By the transitivity of parabolic induction, we see that $\pi_I$ is a subrepresentation of $i_{\op^{\lrr}_{{J}}(L)\cap \bL^{\lrr}_{I}(L)}^{\bL^{\lrr}_{I}(L)}\pi_J$ since $\pi_I$ is the unique irreducible subrepresentation of ${i}_{\op^{\lrr}(L)}^{\bL^{\lrr}_{I}(L)}{\pi^{\lrr}}$.\;This proves (\ref{[A1]-2}).\;\\
\textbf{Proof of (\ref{[A1]-1}).\;}By definition and \cite[1.5.\;Proposition (b)]{bernstein1977induced1}, $\pi_I$ is the unique irreducible subrepresentation of
\begin{equation*}
\begin{aligned}
&{i}_{\op^{\lrr}(L)\cap\bL^{\lrr}_{I}(L)}^{\bL^{\lrr}_{I}(L)}{\pi^{\lrr}}\\
=&\;\otimes_{i=1}^l {i}_{\op^{\lrr}(L)\cap \GLN_{k_ir}(L)}^{\GLN_{k_ir}(L)}\delta_{\op^{\lrr}(L)\cap \GLN_{k_ir}(L)}^{1/2}\Delta_{[k_i-1,0]}(\pi\otimes_E v_{r}^{-\frac{r}{2}(k-2s_{i-1}-k_i)+k-s_i}), 
\end{aligned}
\end{equation*}
where we identify $\GLN_{k_ir}(L)$ (resp., $\op^{\lrr}(L)\cap \GLN_{k_ir}(L)$) with the subgroup $1\times \cdots \times \GLN_{k_ir}(L) \times \cdots \times 1$ of $\bL^{\lrr}_{I}(L)\subset G$ (resp., $(1\times \cdots \times \GLN_{k_ir}(L) \times \cdots \times 1)\cap \op^{\lrr}(L)$).\;This implies that
\begin{equation}\label{f11}
\begin{aligned}
\pi_I=\otimes_{i=1}^l\langle\Delta_{[k_i-1,0]}(\pi\otimes_E v_{r}^{-\frac{r}{2}(k-2s_{i-1}-k_i)+k-s_i})\rangle.
\end{aligned}
\end{equation}
Therefore, by \cite[Proposition 1.5 (a)]{bernstein1977induced1}, we deduce
\begin{equation}\label{f2}
\begin{aligned}
{r}^{\bL^{\lrr}_{I}(L)}_{\op^{\lrr}_{{J}}(L)\cap \bL^{\lrr}_{I}(L)}\pi_I=\otimes_{i=1}^lr^{\GLN_{k_ir}(L)}_{\GLN_{k_ir}(L)\cap \op^{\lrr}_{{J}}(L)}\langle\Delta_{[k_i-1,0]}(\pi\otimes_E v_{r}^{-\frac{r}{2}(k-2s_{i-1}-k_i)+k-s_i})\rangle,
\end{aligned}
\end{equation}
where we identify $\op_{{J}}^{\lrr}(L)\cap \GLN_{k_ir}(L)$) with the subgroup $(1\times \cdots \times \GLN_{k_ir}(L) \times \cdots \times 1)\cap \op_{{J}}^{\lrr}(L)$).\;Let $\underline{k}^{\lrr}_J$ be the ordered partition of integer $k$ associated with the $\bL^{\lrr}_{J}(L)$.\;Since $J\subseteq I$, we can write $\underline{k}^{\lrr}_J$ as $\underline{k}^{\lrr}_J=(\underline{k_1},\cdots,\underline{k_{l}})$, where $\underline{k_i}=(k_{i,1},\cdots,k_{i,l_i})$ is an ordered partition of integer $k_i$ for each $1\leq i\leq l$.\;For $0\leq t\leq l_i$, we put $s_{i,0}'=0$ and $s_{i,t}'=\sum_{j=1}^t k_{i,j}r$.\;Similar to  (\ref{f11}), we have the following
\begin{equation}\label{f1}
\begin{aligned}
\pi_J=\otimes_{i=1}^l\otimes_{j=1}^{l_i}\langle \Delta_{[k_{i,j}-1,0]}(\pi\otimes_Ev_{r}^{-\frac{r}{2}(k-2(s_{i-1}+ s_{i,j-1}')-k_{i,j})+k-s_{i-1}-s_{i,j}'})\rangle.\;
\end{aligned}
\end{equation}
By applying \cite[3.4.\;Proposition]{av1980induced2} step by step, we have
\begin{equation}\label{f3}
\begin{aligned}
&r^{\GLN_{k_ir}(L)}_{\GLN_{k_ir}(L)\cap \op^{\lrr}_{{J}}(L)}\langle\Delta_{[k_i-1,0]}(\pi\otimes_E v_{r}^{-\frac{r}{2}(k-2s_{i-1}-k_i)+k-s_i})\rangle\\
= &\otimes_{j=1}^{l_i}\langle \Delta_{[k_i-1-s_{i,j-1}',k_i-1-s_{i,j}']}(\pi\otimes_E v_{r}^{-\frac{r}{2}(k-2s_{i-1}-k_i)+k-s_i})\rangle \otimes_E v_{k_{i,j}r}^{-\frac{r}{2}(k_i-2s_{i,j-1}'-k_{i,j})},\\
= &\otimes_{j=1}^{l_i}\langle \Delta_{[k_{i,j}-1,0]}(\pi\otimes_E v_{r}^{-\frac{r}{2}(k-2(s_{i-1}+ s_{i,j-1}')-k_{i,j})+k-s_{i-1}-s_{i,j}'})\rangle.\;
\end{aligned}
\end{equation}
Now (\ref{[A1]-1}) follows by comparing
(\ref{f2}), (\ref{f1}) and (\ref{f3}).\;\\
\textbf{Proof of (\ref{[A3]-1}).\;}It is clear that $i_{\op^{\lrr}_{{I}}(L)}^{G}\pi_{{I}}\cap i_{\op^{\lrr}_{{J}}(L)}^{G}\pi_{J}\supseteq i_{\op^{\lrr}_{I\cup J}(L)}^{G}\pi_{I\cup J}$.\;Let $\mathcal{V}$ be its cokernel.\;It suffices to prove that $\mathcal{V}=0$.\;\\
\textbf{Claim.\;}$r_{\op^{\lrr}(L)}^{G}\mathcal{V}=0$.\;Admitting this claim, if $\mathcal{V}$ admits at least one nonzero irreducible subquotient $\mathcal{V}'$, then  $r_{\op^{\lrr}(L)}^{G}\mathcal{V}'=0$ by the exactness of $r_{\op^{\lrr}(L)}^{G}$.\;But this would lead to a contradiction to Lemma \ref{resvanshing} below.\;\\
\textbf{Proof of claim.\;}Note that 
\[i_{\op^{\lrr}_{{I\backslash I\cap J}}(L)}^{G}\pi_{I\backslash I\cap J}\cap i_{\op^{\lrr}_{{J}}(L)}^{G}\pi_{J}\supseteq i_{\op^{\lrr}_{{I}}(L)}^{G}\pi_{{I}}\cap i_{\op^{\lrr}_{{J}}(L)}^{G}\pi_{J}\supseteq i_{\op^{\lrr}_{I\cup J}(L)}^{G}\pi_{I\cup J},\]
by (\ref{[A1]-2}).\;After replacing  $I,J$ by $I\backslash I\cap J,J$, we can assume that $I\cap J=\emptyset$.\;By the transitivity of parabolic inductions, it suffices to prove that
\begin{equation}
\Big(i_{\op^{\lrr}_{{I}}(L)\cap \bL^{\lrr}_{{I\cup J}}(L)}^{\bL^{\lrr}_{{I\cup J}}(L)}\pi_I\Big)\cap \left(i_{\op^{\lrr}_{{J}}(L)\cap \bL^{\lrr}_{{I\cup J}}(L)}^{\bL^{\lrr}_{{I\cup J}}(L)}\pi_J\right)=\pi_{I\cup J}.
\end{equation}
We can suppose further that $I\cup J={\Delta_n(k)}$ by \cite[1.5.\;Proposition (b)]{bernstein1977induced1}.\;Since $i_{\op^{\lrr}_{{I}}(L)}^{G}\pi_{{I}}$, $i_{\op^{\lrr}_{{J}}(L)}^{G}\pi_{J}$ and $i_{\op^{\lrr}_{I\cup J}(L)}^{G}\pi_{I\cup J}$ are subrepresentations of $i_{\op^{\lrr}(L)}^{G}{\pi^{\lrr}}$, we have
\begin{equation}\label{semisimple1}
\begin{aligned}
r_{\op^{\lrr}(L)}^{G}i_{\op^{\lrr}_{{I}}(L)}^{G}\pi_{{I}}&=\Big(r_{\op^{\lrr}(L)}^{G}i_{\op^{\lrr}_{{I}}(L)}^{G}\pi_{{I}}\Big)^{\mathrm{ss}}\\&= \bigoplus_{w\in [\sW^{\lrr}_{I}\backslash\sW_{n}/\sW^{\lrr}]}i_{\bL^{\lrr}(L)\cap \op^{\lrr}_{{I}}(L)^w}^{\bL^{\lrr}(L)}\Big(\gamma_{I,\emptyset}^w\otimes_Er^{\bL^{\lrr}_{I}(L)^w}_{\op^{\lrr}(L)\cap \bL^{\lrr}_{I}(L)^w}\pi_{I}^w\Big),
\end{aligned}
\end{equation}
\begin{equation}\label{semisimple2}
\begin{aligned}
r_{\op^{\lrr}(L)}^{G}i_{\op^{\lrr}_{J}(L)}^{G}\pi_{{J}}&=\Big(r_{\op^{\lrr}(L)}^{G}i_{\op^{\lrr}_{{J}}(L)}^{G}\pi_{J}\Big)^{\mathrm{ss}}\\&= \bigoplus_{u\in [\sW^{\lrr}_{J}\backslash\sW_{n}/\sW^{\lrr}]}i_{\bL^{\lrr}(L)\cap {\op}_{J}(L)^w}^{\bL^{\lrr}(L)}\Big(\gamma_{I,\emptyset}^u\otimes_Er^{\bL^{\lrr}_{I}(L)^w}_{\op^{\lrr}(L)\cap \bL^{\lrr}_{J}(L)^u}\pi_{J}^u\Big),
\end{aligned}
\end{equation}
where the first equality in (\ref{semisimple1}) (resp., (\ref{semisimple2})) follows from the semi-simplicity of the smooth representation ${r}_{\op^{\lrr}(L)}^{G}{i}_{\op^{\lrr}(L)}^{G}{\pi^{\lrr}}$.\;By (\ref{semisimplerep}), we see that an irreducible constitute $w'(\pi^{\lrr})$ (for some $w'\in \sW(\bL^{\lrr})$) appears as an irreducible constitute  in $i_{\bL^{\lrr}(L)\cap {\op}_{J}(L)^w}^{\bL^{\lrr}(L)}\Big(\gamma_{I,\emptyset}^w\otimes_Er^{\bL^{\lrr}_{I}(L)^w}_{\op^{\lrr}(L)\cap \bL^{\lrr}_{I}(L)^u}\pi_{I}^w\Big)$ for some $w\in [\sW^{\lrr}_{I}\backslash\sW_{n}/\sW^{\lrr}]$ if and only if
\begin{equation*}
\begin{aligned}
0\neq&\homo_{\bL^{\lrr}(L)}\Big(w'(\pi^{\lrr}),i_{\bL^{\lrr}(L)\cap {\op}_{J}(L)^w}^{\bL^{\lrr}(L)}\big(\gamma_{I,\emptyset}^w\otimes_Er^{\bL^{\lrr}_{I}(L)^w}_{\op^{\lrr}(L)\cap \bL^{\lrr}_{I}(L)^w}\pi_{I}^w\big)\Big)\\
&\cong  \homo_{\bL^{\lrr}(L)\cap \bL^{\lrr}_{I}(L)^w}\Big(r_{\bL^{\lrr}(L)\cap {\op}^{\lrr}_{{I}}(L)^w}^{\bL^{\lrr}(L)}w'(\pi^{\lrr}),\gamma_{I,\emptyset}^w\otimes_Er^{\bL^{\lrr}_{I}(L)^w}_{\op^{\lrr}(L)\cap \bL^{\lrr}_{I}(L)^w}\pi_{I}^w\Big).\\
\end{aligned}
\end{equation*}
where the second isomorphism follows from the adjunction formula (\ref{smoothadj1}).\;Therefore, by the supercuspidality of $w'(\pi^{\lrr})$ and Lemma \ref{nonzerolem}, we see that $w\in\sW_{I,\emptyset}(\bL^{\lrr})$.\;Then, we deduce from (\ref{[A1]-1}) and the proof of \cite[Corollary 6.3.4 (b), i.e., the computation of $\delta$-factors]{casselman1975introduction} that
\begin{equation*}
\begin{aligned}
0\neq  \homo_{\bL^{\lrr}(L)}\Big(w'(\pi^{\lrr}),\gamma_{I,\emptyset}^w\otimes_Er^{\bL^{\lrr}_{I}(L)^w}_{\op^{\lrr}(L)\cap \bL^{\lrr}_{I}(L)^u}\pi_{I}^w\Big)
\cong \homo_{\bL^{\lrr}(L)}\Big(w^{-1}w'(\pi^{\lrr}),r^{\bL^{\lrr}_{I}(L)}_{\op^{\lrr}(L)\cap \bL^{\lrr}_{I}(L)}\pi_{I}\Big).
\end{aligned}
\end{equation*}
This implies that $w'=w$.\;The same argument holds for $J$.\;Thus, if $w'(\pi^{\lrr})$ appears as an irreducible constitute of $\Big(r_{\op^{\lrr}(L)}^{G}i_{\op^{\lrr}_{{I}}(L)}^{G}\pi_{{I}}\Big)\cap \Big(r_{\op^{\lrr}(L)}^{G}i_{\op^{\lrr}_{{J}}(L)}^{G}\pi_{{J}}\Big)$, then $w'\in \sW_{I,\emptyset}(\bL^{\lrr})\cap \sW_{J,\emptyset}(\bL^{\lrr})=1$ (under the assumption $I\cup J={\Delta_n(k)}$).\;We then conclude that
\[\Big(r_{\op^{\lrr}(L)}^{G}i_{\op^{\lrr}_{{I}}(L)}^{G}\pi_{{I}}\Big)\cap \Big(r_{\op^{\lrr}(L)}^{G}i_{\op^{\lrr}_{{J}}(L)}^{G}\pi_{{J}}\Big)=\pi^{\lrr}=r_{\op^{\lrr}(L)}^{G}\pi_{\Delta_n(k)}.\]
The claim follows.\;\\
\textbf{Proof of (\ref{[A3]-2}).\;}It is clear that $i_{\op^{\lrr}_{{I}}(L)}^{G}\pi_{{I}}\cap \Big(\sum_{j=1}^m i_{\op^{\lrr}_{{I}_j}(L)}^{G}\pi_{{I}_j}\Big)
\supseteq\sum_{j=1}^m i_{\op^{\lrr}_{{I}}(L)}^{G}\pi_{{I}}\cap i_{\op^{\lrr}_{{I}_j}(L)}^{G}\pi_{{I}_j}$.\;Let $\mathcal{V}'$ be its cokernel.\;
Since $r_{\op^{\lrr}(L)}^{G}\Big(i_{\op^{\lrr}_{{I}}(L)}^{G}\pi_{{I}}\cap \Big(\sum_{j=1}^m i_{\op^{\lrr}_{{I}_j}(L)}^{G}\pi_{{I}_j}\Big)\Big)$ and $r_{\op^{\lrr}(L)}^{G}\Big(\sum_{j=1}^m i_{\op^{\lrr}_{{I}}(L)}^{G}\pi_{{I}}\cap i_{\op^{\lrr}_{{I}_j}(L)}^{G}\pi_{{I}_j}\Big)$ are subrepresentations of the semi-simple representation $r_{\op^{\lrr}(L)}^{G}i_{\op^{\lrr}(L)}^{G}{\pi^{\lrr}}$, 
which shows that all the sums are direct.\;Therefore, we get that $r_{\op^{\lrr}(L)}^{G}\mathcal{V}'=0$.\;By Lemma \ref{resvanshing}, we deduce $\mathcal{V}'=0$.\;This completes the proof.\;
\end{proof}

\begin{lem}\label{resvanshing}Let $\mathcal{V}$ be a non-zero irreducible subquotient of ${i}_{\op^{\lrr}(L)}^{G}{\pi^{\lrr}}$, then $r_{\op^{\lrr}(L)}^{G}\mathcal{V}\neq 0$.\;
\end{lem}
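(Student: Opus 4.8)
\textbf{Proof plan for Lemma \ref{resvanshing}.}

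The claim is that the Jacquet functor $r^{G}_{\op^{\lrr}(L)}$ does not kill any nonzero irreducible subquotient $\mathcal{V}$ of ${i}_{\op^{\lrr}(L)}^{G}{\pi^{\lrr}}$. The plan is to argue via cuspidal support. First I would recall, using Lemma \ref{rigeolemma} (via \cite[Proposition 6.4.1, Theorem 6.3.5]{casselman1975introduction}), that ${r}_{\op^{\lrr}(L)}^{G}{i}_{\op^{\lrr}(L)}^{G}{\pi^{\lrr}}\cong \bigoplus_{w\in \sW(\bL^{\lrr})}w(\pi^{\lrr})$, so in particular the full induced representation ${i}_{\op^{\lrr}(L)}^{G}{\pi^{\lrr}}$ has nonzero Jacquet module with respect to $\op^{\lrr}(L)$, and every irreducible subquotient of ${i}_{\op^{\lrr}(L)}^{G}{\pi^{\lrr}}$ has cuspidal support equal to the multiset $\{\pi(k-1),\dots,\pi(1),\pi\}$ (up to permutation), i.e. the cuspidal support carried by $\pi^{\lrr}$ (twisted by $\delta_{\op^{\lrr}(L)}^{1/2}$).

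Next I would use the transitivity of Jacquet functors and Frobenius reciprocity: since $\mathcal{V}$ is a subquotient of ${i}_{\op^{\lrr}(L)}^{G}{\pi^{\lrr}}$, by second adjointness (or by the geometric lemma applied to $\op^{\lrr}$ and $\op^{\lrr}$) the Jacquet module $r^{G}_{\op^{\lrr}(L)}\mathcal{V}$, if it were zero, would force $\mathcal{V}$ to be cuspidal; but a cuspidal irreducible representation of $G=\GLN_n(L)$ cannot be a subquotient of a proper parabolic induction unless $\op^{\lrr}=G$, which is false here since $k\geq 2$ (the case $k=1$ being vacuous as $\op^{\lrr}=G$ and the statement is trivial). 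More precisely: the cuspidal support of $\mathcal{V}$ is the multiset $\{\pi(k-i)\}_{i=1}^{k}$, which is supported on the proper Levi $\bL^{\lrr}(L)$; hence $\mathcal{V}$ is not cuspidal, so by Jacquet's theorem $r^{G}_{\bP'(L)}\mathcal{V}\neq 0$ for some proper parabolic $\bP'$. I would then descend this to $\op^{\lrr}$: since the cuspidal support of $\mathcal{V}$ lies in $\bL^{\lrr}(L)$, $\mathcal{V}$ embeds into ${i}^{G}_{\op^{\lrr}(L)}\sigma$ for some irreducible $\sigma$ in the Bernstein component of $\pi^{\lrr}$ on $\bL^{\lrr}(L)$, and Frobenius reciprocity gives $\homo_{\bL^{\lrr}(L)}(r^{G}_{\op^{\lrr}(L)}\mathcal{V},\sigma)=\homo_{G}(\mathcal{V},{i}^{G}_{\op^{\lrr}(L)}\sigma)\neq 0$, whence $r^{G}_{\op^{\lrr}(L)}\mathcal{V}\neq 0$.

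The main obstacle is ensuring the embedding $\mathcal{V}\hookrightarrow {i}^{G}_{\op^{\lrr}(L)}\sigma$ with $\sigma$ cuspidal on $\bL^{\lrr}(L)$; this is where I would invoke the theory of cuspidal support (\cite[Proposition 1.5 and 2.2, 2.3]{av1980induced2}): every irreducible subquotient $\mathcal{V}$ of a representation parabolically induced from a cuspidal $\pi^{\lrr}$ of $\bL^{\lrr}(L)$ is itself a subrepresentation of ${i}^{G}_{\op^{\lrr}(L)}\tau$ for some $\tau$ in the orbit of $\pi^{\lrr}$ under $\sW(\bL^{\lrr})$ (the relevant orbit being exactly the $w(\pi^{\lrr})$'s appearing in Lemma \ref{rigeolemma}). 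Combined with Frobenius reciprocity this immediately gives $r^{G}_{\op^{\lrr}(L)}\mathcal{V}\neq 0$. I expect the only delicate point to be bookkeeping the twists by $\delta_{\op^{\lrr}(L)}^{1/2}$ and by the characters $v_r^{w(k-i)}$, but these do not affect the non-vanishing statement.
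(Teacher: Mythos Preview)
Your argument is correct but takes a different route from the paper. You appeal to the general theory of cuspidal support: since $\mathcal{V}$ is a subquotient of an induction from the cuspidal $\pi^{\lrr}$ on $\bL^{\lrr}(L)$, its cuspidal support lies on $\bL^{\lrr}(L)$, and the subrepresentation theorem (applied with respect to parabolics containing $\ob$) then yields an embedding $\mathcal{V}\hookrightarrow i^{G}_{\op^{\lrr}(L)}\tau$ for some cuspidal $\tau$, whence Frobenius reciprocity gives $r^{G}_{\op^{\lrr}(L)}\mathcal{V}\neq 0$. Two small points of care: the sentence ``if $r^{G}_{\op^{\lrr}(L)}\mathcal{V}=0$ would force $\mathcal{V}$ to be cuspidal'' is false as written (vanishing of one Jacquet module does not imply cuspidality), though you immediately supersede it with the correct cuspidal-support reasoning; and you should note that the subrepresentation theorem a priori gives an embedding into $i^{G}_{Q}\sigma$ for \emph{some} parabolic $Q$ with Levi $\bL^{\lrr}$, and one then conjugates by an element of $N_{G}(\bL^{\lrr})$ (which acts transitively on such parabolics in $\GLN_n$) to arrange $Q=\op^{\lrr}$.

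The paper instead gives a self-contained argument by contradiction that stays within its own framework: assuming $r^{G}_{\op^{\lrr}(L)}\mathcal{V}=0$, it picks a minimal $\emptyset\neq I\subset\Delta_n(k)$ with $r^{G}_{\op^{\lrr}_{I}(L)}\mathcal{V}\neq 0$, takes a cuspidal irreducible subquotient $\cW$ of this Jacquet module, and then applies the Bernstein--Zelevinsky geometric lemma together with Lemma~\ref{nonzerolem} to show that $\cW$ must occur in a \emph{proper} parabolic induction inside $\bL^{\lrr}_{I}(L)$, contradicting cuspidality. Your approach is shorter and more conceptual, at the cost of invoking the well-definedness of cuspidal support and the subrepresentation theorem as black boxes; the paper's approach avoids these and reuses the same geometric-lemma machinery (and the key combinatorial Lemma~\ref{nonzerolem}) that drives the surrounding arguments in Proposition~\ref{axioms}.
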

\begin{proof}If  $r_{\op^{\lrr}(L)}^{G}\mathcal{V}=0$, we can choose $\emptyset\neq I\subset {\Delta_n(k)}$ such that $r_{\op^{\lrr}_{{I}}(L)}^{G}\mathcal{V}$ is non-zero and quasi-cuspidal (i.e., for any $J\subsetneq I$, we have $r_{\op^{\lrr}_{{J}}(L)}^{G}\mathcal{V}=0$, see also \cite[1.10]{av1980induced2}).\;Let $\cW$ be an irreducible subquotient of $r_{\op^{\lrr}_{{I}}(L)}^{G}\mathcal{V}$.\;Then $\cW$ is cuspidal.\;Note that
\[(r_{\op^{\lrr}_{{I}}(L)}^{G}i_{\op^{\lrr}(L)}^{G}{\pi^{\lrr}})^{\mathrm{ss}}= \bigoplus_{w\in [\sW^{\lrr}\backslash\sW_{n}/\sW^{\lrr}_I]}i_{\bL^{\lrr}_{I}(L)\cap {\op}(L)^w}^{\bL^{\lrr}_{I}(L)}\Big(\gamma_{\emptyset,I}^w\otimes_Er^{\bL^{\lrr}(L)^w}_{\op^{\lrr}_{I}(L)\cap \bL^{\lrr}(L)^w}\big(\pi^{\lrr}\big)^{w}\Big).\;\]
It follows from the cuspidality of $\big(\pi^{\lrr}\big)^{w}$ that $\op^{\lrr}_{I}(L)\cap \bL^{\lrr}(L)^w=\bL^{\lrr}(L)^w$.\;By  Lemma \ref{nonzerolem}, we deduce $w\in \sW_{\emptyset,I}(\bL^{\lrr}) $.\;Therefore, there exists an element $w\in \sW_{\emptyset,I}(\bL^{\lrr})$ such that $\cW$ is an irreducible subquotient of $i_{\bL^{\lrr}_{I}(L)\cap {\op}(L)^w}^{\bL^{\lrr}_{I}(L)}\Big(\gamma_{\emptyset,I}^w\otimes_Er^{\bL^{\lrr}(L)^w}_{\op^{\lrr}_{I}(L)\cap \bL^{\lrr}(L)^w}\big(\pi^{\lrr}\big)^{w}\Big)$.\;Therefore, for such $w$ we have
\begin{equation*}
\begin{aligned}
0   \neq  \homo_{\bL^{\lrr}_{I}(L)}\Big(&\cW,i_{\bL^{\lrr}_{I}(L)\cap {\op}(L)^w}^{\bL^{\lrr}_{I}(L)}\big(\gamma_{\emptyset,I}^w\otimes_E\big(\pi^{\lrr}\big)^{w}\big)\Big)\\
& =\homo_{\bL^{\lrr}(L)^w}\Big(r_{\bL^{\lrr}_{I}(L)\cap \op^{\lrr}(L)^w}^{\bL^{\lrr}_{I}(L)}\cW,\gamma_{\emptyset,I}^w\otimes_E\big(\pi^{\lrr}\big)^{w}\Big).
\end{aligned}
\end{equation*}
This leads to a contradiction to the cuspidality of $\cW$.\;
\end{proof}

\subsection{Jordan-H\"{o}lder factors of parabolically induced representations}\label{JHfactorPARA}

The main goal of this section is to determine the Jordan-H\"{o}lder factors $\mathbf{JH}(i_{\op^{\lrr}(L)}^{G}(\pi))$ of $i_{\op^{\lrr}(L)}^{G}(\pi)$.\;Recall that the parabolic Steinberg representation $\mathrm{St}_{(r,k)}^{\infty}(\pi)$ is the unique irreducible generic quotient of $i_{\op^{\lrr}(L)}^{G}(\pi)$.\;

Let $\Sigma$ be the root system of $\GLN_{n}$, then $\Sigma$ is the hyperplane $\{x=(x_1,\cdots,x_n)\in \BR^n:x_1+\cdots+x_n=0\}$.\;Let us identify the root  $\epsilon_i:(x_1,\cdots,x_n)\in\ft \mapsto x_i$ to the coordinate $x\mapsto x_i$ for $i=1,\cdots,n$.\;Then each root $\alpha\in \Phi$  induces an $\BR$-linear form on $\Sigma$.\;The Weyl group $\sW_n$ act on $\Sigma$ by $w\cdot x \mapsto (x_{w^{-1}(1)},\cdots,x_{w^{-1}(n)})$.\;Let $C=\{x\in \Sigma: \alpha_i(x)>0, i=1,\cdots,n-1\}$ be the standard Weyl chamber.\;Let $I'$ be a subset of $\Delta_n$, then for any $w\in \sW_n$, we have $w(\Phi^+)\cap \Delta_n=I'$ if and only if $w(C)$ is contained in the set
$C_I=\{x\in V: \alpha_i(x)>0, \forall i\in I', \alpha_i(x)<0, \forall i\in \Delta_n\backslash I'\}$.\;Let $w_{\Delta_n\backslash I'}$ be the longest element of $\sW_{\Delta_n\backslash I'}$.\;Then we have $w_{\Delta_n\backslash I'}^2=1$, $w_{\Delta_n\backslash I'}(\Delta_n\backslash I')\subset -(\Delta_n\backslash I')$, $w_{\Delta_n\backslash I'}(I')\subset \Phi^+$, and $w_{\Delta_n\backslash I'}(\Phi^+)\cap \Delta_n=I'$.\;

Recall that \cite[Section 2]{av1980induced2} gives a bijection between 
$\mathbf{JH}(i_{\op^{\lrr}}^{G}(\pi))$ and orientations of some graph $\Gamma_\pi$.\;The graph $\Gamma_\pi$ consisting of the vertices $\{\pi(k-i)\}_{1\leq i\leq k}$, and the edges $\Big\{\overset{\pi(k-i)}{\bullet}-\overset{\pi(k-i-1)}{\bullet}\Big\}_{1\leq i\leq k}$, i.e., 
\begin{equation}
	\xymatrix{\overset{\pi(k)}{\bullet} \ar@{-}[r] & \overset{\pi(k-1)}{\bullet} \ar@{-}[r] &\cdots\ar@{-}[r] & \overset{\pi(0)}{\bullet}}
\end{equation}
An orientation of $\Gamma_\pi$ is given by choosing a direction on each edge.\;Let $\cO(\Gamma_\pi)$ be the set of all orientations on $\Gamma_\pi$.\;We can construct a map $\Theta:\sW(\bL^{\lrr})\rightarrow \cO(\Gamma_\pi)$ as follows.\;The edge $\overset{\pi(k-i)}{\bullet}-\overset{\pi(k-i-1)}{\bullet}$ is oriented by the form $\overset{\pi(k-i)}{\bullet}\longrightarrow\overset{\pi(k-i-1)}{\bullet}$ if and only if $w(ir)<w((i+1)r)$, where $ir,(i+1)r\in I$ are viewed as simple roots.\;For a subset $I\subset \Delta_n(k)$ and each $w\in \sW(\bL^{\lrr})$, we see that $w(\Phi^+)\cap \Delta_n=I\cup \Delta_n^k$ if and only if  $w(\Phi^+)\cap \Delta_n(k)=I$, since $w$ fixes simple roots in $\Delta_n^k$.\;Let $\overrightarrow{\Gamma_\pi}(I)$ be the orientation of $\Gamma_\pi$ defined by $\pi(k-i)\rightarrow\pi(k-i-1)$ if and only if $ir\in I$.\;By definition, we see that $\Theta^{-1}(\overrightarrow{\Gamma_\pi}(I))=\{w\in \sW(\bL^{\lrr}):w^{-1}(\Phi^+)\cap \Delta_n(k)=I\}$.\;By identifying the set $\Delta_n(k)$ with the simple roots of $\sW(\bL^{\lrr})\cong \sW_k$, we let 
$w^{\lrr}_{\Delta_n(k)\backslash I}$ be the longest element of $\sW(\bL^{\lrr})_{\Delta_n(k)\backslash I}$ (i.e., the subgroup of $\sW(\bL^{\lrr})$ generated by simple reflections which belong to $\Delta_n(k)\backslash I$).\;By definition, we see that $(w^{\lrr}_{\Delta_n(k)\backslash I})^2=1$, $w^{\lrr}_{\Delta_n(k)\backslash I}(I)\in  \sW_{\emptyset,I}(\bL^{\lrr})$ and $w^{\lrr}_{\Delta_n(k)\backslash I}\in \Theta^{-1}(\overrightarrow{\Gamma_\pi}(I))$.\;

For each  $w\in\sW(\bL^{\lrr})\cong  S_k$, we let $y_{w}^{\infty}(\pi)$ (resp., $z_{w}^{\infty}(\pi)$) be the unique irreducible quotient  (resp., subrepresentation) of $i_{\op^{\lrr}(L)}^{G}w(\pi^{\lrr})$ (see \cite[2.10.\;Proposition]{av1980induced2}), where $w(\pi^{\lrr})=(\otimes_{i=1}^{k}\pi\otimes_E v_r^{w(k-i)})\otimes_E \delta_{\op^{\lrr}(L)}^{1/2}$.\;Let $\Sigma_0$ be a system of representatives of the classes in $\sW(\bL^{\lrr})$ for the equivalence relation
\[w\sim w' \text{ if and only if } i_{\op^{\lrr}(L)}^{G}w(\pi^{\lrr})\cong i_{\op^{\lrr}(L)}^{G}w'(\pi^{\lrr}).\]
Then \cite[2.11.\;Corollary]{av1980induced2} show that the following conditions are equivalent:
\begin{itemize}
	\item $w\sim w'$,
	\item $w^{-1}(\Phi^+)\cap \Delta_n(k)=w'^{-1}(\Phi^+)\cap \Delta_n(k)$,
	\item $z_{w}^{\infty}(\pi)\cong z_{w'}^{\infty}(\pi)$,
	\item $y_{w}^{\infty}(\pi)\cong y_{w'}^{\infty}(\pi)$.
\end{itemize}
Together with the Frobenius reciprocity and the semi-simplicity of ${r}_{\op^{\lrr}(L)}^{G}{i}_{\op^{\lrr}(L)}^{G}{\pi^{\lrr}}$, the above equivalent conditions imply that
\begin{equation}\label{zwfactors}
	r_{\op^{\lrr}(L)}^{G}z_{w}^{\infty}(\pi)=\bigoplus_{w\sim w'}w'(\pi^{\lrr}).
\end{equation}
These also show that the system $\Sigma_0$ of representatives can be chosen by $\{w^{\lrr}_{\Delta_n(k)\backslash I}\}_{I\subseteq \Delta_n(k)}$.\;We put $z_{\op^{\lrr}_{{I}}}^{\infty}(\pi):=z_{w^{\lrr}_{\Delta_n(k)\backslash I}}^{\infty}(\pi)$.\;By \cite[Section 2]{av1980induced2} or \cite[Page 628-629]{2014Vers}, we see that 
\[\mathbf{JH}(i_{\op^{\lrr}}^{G}(\pi))=\{z_{\op^{\lrr}_{{I}}}^{\infty}(\pi)\}_{I\subseteq \Delta_n(k)}.\]
The following lemma compares our construction in Definition \ref{dfnlagpstreplalg}  with the Zelevinsky classification (i.e., \cite[Section 2]{av1980induced2}) of $\mathbf{JH}(i_{\op^{\lrr}}^{G}(\pi))$.\;The proof follows along the route of \cite[Theorem (8.1.2), Lemma (8.1.3) and Lemma (8.1.4)]{Drimodularvar}.\;
\begin{lem}\label{JHsmooth}We have
	\begin{itemize}
		\item[(1)] $i_{\op^{\lrr}_{{I}}}^{G}(\pi)$ has a composition series whose successive quotients are the $v_{\op^{\lrr}_{{J}}}^{\infty}(\pi)$ with $J\supseteq I$, each occurring with multiplicity one.
		\item[(2)] $u_{\op^{\lrr}_{{I}}}^{G}(\pi)$ has a composition series whose successive quotients are the $v_{\op^{\lrr}_{{J}}}^{\infty}(\pi)$ with $J\varsupsetneq I$, each occurring with multiplicity one.
		\item[(3)] The composition series constructed in parts (1) and (2) are indeed  Jordon-H\"{o}lder series.\;In particular, for each $I\subset \Delta_n(k)$, $v_{\op^{\lrr}_{{I}}}^{\infty}(\pi)$ is irreducible, and  $v_{\op^{\lrr}}^{\infty}(\pi)=\mathrm{St}_{(r,k)}^{\infty}(\pi)$.\;
		\item[(4)] For any $I\subseteq \Delta_n$,  $v_{\op^{\lrr}_{{I}}}^{\infty}(\pi)$ is isomorphic to $z_{\op^{\lrr}_{{I}}}^{\infty}(\pi)$.\;In particular, for any $I',I''\subseteq \Delta_n(k)$, the representations $v_{\op^{\lrr}_{{I'}}}^{\infty}(\pi)$ and $v_{\op^{\lrr}_{{I''}}}^{\infty}(\pi)$ are isomorphic if and only if $I'=I''$.\;
	\end{itemize}
\end{lem}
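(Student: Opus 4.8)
\textbf{Proof strategy for Lemma \ref{JHsmooth}.}
The plan is to follow the route of \cite[Theorem (8.1.2), Lemma (8.1.3) and Lemma (8.1.4)]{Drimodularvar}, using the axioms $\mathbf{[A1]}$ and $\mathbf{[A2]}$ of Proposition \ref{axioms} as the combinatorial input, together with the semisimplicity of ${r}_{\op^{\lrr}(L)}^{G}{i}_{\op^{\lrr}(L)}^{G}{\pi^{\lrr}}$ (Lemma \ref{rigeolemma}) and the Zelevinsky classification of $\mathbf{JH}(i_{\op^{\lrr}}^{G}(\pi))=\{z_{\op^{\lrr}_{{I}}}^{\infty}(\pi)\}_{I\subseteq \Delta_n(k)}$ recalled above as the final comparison. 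First I would prove (1) and (2) simultaneously by downward induction on $|I|$. For $I=\Delta_n(k)$ one has $i_{\op^{\lrr}_{\Delta_n(k)}}^{G}(\pi)=\pi_{\Delta_n(k)}=v_{\op^{\lrr}_{\Delta_n(k)}}^{\infty}(\pi)$ and $u_{\op^{\lrr}_{\Delta_n(k)}}^{G}(\pi)=0$, so the base case is immediate. For the inductive step, one uses the short exact sequence $0\to u_{\op^{\lrr}_{{I}}}^{\infty}(\pi)\to i_{\op^{\lrr}_{{I}}}^{G}(\pi)\to v_{\op^{\lrr}_{{I}}}^{\infty}(\pi)\to 0$ (the definition of $v_{\op^{\lrr}_{{I}}}^{\infty}(\pi)$), so (1) for $I$ follows from (2) for $I$; and (2) for $I$ is obtained by a Mayer--Vietoris / inclusion--exclusion argument: $u_{\op^{\lrr}_{{I}}}^{\infty}(\pi)=\sum_{J\supsetneq I}i_{\op^{\lrr}_{{J}}}^{G}(\pi)$, and the intersection identities (\ref{[A3]-1}) and (\ref{[A3]-2}) let one express the successive quotients of the natural filtration on this sum (indexed by the $J$'s of a given cardinality) in terms of the $i_{\op^{\lrr}_{{K}}}^{G}(\pi)$ with $K\supsetneq I$, to which the induction hypothesis applies. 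The standard nerve-of-a-covering bookkeeping (as in \cite[Proposition 11]{2012Orlsmoothextensions}, already invoked for Proposition \ref{smoothexact}) then gives that each $v_{\op^{\lrr}_{{J}}}^{\infty}(\pi)$ with $J\supsetneq I$ occurs exactly once.

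Next, for (3), the point is to upgrade the composition series of (1)--(2) to genuine Jordan--H\"older series, i.e.\ to show each $v_{\op^{\lrr}_{{I}}}^{\infty}(\pi)$ is irreducible and that the $\{v_{\op^{\lrr}_{{I}}}^{\infty}(\pi)\}_{I}$ are pairwise non-isomorphic. I would count: by part (1) applied to $I=\emptyset$, the length of $i_{\op^{\lrr}}^{G}(\pi)$ is at most $2^{k-1}=|\{I\subseteq\Delta_n(k)\}|$, while the Zelevinsky classification recalled above says $|\mathbf{JH}(i_{\op^{\lrr}}^{G}(\pi))|$, counted with multiplicity, equals exactly $2^{k-1}$ (the $z_{\op^{\lrr}_{{I}}}^{\infty}(\pi)$ are pairwise distinct irreducibles). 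Hence equality must hold at every stage, forcing each $v_{\op^{\lrr}_{{I}}}^{\infty}(\pi)$ to be irreducible and the collection to be pairwise non-isomorphic; the assertion $v_{\op^{\lrr}}^{\infty}(\pi)=\st_{(r,k)}^{\infty}(\pi)$ then follows since $v_{\op^{\lrr}}^{\infty}(\pi)$ is by construction a quotient of $i_{\op^{\lrr}}^{G}(\pi)$ and $\st_{(r,k)}^{\infty}(\pi)$ is its unique irreducible generic quotient (one checks genericity of $v_{\op^{\lrr}}^{\infty}(\pi)$ via the Jacquet module computation, or simply notes it is the top quotient of the $\mathbf{[A2]}$-filtration).

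Finally, for (4), I would identify $v_{\op^{\lrr}_{{I}}}^{\infty}(\pi)$ with $z_{\op^{\lrr}_{{I}}}^{\infty}(\pi)$ by matching Jacquet modules. Using Lemma \ref{rigeolemma}, the exactness of $r_{\op^{\lrr}(L)}^{G}$, and the filtration of (1), one computes $r_{\op^{\lrr}(L)}^{G}v_{\op^{\lrr}_{{I}}}^{\infty}(\pi)$ as a subquotient of $\bigoplus_{w\in\sW(\bL^{\lrr})}w(\pi^{\lrr})$; on the other hand (\ref{zwfactors}) gives $r_{\op^{\lrr}(L)}^{G}z_{\op^{\lrr}_{{I}}}^{\infty}(\pi)=\bigoplus_{w\sim w^{\lrr}_{\Delta_n(k)\backslash I}}w(\pi^{\lrr})$, i.e.\ the sum over $\{w:w^{-1}(\Phi^+)\cap\Delta_n(k)=I\}$. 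One then shows, using the axiom (\ref{[A1]-1}) and the description of $\pi_I$ as a subrepresentation of $i_{\op^{\lrr}(L)\cap\bL^{\lrr}_{I}(L)}^{\bL^{\lrr}_{I}(L)}\pi^{\lrr}$, that $\pi^{\lrr}$ itself (i.e.\ $w=1$ up to the equivalence, corresponding to the orientation $\overrightarrow{\Gamma_\pi}(I)$) appears in $r_{\op^{\lrr}(L)}^{G}v_{\op^{\lrr}_{{I}}}^{\infty}(\pi)$ exactly when the relevant orientation data matches $I$; since each irreducible constituent is determined by its Jacquet module (the central characters of the $w(\pi^{\lrr})$ being pairwise distinct), the irreducible $v_{\op^{\lrr}_{{I}}}^{\infty}(\pi)$ must coincide with $z_{\op^{\lrr}_{{I}}}^{\infty}(\pi)$. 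The last clause of (4) is then a restatement of the pairwise distinctness of the $z_{\op^{\lrr}_{{I}}}^{\infty}(\pi)$.

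\textbf{Main obstacle.} The routine part is the inclusion--exclusion in (1)--(2); the genuinely delicate step is (4), namely pinning down \emph{which} orientation/which double-coset representative corresponds to a given $I$ and checking that the Jacquet module of $v_{\op^{\lrr}_{{I}}}^{\infty}(\pi)$ contains precisely the constituents $w(\pi^{\lrr})$ with $w^{-1}(\Phi^+)\cap\Delta_n(k)=I$. This requires carefully tracking the segment bookkeeping of \cite[3.4.\;Proposition]{av1980induced2} (as already done in the proof of Proposition \ref{axioms}, identity (\ref{[A1]-1})) through the quotient defining $v_{\op^{\lrr}_{{I}}}^{\infty}(\pi)$, and correctly normalizing the longest-element conventions $w^{\lrr}_{\Delta_n(k)\backslash I}$ versus the opposite-parabolic conventions used throughout this paper.
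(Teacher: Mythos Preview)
Your strategy for (1)--(3) is correct and close to the paper's. One structural difference: for (1) the paper does not induct on $|I|$. Instead it fixes $I$ once and for all, chooses a linear extension $I=J_0,J_1,\dots,J_N=\Delta_n(k)$ of the poset $\{J:J\supseteq I\}$ (so $J_n\subseteq J_m\Rightarrow n\leq m$), sets $F^l=\sum_{j\geq l}i_{\op^{\lrr}_{J_j}}^{G}(\pi)$, and uses (\ref{[A3]-1})--(\ref{[A3]-2}) directly to get $F^l/F^{l+1}\cong v_{\op^{\lrr}_{J_l}}^{\infty}(\pi)$. Then (2) follows from (1) and the defining short exact sequence, i.e.\ the paper's logical order is the reverse of yours. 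Your inductive scheme is fine, but the direct filtration is shorter and avoids the nerve bookkeeping. For (3) your counting argument is exactly the paper's; for $v_{\op^{\lrr}}^{\infty}(\pi)=\st_{(r,k)}^{\infty}(\pi)$ drop the genericity remark---once $v_{\op^{\lrr}}^{\infty}(\pi)$ is known to be an irreducible quotient of $i_{\op^{\lrr}}^{G}(\pi)$, uniqueness of the irreducible quotient finishes it.

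There is a genuine slip in your paragraph on (4). You write that one should track ``$\pi^{\lrr}$ itself (i.e.\ $w=1$ up to the equivalence, corresponding to the orientation $\overrightarrow{\Gamma_\pi}(I)$)''. But $w=1$ satisfies $1^{-1}(\Phi^+)\cap\Delta_n(k)=\Delta_n(k)$, so $\pi^{\lrr}$ lies in the Jacquet module of $z_{\op^{\lrr}_{\Delta_n(k)}}^{\infty}(\pi)=\pi_{\Delta_n(k)}$, not of $z_{\op^{\lrr}_{I}}^{\infty}(\pi)$ for general $I$. The constituent one must track for a given $I$ is $w^{\lrr}_{\Delta_n(k)\backslash I}(\pi^{\lrr})$, where $w^{\lrr}_{\Delta_n(k)\backslash I}$ is the longest element of $\sW(\bL^{\lrr})_{\Delta_n(k)\backslash I}$. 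Your ``Main obstacle'' paragraph states the correct target (the constituents $w(\pi^{\lrr})$ with $w^{-1}(\Phi^+)\cap\Delta_n(k)=I$), so the confusion is local; but as written the argument for (4) does not go through.

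The paper's actual argument for (4) is more economical than computing the full Jacquet module of each $v_{\op^{\lrr}_{I}}^{\infty}(\pi)$. By (3) there is \emph{some} bijection $\beta$ on subsets with $v_{\op^{\lrr}_{I}}^{\infty}(\pi)\cong z_{\op^{\lrr}_{\beta(I)}}^{\infty}(\pi)$; one shows $\beta=\mathrm{id}$ by downward induction on $|\Delta_n(k)\backslash I|$. For the inductive step, the discussion after (\ref{semisimple1}) shows that $w^{\lrr}_{\Delta_n(k)\backslash I}(\pi^{\lrr})$ occurs in $r_{\op^{\lrr}(L)}^{G}i_{\op^{\lrr}_{I}}^{G}(\pi)$ (because $w^{\lrr}_{\Delta_n(k)\backslash I}\in\sW_{I,\emptyset}(\bL^{\lrr})$), while by (\ref{zwfactors}) and the induction hypothesis it does \emph{not} occur in $r_{\op^{\lrr}(L)}^{G}v_{\op^{\lrr}_{J}}^{\infty}(\pi)$ for any $J\supsetneq I$. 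Exactness of the Jacquet functor on the filtration from (1) then forces it into $r_{\op^{\lrr}(L)}^{G}v_{\op^{\lrr}_{I}}^{\infty}(\pi)=r_{\op^{\lrr}(L)}^{G}z_{\op^{\lrr}_{\beta(I)}}^{\infty}(\pi)$, and (\ref{zwfactors}) again gives $w^{\lrr}_{\Delta_n(k)\backslash I}\sim w^{\lrr}_{\Delta_n(k)\backslash\beta(I)}$, i.e.\ $\beta(I)=I$. This avoids the full segment bookkeeping you flagged as the obstacle: one only needs to place a single, well-chosen constituent.
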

\begin{proof}\textbf{Proof of $(1)$}.\;An easy induction shows that there exists a numbering
	$I=J_0,J_1,\cdots,J_N={\Delta_n(k)}$ of the set $\{J|J\subseteq I\}$ such that $J_n\subseteq J_m$ then $n\leq m$.\;Now define
	\[F^l=\sum_{j=l}^{N}i_{\op^{\lrr}_{J_j}}^{G}(\pi).\]
	Then we have a filtration $0=F^{N+1}\subset F^N\subset \cdots \subset F^0=i_{\op^{\lrr}_{{I}}}^{G}(\pi)$
	such that, for each $l=0,\cdots,N$, we have
	\begin{equation*}
		\begin{aligned}
			F^l/F^{l+1}&=\sum_{j=l}^{N}i_{\op^{\lrr}_{J_j}}^{G}(\pi)\bigg/\sum_{j=l+1}^{N}i_{\op^{\lrr}_{J_j}}^{G}(\pi)=i_{\op^{\lrr}_{J_l}}^{G}(\pi)\Big/\sum_{j=l+1}^{N}\bigg(i_{\op^{\lrr}_{J_j}}^{G}(\pi)\cap i_{\op^{\lrr}_{J_l}}^{G}(\pi)\Big)\cong v_{\op^{\lrr}_{J_l}}^{\infty}(\pi).
		\end{aligned}
	\end{equation*}
	The second equality and the third isomorphism follow from \textbf{[A2]} in Proposition \ref{axioms}.\;\\
	\textbf{Proof of $(2)$}.\;This is a direct consequence of $(1)$ and the exact sequence
	\[0\longrightarrow u_{\op^{\lrr}_{{I}}}^{G}(\pi)\longrightarrow i_{\op^{\lrr}_{{I}}}^{G}(\pi)\longrightarrow v_{\op^{\lrr}_{{I}}}^{\infty}(\pi)\longrightarrow 0.\]
	\textbf{Proof of $(3)$}.\;We claim that $i_{\op^{\lrr}}^{G}(\pi)$ has a Jordon-H\"{o}lder series whose successive quotients are $v_{\op^{\lrr}_{{J}}}^{\infty}(\pi)$ with $J\subseteq \Delta_n(k)$, each occurring multiplicity one.\;By  (\cite[2.3.\;Corollary]{av1980induced2}), a Jordon-H\"{o}lder series for $i_{\op^{\lrr}}^{G}(\pi)$ has length $2^{k-1}$.\;This shows that the composition series  constructed in part (1) is indeed a Jordon-H\"{o}lder series.\;By the uniquely of the irreducible quotient of $i_{\op^{\lrr}}^{G}(\pi)$, we get $v_{\op^{\lrr}}^{\infty}(\pi)=\mathrm{St}_{(r,k)}^{\infty}(\pi)$.\;\\
	\textbf{Proof of $(4)$}.\;The above discussion implies that there exists a bijection $\beta$ from $\{I\subseteq \Delta_n(k)\}$ onto itself such that $v_{\op^{\lrr}_{{I}}}^{\infty}(\pi)\cong z_{\op^{\lrr}_{{\beta(I)}}}^{\infty}(\pi)$ for any $I\subseteq \Delta_n(k)$.\;In fact, we will prove that $\beta(I)=I$ by induction on $|\Delta_n(k)\backslash I|$.\;It is clear that $\beta(\Delta_n(k))=\Delta_n(k)$.\;Let $I\subseteq \Delta_n(k)$ with $|\Delta_n(k)\backslash I|>0$ and let us assume that  $\beta(J)=J$ for any $|\Delta_n(k)\backslash J|<|\Delta_n(k)\backslash I|$.\;For these $J$, we thus have 
	\begin{equation}\label{zwfactors1}
		r_{\op^{\lrr}(L)}^{G}v_{\op^{\lrr}_{{J}}}^{\infty}(\pi)=\bigoplus_{w\sim w'}w'(\pi^{\lrr}).
	\end{equation}
	by (\ref{zwfactors}).\;Note that $w^{\lrr}_{\Delta_n(k)\backslash I}\in  \sW_{\emptyset,I}(\bL^{\lrr})$.\;By the same arguments as in the discussion after (\ref{semisimple1}) and (\ref{zwfactors}), we deduce that  $w^{\lrr}_{\Delta_n(k)\backslash I}\big(\pi^{\lrr}\big)$ appears in the semi-simplification of $r_{\op^{\lrr}(L)}^{G}i_{\op^{\lrr}_{{I}}(L)}^{G}\pi_{{I}}$ (up to isomorphism).\;But by the induction hypothesis, Part (1) and (\ref{zwfactors1}), we see that $w^{\lrr}_{\Delta_n(k)\backslash I}\big(\pi^{\lrr}\big)$ cannot be isomorphic to a subquotient of $r_{\op^{\lrr}(L)}^{G}v_{\op^{\lrr}_{{J}}}^{\infty}(\pi)$ for $J\varsupsetneq I$.\;Hence $w^{\lrr}_{\Delta_n(k)\backslash I}\big(\pi^{\lrr}\big)$ is isomorphic to a subquotient of $r_{\op^{\lrr}(L)}^{G}v_{\op^{\lrr}_{{I}}}^{\infty}(\pi)=r_{\op^{\lrr}(L)}^{G}z_{\op^{\lrr}_{{\beta(I)}}}^{\infty}(\pi)$.\;We thus have $w^{\lrr}_{\Delta_n(k)\backslash I}\sim w^{\lrr}_{\Delta_n(k)\backslash \beta(I)}$, i.e., $\beta(I)=w^{\lrr}_{\Delta_n(k)\backslash \beta(I)}(\Phi^+)\cap \Delta_{n}(k)=w^{\lrr}_{\Delta_n(k)\backslash (I)}(\Phi^+)\cap \Delta_{n}(k)=I$.\;We complete the proof of part $(4)$.\;By definition of equivalence relation, we see that $z_{\op^{\lrr}_{{I'}}}^{\infty}(\pi)$ and $z_{\op^{\lrr}_{{I''}}}^{\infty}(\pi)$ are isomorphic if and only if $I'=I''$.\;
	Therefore,  we get that $v_{\op^{\lrr}_{{I'}}}^{\infty}(\pi)$ and $v_{\op^{\lrr}_{{I''}}}^{\infty}(\pi)$ are isomorphic if and only if $I'=I''$.\;This completes the proof.\;
\end{proof}

\subsection{Smooth extensions of generalized parabolic Steinberg representations}\label{presmoothext}

This section follows a technical modification of the route of \cite{2012Orlsmoothextensions}.\;In this paper, the author compute the higher $\ext$-groups between smooth generalized (Borel) Steinberg representations via some spectral sequences arising from the so-called smooth Tits complex.\;This method can be adapted to our parabolic case, where the smooth Tits complexes in \cite{2012Orlsmoothextensions} are replaced by the results in Section \ref{Titcomp}.\;In parabolic case, we meet some cuspidal objects, these cause  difficulty in the computations.\;Therefore, the theory of semi-simple types by Bushnell and Kutzko \cite{bushnell2016admissible} is the cornerstone of the computation.\;

Many results in this section are not new.\;Indeed, in \cite[Corollary 18]{2012Orlsmoothextensions}, Sascha Orlik computed the smooth extension groups  $\ext^{i,\infty}_{G,\omega_{\pi}^{\lrr}}\big(v_{\op^{\lrr}_{{I}}}^{\infty}(\pi), v_{\op^{\lrr}_{{J}}}^{\infty}(\pi)\big)$  for general $I,J$.\;The proof uses the Zelevinski classification and the theory of semi-simple types by Bushnell and Kutzko \cite{bushnell2016admissible}.\;Indeed, the semisimple Bushnell-Kutzko types for $\GLN_{n}$, which classify the cuspidal smooth representations of $\GLN_{n}$ over $\overline{\bQ}_p$ (or complex field $\BC$), play a central role in its proof.\;But in Borel case, all these become trivial.\;In this section, we avoid using this theory for several reasons.\;
\begin{itemize}
\item We only need $\ext^1$-groups for some special case (see Proposition \ref{smoothExt4}).\;We can achieve this by a easy argument of spectral sequences.\;
\item The semisimple Bushnell-Kutzko types requires the introduction of many concepts, discussions and statements, which are deviated from our central goals and themes.\;
\end{itemize}

Therefore, we use instead the modified treatment of Orlik's approach on Borel case \cite{2012Orlsmoothextensions}.\;The computations (Section \ref{anaext1}) on extension groups of locally $\bQ_p$-analytic generalized (parabolic) Steinberg representations also follows this route, but in the setting of locally analytic representation.\;

Let $I\subset {\Delta_n(k)}$ and let $\underline{k}_{I}^{\lrr}=(k_1,\cdots,k_{l_I})$ be the ordered partition of integer $k$ associated with the $\bL^{\lrr}_{I}(L)$.\;Then $\bL^{\lrr}_{I}(L)=\GLN_{k_1r}(L)\times \cdots\times\GLN_{k_{l_I}r}(L)$.\;For $i\in \BZ_{\geq0}$, we put
\[\ssE_I^i:=\ext^{i,\infty}_{\bL^{\lrr}_{I}(L)}(\pi_I,\pi_I), \text{and
 }\sEo_I^i:=\ext^{i,\infty}_{\bL^{\lrr}_{I}(L),\omega_{\pi}^{\lrr}}(\pi_I,\pi_I).\]
The cuspidal representation $\pi$ cause  difficulty in the computation of $\ssE_I^i$ and $\sEo_I^i$.\;But there are natural morphisms
\begin{equation}\label{charactertoanaext}
\begin{aligned}
&\kappa_I^i:\bigwedge^i X_E^*(\bL^{\lrr}_{I})\cong \ext^{i,\infty}_{\bL^{\lrr}_{I}(L)}(1,1) \rightarrow \ssE_{I}^i, \\
\text{resp.\;}&\overline{\kappa}_I^i: \bigwedge^i X_E^*(\bL^{\lrr}_{I}/\bZ_n)\cong \ext^{i,\infty}_{\bL^{\lrr}_{I}(L),Z_n=1}(1,1) \rightarrow \sEo_{I}^i
\end{aligned}
\end{equation}
induced by the Yoneda $i$-extensions (see (\ref{smoothtensor})).\;

\begin{lem}\label{smoothhHinj}For any $I\subset {\Delta_n(k)}$, the maps $\kappa_I^1$ and $\overline{\kappa}_I^1$ are injective.\;In particular, we have $\dim_E \ssE_I^1\geq \dim_EX_E^*(\bL^{\lrr}_{I})$ and $\dim_E\sEo_I^1 \geq \dim_EX_E^*(\bL^{\lrr}_{I}/\bZ_n)$.
\end{lem}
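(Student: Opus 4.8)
The plan is to realize the maps $\kappa_I^1$ and $\overline{\kappa}_I^1$ as \emph{split} injections by constructing explicit one-sided inverses, exactly as in the proof of Lemma \ref{hHinj}. First I would recall that, since $\pi_I$ is an irreducible cuspidal representation of $\bL^{\lrr}_{I}(L)$, its central character $\omega_{\pi_I}$ restricts the action of $\bZ^{\lrr}_{I}(L)$ to a character; moreover, by the computation of $\hH^1_{\infty}$ in Section \ref{comphinftyhana} (via \cite[Proposition 9]{2012Orlsmoothextensions}), the natural injection $j_I:X_E^*(\bL^{\lrr}_{I})\hookrightarrow \homo(\bL^{\lrr}_{I}(L),E)$ identifies $\bigwedge^1 X_E^*(\bL^{\lrr}_{I})\cong X_E^*(\bL^{\lrr}_{I})$ with $\homo_\infty(\bL^{\lrr}_{I}(L),E)$, and similarly for $\bL^{\lrr}_{I}/\bZ_n$. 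The key observation is that the map $\kappa_I^1$ sends an algebraic character $\alpha\in X_E^*(\bL^{\lrr}_{I})$ (viewed as the smooth additive character $|\alpha(L)|_L$) to the self-extension $\pi_I(1+|\alpha(L)|_L\,\epsilon)$ of $\pi_I$ by itself.

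Next I would introduce the restriction-to-center map. The natural closed immersion $\bZ^{\lrr}_{I}(L)\hookrightarrow \bL^{\lrr}_{I}(L)$ induces a restriction homomorphism on smooth Ext-groups
\[
c_I:\ssE_I^1=\ext^{1,\infty}_{\bL^{\lrr}_{I}(L)}(\pi_I,\pi_I)\longrightarrow \ext^{1,\infty}_{\bZ^{\lrr}_{I}(L)}(\omega_{\pi_I},\omega_{\pi_I})\cong \ext^{1,\infty}_{\bZ^{\lrr}_{I}(L)}(1,1)\cong \homo(\bZ^{\lrr}_{I}(L),E),
\]
where the first isomorphism is twisting by $\omega_{\pi_I}^{-1}$ and the second uses that $\bZ^{\lrr}_{I}(L)$ is abelian (so $\hH^1_{\infty}(\bZ^{\lrr}_{I}(L),E)=\homo(\bZ^{\lrr}_{I}(L),E)$, the additive characters). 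One then checks that the composite $c_I\circ\kappa_I^1$ agrees, up to the identification $X_E^*(\bL^{\lrr}_{I})\cong\homo_\infty(\bZ^{\lrr}_{I}(L),E)\subseteq\homo(\bZ^{\lrr}_{I}(L),E)$ obtained by $\alpha\mapsto|\alpha(L)|_L|_{\bZ^{\lrr}_{I}(L)}$, with the restriction of characters $X_E^*(\bL^{\lrr}_{I})\to X_E^*(\bZ^{\lrr}_{I})$; since this latter map is injective (a character of a Levi of $\GLN$ is determined by its restriction to the center, as the Levi is a product of general linear groups and $X_E^*(\GLN_m)$ is rank one generated by $\det$), the composite $c_I\circ\kappa_I^1$ is injective, hence $\kappa_I^1$ is injective. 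The argument for $\overline{\kappa}_I^1$ is identical after replacing $\bL^{\lrr}_{I}$ by $\bL^{\lrr}_{I}/\bZ_n$, $\bZ^{\lrr}_{I}$ by $\bZ^{\lrr}_{I}/\bZ_n$, and working in the category with fixed central character $\omega_{\pi}^{\lrr}$, using the corresponding restriction map $\overline{c}_I:\sEo_I^1\to\ext^{1,\infty}_{\bZ^{\lrr}_{I}(L)/\bZ_n}(1,1)\cong\homo(\bZ^{\lrr}_{I}(L)/\bZ_n,E)$; note that $\overline{\kappa}_I^1$ really does land in $\sEo_I^1$ since tensoring a self-extension of $1$ with fixed trivial $\bZ_n$-action by $\pi_I$ keeps the $\bZ_n$-action equal to $\omega_{\pi}^{\lrr}$. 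The dimension inequalities $\dim_E\ssE_I^1\geq\dim_EX_E^*(\bL^{\lrr}_{I})$ and $\dim_E\sEo_I^1\geq\dim_EX_E^*(\bL^{\lrr}_{I}/\bZ_n)$ follow immediately from injectivity.

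The main obstacle I anticipate is purely bookkeeping rather than conceptual: verifying carefully that $c_I\circ\kappa_I^1$ equals the character-restriction map under the chosen identifications, and that all the isomorphisms $\ext^{1,\infty}_{\bZ^{\lrr}_{I}(L)}(\omega_{\pi_I},\omega_{\pi_I})\cong\ext^{1,\infty}_{\bZ^{\lrr}_{I}(L)}(1,1)\cong\homo(\bZ^{\lrr}_{I}(L),E)$ are compatible with the Yoneda-extension description of $\kappa_I^1$ from (\ref{charactertoanaext}). This is exactly the smooth analogue of what is done (and left implicit) in the proof of Lemma \ref{hHinj}, so I would simply cite that proof and \cite[Section 6]{vigneras1997extensions} for the identification of $\ext^{1,\infty}$ of abelian groups with additive characters, and remark that the section $c_I$ (resp.\ $\overline{c}_I$) of $\kappa_I^1$ (resp.\ $\overline{\kappa}_I^1$) exists for the same reason. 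No appeal to Bushnell--Kutzko types is needed here, since we only compare with the central character and never need to compute $\ssE_I^1$ itself.
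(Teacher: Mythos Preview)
Your proposal is correct and follows essentially the same approach as the paper: construct the restriction-to-center map $c_I$ (resp.\ $\overline{c}_I$) and observe that $c_I\circ\kappa_I^1$ (resp.\ $\overline{c}_I\circ\overline{\kappa}_I^1$) is the injective character-restriction map, so $\kappa_I^1$ and $\overline{\kappa}_I^1$ are injective. Two minor slips worth tidying: $\pi_I$ is irreducible but not cuspidal for $I\neq\emptyset$ (only the central character is used, so the argument is unaffected), and $\ext^{1,\infty}_{\bZ^{\lrr}_{I}(L)}(1,1)$ identifies with $\homo_\infty(\bZ^{\lrr}_{I}(L),E)\cong X_E^*(\bZ^{\lrr}_{I})$, not all of $\homo(\bZ^{\lrr}_{I}(L),E)$.
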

\begin{proof}The injection $\bZ^{\lrr}_{I}(L)\hookrightarrow \bL^{\lrr}_{I}(L)$ induces a map $\ssE_{I}^i\rightarrow \ext^{1,\infty}_{\bZ^{\lrr}_{I}(L)}(\omega_{\pi_I},\omega_{\pi_I})\cong \ext^1_{\bZ^{\lrr}_{I}(L)}(1,1)$ (resp., $\sEo_{I}^i\rightarrow \ext^{1,\infty}_{\bZ^{\lrr}_{I}(L),\omega_{\pi}^{\lrr}}(\omega_{\pi_I},\omega_{\pi_J})\cong \ext^{1,\infty}_{\bZ^{\lrr}_{I}(L),Z_n=1}(1,1)$), where $\omega_{\pi_I}$ is the central character of $\pi_I$.\;It is clear that this map gives a section  of $\kappa_I^1$ (resp., $\overline{\kappa}_I^1$).\;The results follow.\;
\end{proof}

For any $1\leq j\leq k-1$, denote by $\Delta_{k,j}$ the set ${\Delta_n(k)}\backslash\{jr\}$.\;We establish the following lemma (crucially using the theory of types of Bushnell and Kutzko \cite{bushnell2016admissible}).

\begin{lem}\label{SmoothEXTlemmaPre}We have
\begin{description}
\item[(a)] $\ssE_{{\Delta_n(k)}}^i=\ext^{i,\infty}_{G,\omega_{\pi}^{\lrr}}(\pi_{{\Delta_n(k)}},\pi_{{\Delta_n(k)}})=\bigwedge^i X_E^*(\bG_n/\bZ_n)$.\;
\item[(b)] $\sEo_{{\Delta_n(k)}}^i=\ext^{i,\infty}_{G}(\pi_{{\Delta_n(k)}},\pi_{{\Delta_n(k)}})=\bigwedge^i X_E^*(\bG_n)$.\;
\item[(c)] $\ssE_I^i=\ext^{i,\infty}_{\bL^{\lrr}_{I}(L)}(\pi_I,\pi_I)=\bigwedge^i X_E^*(\bL^{\lrr}_{I}) $.\;
\item[(d)] $\sEo_{\Delta_{k,j}}^i:=\ext^{i,\infty}_{\bL^{\lrr}_{\Delta_{k,j}}(L),\omega_{\pi}^{\lrr}}(\pi_{\Delta_{k,j}},\pi_{\Delta_{k,j}})=\bigwedge^i X_E^*(\bL^{\lrr}_{\Delta_{k,j}}/\bZ_n)$.\;
\end{description}
These isomorphisms are induced by the morphisms in (\ref{charactertoanaext}) respectively.\;
\end{lem}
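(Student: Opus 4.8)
The plan is to compute all four groups by reducing to the smooth self-extensions of an irreducible cuspidal representation of a single $\GLN_m(L)$, for which the answer is already dictated by the theory of Bernstein components and the Bushnell--Kutzko theory of semisimple types; since the paper has chosen to avoid spelling out that machinery, I would instead invoke it only for the single-block statement and then bootstrap. The key input is: if $\pi'$ is an irreducible cuspidal smooth representation of $\GLN_m(L)$ over $E$, then $\ext^{i,\infty}_{\GLN_m(L)}(\pi',\pi')\cong \bigwedge^i X_E^*(\GLN_m)$ and $\ext^{i,\infty}_{\GLN_m(L),\omega_{\pi'}}(\pi',\pi')\cong \bigwedge^i X_E^*(\GLN_m/\bZ_m)$. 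This is because $\pi'$ is (up to twist) a point on its Bernstein component whose endomorphism algebra is a torus of dimension $\dim_E X_E^*(\GLN_m)$ (from the unramified twists fixing the inertial class), and the self-extension groups are the exterior powers of the tangent space of that torus; the ``fixed central character'' variant removes exactly the one-dimensional subtorus of central unramified twists.

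First I would record that $\pi^{\lrr}=\otimes_{i=1}^k \pi\otimes_E v_r^{\ast}$ (see~(\ref{pilrr})) is a product of \emph{pairwise non-isomorphic} cuspidal twists of $\pi$, since the exponents $k-i$, $1\le i\le k$, are distinct. Hence for $J\subseteq I$ the representation $\pi_I$ is, by~(\ref{f11}) in the proof of Proposition~\ref{axioms}, an outer tensor product $\pi_I\cong \boxtimes_{a=1}^{l_I} \langle\Delta_{[k_a-1,0]}(\pi\otimes_E v_r^{\ast})\rangle$ of Zelevinsky quotients over the blocks $\GLN_{k_ar}(L)$ of $\bL^{\lrr}_I(L)$. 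For part~(a) and part~(b) one has $I=\Delta_n(k)$, so $l_I=1$, $\pi_{\Delta_n(k)}=\langle\Delta_{[k-1,0]}(\pi)\rangle=\st^{\infty}_{(r,k)}(\pi)$, and the essentially square-integrable representation $\st^{\infty}_{(r,k)}(\pi)$ of $G=\GLN_n(L)$ again lies alone on its Bernstein component with torus of unramified self-twists of rank $\dim_E X_E^*(\bG_n)=1$; this gives $\ssE^i_{\Delta_n(k)}=\bigwedge^i X_E^*(\bG_n/\bZ_n)$ (fixed central character, the central line quotiented out) and $\sEo^i_{\Delta_n(k)}=\bigwedge^i X_E^*(\bG_n)$. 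The fact that these isomorphisms are induced by $\kappa^i$ and $\overline{\kappa}^i$ of~(\ref{charactertoanaext}) is automatic since $\kappa^1$, $\overline{\kappa}^1$ are injective by Lemma~\ref{smoothhHinj} and the source and target have the same (correct) dimension in degree $1$, whence the full graded isomorphism follows by the cup-product structure (the exterior algebra on $\ssE^1$).

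For parts~(c) and~(d) I would use the K\"unneth formula for smooth extension groups, Proposition~\ref{prosmoothKunneth}, iterated over the blocks of $\bL^{\lrr}_I(L)$: $\ssE^i_I=\ext^{i,\infty}_{\bL^{\lrr}_I(L)}(\pi_I,\pi_I)\cong\bigoplus_{i_1+\cdots+i_{l_I}=i}\bigotimes_a \ext^{i_a,\infty}_{\GLN_{k_ar}(L)}(\pi_{I,a},\pi_{I,a})$, where $\pi_{I,a}:=\langle\Delta_{[k_a-1,0]}(\pi\otimes_E v_r^{\ast})\rangle$ is essentially square-integrable on $\GLN_{k_ar}(L)$. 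Each factor is $\bigwedge^{i_a}X_E^*(\GLN_{k_ar})$ by the single-block input, and $\bigoplus_{\sum i_a=i}\bigotimes_a\bigwedge^{i_a}X_E^*(\GLN_{k_ar})=\bigwedge^i\big(\bigoplus_a X_E^*(\GLN_{k_ar})\big)=\bigwedge^i X_E^*(\bL^{\lrr}_I)$, proving~(c). For~(d), $I=\Delta_{k,j}$ has two blocks, $\GLN_{jr}(L)\times\GLN_{(k-j)r}(L)$, and one works with the fixed-central-character variant $\sEo^i_{\Delta_{k,j}}$; here I would decompose $\bZ_n$ inside $\bZ^{\lrr}_{\Delta_{k,j}}$ and run the same K\"unneth computation while keeping only the subspace where the global center acts by $\omega^{\lrr}_\pi$, giving $\bigwedge^i X_E^*(\bL^{\lrr}_{\Delta_{k,j}}/\bZ_n)$; concretely, the rank of $X_E^*(\bL^{\lrr}_{\Delta_{k,j}}/\bZ_n)$ is $2-1=1$, matching the rank of unramified self-twists of $\pi_{\Delta_{k,j}}$ modulo the diagonal central twist. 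In all cases the asserted compatibility with~(\ref{charactertoanaext}) follows as above from injectivity of $\kappa^1,\overline\kappa^1$ (Lemma~\ref{smoothhHinj}) together with the exterior-algebra structure on the $\ext$-groups under cup product. The main obstacle is the single-block cuspidal input itself: making precise that the Bernstein-component endomorphism torus of $\pi$ (resp.\ of the essentially square-integrable $\pi_{I,a}$) has rank $\dim_E X_E^*(\GLN_m)$ and that its self-extensions are the exterior powers of the cotangent space; this is exactly where the Bushnell--Kutzko semisimple types (or, equivalently, the explicit description of the Hecke algebra of the type as a tensor product of affine/extended-affine Hecke algebras whose center is the relevant torus) are needed, and I would cite \cite{bushnell2016admissible} and the corresponding $\ext$-computation rather than reproduce it.
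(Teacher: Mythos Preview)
Your approach to (b) and (c) is essentially the paper's: invoke the types-theoretic input for a single block of $\GLN$ (the paper cites \cite[Corollary~18]{2012Orlsmoothextensions} for this) and then run the K\"unneth formula (Proposition~\ref{prosmoothKunneth}) blockwise for (c). Two corrections on the single-block step, though. First, $\pi_{\Delta_n(k)}=\langle\Delta_{[k-1,0]}(\pi)\rangle$ is \emph{not} $\st^\infty_{(r,k)}(\pi)$: the former is the unique irreducible \emph{subrepresentation} of $i^G_{\op^{\lrr}}\pi^{\lrr}$, the latter its unique irreducible quotient; in particular $\pi_{\Delta_n(k)}$ is not essentially square-integrable. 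Second, $\pi_{\Delta_n(k)}$ does not ``lie alone on its Bernstein component'': that block contains all $2^{k-1}$ constituents $v^\infty_{\op^{\lrr}_J}(\pi)$. What the types argument actually gives is a description of the block as modules over an explicit Hecke algebra, from which self-extensions of each constituent can be read off; this is the content of Orlik's computation. Neither slip is fatal to the strategy, but the heuristic you wrote does not stand alone.

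The real divergence is in (a) and (d). The paper does \emph{not} re-invoke types for the fixed-central-character versions; it deduces them from (b) and (c) via a Hochschild--Serre type spectral sequence for $Z_n\hookrightarrow G$ (resp.\ $Z_n\hookrightarrow\bL^{\lrr}_{\Delta_{k,j}}(L)$), using that $\hH_s(Z_n,\,\cdot\,)$ is concentrated in degrees $s=0,1$ together with the eventual vanishing of smooth $\ext$. For (a) this finishes immediately since $X_E^*(\bG_n/\bZ_n)=0$. For (d) the spectral sequence forces $\sEo^2_{\Delta_{k,j}}=\sEo^4_{\Delta_{k,j}}=\cdots$ and $\sEo^3_{\Delta_{k,j}}=\sEo^5_{\Delta_{k,j}}=\cdots$, hence all vanish, and one then solves for $\sEo^0,\sEo^1$ from the resulting exact sequence combined with Lemma~\ref{smoothhHinj}. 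Your proposed route for (d), ``run K\"unneth while keeping only the subspace where the global center acts by $\omega^{\lrr}_\pi$'', does not go through as stated: $Z_n$ sits diagonally in the product of block centers, so the category $\Rep^\infty_{E,Z_n=\omega^{\lrr}_\pi}(\bL^{\lrr}_{\Delta_{k,j}}(L))$ does not split as a product and Proposition~\ref{prosmoothKunneth} does not apply to it. You would need either a K\"unneth variant in that quotient category or, more simply, to adopt the paper's spectral-sequence reduction.
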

\begin{proof}Part $(b)$ is a direct consequence of \cite[Corollary 18]{2012Orlsmoothextensions} with $I=J={\Delta_n(k)}$ (by enlarging $E$ if necessary).\;The proof of  \cite[Corollary 18]{2012Orlsmoothextensions} makes use of the theory of types of Bushnell and Kutzko \cite{bushnell2016admissible}.\;By applying \cite[Lemma 5]{2012Orlsmoothextensions} to our situation, we get a spectral sequence (see \cite[Proof of Corollary 2]{2012Orlsmoothextensions} for a similar spectral sequence), 
\begin{equation*}
\begin{aligned}
E_2^{r,s}=\ext^{r,\infty}_{G/Z_n}(&\hH_s(Z_n,\pi_{{\Delta_n(k)}}\otimes_E(\omega_{\pi}^{\lrr})^{-1}),\pi_{{\Delta_n(k)}}\otimes_E(\omega_{\pi}^{\lrr})^{-1})\\
&\Rightarrow\ext^{r+s,\infty}_{G}(\pi_{{\Delta_n(k)}},\pi_{{\Delta_n(k)}}).
\end{aligned}
\end{equation*}
Note that $\hH^\ast_{\infty}(Z_n,E)=\Lambda^\ast\homo(Z_n/^0Z_n,E)\cong \Lambda^\ast E$, we get
\begin{equation*}
\begin{aligned}
\hH_s(Z_n,\pi_{{\Delta_n(k)}}\otimes_E(\omega_{\pi}^{\lrr})^{-1})&=\hH^s_{\infty}(Z_n,E)^\vee \otimes_E\pi_{{\Delta_n(k)}}\otimes_E(\omega_{\pi}^{\lrr})^{-1}\\
&\cong
\left\{
\begin{array}{ll}
\pi_{{\Delta_n(k)}}\otimes_E(\omega_{\pi}^{\lrr})^{-1}, & \hbox{$s=0,1$;} \\
0, & \hbox{otherwise.}
\end{array}
\right.
\end{aligned}
\end{equation*}
Therefore, Lemma \ref{degspectral} (c) gives a long exact sequence
\[\cdots\rightarrow E_{2}^{i,0}\rightarrow \ext^{i,\infty}_{G}(\pi_{{\Delta_n(k)}},\pi_{{\Delta_n(k)}})\rightarrow  E_{2}^{i-1,1}\rightarrow E_{2}^{i+1,0} \rightarrow \ext^{i+1,\infty}_{G}(\pi_{{\Delta_n(k)}},\pi_{{\Delta_n(k)}})\rightarrow\cdots .\]
At first, we have $\ext^{0,\infty}_{G,\omega_{\pi}^{\lrr}}(\pi_{{\Delta_n(k)}},\pi_{{\Delta_n(k)}})\cong E_2^{0,0}\cong E$.\;Since $\ext^{i,\infty}_{G}(\pi_{{\Delta_n(k)}},\pi_{{\Delta_n(k)}})=0$ for all $i\geq 2$, we see that $\sEo_{\Delta_n(k)}^2=\sEo_{\Delta_n(k)}^4=\cdots$ and $\sEo_{\Delta_n(k)}^1=\sEo_{\Delta_n(k)}^3=\cdots$.\;By \cite[Chapter X, Theorem 2.4]{borel2000continuous}, we see that $\sEo_{\Delta_n(k)}^l$ is zero for all $l$ large enough.\;We deduce that $\sEo_{\Delta_{k,j}}^l=0$ for all $l\geq 1$.\;We prove Part $(a)$.\;To prove Part (c), we write $\bL^{\lrr}_{I}(L)=\GLN_{k_{1}r}(L)\times \GLN_{k_{2}r}(L)\times\cdots \times \GLN_{k_{l-1}r}(L)\times \GLN_{k_{l}r}(L)$ for some ordered partition
$\underline{k}_I=(k_1,\cdots,k_t, \cdots,k_{l})$ of $k$.\;Then by (\ref{f11}), we have
\begin{equation}
\begin{aligned}
\pi_I=\otimes_{i=1}^l\langle\Delta_i\rangle, \Delta_i:=\Delta_{[k_i-1,0]}(\pi\cdot v_{r}^{-\frac{r}{2}(k-2s_{i-1}-k_i)+k-s_i})
\end{aligned}
\end{equation}
Then by Part $(b)$ and Proposition \ref{prosmoothKunneth} (K\"{u}nneth formula for smooth extension groups), we have
\begin{equation*}
\begin{aligned}
\ext^{\ast,\infty}_{\bL^{\lrr}_{I}(L)}\left(\pi_I,\pi_I\right)=\bigotimes_{i=1}^l\ext^{\ast,\infty}_{\GLN_{k_{i}r}(L)}(\langle\Delta_i\rangle,\langle\Delta_i\rangle)
=\bigotimes_{i=1}^l\big(\bigwedge^\ast X_E^*(\GLN_{k_{i}r})\big)
=\bigwedge^\ast X_E^*(\bL^{\lrr}_{I}).
\end{aligned}
\end{equation*}
It remains to prove Part $(d)$.\;Similarly, we have the following spectral sequence, 
\begin{equation*}
\begin{aligned}
 E_2^{r,s}=\ext^{r,\infty}_{\bL^{\lrr}_{\Delta_{k,j}}(L)/\bZ_n}(\hH_s(Z_n,\pi_{\Delta_{k,j}}\otimes_E(\omega_{\pi}^{\lrr})^{-1}),&\pi_{\Delta_{k,j}}\otimes_E(\omega_{\pi}^{\lrr})^{-1})\\
&\Rightarrow\ext^{r+s,\infty}_{\bL^{\lrr}_{\Delta_{k,j}}(L)}(\pi_{\Delta_{k,j}},\pi_{\Delta_{k,j}}).
\end{aligned}
\end{equation*}
Note that $E_2^{r,s}$ is zero unless $s\in\{0,1\}$ and  $E_2^{r,0}\cong E_2^{r,1}\cong\sEo_{\Delta_{k,j}}^r$ for all $r\geq 0$.\;Therefore, Lemma \ref{degspectral} (c) gives a long exact sequence
\[\cdots\rightarrow E_{2}^{i,0}\rightarrow \ext^{i,\infty}_{\bL^{\lrr}_{\Delta_{k,j}}(L)}(\pi_{\Delta_{k,j}},\pi_{\Delta_{k,j}})\rightarrow  E_{2}^{i-1,1}\rightarrow E_{2}^{i+1,0} \rightarrow \ext^{i+1,\infty}_{\bL^{\lrr}_{\Delta_{k,j}}(L)}(\pi_{\Delta_{k,j}},\pi_{\Delta_{k,j}})\cdots.\]
By the vanishing of $\ext^{i,\infty}_{\bL^{\lrr}_{\Delta_{k,j}}(L)}(\pi_{\Delta_{k,j}},\pi_{\Delta_{k,j}})$ for all $i>2$, we see that $\sEo_{\Delta_{k,j}}^2=\sEo_{\Delta_{k,j}}^4=\cdots$ and $\sEo_{\Delta_{k,j}}^3=\sEo_{\Delta_{k,j}}^5=\cdots$ .\;Since $\sEo_{\Delta_{k,j}}^l$ is zero for all $l$ large enough, we deduce that $\sEo_{\Delta_{k,j}}^l=0$ for all $l\geq 2$.\;Then we get a long exact sequence
\[0\rightarrow \sEo_{\Delta_{k,j}}^0\rightarrow E\rightarrow  E_2^{-1,1}\rightarrow \sEo_{\Delta_{k,j}}^1 \rightarrow E\oplus E\rightarrow \sEo_{\Delta_{k,j}}^0\rightarrow \sEo_{\Delta_{k,j}}^2 \rightarrow E \rightarrow \sEo_{\Delta_{k,j}}^1\rightarrow \sEo_{\Delta_{k,j}}^3=0.\]
This implies $\sEo_{\Delta_{k,j}}^0\cong E$, and $\sEo_{\Delta_{k,j}}^1\cong E$.\;By Lemma \ref{smoothhHinj}, we get $(d)$.\;
\end{proof}

We are ready to compute the smooth Extensions between smooth generalized parabolic Steinberg representations.\;The following contents follow along the lines of \cite{2012Orlsmoothextensions}.\;If $W$ is a smooth representation, then we let $W^{\sim}$ be its smooth dual.\;

\begin{lem}\label{SmoothExt1}We have
\begin{equation}\label{smoothExt1}
\ext^{i,\infty}_{G}\big(i_{\op^{\lrr}_{{I}}}^{G}(\pi),i_{\op^{\lrr}_{{J}}}^{G}(\pi)\big)=\left\{
\begin{array}{ll}
\ssE^i_{J}, & \hbox{$J\subseteq I$;} \\
0, & \hbox{otherwise,}
\end{array}
\right.
\end{equation}
and
\begin{equation}\label{smoothExtC1}
\ext^{i,\infty}_{G,\omega_{\pi}^{\lrr}}\big(i_{\op^{\lrr}_{{I}}}^{G}(\pi),i_{\op^{\lrr}_{{J}}}^{G}(\pi)\big)=\left\{
\begin{array}{ll}
\sEo^i_{J}, & \hbox{$J\subseteq I$;} \\
0, & \hbox{otherwise.}
\end{array}
\right.
\end{equation}
\end{lem}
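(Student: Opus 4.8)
The plan is to reduce the computation of $\ext^{i,\infty}_G\big(i^G_{\op^{\lrr}_I}(\pi), i^G_{\op^{\lrr}_J}(\pi)\big)$ to the Levi-level extension groups $\ssE^i_J$ (resp. $\sEo^i_J$) via Frobenius reciprocity (Lemma \ref{smoothbasiclemma2}) together with a d\'evissage along the Bruhat filtration of Section \ref{Brufil}. First I would apply Lemma \ref{smoothbasiclemma2} to rewrite
\[
\ext^{i,\infty}_G\big(i^G_{\op^{\lrr}_I}(\pi), i^G_{\op^{\lrr}_J}(\pi)\big)\cong \ext^{i,\infty}_{\bL^{\lrr}_J(L)}\big(r^G_{\op^{\lrr}_J(L)}(i^G_{\op^{\lrr}_I}(\pi)), \pi_J\big),
\]
and similarly with the central character fixed. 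The Jacquet module $r^G_{\op^{\lrr}_J(L)}(i^G_{\op^{\lrr}_I}(\pi))$ is filtered by the Bruhat filtration $\cF^\bullet_B$ of (\ref{filext}), whose graded pieces are given by (\ref{rgradfilext}): a direct sum over $w\in[\sW^{\lrr}_I\backslash\sW_n/\sW^{\lrr}_J]$ of $i^{\bL^{\lrr}_J(L)}_{\bL^{\lrr}_J(L)\cap\op^{\lrr}_I(L)^w}\big(\gamma^w_{I,J}\otimes_E r^{\bL^{\lrr}_I(L)^w}_{\op^{\lrr}_J(L)\cap\bL^{\lrr}_I(L)^w}\pi_I^w\big)$.

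The key step is then to show that, for $w\neq 1$ (or for $J\nsubseteq I$), the corresponding graded piece contributes nothing to $\ext^{\bullet,\infty}_{\bL^{\lrr}_J(L)}(-,\pi_J)$. This is exactly the vanishing statement established inside the proof of Lemma \ref{analyticExt1analyticExtC1}: by Lemma \ref{nonzeroIJw} the graded piece vanishes unless $w\in\sW_{I,J}(\bL^{\lrr})$, and for such $w\neq 1$ one uses Lemma \ref{smoothbasiclemma2} again (Frobenius reciprocity and its dual, via \cite[Lemma 14]{2012Orlsmoothextensions}) to rewrite the $\ext$-group as $\ext^{i,\infty}_{\bL^{\lrr}_{J\cap w^{-1}I}(L)}(A,B)$ with $A,B$ as in (\ref{extIJfil1}); then the computation of $\delta$-factors (as in \cite[Proposition 15, Lemma 16]{2012Orlsmoothextensions}) produces an element $z$ in the center $\bZ^{\lrr}_{J\cap w^{-1}I}(L)$ acting by distinct scalars on $A$ and $B$, so Lemma \ref{centervan} forces the $\ext$-group to vanish. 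Hence only the $w=1$, $J\subseteq I$ piece survives, and on it the Jacquet module computation gives $r^{\bL^{\lrr}_J(L)}_{\op^{\lrr}_J(L)\cap\bL^{\lrr}_J(L)}\pi_J=\pi_J$ (really $r^{\bL^{\lrr}_I(L)}_{\op^{\lrr}_J(L)\cap\bL^{\lrr}_I(L)}\pi_I=\pi_J$ by (\ref{[A1]-1})), so $\cF^0_B$ restricts to $\pi_J$ and we are left with $\ext^{i,\infty}_{\bL^{\lrr}_J(L)}(\pi_J,\pi_J)=\ssE^i_J$; the proof of (\ref{smoothExtC1}) is identical with $Z_n$ fixed, yielding $\sEo^i_J$.

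I expect the main obstacle to be the bookkeeping in the d\'evissage: one must check that the long exact sequences attached to the Bruhat filtration genuinely degenerate, i.e. that the vanishing of $\ext^{\bullet,\infty}$ against every graded piece with $w\neq 1$ propagates to $\ext^{\bullet,\infty}$ against $r^G_{\op^{\lrr}_J(L)}(i^G_{\op^{\lrr}_I}(\pi))$ itself, and this requires the filtration to be finite and exhaustive (which it is, being indexed by lengths of double-coset representatives). One also needs the separatedness hypotheses on Jacquet modules to invoke the relevant spectral sequences in the smooth setting, but here everything is a genuine smooth representation so these are automatic. Since all the essential ingredients --- the Bruhat filtration (\ref{filext})--(\ref{rgradfilext}), Lemma \ref{nonzeroIJw}, Lemma \ref{centervan}, Lemma \ref{smoothbasiclemma2}, and the central-character argument from the proof of Lemma \ref{analyticExt1analyticExtC1} --- are already available, the proof is essentially a streamlined smooth-coefficient version of that argument, and I would simply remark that one repeats the proof of Lemma \ref{analyticExt1analyticExtC1} verbatim after replacing $\BI^G_{\op^{\lrr}_J}(\pi,\underline\lambda)$ by $i^G_{\op^{\lrr}_J}(\pi)$, locally analytic $\ext$ by smooth $\ext$, and $\cE^i_J$, $\cEo^i_J$ by $\ssE^i_J$, $\sEo^i_J$.
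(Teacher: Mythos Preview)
Your proposal is correct and follows essentially the same approach as the paper: apply Frobenius reciprocity (Lemma \ref{smoothbasiclemma2}) to pass to the Levi, then d\'evissage along the Bruhat filtration using the vanishing (\ref{smoooothvansidevissage}) from the proof of Lemma \ref{analyticExt1analyticExtC1}, leaving only the $w=1$ piece $\pi_J$ when $J\subseteq I$. The paper's proof is in fact even terser than yours---it simply says ``Restrict the proof of Lemma \ref{analyticExt1analyticExtC1} to smooth case'' and records the endpoint computations---so your write-up is, if anything, more explicit about the d\'evissage than necessary.
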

\begin{proof}We prove (\ref{smoothExtC1}) at first, and (\ref{smoothExt1}) follows by the same strategy.\;By Lemma \ref{smoothbasiclemma2}, we have for all $r\geq 0$ the isomorphisms
\begin{equation*}
\begin{aligned}
\ext^{i,\infty}_{G,\omega_{\pi}^{\lrr}}\big(i_{\op^{\lrr}_{{I}}}^{G}(\pi),i_{\op^{\lrr}_{{J}}}^{G}(\pi)\big)
=\ext^{i,\infty}_{\bL^{\lrr}_{J}(L),\omega_{\pi}^{\lrr}}\big(r^G_{\op^{\lrr}_{{J}}}i_{\op^{\lrr}_{{I}}}^{G}(\pi),\pi_{J}\big).\\
\end{aligned}\end{equation*}
Restrict the proof of Lemma \ref{analyticExt1analyticExtC1} to smooth case.\;Recall the Bruhat filtration $\mathcal{F}^\bullet_{B}$ on $i_{\op^{\lrr}_{{I}}}^{G}(\pi)$ (see (\ref{filext})) and the vanishing result of (\ref{smoooothvansidevissage}).\;If $J \subsetneq I$, we deduce by d\'{e}vissage 
(see the graded pieces (\ref{rgradfilext}) for all $h\in\BZ_{\geq 0}$) that $\ext^{i,\infty}_{G,\omega_{\pi}^{\lrr}}\Big(i_{\op^{\lrr}_{{I}}}^{G}(\pi),i_{\op^{\lrr}_{{J}}}^{G}(\pi)\Big)=0$ for all $i\geq 0$.\;If $J \subseteq I$, we have for all $i\geq 0$ the isomorphisms
\begin{equation*}
\begin{aligned}
\ext^{i,\infty}_{\bL^{\lrr}_{J}(L),\omega_{\pi}^{\lrr}}\Big(r^G_{\op^{\lrr}_{{J}}(L)}i_{\op^{\lrr}_{{I}}(L)}^{G}(\pi),\pi_J\Big)\cong\; &\ext^{i,\infty}_{\bL^{\lrr}_{J}(L),\omega_{\pi}^{\lrr}}\Big(r^G_{\op^{\lrr}_{{J}}(L)}(\mathcal{F}_B^0i_{\op^{\lrr}_{{I}}(L)}^{G}(\pi)),\pi_J\Big)\\
=\;& \ext^{i,\infty}_{\bL^{\lrr}_{J}(L),\omega_{\pi}^{\lrr}}\Big({r}^{\bL^{\lrr}_{J}(L)}_{\op^{\lrr}_{{I}}(L)\cap \bL^{\lrr}_{J}(L)}\pi_J,\pi_J\Big)\\
=\;& \ext^{i,\infty}_{\bL^{\lrr}_{J}(L),\omega_{\pi}^{\lrr}}\Big(\pi_J,\pi_J\Big).\\
\end{aligned}
\end{equation*}
This completes the proof.\;
\end{proof}

The following proposition follows from the same arguments as in the proof of \cite[Proposition 17]{2012Orlsmoothextensions}, where the acyclic complex in the proof of \cite[Proposition 11]{2012Orlsmoothextensions} is replaced by our exact sequence (\ref{smoothBTseq}) in Proposition \ref{smoothexact}.\;

\begin{pro}\label{SmoothExt2}We have
\begin{equation}\label{smoothExt2}
\ext^{i,\infty}_{G}\big(v_{\op^{\lrr}_{{I}}}^{G}(\pi),i_{\op^{\lrr}_{{J}}}^{G}(\pi)\big)=\left\{
\begin{array}{ll}
\ssE^{i-|{\Delta_n(k)}\backslash I|}_J, & \hbox{$I\cup J={\Delta_n(k)}$;} \\
0, & \hbox{otherwise.}
\end{array}
\right.
\end{equation}
and
\begin{equation}\label{smoothExtC2}
\ext^{i,\infty}_{G,\omega_{\pi}^{\lrr}}\big(v_{\op^{\lrr}_{{I}}}^{G}(\pi),i_{\op^{\lrr}_{{J}}}^{G}(\pi)\big)=\left\{
\begin{array}{ll}
\sEo^{i-|{\Delta_n(k)}\backslash I|}_J, & \hbox{$I\cup J={\Delta_n(k)}$;} \\
0, & \hbox{otherwise.}
\end{array}
\right.
\end{equation}
\end{pro}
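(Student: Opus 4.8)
\textbf{Proof proposal for Proposition \ref{SmoothExt2}.}
The plan is to follow the strategy of \cite[Proposition 17]{2012Orlsmoothextensions} verbatim, substituting the smooth Tits complex of \cite{2012Orlsmoothextensions} by the acyclic complex (\ref{smoothBTseq}) of Proposition \ref{smoothexact}, and the input on $\ext^{i,\infty}$ between the induced representations $i^G_{\op^{\lrr}_{\bullet}}(\pi)$ by Lemma \ref{SmoothExt1}. As before, I will prove (\ref{smoothExtC2}) and deduce (\ref{smoothExt2}) by the same argument (dropping the center condition). First I would apply the exact sequence of Proposition \ref{smoothexact} for the subset $I$, namely
\[
0\to C^\infty_{I,k-1-|I|}\to\cdots\to C^\infty_{I,1}\to C^\infty_{I,0}=i^G_{\op^{\lrr}_{I}}(\pi)\to v^\infty_{\op^{\lrr}_{I}}(\pi)\to 0,
\]
with $C^\infty_{I,l}=\bigoplus_{I\subseteq K,\,|K\setminus I|=l}i^G_{\op^{\lrr}_{K}}(\pi)$. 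Exactly as in the proof of Corollary \ref{coranalyticExt2}, I would resolve each term $C^\infty_{I,l}$ by the standard homogeneous cochain complexes $C^\infty(G^{q+1},-)$ and take $\homo_G(-,i^G_{\op^{\lrr}_{J}}(\pi))$ of the resulting double complex, producing a spectral sequence
\begin{equation*}
E_1^{-s,r}=\bigoplus_{\substack{I\subseteq K\subseteq\Delta_n(k)\\|K\setminus I|=s}}\ext^{r,\infty}_{G,\omega_{\pi}^{\lrr}}\big(i^G_{\op^{\lrr}_{K}}(\pi),i^G_{\op^{\lrr}_{J}}(\pi)\big)\Rightarrow\ext^{r-s,\infty}_{G,\omega_{\pi}^{\lrr}}\big(v^\infty_{\op^{\lrr}_{I}}(\pi),i^G_{\op^{\lrr}_{J}}(\pi)\big).
\end{equation*}

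Now I would feed in Lemma \ref{SmoothExt1}, which tells us that $\ext^{r,\infty}_{G,\omega_{\pi}^{\lrr}}(i^G_{\op^{\lrr}_{K}}(\pi),i^G_{\op^{\lrr}_{J}}(\pi))$ is $\sEo^r_J$ when $J\subseteq K$ and vanishes otherwise. Hence the only non-vanishing columns of the $E_1$-page correspond to those $K$ with $I\cup J\subseteq K\subseteq\Delta_n(k)$; the $r$-th row then reads
\begin{equation*}
\sEo^r_J\longrightarrow\bigoplus_{\substack{I\cup J\subseteq K,\,|K\setminus(I\cup J)|=1}}\sEo^r_J\longrightarrow\cdots\longrightarrow\sEo^r_J,
\end{equation*}
which is the cochain complex of the constant coefficient system $\sEo^r_J$ on the simplex with vertex set $\Delta_n(k)\setminus(I\cup J)$. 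If $I\cup J\neq\Delta_n(k)$ this simplex is non-empty, so the complex is exact and the abutment vanishes for all degrees, giving the second case of (\ref{smoothExtC2}). If $I\cup J=\Delta_n(k)$, the simplex is empty, only the single column $s=|\Delta_n(k)\setminus I|$ survives with value $\sEo^r_J$, the spectral sequence degenerates, and one reads off $\ext^{i,\infty}_{G,\omega_{\pi}^{\lrr}}(v^\infty_{\op^{\lrr}_{I}}(\pi),i^G_{\op^{\lrr}_{J}}(\pi))\cong\sEo^{i-|\Delta_n(k)\setminus I|}_J$ (with the convention that negative-index $\ext$ groups are zero). The case without the central character condition is identical, using the part of Lemma \ref{SmoothExt1} for $\ext^{i,\infty}_G$ and the groups $\ssE^i_J$.

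The only point requiring care — and the main (mild) obstacle — is the bookkeeping for the spectral sequence: one must check that the differentials on the $E_1$-page really are the simplicial differentials of the constant system, i.e. alternating sums of the transition maps $P_{K',K}$ of Proposition \ref{axioms} (\ref{[A1]-2}), so that ``exactness of the augmented simplicial complex'' can be invoked (this is the same combinatorial fact used in \cite[Proposition 11]{2012Orlsmoothextensions} and \cite[Section 2, Proposition 6]{schneider1991cohomology}). Once the identification of differentials is in place, the degeneration and the constant-coefficient acyclicity are formal, and no further input beyond Lemmas \ref{degspectral}, \ref{SmoothEXTlemmaPre}, \ref{SmoothExt1} is needed. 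I would also remark, as in \cite{2012Orlsmoothextensions}, that the analogous statement with $v^\infty_{\op^{\lrr}_{J}}(\pi)$ in the second variable can be obtained afterwards by running the dual argument (resolving the second variable by (\ref{smoothBTseq}) for $J$), but that is not needed for Proposition \ref{SmoothExt2} itself.
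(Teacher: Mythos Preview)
Your proposal is correct and follows exactly the approach indicated in the paper: the paper simply states that Proposition \ref{SmoothExt2} follows from the same arguments as \cite[Proposition 17]{2012Orlsmoothextensions}, with the Tits complex there replaced by the acyclic complex (\ref{smoothBTseq}) of Proposition \ref{smoothexact}, and this is precisely what you carry out (the spectral sequence and the constant-coefficient-system argument are the smooth analogue of the detailed proof of Corollary \ref{coranalyticExt2}).
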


We now compute the smooth $\ext^i$-groups of smooth  generalized parabolic Steinberg representations.\;

\begin{pro}\label{smoothExt4}Let $J\subseteq I$ be subsets of $\Delta_n(k)$ such that $|I|=|J|+1$.\;Assume that $I=J\cup\{\upsilon r\}$ for some $1\leq \upsilon\leq k-1$.\;We have
\begin{equation}\label{smoothExt4-1}
\begin{aligned}
X_E^*(\bL^{\lrr}_{{\Delta}_{k,\upsilon}}/\bZ_n) &\xrightarrow{\sim} \ext^{1,\infty}_{G,\omega_{\pi}^{\lrr}}\big(v_{\op^{\lrr}_{{I}}}^{\infty}(\pi), v_{\op^{\lrr}_{{J}}}^{\infty}(\pi)\big)\xrightarrow{\sim} \ext^{1,\infty}_{G}\big(v_{\op^{\lrr}_{{I}}}^{\infty}(\pi), v_{\op^{\lrr}_{{J}}}^{\infty}(\pi)\big).
\end{aligned}
\end{equation}
and $\ext^{i,\infty}_G(v_{\op^{\lrr}_{{I}}}^{\infty}(\pi), v_{\op^{\lrr}_{{J}}}^{\infty}(\pi)\big)=0$ for all $i\neq 1$.\;
\end{pro}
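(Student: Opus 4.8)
The plan is to compute the smooth $\ext$-groups $\ext^{i,\infty}_G(v_{\op^{\lrr}_{{I}}}^{\infty}(\pi), v_{\op^{\lrr}_{{J}}}^{\infty}(\pi))$ (and their central-character variant) by running two spectral sequences in tandem, exactly mimicking \cite[Proposition 17]{2012Orlsmoothextensions} but with the algebraic input replaced by Lemma \ref{SmoothEXTlemmaPre}, Lemma \ref{SmoothExt1} and Proposition \ref{SmoothExt2}. First I would resolve the source $v_{\op^{\lrr}_{{I}}}^{\infty}(\pi)$ by the acyclic smooth Tits complex of Proposition \ref{smoothexact}, i.e.\ by the complex $C^\infty_{I,\bullet}$ with terms $\bigoplus_{I\subseteq K,\ |K\backslash I|=s}i_{\op^{\lrr}_{{K}}}^{G}(\pi)$. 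Applying $\ext^{\bullet,\infty}_{G,\omega_{\pi}^{\lrr}}(-,v_{\op^{\lrr}_{{J}}}^{\infty}(\pi))$ yields a first-quadrant spectral sequence whose $E_1$-page is, by Proposition \ref{SmoothExt2} (with the roles of the two arguments as there, using that $\ext^{r,\infty}$ is computed termwise on the complex), supported only in columns $s$ with $I\cup K=\Delta_n(k)$; since $K\supseteq I$ this forces $K=\Delta_n(k)$ unless $I=\Delta_n(k)$, so in fact only one column survives. This degeneration reduces the computation of $\ext^{i,\infty}_{G,\omega_{\pi}^{\lrr}}(v_{\op^{\lrr}_{{I}}}^{\infty}(\pi),v_{\op^{\lrr}_{{J}}}^{\infty}(\pi))$ to that of $\ext^{i-|\Delta_n(k)\backslash I|,\infty}_{G,\omega_{\pi}^{\lrr}}(i^G_{\op^{\lrr}_{\Delta_n(k)}}(\pi),v_{\op^{\lrr}_{{J}}}^{\infty}(\pi))$.

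Next I would resolve the target $v_{\op^{\lrr}_{{J}}}^{\infty}(\pi)$, this time by the Tits complex $C^\infty_{J,\bullet}$ regarded as an injective-type resolution on the second variable, and apply $\ext^{\bullet,\infty}_{G,\omega_{\pi}^{\lrr}}(i^G_{\op^{\lrr}_{\Delta_n(k)}}(\pi),-)$. By Lemma \ref{SmoothExt1}, $\ext^{i,\infty}_{G,\omega_{\pi}^{\lrr}}(i^G_{\op^{\lrr}_{\Delta_n(k)}}(\pi),i^G_{\op^{\lrr}_{{K}}}(\pi))=\sEo^i_K$ for $K\subseteq\Delta_n(k)$ (always true here), so the $E_1$-page has $r$-th row $\bigoplus_{J\subseteq K,\ |K\backslash J|=s}\sEo^r_K$, with differentials the constant-coefficient-system maps on the simplex of subsets between $J$ and $\Delta_n(k)$. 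Now I invoke Lemma \ref{SmoothEXTlemmaPre}: for the relevant $K$, which run over supersets of $J$ and hence include $K=\Delta_n(k)$ and $K=\Delta_{k,m}$-type sets, one knows $\sEo^r_K=\bigwedge^r X_E^*(\bL^{\lrr}_K/\bZ_n)$ for $r\in\{0,1\}$ and the higher groups vanish in the special cases we need; since $|I|=|J|+1$, the relevant simplices are of dimension $0$ or $1$, so only $K=\Delta_n(k)$ and $K\in\{\Delta_n(k),J\cup\{qr\}\}$ appear and one never leaves the range where Lemma \ref{SmoothEXTlemmaPre} applies. Taking cohomology of this tiny complex (a single arrow $\bigwedge^r X_E^*(\bL^{\lrr}_{\Delta_n(k)}/\bZ_n)\to\bigoplus\bigwedge^r X_E^*(\bL^{\lrr}_{J\cup\{qr\}}/\bZ_n)$, or its shift) picks out $X_E^*(\bL^{\lrr}_{\Delta_{k,\upsilon}}/\bZ_n)$ in degree $1$ and nothing else, which after the degree shift by $|\Delta_n(k)\backslash I|$ from the first spectral sequence gives precisely \eqref{smoothExt4-1} and the vanishing for $i\neq 1$. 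The semisimplicity of $\usl_{n,\Sigma_L}$ (equivalently $\hH^1=\hH^2=0$ for the derived-group factors) is what makes the wedge powers collapse to the central-character contributions, just as in Lemma \ref{addcha}.

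Finally, the comparison $\ext^{1,\infty}_{G,\omega_{\pi}^{\lrr}}\xrightarrow{\sim}\ext^{1,\infty}_{G}$ follows because $\homo_G(v_{\op^{\lrr}_{{I}}}^{\infty}(\pi),v_{\op^{\lrr}_{{J}}}^{\infty}(\pi))=0$ (distinct Jordan--H\"older factors, Lemma \ref{JHsmooth}) together with \cite[Lemma 3.1]{HigherLinvariantsGL3Qp} or the argument of \cite[Lemma 3.2]{wholeLINV}, applied in the smooth setting; alternatively one runs the same pair of spectral sequences without fixing the central character, using parts $(b)$ and $(c)$ of Lemma \ref{SmoothEXTlemmaPre} in place of $(a)$ and $(d)$, and the two answers are identified through the natural map. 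The identification of the surviving $E_2$-entry with $X_E^*(\bL^{\lrr}_{\Delta_{k,\upsilon}}/\bZ_n)$ as an explicit image of $\kappa$/$\overline\kappa$ comes from tracing the edge morphisms, exactly as in the proof of Corollary \ref{coranalyticExt2} and Lemma \ref{IJCHANGE}. The main obstacle I anticipate is bookkeeping: one must check that in both spectral sequences only the subsets $K$ for which Lemma \ref{SmoothEXTlemmaPre} is proved ($K=\Delta_n(k)$ or $K=\Delta_{k,j}$) actually contribute, i.e.\ that the constraint $|I|=|J|+1$ genuinely confines everything to $0$- and $1$-dimensional simplices, and that the two degenerations are compatible so that no hidden differential between the pages can create extra classes; this is where the hypothesis $I=J\cup\{\upsilon r\}$ is used essentially and where the argument would break for general $J\subseteq I$.
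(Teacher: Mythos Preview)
Your two-step strategy contains a genuine gap. In your first spectral sequence you resolve the \emph{source} $v_{\op^{\lrr}_{{I}}}^{\infty}(\pi)$, so the $E_1$-terms are $\ext^{r,\infty}_{G,\omega_{\pi}^{\lrr}}\big(i_{\op^{\lrr}_{{K}}}^G(\pi),v_{\op^{\lrr}_{{J}}}^{\infty}(\pi)\big)$ for $K\supseteq I$. You then invoke Proposition \ref{SmoothExt2} to claim these vanish unless $K=\Delta_n(k)$. But Proposition \ref{SmoothExt2} computes $\ext^{r,\infty}\big(v_{\op^{\lrr}_{{I}}}^{\infty}(\pi),i_{\op^{\lrr}_{{K}}}^G(\pi)\big)$, with $v$ in the first slot and $i$ in the second; your terms have the arguments in the opposite order, and no result in the paper evaluates $\ext^r(i_K,v_J)$ directly. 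Your second step then inherits this problem: once you try to compute $\ext^r\big(i^G_{\op^{\lrr}_{\Delta_n(k)}}(\pi),v_{\op^{\lrr}_{{J}}}^{\infty}(\pi)\big)$ by resolving $v_J$, the $E_1$-row is $\bigoplus_{J\subseteq K}\sEo^r_K$ over \emph{all} supersets $K$ of $J$, not just $\Delta_n(k)$ and the $\Delta_{k,j}$'s. The constraint $|I|=|J|+1$ no longer helps here, because $I$ has already been absorbed in your first step; so you leave the range where Lemma \ref{SmoothEXTlemmaPre} is available.

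The paper's proof avoids both issues with a \emph{single} spectral sequence: it resolves the \emph{target} $v_{\op^{\lrr}_{{J}}}^{\infty}(\pi)$ by the Tits complex, giving $E_1$-terms $\ext^{r,\infty}_{G,\omega_{\pi}^{\lrr}}\big(v_{\op^{\lrr}_{{I}}}^{\infty}(\pi),i_{\op^{\lrr}_{{K}}}^G(\pi)\big)$ for $K\supseteq J$, which is exactly the shape covered by Proposition \ref{SmoothExt2}. That proposition says the term is $\sEo^{\,r-|\Delta_n(k)\backslash I|}_K$ when $I\cup K=\Delta_n(k)$ and zero otherwise; combined with $K\supseteq J$ and $I=J\cup\{\upsilon r\}$, this forces $K\supseteq\Delta_{k,\upsilon}$, so only the two columns $K=\Delta_n(k)$ and $K=\Delta_{k,\upsilon}$ survive. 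It is precisely this coupling of the condition $I\cup K=\Delta_n(k)$ with the hypothesis $|I|=|J|+1$ that confines you to the two subsets for which Lemma \ref{SmoothEXTlemmaPre} applies. Then $X_E^*(\bL^{\lrr}_{\Delta_n(k)}/\bZ_n)=0$ and $\bigwedge^{d}X_E^*(\bL^{\lrr}_{\Delta_{k,\upsilon}}/\bZ_n)=0$ for $d\geq 2$ give the degeneration. Your handling of the comparison $\ext^{1,\infty}_{G,\omega_{\pi}^{\lrr}}\cong\ext^{1,\infty}_G$ via $\homo_G=0$ and \cite[Lemma 3.1]{HigherLinvariantsGL3Qp} is correct and matches the paper.
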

\begin{proof}Using the fact  $\homo_G\big(v_{\op^{\lrr}_{{J}}}^{\infty}(\pi), v_{\op^{\lrr}_{{I}}}^{\infty}(\pi)\big)=0$ (see Lemma \ref{JHsmooth} (4)), we deduce
the second isomorphism of (\ref{smoothExt4-1})  from \cite[Lemma 3.1]{HigherLinvariantsGL3Qp} or \cite[Lemma 3.2]{Dilogarithm}.\;We apply the acyclic complex of Proposition \ref{smoothexact}, i.e., 
\begin{equation}
\begin{aligned}
0\rightarrow C^{\infty}_{J,k-1} &\rightarrow C^{\infty}_{J,k-2} \rightarrow C^{\infty}_{J,k-3} \rightarrow \cdots \rightarrow C^{\infty}_{J,1} \rightarrow C^{\infty}_{J,0}=i_{\op^{\lrr}_{{J}}}^{\infty}(\pi)\rightarrow v_{\op^{\lrr}_{{J}}}^{\infty}(\pi)\rightarrow 0 ,
\end{aligned}
\end{equation}
to the representation $v_{\op^{\lrr}_{{J}}}^{\infty}(\pi)$.\;By the same arguments as \cite[Page 626, proof of Theorem 1]{2012Orlsmoothextensions}, we first choose a projective resolution of $v_{\op^{\lrr}_{{I}}}^{\infty}(\pi)$ (recall that  $\Rep^{\infty}_E(G)$  has enough projectives, see the beginning of  \cite[Section 3]{2012Orlsmoothextensions} or Section \ref{smanaextgps}).\;Then we get a double complex such that its associated spectral sequence converges:
\begin{equation}\label{equ: lgln-sps}
E_1^{-s,r}=\bigoplus_{\substack{J\subseteq K\subseteq {\Delta_n(k)}\\|K\backslash J|=s}}\ext^{r,\infty}_{G,\omega_{\pi}^{\lrr}}\big(v_{\op^{\lrr}_{{I}}}^{\infty}(\pi), i_{\op^{\lrr}_{{K}}}^G(\pi)\big) \Rightarrow \ext^{r-s,\infty}_{G,\omega_{\pi}^{\lrr}}\big(v_{\op^{\lrr}_{{I}}}^{\infty}(\pi), v_{\op^{\lrr}_{{J}}}^{\infty}(\pi)\big).
\end{equation}
By Proposition \ref{SmoothExt2}, we see that $S:=J\cup ({\Delta_n(k)}\backslash I)=\Delta_{k,\upsilon}$ is the minimal subset $S$ of ${\Delta_n(k)}$ containing $J$ with $\ext^{r,\infty}_{G,\omega_{\pi}^{\lrr}}\big(v_{\op^{\lrr}_{{I}}}^{\infty}(\pi), i_{\op^{\lrr}_{{K}}}^G(\pi)\big)\neq 0$.\;Hence, the $h$-th row of the $E_1$-page of the above spectral sequence is given by
\begin{equation}
\begin{aligned}
0 \longrightarrow \sEo^{h-|{\Delta_n(k)}\backslash I|}_{\Delta_n(k)} \longrightarrow \bigoplus_{\substack{S\subseteq K\subseteq {\Delta_n(k)}\\|\Delta_n(k)\backslash K|=1}}&\sEo^{h-|{\Delta_n(k)}\backslash I|}_{K} \longrightarrow \cdots \\
& \longrightarrow \bigoplus_{\substack{S\subseteq K\subseteq {\Delta_n(k)}\\|K\backslash S|=1}}\sEo^{h-|{\Delta_n(k)}\backslash I|}_{K} \longrightarrow\sEo^{h-|{\Delta_n(k)}\backslash I|}_{S} \longrightarrow 0.\;
\end{aligned}
\end{equation}
Only the objects in the $-(k-1-|J|)$-th and $-(k-2-|J|)$-th columns of the $E_1$-page can be non-zero.\;By Lemma \ref{SmoothEXTlemmaPre}, the $|{\Delta_n(k)}\backslash I|+1$-th row is given by
\begin{equation*}
\begin{matrix}
& X_E^*(\bL^{\lrr}_{\Delta_n(k)}/\bZ_n) &\lra &X_E^*(\bL^{\lrr}_{{\Delta}_{k,\upsilon}}/\bZ_n) \\
&\parallel &\empty &\parallel \\
&E_1^{-(k-1-|J|), |{\Delta_n(k)}\backslash I|+1} &\xrightarrow{d_1^{-(k-1-|J|), |{\Delta_n(k)}\backslash I|+1}} &E_1^{-(k-2-|J|), |{\Delta_n(k)}\backslash I|+1}
\end{matrix}.
\end{equation*}
For $d\geq 2$, the $ |{\Delta_n(k)}\backslash I|+d$-th row is given by
\begin{equation*}
\begin{matrix}
& \bigwedge^d X_E^*(\bL^{\lrr}_{\Delta_n(k)}/\bZ_n) &\lra &\bigwedge^d X_E^*(\bL^{\lrr}_{{\Delta}_{k,\upsilon}}/\bZ_n)\\
&\parallel &\empty &\parallel \\
& E_1^{-(k-1-|J|), |{\Delta_n(k)}\backslash I|+d} &\xrightarrow{d_1^{-(k-1-|J|), |{\Delta_n(k)}\backslash I|+2}} & E_1^{-(k-2-|J|), |{\Delta_n(k)}\backslash I|+d}
\end{matrix}.
\end{equation*}
Note that $X_E^*(\bL^{\lrr}_{\Delta_n(k)}/\bZ_n)=0$ and $\bigwedge^d X_E^*(\bL^{\lrr}_{{\Delta}_{k,i}}/\bZ_n)$ for all $d\geq 2$.\;We thus have 
\begin{equation*}
\begin{aligned}
&E_2^{-(k-2-|J|), |{\Delta_n(k)}\backslash I|+1}=E_1^{-(k-2-|J|), |{\Delta_n(k)}\backslash I|+1}/\mathrm{Im}(d_1^{-(k-1-|J|), |{\Delta_n(k)}\backslash I|+1})=X_E^*(\bL^{\lrr}_{{\Delta}_{k,\upsilon}}/\bZ_n), \\
&E_2^{-(k-2-|J|), |{\Delta_n(k)}\backslash I|+d}=E_2^{-(k-1-|J|), |{\Delta_n(k)}\backslash I|+1}=E_2^{-(k-1-|J|), |{\Delta_n(k)}\backslash I|+d}=0,
\end{aligned}
\end{equation*}
for all $d\geq 2$.\;The results follow from Lemma \ref{degspectral} (b).\;
\end{proof}

\end{document}